\definecolor{labelkey}{rgb}{0.6,0,0}
\numberwithin{equation}{section}
\newcommand{\Y}{\widetilde{Y}}
\providecommand{\ip}[1]{\langle#1\rangle}
\providecommand{\abs}[1]{\left\lvert#1\right\rvert}
\providecommand{\norm}[1]{\left\|#1\right\|}
\def\C{{\mathbb{C}}}
\def\be{{\beta}}
\def\eps{\varepsilon}
\def\varep{\varepsilon}
\def\al{\alpha}
\def\hat{\widehat}
\def\bar{\overline}
\def\R{{\mathbb R}}
\def\R{{\bf R}}
\def\Z{{\mathbb Z}}
\def\p{{\bf p}}
\def\W{\widetilde{W}}
\def\bar{\overline}
\def\R{\mathbb{R}}
\def\T{{\mathbb T}}
\def\R{\mathbb{R}}
\def\Z{\mathbb{Z}}
\def\C{\mathbb{C}}
\def\p{\partial}
\def\be{\begin{equation}}
\def\ee{\end{equation}}
\newtheorem{theorem}{Theorem}[section]
\newtheorem{lemma}[theorem]{Lemma}
\newtheorem{proposition}[theorem]{Proposition}
\newtheorem{remark}[theorem]{Remark}
\numberwithin{equation}{section}
\begin{document}

\title[Landau damping near the Poisson Equilibrium in $\R^3$]{Nonlinear Landau damping for the Vlasov-Poisson system in $\R^3$: the Poisson equilibrium}

\author{Alexandru D.\ Ionescu}
\address{Princeton University}
\email{aionescu@math.princeton.edu}
\author{Benoit Pausader}
\address{Brown University}
\email{benoit\_pausader@brown.edu}
\author{Xuecheng Wang}
\address{YMSC, Tsinghua University\& BIMSA}
\email{xuecheng@tsinghua.edu.cn}
\author{Klaus Widmayer}
\address{University of Zurich}
\email{klaus.widmayer@math.uzh.ch}

\thanks{A.I.\ was supported in part by NSF grant DMS-2007008; B.P.\ was partially supported by a Simons Fellowship, the CY-IAS fellowship program, and NSF grant DMS-1700282; X.W.\  was supported in part by NSFC-12141102, and MOST-2020YFA0713003. K.W.\ gratefully acknowledges support of the SNSF through grant PCEFP2\_203059. This material is based upon work supported by the National Science Foundation under Grant No. DMS-1929284 while A.I., B.P.\ and K.W.\ were in residence at the Institute for Computational and Experimental Research in Mathematics in Providence, RI, during the program ``Hamiltonian Methods in Dispersive and Wave Evolution Equations''. }
\maketitle

\begin{abstract}

We prove asymptotic stability of the Poisson homogeneous equilibrium among solutions of the Vlasov-Poisson system in the Euclidean space $\mathbb{R}^3$. More precisely, we show that small, smooth, and localized perturbations of the Poisson equilibrium lead to global solutions of the Vlasov-Poisson system, which scatter to linear solutions at a polynomial rate as $t\to\infty$.

The Euclidean problem we consider here differs significantly from the classical work on Landau damping in the periodic setting, in several ways. Most importantly, the linearized problem cannot satisfy a ``Penrose condition''. As a result, our system contains resonances (small divisors) and the electric field is a superposition of an electrostatic component and a larger oscillatory component, both with polynomially decaying rates.

\end{abstract}

\setcounter{tocdepth}{1}

\tableofcontents

 \section{Introduction}
It is believed that the vast majority of ordinary matter in the visible universe takes the form of a plasma, i.e.\  of an ionised gas, ranging from sparse intergalactic plasma to the interior of stars and neon
signs. Both from a theoretical and practical (e.g. nuclear fusion) point of view, the understanding of stability versus instability in plasmas is a formidable yet crucial challenge. We refer to e.g.\  \cite{fitzpatrick2014plasma,PlasmaBook} for physics references in book form.

In this article we consider a \textit{hot, unconfined, electrostatic} plasma in three dimensions of electrons on a uniform, static, background of ions. Here collisions are neglected and the associated distribution of electrons is modeled by a measure $M(x,v,t)dxdv$ on the phase space $\mathbb{R}^3_x\times\mathbb{R}^3_v$, where $M$ satisfies the \textit{Vlasov-Poisson} equation.
\begin{equation}\label{eq:VP_physics}
m_e(\partial_t+v\cdot\nabla_x)M+q{\bf E}\cdot\nabla_vM=0,\qquad \hbox{div}_x{\bf E}(x,t)=4\pi \epsilon_0\left\{n_0e+q\int_{\mathbb{R}^3} M dv\right\}.
\end{equation}
In \eqref{eq:VP_physics}, ${\bf E}$ denotes the self-generated electrostatic field, $q=-e<0$ the charge of an electron, $m_e$ its mass, $\epsilon_0$ the vacuum resistivity and $n_0e>0$ the uniform background charge density of the ions.

A particularly simple yet relevant class of solutions to \eqref{eq:VP_physics} are stationary, {\it spatially homogeneous} functions $M_0:\R^3\to[0,\infty)$ satisfying $q\int_{\R^3}M_0\,dv=n_0$. To understand the role of such equilibria in the overall dynamics of the Vlasov-Poisson equations, an important step is the investigation of their stability. Writing
\begin{equation}
 M(x,v,t)=n_0 e^{-1}\left(M_0(v)+F(x,v,t)\right)
\end{equation}
the equation for the perturbation $F$ becomes
\begin{equation}\label{VP}
\begin{split}
\left(\partial_t+v\cdot\nabla_x\right)F+\omega^2_e\nabla_x\psi\cdot\nabla_v M_0&+\omega_e^2\nabla_x\psi\cdot\nabla_vF=0,\qquad
\Delta_x\psi=\int_{\R^3} Fdv,
\end{split}
\end{equation}
with electrostatic potential $\psi$ and \textit{electron plasma frequency} $\omega_e$ given by $\omega_e^2:=\frac{4\pi\epsilon_0 n_0e^2}{m_e}$. Let
\begin{equation*}
 f(x,v,t):=F\big(\frac{x}{\omega_e},v,\frac{t}{\omega_e}\big),\qquad\phi(x,t):=\omega_e^2\cdot \psi\big(\frac{x}{\omega_e},\frac{t}{\omega_e}\big),
\end{equation*}
so we obtain the nondimensionalized Vlasov-Poisson system
\begin{equation}\label{NVP}
\begin{split}
\left(\partial_t+v\cdot\nabla_x\right)f+\nabla_x\phi\cdot \nabla_vf&=-\nabla_x\phi\cdot\nabla_vM_0,\\
\Delta_x\phi(x,t)&=\rho(x,t):=\int_{\R^3} f(x,v,t)\,dv,
\end{split}
\end{equation}
which will be the focus of the rest of this article.

\subsection{Main result}
In this paper we investigate the asymptotic stability of a particular homogeneous equilibrium, namely the ``Poisson equilibrium''
\begin{equation}\label{NVP.2}
M_0(v):=\frac{1}{\pi^2(1+|v|^2)^{2}},\qquad \widehat{M_0}(\xi)=e^{-|\xi|}.
\end{equation}
Our main result asserts that smooth perturbations of $M_0$ lead to global solutions which scatter to linear solutions, and exhibit two different dynamics in their associated electric fields:
 
 \begin{theorem}\label{thm:main_simple}
 There exists $\bar\eps>0$ such that if the initial particle distribution $f_0$ satisfies
 \begin{equation}\label{eq:init}
  \sum_{\abs{\alpha}+\abs{\beta}\leq 1}\norm{\ip{v}^{4.5}\partial_x^\alpha\partial_v^\beta f_0(x,v)}_{L^\infty_x L^\infty_v}+\norm{\ip{v}^{4.5}\partial_x^\alpha\partial_v^\beta f_0(x,v)}_{L^1_x L^\infty_v}\leq\eps_0\leq \bar\eps,
 \end{equation}
 then the Vlasov-Poisson system \eqref{NVP} with $M_0$ defined as in \eqref{NVP.2} has a global unique solution $f\in C^1_{x,v,t}(\mathbb{R}^{3+3}\times\mathbb{R}_+)$ that scatters linearly, i.e.\ there exists $f_\infty\in L^\infty_{x,v}$ such that
 \begin{equation}\label{eq:lin_scatter}
\begin{split}
\Vert f(x,v,t)-f_\infty(x-tv,v)\Vert_{L^\infty_{x,v}}\lesssim\varepsilon_0\langle t\rangle^{-1/2}.
\end{split}
\end{equation}
Moreover, the electric field $E(x,t)=\nabla_x\phi(x,t)$ decomposes into a ``static'' and an ``oscillatory'' component with different decay rates
 \begin{equation}\label{eq:E-decomp}
\begin{split}
&E(t)=E^{stat}(t)+\Re(e^{-it}E^{osc}(t)),\\
&\ip{t}\norm{E^{stat}(t)}_{L^\infty}+\norm{E^{osc}(t)}_{L^\infty}\lesssim \eps_0\ip{t}^{-2+\delta},
\end{split}
 \end{equation}
where $\delta=10^{-4}$ is a small parameter.
\end{theorem}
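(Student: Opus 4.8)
The plan is to set up a bootstrap/continuity argument in a suitable weighted norm, controlling simultaneously the particle distribution $f$ along characteristics and the self-consistent field $E$. First I would pass to the "profile" along the free flow, writing $g(x,v,t):=f(x+tv,v,t)$ (or working on the Fourier side with the phase $e^{it\xi\cdot v}$), so that the transport operator $\partial_t+v\cdot\nabla_x$ is removed and the equation becomes $\partial_t g=-\,(\text{nonlinear and linear field terms, composed with the flow})$. The density $\rho$ then satisfies a Volterra-type equation $\rho=\rho_{\mathrm{lin}}+\KK\ast\rho+(\text{quadratic})$, where the linear response kernel involves $\widehat{M_0}(t\xi)=e^{-t|\xi|}$. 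The decisive structural point, emphasized in the introduction, is that the associated symbol does \emph{not} satisfy a Penrose condition: the dispersion function has zeros on the relevant contour, producing the oscillatory mode $e^{-it}$ and genuine small divisors. So the first real step is a careful analysis of this linear Volterra equation: I would Laplace/Fourier transform in time, locate the resonance at frequency $\pm 1$ (the plasma oscillation), isolate it via a contour deformation or an explicit spectral decomposition, and thereby derive the decomposition $E=E^{stat}+\Re(e^{-it}E^{osc})$ together with the linear decay rates $\langle t\rangle^{-2+\delta}$ and $\langle t\rangle^{-1}$ for the two pieces. This should already be available from earlier sections of the paper; I would quote it.

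Next, with the linear dispersive estimates in hand, I would run the nonlinear bootstrap. The ansatz is that $E$ decomposes as in \eqref{eq:E-decomp} with constants $2\eps_0$ instead of $\eps_0$, and that $g$ stays bounded in the weighted space dictated by \eqref{eq:init} (the $\langle v\rangle^{4.5}$ weight and one derivative, in $L^\infty_{x,v}\cap L^1_xL^\infty_v$), possibly augmented by a small growth factor $\langle t\rangle^{C\delta}$ to absorb the loss from the resonance. The main mechanism is: (i) \emph{field $\to$ distribution}: given the assumed decay of $E$, integrate the characteristic ODEs to show the characteristics converge (the $\langle t\rangle^{-2+\delta}$ decay of the oscillatory part is integrable, the static part is borderline $\langle t\rangle^{-1}$ but its oscillation in the nonlinear term, or a null-type structure in $v$, should recover integrability), yielding control of $g$ and the scattering statement \eqref{eq:lin_scatter}; (ii) \emph{distribution $\to$ field}: feed the controlled $g$ back into the Volterra equation for $\rho$, where the quadratic term $\nabla_x\phi\cdot\nabla_v f$ composed with the flow gains a factor of $t$ from $\nabla_v(e^{it\xi\cdot v})$ but this is beaten by the decay/regularity of $E$ and the velocity weights, and re-derive the improved field decomposition with the better constant. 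Closing the loop on a maximal time interval and using the a priori bounds to show that interval is all of $\mathbb{R}_+$ gives global existence; uniqueness and $C^1$ regularity follow from the standard Vlasov–Poisson local theory combined with the global a priori bounds.

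The technical heart — and the main obstacle — is step (ii), i.e.\ estimating the nonlinear density in the presence of the resonance. Two effects compound: the loss of one power of $t$ when $\nabla_v$ hits the oscillatory phase, and the small divisors coming from the non-Penrose linear operator, which means one cannot simply invert the Volterra equation by a Neumann series with a contraction. I expect the resolution to require (a) exploiting the specific algebraic form $\widehat{M_0}(\xi)=e^{-|\xi|}$, so that the dispersion function is explicit and its only zeros on the contour are the simple ones at $\pm 1$, allowing a clean factorization of the resolvent into a "pole part" producing $e^{\pm it}E^{osc}$ and a "regular part" with genuine decay; (b) a normal-form / space-time resonance bookkeeping on the quadratic interaction, separating frequencies where the output phase $\xi\cdot v$ is stationary from those where it is not, so that the dangerous $t$-growth only occurs where it is compensated by decay of $\widehat{\rho}$ at low frequencies or by the velocity localization; and (c) carefully tracking the $\delta=10^{-4}$ budget so that each of the finitely many logarithmic/resonant losses costs only $\langle t\rangle^{C\delta}$ and the final estimate still closes. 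The interplay between the $L^\infty_{x,v}$ (pointwise, for scattering) and $L^1_xL^\infty_v$ (for $\widehat\rho$ at low frequency, hence for the field) norms is what makes the weighted bootstrap space in \eqref{eq:init} exactly the right one, and verifying that both norms are propagated by the full nonlinear flow is where the bulk of the work lies.
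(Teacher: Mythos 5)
Your high-level skeleton — pass to a Lagrangian/profile picture, derive the Volterra equation for $\rho$, Laplace--Fourier the linear response to locate the resonance at $\pm 1$, split off an oscillatory mode, and close by a bootstrap using normal forms on the quadratic term — does match the paper's strategy in outline. But there are two substantive gaps. First, you have the roles of the two components reversed: in the paper (and in the statement \eqref{eq:E-decomp}) the \emph{static} part $E^{stat}$ is the faster-decaying one, $\|E^{stat}(t)\|_{L^\infty}\lesssim \eps_0\langle t\rangle^{-3+\delta}$, and it does \emph{not} oscillate; it is the \emph{oscillatory} part $E^{osc}$, decaying only like $\langle t\rangle^{-2+\delta}$, that carries the $e^{-it}$ factor and is rescued by integration by parts in time. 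Your sentence ``the static part is borderline $\langle t\rangle^{-1}$ but its oscillation \ldots should recover integrability'' is exactly backwards, and if followed literally the bootstrap would not close: there is no oscillation in $E^{stat}$ to exploit, and treating $E^{osc}$ without its $e^{-it}$ structure produces non-integrable losses in the characteristics estimates (cf.\ the $\widetilde{W}^{osc}$ terms in Lemma \ref{derichar2}, which require the $\tau$-integration by parts).

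Second, the bootstrap variable you propose — the profile $g$ together with a small multiple of the field decomposition — is not the one the paper uses, and the difference matters. The paper's bootstrap is entirely at the level of the density, on the pair $(\rho^{stat},\rho^{osc})$, measured in two \emph{distinct} norms $Stat_\delta$ and $Osc_\delta$ (see \eqref{may12eqn21}): a stronger, low-frequency-weighted norm for $\rho^{stat}$ and a weaker norm for $\rho^{osc}$, with the deficit for $\rho^{osc}$ recovered precisely through the $e^{-it}$ factor. All the needed information about $f$ and the characteristics $(\widetilde{Y},\widetilde{W})$ is then derived \emph{from} these density bounds, and the scattering \eqref{eq:lin_scatter} is a corollary (Lemma \ref{ConvYV}). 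Moreover, the solution formula admits two representations (Lemma \ref{LemSolvingFrocingTerm}), and the paper switches between them depending on whether the denominator $1-\langle v,\xi\rangle - i|\xi|$ is near zero (the sets $A^I,A^{II}$ in \eqref{sug30}); without this dichotomy the normal form produces a singular small divisor that your sketch does not address. These choices — density-level bootstrap with asymmetric norms, plus the I/II dichotomy — are the load-bearing ideas that make the scheme close, and they are missing from your outline.
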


\begin{remark}
(1)  Theorem \ref{thm:main_simple} appears to be the first nonlinear asymptotic stability result for the Vlasov-Poisson system in $\R^3$ in a neighborhood of a smooth non-trivial equilibrium (see \cite{PW2020} for the case of a repulsive point charge). We work with the Poisson homogeneous equilibrium $M_0$ mostly for the sake of simplicity, as it leads to explicit formulas such as \eqref{eq:rho_self2}--\eqref{eq:rho_self3}. However, we expect that the linear scattering conclusion of the theorem  and the underlying analysis extend to more general smooth homogeneous equilibria, at least as long as the resonances of the system (the set where the kernel $1+K(\xi,\lambda)$ defined as in \eqref{eq:PenroseR3} vanishes) are not too severe. See sections \ref{sec:PW} and \ref{MainIdeas} for further discussion.


(2) The statement of the theorem, in particular the crucial decay estimates \eqref{eq:E-decomp}, depend on the fact that we work in dimension $d=3$. The decay is stronger in higher dimensions $d\geq 4$ (so the proof becomes easier), and weaker and insufficient in dimension $d=2$. This is due mainly to the dimension-dependent dispersive estimates Lemma \ref{disper2}, and is in sharp contrast with the periodic case $x\in\T^d$. See section \ref{sec:PW} for further discussion.

\end{remark}

\subsection{Prior works}\label{sec:PW}

The literature of broadly related results is too vast to be surveyed here, so we will focus on a selection of more directly related results. In particular, we restrict our attention to the setting of three spatial dimensions.

\subsubsection{Perturbations of vacuum} The simplest equilibrium of \eqref{eq:VP_physics} is the \textit{vacuum} $n_0\equiv 0$ and $M_0\equiv 0$. The resulting equations
\begin{equation}\label{VVP}
\left(\partial_t+v\cdot\nabla_x\right)f+\lambda\nabla_x\phi\cdot \nabla_vf=0,\qquad \Delta_x\phi(x,t)=\int f(x,v,t)dv,\qquad\lambda\in\{-1,1\},
\end{equation}
are also relevant in astrophysics, where the self-interactions through a gravitational potential are attractive ($\lambda=-1$) rather than repulsive ($\lambda=+1$). In the former case, \eqref{VVP} possesses a large variety of localized equilibria, see e.g.\ \cite{GR1999,LMR2008,Rei2007,Mou2013}. 

For sufficiently localized initial data, the system \eqref{VVP} has been extensively studied (see e.g.\ \cite{Gla1996,Rei2007} for book references). Solutions are global in time under reasonably general regularity assumptions \cite{BD1985,LP1991,Sch1991,Pfa1992}. For small perturbations, the electric field and charge density decay over time \cite{BD1985}, but the precise dynamics were only recently clarified (after a series of preliminary works \cite{IR1996,Perthame1996,HRV2011,Smu2016,CK2016,Wang2018}): the \textit{long-range effects} of the electrostatic field lead to asymptotic dynamics that feature a logarithmic correction of linear scattering, a phenomenon known as \textit{modified scattering} \cite{IPWW2020,FOPW2021}.

\subsubsection{Nonlinear Landau damping}
As is well-known, the free transport equation $\partial_t f+v\cdot\nabla_x f=0$ exhibits \textit{phase mixing}, which can manifest itself as time decay in the spatial density $\rho(t,x)$. It was an observation of Landau \cite{Lan1946} (see also \cite[Chapter 3, Section 30]{LLX1981}) that an interesting mechanism of decay exists also in the linearized Vlasov-Poisson equations near homogeneous equilibria satisfying certain conditions (nowadays called ``the Penrose criterion'').   
 
$\bullet$ The periodic setting $x\in\mathbb{T}^3$, is a natural model for a \textit{confined} plasma and has been studied extensively. After some preliminary works \cite{CM1998,HV2009}, \textit{nonlinear Landau damping} was proved in the pioneering work \cite{MV11} (see also \cite{BMM2016,GNR2020} for refinements and simplifications). More precisely, for suitable homogeneous equilibria $M_0$ that satisfy the \textit{Penrose criterion}
\begin{equation}\label{eq:Penrose}
\inf_{k\in\Z^3\setminus\{0\},\,\lambda\in\C,\,\Im(\lambda)<0}\abs{1+\int_0^\infty e^{-i\lambda s}s\widehat{M_0}(ks)ds} >0,
\end{equation}
it was shown that small, highly regular (analytic or Gevrey) perturbations lead to a global solution which \textit{scatters linearly}, with an electric field that decays exponentially.

The confined nature of the system induces strong \textit{nonlinear echoes} which, for smooth enough perturbations, are compensated by the fast decay of the linearized equation. Rougher perturbations (e.g.\ Sobolev) can lead to other stationary solutions \cite{BGK1957,LZ2011} or still damp \cite{GNR2020-2}. We note that related mechanisms are at play in $2d$ \textit{fluid mixing}, in particular near monotone shear flows \cite{BM15,Ionescu2018,IJ20,MZ20} or point vortices \cite{IJ2022}.

$\bullet$ In the \textit{unconfined} setting $x\in\R^3$, Theorem \ref{thm:main_simple} is the first nonlinear asymptotic stability result of non-trivial, smooth homogeneous equilibria in 3D. Previous results are mainly concerned with the linearized system, see \cite{GS1994,GS1995a}, and the screened case, see  \cite{BMM2018,HNR2019}, in which the low frequency part is screened out.

The key difficulty for  the unconfined case is caused by the fact that the Penrose criterion cannot hold. More precisely, for any normalized homogeneous equilibrium $M_0$ we have 
\begin{equation}\label{eq:PenroseR3}
\begin{split}
&1+K(0,\lambda)=1-\lambda^{-2}\,\,\,\text{ if }\,\,\,\Im(\lambda)<0\,\,\,\text{thus}\,\,\,
\lim_{\xi\to 0,\,\lambda\to\pm 1,\,\Im(\lambda)<0}\abs{1+K(\xi,\lambda)} =0,\\
&\text{where}\qquad K(\xi,\lambda):=\int_0^\infty e^{-i\lambda s}s\widehat{M_0}(\xi s)\,ds.
\end{split}
\end{equation}
In particular, the key lower bound in the Penrose criterion \eqref{eq:Penrose} cannot hold. This critical difficulty has already been observed in \cite{GS1994,GS1995a,BMM2020,HNR2020} in the study of the linearized system. 

In the nonlinear setting the failure of the Penrose condition leads to the presence of small denominators and resonances in the system. As a consequence we have much slower decay and convergence (depending on the dimension of the ambient space), and the global dynamics is completely different compared to the periodic case. In fact our analysis reveals two different types of dynamics for the electric field: an oscillatory part, which decays at almost the critical rate $\ip{t}^{-2}$ and oscillates like $e^{-it}$ over time, and a static part, which decays faster than the critical rate $\ip{t}^{-2}$. See section \ref{MainIdeas} below for more details.

We note that a suitable analogue of the Penrose condition \eqref{eq:Penrose} still holds in the screened case investigated in \cite{BMM2018,HNR2019}. As a consequence there are no resonances, and one can prove sufficiently rapid decay of the electric field (of order $\langle t\rangle^{-4+}$) for smooth perturbations. For comparison, the decay of the electric field in the case of vacuum perturbations is of the order of $\langle t\rangle^{-2}$.

\subsubsection{Comparison with Euler-Poisson models}

Finally, we note the analogy with the related case of {\it warm plasmas}, when one can use fluid models instead of kinetic ones: the Euler-Poisson equation for electron is classically asymptotically stable (for irrotational data) \cite{Guo1998}, while the stability of the ion equation was more recently obtained \cite{GP2011}. In this case, the two-fluid models involving both electrons and ions lead to new and interesting phenomena \cite{GM2014,IoPau1,GIP2016}.

\subsection{Main ideas}\label{MainIdeas}
As in \cite{HNR2019} we use a Lagrangian approach. The left-hand side of \eqref{NVP} is a transport equation for $f$, the (backwards) characteristics of which are the curves
\begin{equation}\label{eq:chars}
\begin{alignedat}{3}
\partial_s X(x,v,s,t)&=V(x,v,s,t),\qquad &X(x,v,t,t)&=x,\\
\partial_s V(x,v,s,t)&=\nabla_x\phi(X(x,v,s,t),s),\qquad &V(x,v,t,t)&=v.
\end{alignedat}
\end{equation}
Notice that we can integrate \eqref{NVP} to obtain an exact formula for $f(x,v,t)$, namely
\begin{equation}\label{eq:NVPchar}
\begin{split}
 f(x,v,t)&=f_0(X(x,v,0,t),V(x,v,0,t))-\int_{0}^t\nabla_x\phi(X(x,v,s,t),s)\cdot\nabla_vM_0(V(x,v,s,t))ds.
\end{split}
\end{equation}
It thus suffices to study the characteristics, which are given through the electric field $\nabla_x\phi$, which in turn arises from the density $\rho$. Through integration in $v$ in \eqref{eq:NVPchar}, together with \eqref{eq:chars} we recast \eqref{NVP} as a closed system for the density $\rho$ satisfying
\begin{equation}\label{eq:rho_self}
 \rho(x,t)+\int_0^t\int_{\R^3}(t-s)\rho(x-(t-s)v,s)M_0(v)\,dvds=\mathcal{N}(x,t),
\end{equation}
where $\mathcal{N}=\mathcal{N}_1+\mathcal{N}_2$ is a sum of initial data contribution $\mathcal{N}_1$ and a nonlinear expression $\mathcal{N}_2$ in $\rho$ (see \eqref{Lan4}-\eqref{Lan5} below for precise formulas). This formulation is amenable to a bootstrap approach that we detail further.

\subsubsection{Linear analysis and the Penrose condition}
The (linear) left-hand side of \eqref{eq:NVPchar} is a Volterra equation, which can be integrated exactly. Using the Laplace-Fourier transform and the explicit formula \eqref{NVP.2}, after a few algebraic manipulations it follows from \eqref{eq:rho_self} that
\begin{equation}\label{eq:rho_self2}
\begin{split}
&\left(1+K(\xi,\lambda)\right)\widetilde{\rho}(\xi,\lambda)=\widetilde{\mathcal{N}}(\xi,\lambda),\\
&1+K(\xi,\lambda)=1+\int_0^\infty r\widehat{M_0}(r\xi)e^{-i\lambda r}\,dr=\frac{(|\xi|+i(\lambda-1))(|\xi|+i(\lambda+1))}{(\abs{\xi}+i\lambda)^2}.
\end{split}
\end{equation}
Notice that there is no uniform lower bound for $|1+K|$ as $\xi\to 0$ and $\lambda\to\pm 1$, i.e.\ a Penrose condition similar to \eqref{eq:Penrose} cannot be satisfied. However, we can still obtain an explicit solution of the linear problem,
\begin{equation}\label{eq:rho_self3}
 \widehat{\rho}(\xi,t)=\widehat{\mathcal{N}}(\xi,t)-\int_0^t\widehat{\mathcal{N}}(\xi,\tau)e^{-(t-\tau)|\xi|}\sin(t-\tau)\,d\tau.
\end{equation}
This gives a first ``solution formula'' (in the same spirit as a Duhamel formula) for $\rho$ in terms of the initial data and a nonlinear expression of itself.

We write $\sin(t-\tau)=\sin(t)\cos(\tau)-\cos(t)\sin(\tau)$ and do integration by parts in $\tau$ in \eqref{eq:rho_self3} (similar to a normal form analysis) when the oscillation effect is strong, i.e.\ in the case $|v||\xi|\ll 1$. As a result, we derive 
a second formula for the localized density, see \eqref{sug13.1}-\eqref{sug13.4}. 

The identity \eqref{eq:rho_self3} naturally leads to a decomposition of the density 
\begin{equation}\label{eq:decrho}
\rho=\Re(e^{-it}\rho^{osc})+\rho^{stat}
\end{equation}
into an oscillatory mode $\Re(e^{-it}\rho^{osc})$ and a static component $\rho^{stat}$. The corresponding decomposition at the level of the electric field follows by setting $E^{\ast}=\nabla\Delta^{-1}\rho^{\ast}$, $\ast\in\{osc,stat\}$. 

\subsubsection{The bootstrap argument and nonlinear analysis} Our main bootstrap argument is at the level of the density function $\rho$. The main idea is to use the decomposition \eqref{eq:decrho} and bound the two components $\rho^{osc}$ and $\rho^{stat}$ in two different Banach spaces: a stronger space for the static component $\rho^{stat}$ and a weaker space for the oscillatory component $\rho^{osc}$. The choice of these two spaces is very important, see \eqref{normA}--\eqref{may12eqn21} for the precise definitions.\footnote{The choice of these bootstrap spaces is in fact the most important choice in the paper, in the spirit of the ``$Z$-norm method'' used by the authors in earlier work \cite{IoPau1, GIP2016, IPWW2020}. } The point is that even though the oscillatory component $\rho^{osc}$ satisfies weaker bounds, the presence of the factor $e^{-it}$ allows integration by parts in time arguments (normal forms), which lead to improved decay and convergence.

To prove the main bootstrap proposition \ref{MainBootstrapProp} we start from the identity \eqref{eq:rho_self3}. The main nonlinear contribution comes from the {\textit{reaction term}}
\begin{equation}\label{eq:reation}
\begin{split}
 \mathcal{N}_2(x,t)=\int_0^t&\int_{\R^3}\big\{E(x-(t-s)v,s)\cdot \nabla_vM_0(v)\\
 &-E(X(x,v,s,t),s)\cdot \nabla_vM_0(V(x,v,s,t))\big\}\,dvds,
 \end{split}
\end{equation}
where the characteristics $(X,V)(x,v,s,t)$ are defined as in \eqref{eq:chars}. 

The bootstrap assumptions on $\rho$ and the definitions  \eqref{eq:chars} can be used to derive bounds on the deviations $V(x,v,s,t)-v$ and $X(x,v,s,t)-[x-(t-s)v]$ of the linear characteristics. To estimate the reaction term we further localize in $|v|\approx 2^j$ and $x$-frequency $|\xi|\approx 2^k$. We then examine the resulting nonlinear interactions, and analyze them according to the size of the parameters $j$ and $k$. In some cases it is beneficial to integrate by parts in time (normal forms) or in $v$ (improved dispersive estimates), particularly when the oscillatory components $E^{osc}$ are involved.  The analysis of the reaction term \eqref{eq:reation} is the most elaborate part of our proof and covers sections 5, 6, and 7.

Integration by parts in time produces quadratic ``main terms'' and a large number of ``cubic remainders''. Fortunately, in many cases the cubic remainders are of lower order and we can deal with them in systematic fashion using Lemma \ref{keybilinearlemma}.

\subsubsection{General equilibria} It is natural to ask if the main conclusions of Theorem \ref{thm:main_simple} hold for more general homogeneous equilibria $M_0(v)$. Our choice of the Poisson equilibrium simplifies the analysis, since it leads to explicit formulas such as \eqref{eq:rho_self2}--\eqref{eq:rho_self3}, but we expect that the framework we construct here extends to more general situations.

At the technical level, in the case of more general homogeneous equilibria there is an additional difficulty identified in \cite{BMM2020,HNR2020}: even at the linearized level the oscillatory component of the electric field contains an entire family of frequency dependent oscillations instead of just the two discrete modes we have here. This requires a more careful decomposition of the density function and more precise $Z$-norm analysis. We hope to return to these issues soon.

\subsection{Organization} The rest of the paper is organized as follows: in section 2 we derive our main formulas for the density function, identify precisely its decomposition (Corollary \ref{rhodeco}), and state our main bootstrap proposition (Proposition \ref{MainBootstrapProp}). In section 3 we use the bootstrap assumptions to prove bounds on the modified characteristics (Lemmas \ref{derivativeschar} and \ref{derichar2}), general dispersive estimates (Lemma \ref{disper2} and \ref{omegaioLem2}), and bounds on operators defined by Fourier multipliers (Lemma \ref{omegaioLem}). In section 4 we prove improved bootstrap estimates on the contributions of the initial data. In sections 5, 6, and 7 we prove improved bootstrap estimates on the contributions of the reaction term. Finally, in section 8 we complete the proof of the main Theorem \ref{thm:main_simple}.

 \section{Dynamics of the density and bootstrap setup}
Recall from \eqref{NVP} that the perturbation $f$ we study satisfies the equation
\begin{equation}\label{tul2}
\begin{split}
&(\partial_t+v\cdot\nabla_x)f+E\cdot\nabla_vM_0+E\cdot\nabla_vf=0,\\
&\rho(x,t)=\int_{\mathbb{R}^3}f(x,v,t)\,dv,\qquad E:=\nabla_x\Delta_x^{-1}\rho,
\end{split}
\end{equation}
where
\begin{equation}\label{sug2}
M_0(v):=\frac{1}{\pi^2(1+|v|^2)^{2}},\qquad \widehat{M_0}(\xi)=e^{-|\xi|}.
\end{equation}

To recast this as a problem for the density $\rho$ we introduce the ``backwards characteristics'' of \eqref{tul2}: these are the functions $X,V:\R^3\times\R^3\times\mathcal{I}^2_T\to\R^3$ obtained by solving the ODE system
\begin{equation}\label{Lan1}
\begin{alignedat}{2}
&\partial_sX(x,v,s,t)=V(x,v,s,t),\qquad &X(x,v,t,t)&=x,\\
&\partial_sV(x,v,s,t)=E(X(x,v,s,t),s),\qquad &V(x,v,t,t)&=v,
\end{alignedat}
\end{equation}
where $\mathcal{I}^2_T:=\{(s,t)\in[0,T]^2:\,s\leq t\}$. The main equation \eqref{tul2} gives
\begin{equation*}
\begin{split}
\frac{d}{ds}f(X(x,v,s,t),V(x,v,s,t),s)&=[(\partial_s+v\cdot\nabla_x+E\cdot\nabla_v)f](X(x,v,s,t),V(x,v,s,t),s)\\
&=-E(X(x,v,s,t),s)\cdot\nabla_vM_0(V(x,v,s,t)).
\end{split}
\end{equation*}
Integrating over $s\in[0,t]$ and letting $M'_0:=\nabla_v M_0$, we have 
\begin{equation}\label{Lan2}
f(x,v,t)=f_0(X(x,v,0,t),V(x,v,0,t))-\int_0^tE(X(x,v,s,t),s)\cdot M'_0(V(x,v,s,t))\,ds,
\end{equation}
for any $(x,v,t)\in\R^3\times\R^3\times[0,T]$. Adding $\int_0^t\int_{\R^3}E(x-(t-s)v,s)\cdot M'_0(v)$ on both sides of \eqref{Lan2}, a further integration in $v$ shows that
\begin{equation}\label{Lan4}
\rho(x,t)+\int_0^t\int_{\R^3}(t-s)\rho(x-(t-s)v,s)M_0(v)\,dvds=\mathcal{N}(x,t),
\end{equation}
for any $(x,t)\in\R^3\times[0,T]$, where
\begin{equation}\label{Lan5}
\begin{split}
\mathcal{N}(x,t)&:=\mathcal{N}_1(x,t)+\mathcal{N}_2(x,t),\\
\mathcal{N}_1(x,t)&:=\int_{\R^3}f_0(X(x,v,0,t),V(x,v,0,t))\,dv,\\
\mathcal{N}_2(x,t)&:=\int_0^t\int_{\R^3}\big\{E(x-(t-s)v,s)\cdot M'_0(v)-E(X(x,v,s,t),s)\cdot M'_0(V(x,v,s,t))\big\}\,dvds.
\end{split}
\end{equation}
Since $E=\nabla_x\Delta_x^{-1}\rho$ can be recovered directly from the density $\rho$, together with the characteristic equations \eqref{Lan1}, these equations yield a closed system.

\subsection{Solving the density equation}
In view of \eqref{Lan4}, the Fourier transform $\widehat{\rho}$ of the density satisfies a forced Volterra equation
\begin{equation}\label{ForcedLearizedEq}
\begin{split}
 \widehat{\rho}(\xi,t)+\int_{0}^t(t-s)\widehat{M}_0((t-s)\xi)\widehat{\rho}(\xi,s)ds=\widehat{H}(\xi,t),\qquad t\geq 0,
\end{split}
\end{equation}
where $H$ is a forcing term. Such equations can be solved as follows:
\begin{lemma}\label{LinDensity}
Assume $\rho$ and $H$ satisfy \eqref{ForcedLearizedEq} and define the convolution kernel $G$ by
\begin{equation}\label{DefKernel1}
\begin{split}
G(\xi,\tau)&:=\delta_0(\tau)-\sin(\tau)e^{-\tau\vert\xi\vert}\mathbf{1}_+(\tau),
\end{split}
\end{equation}
where $\mathbf{1}_+$ denotes the characteristic function of the interval $[0,\infty)$. Then
\begin{equation}\label{FormulaForRho}
\begin{split}
\widehat{\rho}(\xi,t)&=\int_{0}^t G(\xi,s)\widehat{H}(\xi,t-s)\,ds.
\end{split}
\end{equation}
\end{lemma}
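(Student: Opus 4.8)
The plan is to treat \eqref{ForcedLearizedEq} as a scalar (in $t$) linear Volterra equation of the second kind, with $\xi$ a fixed parameter, and to exhibit its resolvent kernel explicitly. Write $k(\xi,\tau):=\tau\widehat{M_0}(\tau\xi)=\tau e^{-\tau|\xi|}$ for $\tau\ge 0$ and let $(a*b)(t):=\int_0^t a(s)\,b(t-s)\,ds$ denote the Laplace convolution on $[0,\infty)$, so that \eqref{ForcedLearizedEq} reads $\widehat\rho+k*\widehat\rho=\widehat H$. Standard Volterra theory (the kernel $k(\xi,\cdot)$ is continuous, so a Neumann series/Gr\"onwall argument applies) gives a unique solution in $C([0,T])$, of the form $\widehat\rho=\widehat H-R*\widehat H$, where the resolvent kernel $R(\xi,\cdot)$ is the unique solution of $R+k*R=k$. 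It therefore suffices to check that $R(\xi,\tau)=\sin(\tau)e^{-\tau|\xi|}\mathbf{1}_+(\tau)$ solves this resolvent identity; this identifies $G=\delta_0-R$ as in \eqref{DefKernel1} and yields \eqref{FormulaForRho} via $\widehat\rho=G(\xi,\cdot)*\widehat H$.

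There are two essentially equivalent ways to carry out the verification. The first is via the Laplace transform in $t$, which I denote $\widetilde{\,\cdot\,}(\xi,p):=\int_0^\infty e^{-p\tau}(\cdot)(\xi,\tau)\,d\tau$ (a fresh variable $p$, to avoid the sign conventions of \eqref{eq:rho_self2}). Since $\widetilde k(\xi,p)=(p+|\xi|)^{-2}$ for $\Re p>-|\xi|$, transforming \eqref{ForcedLearizedEq} gives $\bigl(1+(p+|\xi|)^{-2}\bigr)\widetilde\rho(\xi,p)=\widetilde H(\xi,p)$, hence
\[
\widetilde\rho(\xi,p)=\frac{(p+|\xi|)^2}{(p+|\xi|)^2+1}\,\widetilde H(\xi,p)=\Bigl(1-\frac{1}{(p+|\xi|)^2+1}\Bigr)\widetilde H(\xi,p).
\]
Recognizing $\frac{1}{(p+|\xi|)^2+1}$ as the Laplace transform of $e^{-|\xi|\tau}\sin\tau$ and inverting produces exactly \eqref{DefKernel1}--\eqref{FormulaForRho}.

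The second route avoids the (mild) growth hypotheses needed to justify the transform manipulations and instead verifies the resolvent identity directly: with $R(\xi,\tau)=\sin(\tau)e^{-\tau|\xi|}$ one computes
\[
(k*R)(\xi,\tau)=e^{-\tau|\xi|}\int_0^\tau(\tau-\sigma)\sin\sigma\,d\sigma=e^{-\tau|\xi|}\bigl(\tau-\sin\tau\bigr),
\]
using $\int_0^\tau\sin\sigma\,d\sigma=1-\cos\tau$ and $\int_0^\tau\sigma\sin\sigma\,d\sigma=\sin\tau-\tau\cos\tau$. Adding $R$ gives $R+k*R=\tau e^{-\tau|\xi|}=k$, as required. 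Equivalently, one may directly insert $\widehat\rho(\xi,t)=\widehat H(\xi,t)-\int_0^t\sin(s)e^{-s|\xi|}\widehat H(\xi,t-s)\,ds$ into \eqref{ForcedLearizedEq}, apply Fubini to swap the two $s$-integrations over the relevant triangle (the integrand is continuous), and use the same elementary primitive to see that everything except $\widehat H(\xi,t)$ cancels; uniqueness of the solution of \eqref{ForcedLearizedEq} then forces $\widehat\rho$ to equal this expression, which is \eqref{FormulaForRho}.

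I expect no genuine obstacle here: the statement is a textbook resolvent computation for a convolution Volterra equation. The only points requiring a little care are the bookkeeping of the elementary integral $\int_0^\tau(\tau-\sigma)\sin\sigma\,d\sigma=\tau-\sin\tau$, the interpretation of the $\delta_0(\tau)$ term in \eqref{DefKernel1} (so that \eqref{FormulaForRho} reads $\widehat\rho(\xi,t)=\widehat H(\xi,t)-\int_0^t\sin(s)e^{-s|\xi|}\widehat H(\xi,t-s)\,ds$), and the (standard) well-posedness/uniqueness statement that legitimizes passing from ``$\widehat\rho$ satisfies \eqref{ForcedLearizedEq}'' to the closed-form expression.
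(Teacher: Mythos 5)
Your proposal is correct, and your first route is essentially the paper's argument: the paper applies a Fourier--Laplace transform in $t$ (denoted $\mathcal{F}_t$, with dual variable $\lambda$ and $\Im\lambda<0$), computes $K(\xi,\lambda)=(|\xi|+i\lambda)^{-2}$, and inverts $1/(1+K)=1-1/\bigl((|\xi|+i\lambda)^2+1\bigr)$ by partial fractions to read off $G$; your variable $p$ is simply $i\lambda$. Your second route --- verifying the Volterra resolvent identity $R+k*R=k$ directly via the elementary primitive $\int_0^\tau(\tau-\sigma)\sin\sigma\,d\sigma=\tau-\sin\tau$, combined with uniqueness for continuous-kernel Volterra equations of the second kind --- is a genuinely different and more elementary argument. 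It avoids the (mild but real) need to justify the transform and its inversion on the class of functions at hand, and is fully self-contained in two lines of calculus. What the paper's transform route buys is not just the answer but the form of the symbol $1/(1+K)$: the partial-fraction decomposition into $\frac{1}{|\xi|+i(\lambda\mp 1)}$ is exactly what motivates the static/oscillatory split and the operator $\mathcal{D}$ that appear immediately in Lemma \ref{LemSolvingFrocingTerm}, so the authors' presentation is a derivation that exposes structure reused in the sequel rather than a bare verification. Either proof is complete for the lemma as stated.
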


\begin{proof}
Recall the formulas \eqref{sug2}. For any $\xi\neq 0$ and $\lambda\in\C$ with $\Im\lambda\leq 0$ we have
\begin{equation}\label{sug3}
K(\xi,\lambda):=\int_0^\infty r\widehat{M_0}(r\xi)e^{-i\lambda r}\,dr=\int_0^\infty re^{-r(|\xi|+i\lambda)}\,dr=\frac{1}{(|\xi|+i\lambda)^2}.
\end{equation}
Then \eqref{ForcedLearizedEq} shows that
\begin{equation}
 \mathcal{F}_t(\widehat{\rho}(\xi,t))(\lambda)+K(\xi,\lambda)\mathcal{F}_t(\widehat{\rho}(\xi,t))(\lambda)=\mathcal{F}_t(\widehat{H}(\xi,t))(\lambda).
\end{equation}
The claim follows by computing that
\begin{equation}\label{sug4}
\frac{1}{1+K(\xi,\lambda)}=1-\frac{1}{(|\xi|+i\lambda)^2+1}=1-\frac{1}{2i}\Big[\frac{1}{|\xi|+i(\lambda-1)}-\frac{1}{|\xi|+i(\lambda+1)}\Big],
\end{equation}
so
\begin{equation}\label{sug5}
\mathcal{F}_\lambda^{-1}\Big(\frac{1}{1+K(\xi,\lambda)}\Big)(\tau)=\delta_0(\tau)-e^{-\tau|\xi|}\sin\tau \mathbf{1}_+(\tau)=:G(\xi,\tau),
\end{equation}
as desired.
\end{proof}

Using this we can solve the equations for forcing terms as they appear in \eqref{Lan4}-\eqref{Lan5}. This introduces the new, real operator
\begin{equation}\label{DefD}
\begin{split}
\mathcal{D}(\nabla,v):=\vert\nabla_x\vert-v\cdot\nabla_x,
\end{split}
\end{equation}
and reveals a discretely oscillatory dynamic: 

\begin{lemma}\label{LemSolvingFrocingTerm}
For a given $h:\R^3\times\R^3\times\R\to\R$ let
\begin{equation}\label{eq:Hh}
H(x,t):=\int_{\mathbb{R}^3}h(x-tv,v,t)dv.
\end{equation}
Then we have two alternative expressions $\mathcal{S}^{I}[h]$ and $\mathcal{S}^{II}[h]$ for the solution of the associated equation \eqref{ForcedLearizedEq}, with
\begin{equation}
 \mathcal{S}^{\ast}[h](x,t)=R^{\ast}(x,t)+\Re\{e^{-it}T^{\ast}(x,t)\},\qquad \ast\in\{I,II\},
\end{equation}
where
\begin{equation}
\begin{aligned}
 R^{I}(x,t)&=\int_{\mathbb{R}^3} h(x-tv,v,t)dv,\\
 T^{I}(x,t)&=-i\int_{0}^te^{is}e^{-(t-s)\vert\nabla\vert}\int_{\mathbb{R}^3}h(x-sv,v,s)dvds,\\
\end{aligned}
\end{equation}
and
\begin{equation}
\begin{aligned}
 R^{II}(x,t)&=\int_{\mathbb{R}^3} \frac{\mathcal{D}^2}{1+\mathcal{D}^2}h(x-tv,v,t)dv,\\
 T^{II}(x,t)&=e^{-t\vert\nabla\vert}\int_{\mathbb{R}^3}\frac{1}{1-i\mathcal{D}}h(x,v,0)dv\\
 &\quad+\int_{0}^t\int_{\mathbb{R}^3}\frac{e^{is}}{1-i\mathcal{D}}e^{-(t-s)\vert\nabla\vert}(\partial_s h)(x-sv,v,s)dvds.
\end{aligned} 
\end{equation}
\end{lemma}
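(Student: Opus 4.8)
The plan is to apply Lemma \ref{LinDensity} to the specific forcing term $H(x,t)=\int_{\R^3}h(x-tv,v,t)\,dv$ and then repackage the resulting convolution formula \eqref{FormulaForRho} in two different ways corresponding to the two expressions $\mathcal{S}^I[h]$ and $\mathcal{S}^{II}[h]$. First I would substitute the kernel $G(\xi,\tau)=\delta_0(\tau)-\sin(\tau)e^{-\tau|\xi|}\mathbf{1}_+(\tau)$ into \eqref{FormulaForRho} to obtain
\begin{equation*}
\widehat{\rho}(\xi,t)=\widehat{H}(\xi,t)-\int_0^t\sin(s)e^{-s|\xi|}\widehat{H}(\xi,t-s)\,ds=\widehat{H}(\xi,t)-\int_0^t\sin(t-s)e^{-(t-s)|\xi|}\widehat{H}(\xi,s)\,ds,
\end{equation*}
after the change of variables $s\mapsto t-s$. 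This is exactly the identity \eqref{eq:rho_self3} with $\mathcal{N}$ replaced by $H$, so the only real content is the bookkeeping needed to exhibit the stated split into a non-oscillatory part $R^\ast$ and an oscillatory part $\Re\{e^{-it}T^\ast\}$.

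For the first expression $\mathcal{S}^I[h]$, I would write $\sin(t-s)=\Im(e^{i(t-s)})=\Re(-ie^{i(t-s)})$ and pull the factor $e^{it}$ outside the $s$-integral, leaving $-ie^{it}\int_0^t e^{-is}e^{-(t-s)|\xi|}\widehat{H}(\xi,s)\,ds$ under $\Re(\cdot)$. Undoing the Fourier transform in $x$ — so that $e^{-(t-s)|\xi|}$ becomes the operator $e^{-(t-s)|\nabla|}$ — and inserting the definition $H(x,s)=\int_{\R^3}h(x-sv,v,s)\,dv$ from \eqref{eq:Hh} yields precisely $R^I$ and $T^I$ as stated. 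For the second expression $\mathcal{S}^{II}[h]$, the idea is the ``normal form'' manipulation alluded to in the introduction: integrate by parts in $s$ in the term $\int_0^t \sin(t-s)e^{-(t-s)|\xi|}\widehat{H}(\xi,s)\,ds$. Writing $\sin(t-s)e^{-(t-s)|\xi|}=\Im\big(e^{(i-|\xi|)(t-s)}\big)$, an antiderivative in $s$ of $e^{(i-|\xi|)(t-s)}$ is $-\frac{1}{i-|\xi|}e^{(i-|\xi|)(t-s)}$; integrating by parts moves the $s$-derivative onto $\widehat{H}$ and produces boundary terms at $s=0$ and $s=t$. The boundary term at $s=t$ combines with the leading $\widehat{H}(\xi,t)$ to give the multiplier $\tfrac{\mathcal{D}^2}{1+\mathcal{D}^2}$ hitting $h$ evaluated along the free flow (using $1-\tfrac{1}{1+(i-|\xi|)^{-1}\cdot(\dots)}$-type algebra; concretely one checks $1-\Im\!\big(\tfrac{1}{|\xi|-i}\big)\cdot(\text{stuff})$ collapses against $|\nabla|-v\cdot\nabla$ when one recalls $(\partial_s+v\cdot\nabla_x)[h(x-sv,v,s)]=(\partial_s h)(x-sv,v,s)$), the boundary term at $s=0$ gives the $e^{-t|\nabla|}\int \tfrac{1}{1-i\mathcal{D}}h(x,v,0)\,dv$ piece, and the remaining integral carries $\partial_s h$ with the $\tfrac{e^{is}}{1-i\mathcal{D}}e^{-(t-s)|\nabla|}$ weight under $\Re(e^{-it}\cdot)$.

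The main obstacle is purely algebraic rather than conceptual: one has to be careful that the operator $\mathcal{D}(\nabla,v)=|\nabla_x|-v\cdot\nabla_x$ appearing in $R^{II}$ and $T^{II}$ is the correct object. The point is that the $x$-Fourier multiplier $|\xi|$ from the kernel interacts with the $s$-dependence of $h(x-sv,v,s)$, whose $x$-frequency is $\xi$ but whose total $s$-derivative along the characteristic involves $v\cdot\xi$; after the Fourier inversion this is exactly why $|\nabla_x|$ gets shifted to $|\nabla_x|-v\cdot\nabla_x$ inside the $v$-integral. One must therefore perform the integration by parts \emph{before} integrating in $v$, keeping $h(\cdot,v,\cdot)$ as a function of $(x,v,s)$ and treating $v$ as a parameter, so that the total $s$-derivative $(\partial_s+v\cdot\nabla_x)[h(x-sv,v,s)]=(\partial_s h)(x-sv,v,s)$ can be used. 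Once this is arranged, matching the two boundary contributions and the remainder integral against the claimed formulas for $R^{II},T^{II}$ is a direct computation, and the equivalence $\mathcal{S}^I[h]=\mathcal{S}^{II}[h]$ follows because both equal the unique solution $\widehat{\rho}$ of \eqref{ForcedLearizedEq} furnished by Lemma \ref{LinDensity}.
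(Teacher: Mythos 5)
Your overall strategy is essentially the one the paper uses: substitute the kernel from Lemma \ref{LinDensity} to obtain \eqref{eq:rho_self3} with $\mathcal{N}$ replaced by $H$, split $\sin$ into exponentials to expose the $e^{\pm it}$-oscillation and read off $R^I,T^I$, and for the second representation integrate by parts in time. The paper records the latter step compactly as the distributional kernel identity \eqref{NewFormG} for $G(\xi,\lambda)e^{i\lambda v\cdot\xi}\mathbf{1}_{[0,t]}(\lambda)$; you perform the same integration by parts directly. Same content, different packaging, and the $\mathcal{S}^I$ part is fine as you describe it.

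There is, however, an internal inconsistency in the $\mathcal{S}^{II}$ paragraph that, read literally, produces a different formula. You propose the antiderivative $-\frac{1}{i-|\xi|}e^{(i-|\xi|)(t-s)}$ and say the $s$-derivative lands on $\widehat{H}$. Carried out that way, the boundary term at $s=t$ gives the scalar multiplier $\frac{|\xi|^2}{1+|\xi|^2}$ acting on $\widehat{H}(\xi,t)$ \emph{outside} the $v$-integral (not $\int\frac{\mathcal{D}^2}{1+\mathcal{D}^2}\,h\,dv$), and the remainder carries $\partial_s\widehat{H}(\xi,s)=\int_{\R^3}\big[(\partial_s\widehat{h})(\xi,v,s)-iv\cdot\xi\,\widehat{h}(\xi,v,s)\big]e^{-isv\cdot\xi}\,dv$, which differs from $\int e^{-isv\cdot\xi}(\partial_s\widehat{h})\,dv$ by the $-iv\cdot\xi$ term from differentiating $e^{-isv\cdot\xi}$. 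Neither matches the stated $R^{II},T^{II}$. Your closing paragraph correctly identifies the cure — integrate by parts at fixed $v$, before the $v$-integration, so that $(\partial_s+v\cdot\nabla_x)[h(x-sv,v,s)]=(\partial_s h)(x-sv,v,s)$ is the relevant identity — but you should then also replace the antiderivative accordingly: the full phase at fixed $v$ is $e^{-(t-s)|\xi|}e^{-isv\cdot\xi}=e^{-itv\cdot\xi}e^{-(t-s)\mathcal{D}(\xi,v)}$, so the antiderivative is built from $\mathcal{D}$, e.g.\ $-\frac{1}{i-\mathcal{D}}e^{(i-\mathcal{D})(t-s)}$ together with its $-i$ partner. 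With that replacement the computation closes: the $s=t$ boundary gives $\frac{1}{1+\mathcal{D}^2}$ pointwise in $v$, so $1-\frac{1}{1+\mathcal{D}^2}=\frac{\mathcal{D}^2}{1+\mathcal{D}^2}$ yields $R^{II}$; the $s=0$ boundary gives $\frac{1}{1-i\mathcal{D}}e^{-t|\nabla|}$; and the remainder produces $\frac{e^{is}}{1-i\mathcal{D}}e^{-(t-s)|\nabla|}\partial_s h$, all under $\Re\{e^{-it}\cdot\}$ — exactly the paper's $T^{II}$, and equivalently the paper's identity \eqref{NewFormG}.
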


\begin{proof}[Proof of Lemma \ref{LemSolvingFrocingTerm}]
The expressions for $\mathcal{S}^I[h]$ follow directly from the solution formula \eqref{FormulaForRho}. For $\mathcal{S}^{II}[h]$, we have
\begin{equation}
 \mathcal{F}\{h(\cdot-vt,v,t)\}=e^{-itv\cdot\xi}\hat{h}(\xi,v,t),
\end{equation}
and we observe that
\begin{equation}\label{NewFormG}
\begin{split}
G(\xi,\lambda)e^{i\lambda v\cdot\xi}\mathbf{1}_{[0,t]}(\lambda)&=\frac{\mathcal{D}^2}{1+\mathcal{D}^2}\delta_0(\lambda)+\frac{\cos(t)+\mathcal{D}\sin(t)}{1+\mathcal{D}^2}e^{-t\mathcal{D}}\delta_0(\lambda-t)\\
&\quad +\frac{1}{1+\mathcal{D}^2}\frac{d}{d\lambda}\left\{(\cos(\lambda)+\mathcal{D}\sin(\lambda))e^{-\lambda\mathcal{D}}\mathbf{1}_{[0,t]}(\lambda)\right\},\\
\end{split}
\end{equation}
which gives the claim.
\end{proof}

\subsection{The main decomposition} 
Applying Lemma \ref{LemSolvingFrocingTerm} to our equations \eqref{Lan4}-\eqref{Lan5}, the density $\rho$ naturally decomposes into ``static'' and ``oscillatory'' components. We will control these parts using a bootstrap argument and different norms. 

To implement this, we further decompose the nonlinear terms of \eqref{Lan5} in frequency, velocity space, and time. For this we fix an even smooth function $\varphi: \R\to[0,1]$ supported in $[-8/5,8/5]$ and equal to $1$ in $[-5/4,5/4]$,
and define
\begin{equation}\label{phik*}
\varphi_k(x) := \varphi(|x|/2^k) - \varphi(|x|/2^{k-1}) , \quad \varphi_{\leq k}(x):=\varphi(|x|/2^k),
  \quad\varphi_{>k}(x) := 1-\varphi(x/2^{k}),
\end{equation}
for any $k\in\mathbb{Z}$ and $x\in\R^d$, $d\geq 1$. Let $P_k$, $P_{\leq k}$, and $P_{> k}$ denote the operators on $\R^3$ defined by the Fourier multipliers $\varphi_k$, $\varphi_{\leq k}$, and $\varphi_{> k}$ respectively.

Let $\widetilde{\varphi}_0=\varphi_{\leq 0}$ and $\widetilde{\varphi}_j=\varphi_{j}$ if $j\geq 1$. For any interval $I\subseteq\R$ let
\begin{equation}\label{phisum}
\varphi_I(x):=\sum_{k\in\Z\cap I}\varphi_k(x),\qquad \widetilde{\varphi}_I(x):=\sum_{j\in\Z_+\cap I}\widetilde{\varphi}_j(x).
\end{equation}

 We define the functions $L_1,L_{1,j}:\R^3\times\R^3\times[0,T]\to\R$ and $L_{2,j}:\R^3\times\R^3\times\mathcal{I}_T^2\to\R$  by
\begin{equation}\label{qwp1}
\begin{split}
L_{1,j}(x,v,t)&:=\widetilde{\varphi}_j(v)L_1(x,v,t),\quad L_1(x,v,t):=f_0(X(x+tv,v,0,t),V(x+tv,v,0,t)),\\
L_{2,j}(x,v,s,t)&:=E(x,s)\cdot M'_j(v)-E(X(x+(t-s)v,v,s,t),s)\cdot M'_j(V(x+(t-s)v,v,s,t)),
\end{split}
\end{equation} 
where $j\in\Z_+$ and $M'_j(v):=\widetilde{\varphi}_j(v)\nabla_vM_0(v)$. Then for any $j\in\Z_+$ and $k\in\Z$ we define the functions $L_{1,j,k}:\R^3\times\R^3\times[0,T]\to\R$ and $L_{2,j,k}:\R^3\times\R^3\times\mathcal{I}_T^2\to\R$ by
\begin{equation}\label{qwp2}
L_{1,j,k}(x,v,t):=P_kL_{1,j}(x,v,t),\qquad L_{2,j,k}(x,v,s,t):=P_kL_{2,j}(x,v,s,t),
\end{equation} 
where the projections $P_k$ act in the $x$ variable. It follows from \eqref{Lan5} that
\begin{equation}\label{Lan10}
\begin{split}
&\mathcal{N}_1=\sum_{j\in\Z_+,\,k\in\Z}\mathcal{N}_{1,j,k},\qquad\mathcal{N}_{1,j,k}(x,t):=\int_{\R^3}L_{1,j,k}(x-tv,v,t)\,dv,\\
&\mathcal{N}_2=\sum_{j\in\Z_+,\,k\in\Z}\mathcal{N}_{2,j,k},\qquad\mathcal{N}_{2,j,k}(x,t):=\int_0^t\int_{\R^3}L_{2,j,k}(x-(t-s)v,v,s,t)\,dvds.
\end{split}
\end{equation}

\subsubsection*{Contributions from initial data}
For the contributions from $\mathcal{N}_1$, using the expressions $I$ of Lemma \ref{LemSolvingFrocingTerm} we thus we have the first decomposition
\begin{equation}\label{deco1}
\begin{split}
&\rho_{1,j,k}(x,t)=R^I_{1,j,k}(x,t)+\Re\big\{e^{-it}T^I_{1,j,k}(x,t)\big\},\\
&\mathcal{F}(R^I_{1,j,k})(\xi,t):=\int_{\R^3}\widehat{L_{1,j,k}}(\xi,v,t)e^{-itv\cdot\xi}\,dv,\\
&\mathcal{F}(T^I_{1,j,k})(\xi,t):=\int_0^t\int_{\R^3}e^{-(t-s)|\xi|}e^{-isv\cdot\xi}(-i)e^{is}\cdot\widehat{L_{1,j,k}}(\xi,v,s)\,dvds.
\end{split}
\end{equation}
Alternatively, formulation $II$ of Lemma \ref{LemSolvingFrocingTerm} yields a decomposition as
\begin{equation}\label{sug13.1}
\rho_{1,j,k}(x,t)=R^{II}_{1,j,k}(x,t)+\Re\big\{e^{-it}T^{II}_{1,j,k}(x,t)\big\},
\end{equation}
where
\begin{equation}\label{sug13.2}
\mathcal{F}(R^{II}_{1,j,k})(\xi,t):=\int_{\R^3}\widehat{L_{1,j,k}}(\xi,v,t)e^{-itv\cdot\xi}\frac{\mathcal{D}^2}{1+\mathcal{D}^2}\,dv,
\end{equation}
\begin{equation}\label{sug13.4}
\begin{split}
\mathcal{F}(T_{1,j,k}^{II})(\xi,t)&:=e^{-t|\xi|}\int_{\R^3}\frac{1}{1-i\mathcal{D}}\widehat{L_{1,j,k}}(\xi,v,0)\,dv\\
&+\int_0^t \int_{\R^3}e^{-(t-s)|\xi|}e^{-isv\cdot\xi}\frac{e^{is}}{1-i\mathcal{D}}(\partial_s\widehat{L_{1,j,k}})(\xi,v,s)\,dvds
\end{split}
\end{equation}
and we have slightly abused notation to denote by $\mathcal{D}$ also its Fourier symbol 
\begin{equation}\label{sug12}
\mathcal{D}(\xi,v):=|\xi|-iv\cdot\xi.
\end{equation}

\subsubsection*{Contributions from nonlinearity: ``reaction terms''}
Similarly, contributions from $\mathcal{N}_2$ can be decomposed with formulation $I$ of Lemma \ref{LemSolvingFrocingTerm} to obtain
\begin{equation}\label{deco2}
\begin{split}
&\rho_{2,j,k}(x,t)=R^I_{2,j,k}(x,t)+\Re\big\{e^{-it}T_{2,j,k}^I(x,t)\big\},\\
&\mathcal{F}(R^I_{2,j,k})(\xi,t):=\int_0^t\int_{\R^3}\widehat{L_{2,j,k}}(\xi,v,\tau,t)e^{-i(t-\tau)v\cdot\xi}\,dv d\tau,\\
&\mathcal{F}(T^I_{2,j,k})(\xi,t):=\int_0^t\int_0^t\int_{\R^3}\mathbf{1}_+(s-\tau)e^{-(t-s)|\xi|}(-i)e^{is}\widehat{L_{2,j,k}}(\xi,v,\tau,s)e^{-i(s-\tau)v\cdot\xi}\,dv d\tau ds.
\end{split}
\end{equation}
Alternatively, formulation $II$ gives the expressions
\begin{equation}\label{sug23.1}
\rho_{2,j,k}(x,t)=R^{II}_{2,j,k}(x,t)+\Re\big\{e^{-it}T^{II}_{2,j,k}(x,t)\big\},
\end{equation}
where
\begin{equation}\label{sug23.2}
\mathcal{F}(R^{II}_{2,j,k})(\xi,t):=\int_0^t\int_{\R^3}\widehat{L_{2,j,k}}(\xi,v,\tau,t)e^{-i(t-\tau)v\cdot\xi}\frac{\mathcal{D}^2}{1+\mathcal{D}^2}\,dvd\tau,
\end{equation}
\begin{equation}\label{sug23.4}
\begin{split}
\mathcal{F}(T^{II}_{2,j,k})(\xi,t):=\int_0^t\int_0^t \int_{\R^3}&\mathbf{1}_+(s-\tau)e^{-(t-s)|\xi|}e^{-i(s-\tau)v\cdot\xi}\\
&\times\frac{e^{is}}{1-i\mathcal{D}}(\partial_s\widehat{L_{2,j,k}})(\xi,v,\tau,s)\,dvd\tau ds.
\end{split}
\end{equation}

\subsubsection*{Full decomposition} An important issue is whether to use the first representation I or the second representation II to recover the density $\rho$. Ideally, we would like to use the second representation, for two reasons: (1) the static terms $R^{II}$ contain additional favorable factors at low frequencies, compared to the terms $R^I$, and (2) the derivative $\partial_sh$ of the ``profile" $h$ is expected to be smaller than the profile itself. 

However, the representation II contains the potentially small denominator $1-\langle v,\xi\rangle-i\vert\xi\vert$, coming from the normal form. To avoid the associated singularity, our basic idea is to use the representation I when this denominator is small (essentially $\big|1-\langle v,\xi\rangle-i\vert\xi\vert\big|\lesssim 1$) and then use the representation II when the denominator is large. More precisely we define the sets
\begin{equation}\label{sug30}
\begin{split}
A^I&:=\big\{(j,k,m)\in\Z_+\times\Z\times\Z_+:\,m<\delta^{-4}\text{ or }j>19m/20\text{ or }k+j+\delta m/3> 0\big\},\\
A^{II}&:=\big\{(j,k,m)\in\Z_+\times\Z\times\Z_+:\,m\geq\delta^{-4}\text{ and }j\leq 19m/20\text{ and }k+j+\delta m/3\leq 0\big\},
\end{split}
\end{equation}
where $\delta:=10^{-4}$ is a small parameter. We have thus established the following full decomposition of the density $\rho$:

\begin{proposition}\label{rhodeco}
Assume that $f:\R^3\times\R^3\times[0,T]\to\R$ is a regular solution of the system \eqref{tul2} and define the function $\rho$ as before. Then we can decompose
\begin{equation}\label{sug31}
\rho=\rho^{stat}+\Re\big\{e^{-it}\rho^{osc}\big\},
\end{equation}
where, with the definitions \eqref{deco1}--\eqref{sug13.4} and \eqref{deco2}--\eqref{sug30}
\begin{equation}\label{sug34}
\begin{split}
\rho^{stat}(x,t)=\sum_{\ast\in\{I,II\}}\sum_{(j,k,m)\in A^\ast}\widetilde{\varphi}_m(t)[R^{\ast}_{1,j,k}(x,t)+R^{\ast}_{2,j,k}(x,t)],\\
\rho^{osc}(x,t)=\sum_{\ast\in\{I,II\}}\sum_{(j,k,m)\in A^\ast}\widetilde{\varphi}_m(t)[T^{\ast}_{1,j,k}(x,t)+T^{\ast}_{2,j,k}(x,t)].
\end{split}
\end{equation}
\end{proposition}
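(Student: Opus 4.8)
The plan is to verify Proposition \ref{rhodeco} purely as a bookkeeping consequence of the formulas already established, with essentially no new estimates required. The starting point is the closed equation \eqref{Lan4}--\eqref{Lan5} for the density, which expresses $\rho$ as a solution of the forced Volterra equation \eqref{ForcedLearizedEq} with forcing $\widehat H=\widehat{\mathcal N}=\widehat{\mathcal N_1}+\widehat{\mathcal N_2}$. The operator $H\mapsto\rho$ defined by \eqref{ForcedLearizedEq} is linear and, by Lemma \ref{LinDensity}, given by convolution against the kernel $G$ from \eqref{DefKernel1}; hence $\rho$ is the sum of the solutions associated to $\mathcal N_1$ and $\mathcal N_2$ separately. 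First I would record, using the frequency-velocity-time decomposition \eqref{qwp1}--\eqref{Lan10}, that each of $\mathcal N_1$ and $\mathcal N_2$ is a (convergent) sum of the pieces $\mathcal N_{1,j,k}$ and $\mathcal N_{2,j,k}$, each of which is of the form $\int_{\R^3}h(x-tv,v,t)\,dv$ (for $\mathcal N_1$) or $\int_0^t\int_{\R^3}h(x-(t-s)v,v,s)\,dv\,ds$ (for $\mathcal N_2$), so that Lemma \ref{LemSolvingFrocingTerm} applies termwise.

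Next I would apply Lemma \ref{LemSolvingFrocingTerm} to each term. For the $\mathcal N_1$ contributions with $h=L_{1,j,k}$, the two representations $\mathcal S^I[h]$ and $\mathcal S^{II}[h]$ give exactly the formulas \eqref{deco1} and \eqref{sug13.1}--\eqref{sug13.4} for $\rho_{1,j,k}=R^\ast_{1,j,k}+\Re\{e^{-it}T^\ast_{1,j,k}\}$, where one uses $\mathcal F\{h(\cdot-tv,v,t)\}=e^{-itv\cdot\xi}\widehat h(\xi,v,t)$ and the identification of the symbol $\mathcal D(\xi,v)=|\xi|-iv\cdot\xi$ in \eqref{sug12}. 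For the $\mathcal N_2$ contributions the forcing term itself carries an extra time integration $\int_0^t\,d\tau$ against $L_{2,j,k}(x,v,\tau,t)$; treating $\tau$ as a parameter and applying Lemma \ref{LemSolvingFrocingTerm} (now with $h(x,v,s)=L_{2,j,k}(x,v,\tau,s)$, noting the time argument of $h$ in \eqref{eq:Hh} must match the outer variable, which forces the indicator $\mathbf 1_+(s-\tau)$ to appear) yields \eqref{deco2} and \eqref{sug23.1}--\eqref{sug23.4}. The only subtlety here is that in formulation $II$ the boundary term at $s=0$ in $T^{II}$ of Lemma \ref{LemSolvingFrocingTerm} does not appear for $\mathcal N_2$: this is because $L_{2,j}(x,v,\tau,t)$ vanishes at $t=\tau$ (the integrand of \eqref{eq:reation} is zero when $s=t$, since $X(x,v,t,t)=x$ and $V(x,v,t,t)=v$), so no boundary contribution from the lower endpoint survives — I would note this explicitly.

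Finally I would assemble the full density. Summing $\rho=\sum_{j,k}(\rho_{1,j,k}+\rho_{2,j,k})$ and grouping the static and $e^{-it}$-oscillatory parts gives a decomposition $\rho=\rho^{stat}+\Re\{e^{-it}\rho^{osc}\}$. The remaining choice — whether to use representation $I$ or $II$ in the $(j,k,m)$-cell after inserting the time cutoff $\widetilde\varphi_m(t)$ — is made by the disjoint partition $\Z_+\times\Z\times\Z_+=A^I\sqcup A^{II}$ of \eqref{sug30}; since $\sum_m\widetilde\varphi_m(t)=1$ for each $t$ and the two representations $\mathcal S^I[h]$ and $\mathcal S^{II}[h]$ are equal (both solve \eqref{ForcedLearizedEq}), selecting one or the other on each cell does not change the sum, and one arrives precisely at \eqref{sug34}. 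I should also remark on convergence of the double sum over $(j,k)$: this is where the regularity/decay of $f$ (hence of $f_0$ and $E$, and of $M_0$, which is Schwartz away from low velocities with the stated weights) enters, guaranteeing that the series converge in a suitable sense so that the rearrangements above are legitimate; I would phrase this as ``for regular $f$ all sums converge absolutely'' and defer the quantitative bounds to the bootstrap proposition.

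The main obstacle — really the only non-mechanical point — is getting the $\mathcal N_2$ bookkeeping exactly right: one must be careful that the time variable inside $h$ in \eqref{eq:Hh} is the \emph{same} as the evaluation time $t$, so that applying Lemma \ref{LemSolvingFrocingTerm} with $\tau$ frozen produces the indicator $\mathbf 1_+(s-\tau)$ and the correct placement of $e^{-(t-s)|\xi|}$ versus $e^{-i(s-\tau)v\cdot\xi}$, and that the $s=0$ boundary term is absent because $L_{2,j}$ vanishes on the diagonal. Everything else is a direct substitution into formulas already proved.
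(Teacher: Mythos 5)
Your proposal is correct and coincides with the paper's (implicit) argument: the proposition is indeed just bookkeeping collecting the preceding formulas, with the exact mechanism you describe — linearity of the Volterra solution operator, termwise application of Lemma \ref{LemSolvingFrocingTerm} to the $(j,k)$-localized forcing pieces, and the observation that since both representations solve the same Volterra equation one may freely choose $I$ or $II$ on each $(j,k,m)$-cell of the partition $\sum_m\widetilde{\varphi}_m\equiv 1$. The only small imprecisions (neither affecting correctness) are that for $\mathcal{N}_2$ with $\tau$ frozen the appropriate profile is $h_\tau(x,v,s)=L_{2,j,k}(x+\tau v,v,\tau,s)$ rather than $L_{2,j,k}(x,v,\tau,s)$, so that $\int h_\tau(x-sv,v,s)\,dv$ reproduces the shift $x-(s-\tau)v$, and the boundary contribution that is killed by the diagonal vanishing $L_{2,j}(\cdot,\cdot,\tau,\tau)=0$ sits at $s=\tau$ (the lower endpoint of the effective $s$-interval), not at $s=0$.
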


The main point of this decomposition is that we will be able to prove stronger control of the low frequencies of the stationary component $\rho^{stat}$ compared to the time-oscillatory component $\rho^{osc}$. See Proposition \ref{MainBootstrapProp} below.

\subsection{Norms and the bootstrap proposition}\label{bootstrapass} 

We are now ready to define our main norms and state the main bootstrap proposition. Let $B_T$ denote the space of continuous functions on $\R^3\times [0,T]$ defined by the norm
\begin{equation}\label{normA}
\begin{split}
&\|f\|_{B_T}:=\sup_{t\in[0,T]}\|f(t)\|_{B^0_t},\\
&\|f(t)\|_{B^0_t}:=\sup_{k\in\Z}\big\{\langle t\rangle^3\|P_kf(t)\|_{L^\infty}+\|P_kf(t)\|_{L^1}\big\}.
\end{split}
\end{equation}
Assume that $\delta=10^{-4}$ is a small parameter and define the norms
\begin{equation}\label{may12eqn21}
\begin{split}
\Vert f\Vert_{Stat_\delta}&:=\Vert  \langle t\rangle^{1-2\delta}\langle \nabla_x\rangle      f(t)\Vert_{B_T},\\
\Vert f\Vert_{Osc_\delta}&:=\Vert  \langle t\rangle^{-\delta}f(t)\Vert_{B_T}+\Vert   \langle t\rangle^{1-2\delta}\nabla_{x,t}f(t)\Vert_{B_T}.
\end{split}
\end{equation}

We are now ready to state our main bootstrap proposition:
\begin{proposition}\label{MainBootstrapProp} 
There exist $\overline{\varepsilon}\in(0,1]$ and $C\geq 1$ such that the following is true:

Assume that $f:\R^3\times\R^3\times[0,T]\to\R$ is a regular solution of the system \eqref{tul2} for some $T>0$ with initial data $f(0)=f_0:\R^3\times\R^3\to\R$ satisfying the smallness condition
\begin{equation}\label{bootinit}
\|\langle v\rangle^{4.5}(\partial^\al_x\partial_v^\beta f_0)(x,v)\|_{(L^\infty_x\cap L^1_x) L^\infty_v}\leq\varep_0\leq\overline{\varep},\quad \alpha,\beta\in\mathbb{N}_0^3,\quad \abs{\alpha}+\abs{\beta}\leq 1.
\end{equation}
Assume that the associated density $\rho$ decomposes as $\rho=\rho^{stat}+\Re\{e^{-it}\rho^{osc}\}$ as in Proposition \ref{rhodeco}, where for some $0<\varep_1\leq\varep_0^{3/4}$
\begin{equation}\label{YW12}
\Vert \rho^{stat}\Vert_{Stat_\delta}+\Vert \rho^{osc}\Vert_{Osc_\delta}\leq\varep_1.
\end{equation}
Then the functions $\rho^{stat}$ and $\rho^{osc}$ satisfy the improved bounds
\begin{equation}\label{YW12improved}
\Vert \rho^{stat}\Vert_{Stat_\delta}+\Vert \rho^{osc}\Vert_{Osc_\delta}\leq \varep_0+C\varep_1^{3/2}.
\end{equation}
\end{proposition}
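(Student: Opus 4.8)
The strategy is a standard bootstrap/continuity argument at the level of the density $\rho$, starting from the two solution formulas of Lemma \ref{LemSolvingFrocingTerm} and the decomposition of Proposition \ref{rhodeco}. The only term in the smallness bound \eqref{YW12improved} that is not quadratically small is the contribution of the initial data $\mathcal{N}_1$: one must show $\|\rho_1^{stat}\|_{Stat_\delta}+\|\rho_1^{osc}\|_{Osc_\delta}\lesssim\varepsilon_0$, and all remaining (reaction) contributions are bounded by $C\varepsilon_1^{3/2}$. Accordingly I would split the estimate into (a) the linear-in-data part $\mathcal{N}_1$, handled using the explicit formulas \eqref{deco1}/\eqref{sug13.1}--\eqref{sug13.4}, and (b) the reaction term $\mathcal{N}_2$ of \eqref{eq:reation}, which is the bulk of the work.

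\smallskip
\textbf{Step 1: closing the characteristics.} Under the bootstrap hypothesis \eqref{YW12}, $E=\nabla\Delta^{-1}\rho$ obeys the pointwise decay $\|E(t)\|_{L^\infty}\lesssim\varepsilon_1\langle t\rangle^{-2+2\delta}$ (from $\rho^{stat}$) plus an oscillatory piece of size $\varepsilon_1\langle t\rangle^{-1+\delta}$ (crudely) with $\langle t\rangle^{-2+2\delta}$ after one integration by parts in time. Integrating the ODE system \eqref{Lan1} then gives the modified-characteristic bounds of Lemmas \ref{derivativeschar}--\ref{derichar2}: $|V(x,v,s,t)-v|$ and $|X(x,v,s,t)-(x-(t-s)v)|$ are controlled by $\varepsilon_1$ uniformly, together with bounds on their $x$- and $v$-derivatives. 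This makes the maps $v\mapsto V,X$ close to the identity/free flow, so the frequency-velocity localizations $\widetilde{\varphi}_j(v)$, $M'_j(v)$ in \eqref{qwp1} behave as expected and all $v$-integrals converge thanks to the weight $\langle v\rangle^{4.5}$ in \eqref{bootinit} and the decay of $M_0$.

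\smallskip
\textbf{Step 2: the initial-data contribution.} For $\mathcal{N}_1$, plug the localized profiles $L_{1,j,k}$ into \eqref{deco1} and \eqref{sug13.1}--\eqref{sug13.4}. For the static pieces $R^I_{1,j,k}$, $R^{II}_{1,j,k}$ one uses the dispersive estimate Lemma \ref{disper2} on the oscillatory $v$-integral $\int e^{-itv\cdot\xi}\widehat{L_{1,j,k}}\,dv$, which (in $d=3$) gives the gain $\langle t\rangle^{-3}$ times $2^{3j}$-type volume factors, summed against the $\widetilde\varphi_m(t)$ restriction and the set $A^I$ vs. $A^{II}$; the factor $\mathcal{D}^2/(1+\mathcal{D}^2)$ in $R^{II}$ supplies the extra low-frequency gain needed to beat $\langle\nabla_x\rangle$ in $\|\cdot\|_{Stat_\delta}$. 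For the oscillatory pieces $T^{I}_{1,j,k}$, $T^{II}_{1,j,k}$ one uses the smoothing kernel $e^{-(t-s)|\xi|}$ (Lemma \ref{omegaioLem}) together with $\partial_s\widehat{L_{1,j,k}}$ and the bound $|1-i\mathcal{D}|^{-1}\lesssim 1$ on $A^{II}$; the $e^{is}$ oscillation, when combined with $e^{-(t-s)|\xi|}$ and integration by parts in $s$, is what allows the stronger $\nabla_{x,t}$ part of $\|\cdot\|_{Osc_\delta}$ to be controlled by $\varepsilon_0$. This is the content of section 4 of the paper.

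\smallskip
\textbf{Step 3: the reaction term (the hard part).} This is sections 5--7 and is where essentially all the difficulty lies. Write $L_{2,j,k}$ from \eqref{qwp1} as the difference of the linear term $E(x,s)\cdot M'_j(v)$ and the nonlinear one along true characteristics; Taylor-expanding the latter in the deviations $X-(x-(t-s)v)$, $V-v$ produces a leading bilinear term $\rho\cdot\nabla E$-type expression plus cubic remainders. One localizes in $|v|\approx 2^j$, output frequency $|\xi|\approx 2^k$, and time $t\approx 2^m$, and splits into cases according to $(j,k,m)$ mirroring the partition $A^I\cup A^{II}$. When $E^{osc}$ is involved, integrate by parts in time using the $e^{-it}$ factor (a normal form), generating quadratic main terms — controlled by the dispersive Lemmas \ref{disper2}, \ref{omegaioLem2} and the multiplier Lemma \ref{omegaioLem} — and cubic remainders handled systematically via Lemma \ref{keybilinearlemma}; when the $v$-oscillation $e^{-i(s-\tau)v\cdot\xi}$ is strong one integrates by parts in $v$ instead (improved dispersive estimates). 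In every case the two bootstrap norms on the inputs, one strong for $\rho^{stat}$ and one weaker-but-with-$e^{-it}$ for $\rho^{osc}$, are fed in so that the output is bounded by $\varepsilon_1^{3/2}$ (two inputs from $\rho$, one from $E$, with one of them costing only $\varepsilon_1^{1/2}$ in a low norm). The main obstacle throughout is the absence of the Penrose condition: the kernel $1/(1+K)$ has no decay near $\xi\to 0,\lambda\to\pm1$, so the oscillatory component of $E$ decays only like $\langle t\rangle^{-1}$ before normal-form corrections, and closing the bootstrap requires that every occurrence of $E^{osc}$ be paired with an integration by parts in $t$ that converts it to $\langle t\rangle^{-2+2\delta}$ decay without losing more than $\delta$-powers of $t$ — which is precisely what the choice of the $Stat_\delta$ and $Osc_\delta$ norms and the $A^I/A^{II}$ split are engineered to permit.

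\smallskip
\textbf{Conclusion.} Summing the bounds from Steps 2 and 3 over the (finitely overlapping, geometrically convergent in $j,k,m$) decomposition in Proposition \ref{rhodeco} and taking the supremum over $t\in[0,T]$ yields \eqref{YW12improved}; choosing $\overline\varepsilon$ small so that $C\varepsilon_1^{1/2}\le C\varepsilon_0^{3/8}<1/2$ then makes the bootstrap self-improving, which (combined with local existence for $C^1$ data and the continuity of $t\mapsto \|\rho^{stat}\|_{Stat_\delta}+\|\rho^{osc}\|_{Osc_\delta}$) is what is used in section 8 to conclude Theorem \ref{thm:main_simple}.
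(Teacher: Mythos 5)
Your outline is essentially the paper's own roadmap: the split into the decomposition \eqref{bvn0}--\eqref{bvn2}, the characteristics and dispersive estimates of Section 3, the initial-data bound $\lesssim\varepsilon_0$ of Proposition \ref{closeboot1}, and the normal-form and trilinear-remainder machinery of Sections 5--7 that produces the $C\varepsilon_1^{3/2}$ reaction bound, so I have no objection to the approach. One small numeric slip in Step 1: Lemma \ref{Laga10} gives $\|E^{stat}(t)\|_{L^\infty}\lesssim\varepsilon_1\langle t\rangle^{-3+2\delta}$ and $\|E^{osc}(t)\|_{L^\infty}\lesssim\varepsilon_1\langle t\rangle^{-2+\delta}$ directly from the bootstrap norms, not $\langle t\rangle^{-2+2\delta}$ and $\langle t\rangle^{-1+\delta}$ as you state; and of course the actual content of the proposition is the case-by-case analysis of Sections 5--7 (type I vs.\ type II, $A^I$ vs.\ $A^{II}$, the $k_1,k_2,\tilde m,n$ summations, and the resonant cutoffs $\Pi_{\iota,k,p}$), which your plan names but does not carry out.
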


%

The proof of Proposition \ref{MainBootstrapProp} will take up sections 4, 5, 6, and 7. We decompose
\begin{equation}\label{bvn0}
\begin{split}
\rho^{stat}&=\rho^{stat}_{1,I}+\rho^{stat}_{1,II}+\rho^{stat}_{2,I}+\rho^{stat}_{2,II},\\
\rho^{osc}&=\rho^{osc}_{1,I}+\rho^{osc}_{1,II}+\rho^{osc}_{2,I}+\rho^{osc}_{2,II},\\
\end{split}
\end{equation}
where for $a\in\{1,2\}$ and $\ast\in\{I,II\}$ we define 
\begin{equation}\label{bvn2}
\begin{split}
\rho^{stat}_{a,\ast}(x,t)&:=\sum_{(j,k,m)\in A^\ast}\widetilde{\varphi}_m(t)R^{\ast}_{a,j,k}(x,t),\\
\rho^{osc}_{a,\ast}(x,t)&:=\sum_{(j,k,m)\in A^\ast}\widetilde{\varphi}_m(t)T^{\ast}_{a,j,k}(x,t).
\end{split}
\end{equation}

Finally, in Section \ref{ProofMainThm} we show how Proposition \ref{MainBootstrapProp} implies our main result Theorem \ref{thm:main_simple}.

\section{Preliminary estimates}\label{Prelims}

In this section we assume that $f:\R^3\times\R^3\times[0,T]\to\R$ is a regular solution of the system \eqref{tul2}, and the density function $\rho$ satisfies the bootstrap assumptions \eqref{YW12}.

To measure the deviation of the characteristic flow from the free flow we define the functions $\widetilde{Y},\widetilde{W}:\R^3\times\R^3\times\mathcal{I}^2_T\to\R^3$ by
\begin{equation}\label{Lan6}
\begin{split}
\widetilde{Y}(x,v,s,t)&:=X(x+tv,v,s,t)-x-sv,\\
\widetilde{W}(x,v,s,t)&:=V(x+tv,v,s,t)-v.
\end{split}
\end{equation}
The definitions \eqref{Lan1} show that $\widetilde{Y}(x,v,t,t)=0,\widetilde{W}(x,v,t,t)=0$ and
\begin{equation}\label{Lan7}
\begin{split}
\widetilde{W}(x,v,s,t)&=-\int_s^tE(x+\tau v+\widetilde{Y}(x,v,\tau,t),\tau)\,d\tau,\\
\widetilde{Y}(x,v,s,t)&=\int_s^t(\tau-s)E(x+\tau v+\widetilde{Y}(x,v,\tau,t),\tau)\,d\tau.
\end{split}
\end{equation}
Moreover, for any $s\leq t\in[0,T]$ and $x,v\in\R^3$ we have
\begin{equation}\label{Lan7.5}
\partial_s\widetilde{Y}(x,v,s,t)=\widetilde{W}(x,v,s,t),\qquad \widetilde{Y}(x,v,s,t)=-\int_s^t\widetilde{W}(x,v,\tau,t)\,d\tau.
\end{equation}

\subsection{Bounds on the density function $\rho$ and electric field E}\label{ElecFi} The decomposition $\rho=\rho^{stat}+\Re\{e^{-it}\rho^{osc}\}$ in Proposition \ref{rhodeco} induces a natural decomposition of the electric field
\begin{equation}\label{Laga5}
\begin{split}
&E(t)=E^{stat}(t)+\Re\{e^{-it} E^{osc}(t)\},\\
&E^{stat}(t):=(\nabla_x\Delta_x^{-1}\rho^{stat})(t),\qquad E^{osc}(t):=(\nabla_x\Delta_x^{-1}\rho^{osc})(t).
\end{split}
\end{equation}
We start with some bounds on the density components $\rho^{stat}$, $\rho^{osc}$ and on the electric fields $E^{stat}$, $E^{osc}$.

\begin{lemma}\label{Laga10}
(i) For any $k\in\Z$ and $t\in[0,T]$ we have
\begin{equation}\label{Laga2}
\langle t\rangle^{3}\|(P_k\rho^{stat})(t)\|_{L^\infty}+\|(P_k\rho^{stat})(t)\|_{L^1}\lesssim\varep_12^{-k^+}\langle t\rangle^{-1+2\delta},
\end{equation}
\begin{equation}\label{Laga3}
\langle t\rangle^{3}\|(P_k\rho^{osc})(t)\|_{L^\infty}+\|(P_k\rho^{osc})(t)\|_{L^1}\lesssim\frac{\varep_1}{\langle t\rangle^{1-2\delta}2^{k}+\langle t\rangle^{-\delta}},
\end{equation}
\begin{equation}\label{Laga4}
\langle t\rangle^{3}\|(P_k\nabla_{x,t}\rho^{osc})(t)\|_{L^\infty}+\|(P_k\nabla_{x,t}\rho^{osc})(t)\|_{L^1}\lesssim \varep_1\langle t\rangle^{-1+2\delta}.
\end{equation}

(ii) Let $R_j$ denote the Riesz transforms $R_j=\partial_j|\nabla|^{-1}$, $j\in\{1,2,3\}$. Then, for any $t\in[0,T]$
\begin{equation}\label{Laga11}
\begin{split}
&\|E^{stat}(t)\|_{L^\infty}+\sum_{j\in\{1,2,3\}}\|R_jE^{stat}(t)\|_{L^\infty}\lesssim \varep_1\langle t\rangle^{-3+2\delta},\\
&\|\nabla_xE^{stat}(t)\|_{L^\infty}\lesssim \varep_1\langle t\rangle^{-4+2\delta}\ln(2+t),
\end{split}
\end{equation}
and
\begin{equation}\label{Laga12}
\begin{split}
&\|E^{osc}(t)\|_{L^\infty}+\sum_{j\in\{1,2,3\}}\|R_jE^{osc}(t)\|_{L^\infty}\lesssim \varep_1\langle t\rangle^{-2+\delta},\\
&\|\nabla_xE^{osc}(t)\|_{L^\infty}\lesssim \varep_1\langle t\rangle^{-3+\delta}\ln(2+t),\qquad \|\partial_tE^{osc}(t)\|_{L^\infty}\lesssim \varep_1\langle t\rangle^{-3+2\delta}.
\end{split}
\end{equation}
\end{lemma}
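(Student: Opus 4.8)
The plan is to derive both parts directly from the bootstrap hypothesis \eqref{YW12}, with part~(i) being essentially bookkeeping and part~(ii) a Littlewood--Paley summation in which the $L^1$ and $L^\infty$ information has to be traded off at low frequencies. For part~(i) I would unpack \eqref{YW12} using the definitions \eqref{normA}--\eqref{may12eqn21}, the only input being that $\langle\xi\rangle\varphi_k(\xi)=2^{k^+}\cdot(\text{a symbol with uniformly bounded convolution kernel})$ and $|\xi|\varphi_k(\xi)=2^{k}\cdot(\text{same})$, so that $\|P_k\langle\nabla_x\rangle g\|_{L^p}\approx 2^{k^+}\|P_kg\|_{L^p}$ and $\|P_k\nabla_xg\|_{L^p}\approx 2^{k}\|P_kg\|_{L^p}$ for $p\in\{1,\infty\}$. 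Then $\|\rho^{stat}\|_{Stat_\delta}\le\varepsilon_1$ reads off as \eqref{Laga2}; the first summand of $\|\rho^{osc}\|_{Osc_\delta}\le\varepsilon_1$ gives $\langle t\rangle^3\|P_k\rho^{osc}(t)\|_{L^\infty}+\|P_k\rho^{osc}(t)\|_{L^1}\lesssim\varepsilon_1\langle t\rangle^\delta$, the second summand gives the same quantity $\lesssim\varepsilon_1 2^{-k}\langle t\rangle^{-1+2\delta}$, and the minimum of the two, rewritten via $\min(a,b)\approx(a^{-1}+b^{-1})^{-1}$, is \eqref{Laga3}; finally \eqref{Laga4} is verbatim the second summand of $\|\rho^{osc}\|_{Osc_\delta}\le\varepsilon_1$.

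For part~(ii) I would write $E^\ast=\sum_{k\in\Z}P_kE^\ast$. The Fourier symbols of $\nabla_x\Delta_x^{-1}$ and $R_j\nabla_x\Delta_x^{-1}$ are homogeneous of degree $-1$, that of $\nabla_x\nabla_x\Delta_x^{-1}$ is homogeneous of degree $0$, and on each block $P_k$ they act, up to a kernel of uniformly bounded $L^1$ mass, as multiplication by $2^{-k}$, respectively $2^0$; so it suffices to sum $2^{-k}\|P_k\rho^{stat}(t)\|_{L^\infty}$, resp.\ $\|P_k\rho^{stat}(t)\|_{L^\infty}$, and likewise with $\rho^{osc}$ and $\partial_t\rho^{osc}$ in place of $\rho^{stat}$. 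For each block I would bound the $L^\infty$-norm in two ways: directly from the $L^\infty$-bounds of part~(i), and via Bernstein $\|P_kg\|_{L^\infty}\lesssim 2^{3k}\|P_kg\|_{L^1}$ from the $L^1$-bounds of part~(i). For $k\ge 0$ the direct route gives a geometric series in $2^{-k}$ that sums to a quantity decaying faster in $\langle t\rangle$ than claimed. For $k<0$ I would use the Bernstein route when $2^k\lesssim\langle t\rangle^{-1}$ (where the loss $2^{3k}$ is harmless) and the direct route when $2^k\gtrsim\langle t\rangle^{-1}$; each of the two resulting sums is dominated by its endpoint $2^k\sim\langle t\rangle^{-1}$, and it is exactly this endpoint that manufactures the extra powers $\langle t\rangle^{-2}$ and $\langle t\rangle^{-3}$ appearing in \eqref{Laga11}--\eqref{Laga12}. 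For the two degree-$0$ operators $\nabla_xE^{stat}$ and $\nabla_xE^{osc}$ the summand does not decay geometrically on the $\mathcal O(\ln\langle t\rangle)$ intermediate scales $\langle t\rangle^{-1}\lesssim 2^k\lesssim 1$, which accounts for the logarithmic factors. For the oscillatory field I would in addition split the low-frequency range at $2^k\sim\langle t\rangle^{-1+\delta}$ according to which of the two terms in the denominator of \eqref{Laga3} dominates, so as to land on the stated $\langle t\rangle^{-2+\delta}$, $\langle t\rangle^{-3+\delta}$, $\langle t\rangle^{-3+2\delta}$ rather than a weaker power of $\langle t\rangle^\delta$.

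The step I expect to require the most care is this low-frequency summation: the bootstrap norms carry no decay of the low-frequency density beyond what part~(i) records, so one cannot simply bound $\nabla_x\Delta_x^{-1}P_k$ crudely by $2^{-k}$ and sum in $k$ — the divergence as $k\to-\infty$ is killed only by trading the $L^\infty$-bound against the $L^1$-bound at the crossover $2^k\sim\langle t\rangle^{-1}$, and for $E^{osc}$ the finer split at $2^k\sim\langle t\rangle^{-1+\delta}$ is what keeps the loss in $\delta$ to a single power. Everything else is a routine geometric-series estimate.
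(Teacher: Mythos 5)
Your proposal is correct and is essentially the paper's own proof: part (i) is read off directly from \eqref{YW12} and the definitions \eqref{normA}--\eqref{may12eqn21}, and part (ii) is a dyadic sum in which the $L^\infty$ block-estimate is used for $2^k\gtrsim\langle t\rangle^{-1}$ and Bernstein together with the $L^1$ block-estimate for $2^k\lesssim\langle t\rangle^{-1}$, with the logarithmic loss for the degree-zero symbol $\nabla_x\nabla_x\Delta_x^{-1}$ coming from the intermediate plateau. You also correctly identify the one point the paper glosses over with the word ``similar'', namely the further crossover at $2^k\sim\langle t\rangle^{-1+\delta}$ between the two terms in the denominator of \eqref{Laga3}, which is what keeps the oscillatory bounds at $\langle t\rangle^{-2+\delta}$, $\langle t\rangle^{-3+\delta}$, $\langle t\rangle^{-3+2\delta}$ rather than losing an extra power of $\langle t\rangle^{\delta}$.
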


\begin{proof} The bounds \eqref{Laga2}--\eqref{Laga4} follow directly from the bootstrap assumptions \eqref{YW12} and the definitions \eqref{normA}--\eqref{may12eqn21}. To prove \eqref{Laga11}--\eqref{Laga12} we use \eqref{Laga2}--\eqref{Laga4} and the simple estimates $\|P_kg\|_{L^\infty}\lesssim 2^{3k}\|P_kg\|_{L^1}$. Therefore
\begin{equation*}
\begin{split}
&\|E^{stat}(t)\|_{L^\infty}+\sum_{j\in\{1,2,3\}}\|R_jE^{stat}(t)\|_{L^\infty}\\
&\lesssim \sum_{2^k\geq \langle t\rangle^{-1}}2^{-k}\|(P_k\rho^{stat})(t)\|_{L^\infty}+\sum_{2^k\leq \langle t\rangle^{-1}}2^{2k}\|(P_k\rho^{stat})(t)\|_{L^1}\lesssim \varep_1\langle t\rangle^{-3+2\delta}
\end{split}
\end{equation*}
and
\begin{equation*}
\begin{split}
\|\nabla_xE^{stat}(t)\|_{L^\infty}&\lesssim \sum_{2^k\geq \langle t\rangle^{-1}}\|(P_k\rho^{stat})(t)\|_{L^\infty}+\sum_{2^k\leq \langle t\rangle^{-1}}2^{3k}\|(P_k\rho^{stat})(t)\|_{L^1}\\
&\lesssim \varep_1\langle t\rangle^{-4+2\delta}\ln(2+t).
\end{split}
\end{equation*}
The bounds \eqref{Laga11} follow. The bounds \eqref{Laga12} are similar, using \eqref{Laga3}--\eqref{Laga4}.
\end{proof}

\subsection{Fourier multipliers} Some of our main identities involve Fourier multipliers, such as the ``derivative'' operators $\mathcal{D}$. To estimate these contributions efficiently we need to localize in the Fourier space. 

For any $d\geq 1$ let $S^\infty=S^\infty(\R^d)$ denote the space of continuous compactly supported multipliers $m:\R^d\to\mathbb{C}$ defined by the norm
\begin{equation}\label{locl0}
\|m\|_{S^\infty}:=\|\mathcal{F}^{-1}m\|_{L^1}<\infty.
\end{equation}
Notice that if $m,m'\in S^\infty(\R^d)$ then
\begin{equation}\label{locl0.1}
\|mm'\|_{S^\infty}\leq\|m\|_{S^\infty}\|m'\|_{S^\infty}.
\end{equation}
Moreover if $m\in S^\infty(\R^d)$ and $f\in L^p(\R^d)$, $p\in[1,\infty]$, then
\begin{equation}\label{locl0.2}
\|\mathcal{F}^{-1}(m\widehat{f})\|_{L^p}\lesssim \|m\|_{S^\infty}\|f\|_{L^p}.
\end{equation} 

We often use integration by parts to estimate oscillatory integrals with non-degenerate phases, according to the following general lemma:

\begin{lemma}\label{tech5} Assume that $0<\eps\leq 1/\eps\leq K$, $N\geq 1$ is an integer, and $f,g\in C^N_0(\mathbb{R}^d)$, $d\geq 1$. Then
\begin{equation}\label{ln1}
\Big|\int_{\mathbb{R}^d}e^{iKf}g\,dx\Big|\lesssim_N (K\eps)^{-N}\big[\sum_{|\alpha|\leq N}\eps^{|\alpha|}\|D^\alpha_xg\|_{L^1}\big],
\end{equation}
provided that $f$ is real-valued, 
\begin{equation}\label{ln2}
|\nabla_x f|\geq \mathbf{1}_{{\mathrm{supp}}\,g},\quad\text{ and }\quad\|D_x^\alpha f \cdot\mathbf{1}_{{\mathrm{supp}}\,g}\|_{L^\infty}\lesssim_N\eps^{1-|\alpha|},\,2\leq |\alpha|\leq N+1.
\end{equation}
\end{lemma}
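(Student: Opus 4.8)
\emph{Proposed proof.} This will be a standard non-stationary phase estimate obtained by integrating by parts $N$ times; the only delicate point is the quantitative control of the derivatives of the vector field $b:=\nabla f/|\nabla f|^{2}$. Since $f$ is real-valued and $|\nabla_{x}f|\ge1$ on $\mathrm{supp}\,g$ (hence, by compactness and continuity, on an open neighbourhood of it), the plan is to introduce there the first-order operator
\begin{equation*}
L:=\frac{1}{iK}\,b\cdot\nabla_{x},\qquad b:=\frac{\nabla f}{|\nabla f|^{2}},\qquad\text{so that }\ L\,e^{iKf}=e^{iKf}\ \text{ on }\mathrm{supp}\,g,
\end{equation*}
with formal transpose $L^{t}\phi=-(iK)^{-1}\nabla_{x}\cdot(b\,\phi)$. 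After multiplying $b$ by a smooth cutoff equal to $1$ on $\mathrm{supp}\,g$ (supported where $|\nabla f|\ge1/2$), which changes nothing below since every term is ultimately supported where the cutoff equals $1$, one checks inductively that each $(L^{t})^{m}g$ is supported in $\mathrm{supp}\,g$ (the divergence of a vector field supported in a set is supported in that set), so repeated integration by parts produces no boundary terms:
\begin{equation*}
\int_{\R^{d}}e^{iKf}g\,dx=\int_{\R^{d}}e^{iKf}\,(L^{t})^{N}g\,dx,\qquad\text{whence}\qquad \Big|\int_{\R^{d}}e^{iKf}g\,dx\Big|\le \big\|(L^{t})^{N}g\big\|_{L^{1}}.
\end{equation*}

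Next I would expand $(L^{t})^{N}g$ by the Leibniz rule: it equals $(iK)^{-N}$ times a finite sum, with combinatorial coefficients depending only on $N$ and $d$, of terms
\begin{equation*}
(D^{\beta_{1}}b_{i_{1}})\cdots(D^{\beta_{p}}b_{i_{p}})\,(D^{\gamma}g),\qquad p\ge1,\quad |\beta_{1}|+\cdots+|\beta_{p}|+|\gamma|=N,
\end{equation*}
since each of the $N$ derivatives introduced by the operators $L^{t}$ lands on one of the $b$'s or on $g$. The crucial ingredient is then the pointwise bound
\begin{equation}\label{eq:plan-Db}
|D^{\beta}b(x)|\lesssim_{N}\eps^{-|\beta|}\qquad\text{for all }x\in\mathrm{supp}\,g\ \text{ and }\ |\beta|\le N,
\end{equation}
for which the bound on $D^\alpha f$ at $|\alpha|=N+1$ is exactly what is needed. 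To prove \eqref{eq:plan-Db} I would write $b_{i}=(\partial_{i}f)\,u^{-1}$ with $u:=|\nabla f|^{2}\ge1$ on $\mathrm{supp}\,g$, and use the quotient and Leibniz rules to write $D^{\beta}b_{i}$ as a finite sum of terms $c\,u^{-1-r}\,D^{\beta_{0}}(\partial_{i}f)\prod_{l=1}^{r}D^{\mu_{l}}u$ with $\mu_{l}\neq0$ and $|\beta_{0}|+\sum_{l}|\mu_{l}|=|\beta|$. Each factor $D^{\mu_{l}}u$ is a sum of products of two derivatives of $\nabla f$, of which at most one has order $1$ (that one of size $\le|\nabla f|$, the other of order between $2$ and $|\mu_{l}|+1\le N+1$); using the hypothesis $\|D^{\alpha}f\cdot\mathbf{1}_{\mathrm{supp}\,g}\|_{L^{\infty}}\lesssim_{N}\eps^{1-|\alpha|}$ one gets $|D^{\mu_{l}}u|\lesssim_{N}|\nabla f|\,\eps^{-|\mu_{l}|}$, and similarly $|D^{\beta_{0}}(\partial_{i}f)|\lesssim_{N}|\nabla f|^{\mathbf{1}_{\beta_{0}=0}}\eps^{-|\beta_{0}|}$. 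Multiplying and using $u^{-1-r}=|\nabla f|^{-2-2r}$, the net power of $|\nabla f|$ is at most $(1+r)-(2+2r)=-1-r\le0$, so on $\mathrm{supp}\,g$ (where $|\nabla f|\ge1$) every such term is $\lesssim_{N}\eps^{-|\beta|}$; summing gives \eqref{eq:plan-Db}.

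Finally, combining the two previous steps, each product term in $(L^{t})^{N}g$ is bounded pointwise on $\mathrm{supp}\,g$ by
\begin{equation*}
C_{N}K^{-N}\eps^{-(|\beta_{1}|+\cdots+|\beta_{p}|)}|D^{\gamma}g|=C_{N}K^{-N}\eps^{|\gamma|-N}|D^{\gamma}g|=C_{N}(K\eps)^{-N}\eps^{|\gamma|}|D^{\gamma}g|,
\end{equation*}
and integrating in $x$ and summing the finitely many contributions (indexed, after discarding the $b$-factors, by the multi-index $\gamma$ with $|\gamma|\le N$) gives $\|(L^{t})^{N}g\|_{L^{1}}\lesssim_{N}(K\eps)^{-N}\sum_{|\gamma|\le N}\eps^{|\gamma|}\|D^{\gamma}_{x}g\|_{L^{1}}$, which together with the inequality for $|\int e^{iKf}g\,dx|$ above yields \eqref{ln1}. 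Here $\eps\le1$ is used freely, and $1/\eps\le K$ only ensures $(K\eps)^{-N}\le1$. I expect the homogeneity bookkeeping in \eqref{eq:plan-Db} to be the main obstacle: one has no upper bound on $|\nabla f|$ itself, so one must verify that the (potentially large) factors of $|\nabla f|$ produced in the numerators are always overcompensated by the powers of $u^{-1}=|\nabla f|^{-2}$, relying only on the lower bound $|\nabla f|\ge1$ together with the upper bounds on the derivatives of $f$ of order $\ge2$.
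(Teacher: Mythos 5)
Your proof is correct and is the standard non-stationary phase (Hörmander) argument: introduce $L=(iK)^{-1}b\cdot\nabla_x$ with $b=\nabla f/|\nabla f|^2$ so that $Le^{iKf}=e^{iKf}$, iterate the transpose $N$ times, and control $\|(L^t)^Ng\|_{L^1}$ via the derivative bound $|D^\beta b|\lesssim_N\eps^{-|\beta|}$ on $\mathrm{supp}\,g$, which you correctly derive from the hypotheses by tracking powers of $|\nabla f|$ and using $|\nabla f|\geq 1$. Note that the paper does not actually supply a proof of this lemma (it is invoked as a standard tool), so there is nothing to compare against; the only minor caveat in your write-up is that the final integration by parts uses $D^{N+1}f$, so $f$ should really be read as $C^{N+1}$ rather than $C^N_0$ --- consistent with hypothesis \eqref{ln2}, which already assumes control of $D^\alpha f$ for $|\alpha|=N+1$, and with the smooth phases appearing in all of the paper's applications.
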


We define the resonant projectors $\Pi_{\iota,k,p}$, $\iota\in\{+1,-1\}$, $k\leq 4$, $p\in\Z$, by the Fourier multipliers $\varphi_k(\xi)\varphi_p(1+\iota v\cdot\xi)$, i.e.
\begin{equation}\label{omeio0}
\Pi_{\iota,k,p}f(x,v):=\mathcal{F}^{-1}\big\{\widehat{f}(\xi)\cdot\varphi_k(\xi)\varphi_p(1+\iota v\cdot\xi)\big\}(x,v).
\end{equation}
Similarly, we define the projectors $\Pi_{\iota,k,\leq p}$, $\Pi_{\iota,k,>p}$, $k\leq 4$, $p\in\Z$. These projectors are important in arguments involving normal forms. We define also the functions
\begin{equation}\label{omeio1}
\omega_{\iota,\kappa}(\xi,v):=1+\iota v\cdot\xi+i\kappa |\xi|,\qquad\iota\in\{-1,1\},\,\kappa\in\{-1,0,1\},
\end{equation}
and the operators $\omega_{\iota,\kappa}^{-1}$ defined by the multipliers $(\xi,v)\to \omega_{\iota,\kappa}^{-1}(\xi,v)$. We prove the following:

\begin{lemma}\label{omegaioLem}
(i) If $\iota\in\{-1,1\}$, $k\leq 4$ and $p\leq -4$ then
\begin{equation}\label{omegaio2}
\Pi_{\iota,k,\leq p} g(x,v)=\int_{\R^3}g(x-y)K_{k,\leq p}^\iota(y,v)\,dy,
\end{equation}
for some kernels $K_{k,\leq p}^\iota$ that satisfy the bounds
\begin{equation}\label{omegaio3}
\begin{split}
&\sup_{|v|\in [2^{l-1},2^{l+1}]}\|K_{k,\leq p}^\iota(.,v)\|_{L^1_y}\lesssim 1,\qquad \sup_{|v|\in [2^{l-1},2^{l+1}]}\|K_{k,\leq p}^\iota(.,v)\|_{L^\infty_y}\lesssim 2^{2k}2^{p-l},
\end{split}
\end{equation}
for any $l\in\Z$. Moreover, $\mathfrak{1}_{[2^{l-1},2^{l+1}]}(|v|)K_{k,\leq p}^\iota(y,v)\equiv 0$ if $k+l\leq -4$.

As a consequence, for any $q\leq r\in[1,\infty]$ and $g\in L^q(\R^3)$
\begin{equation}\label{omegaio4}
\sup_{|v|\in [2^{l-1},2^{l+1}]}\|\Pi_{\iota,k,\leq p} g(.,v)\|_{L^r_x}\lesssim 2^{(2k+p-l)(1/q-1/r)}\|g\|_{L^q_x}.
\end{equation}

(ii) If $\iota\in\{-1,1\}$, $\kappa\in\{-1,0,1\}$, $k\leq 4$, and $p\in\Z$ then
\begin{equation}\label{omegaio2x}
(\Pi_{\iota,k,p} \omega_{\iota,\kappa}^{-1}g)(x,v)=\int_{\R^3}g(x-y)\widetilde{K}_{k,p}^{\iota,\kappa}(y,v)\,dy,
\end{equation}
for some kernels $\widetilde{K}_{k,p}^{\iota,\kappa}$ that satisfy the bounds
\begin{equation}\label{omegaio3x}
\|\widetilde{K}_{k,p}^{\iota,\kappa}(.,v)\|_{L^1_y}\lesssim 2^{-p},
\end{equation}
for any $v\in\R^3$. As a consequence, for any $q\in[1,\infty]$, $g\in L^q(\R^3)$, and $v\in\R^3$,
\begin{equation}\label{omegaio4x}
\|\Pi_{\iota,k,p} \omega_{\iota,\kappa}^{-1}g(.,v)\|_{L^q_x}\lesssim 2^{-p}\|g\|_{L^q_x}.
\end{equation}
\end{lemma}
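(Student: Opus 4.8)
The plan is to prove Lemma \ref{omegaioLem} by explicitly computing the convolution kernels in the $x$-variable for fixed $v$, treating the two parts separately since they involve different symbols. For part (i), the multiplier is $\varphi_k(\xi)\varphi_{\leq p}(1+\iota v\cdot\xi)$, which is a smooth compactly supported function of $\xi$ (with $v$ as a parameter), so its inverse Fourier transform $K_{k,\leq p}^\iota(\cdot,v)$ is Schwartz; the content of \eqref{omegaio3} is quantitative control of its $L^1$ and $L^\infty$ norms in terms of the geometry of the support. First I would observe that $\varphi_k$ localizes to $|\xi|\approx 2^k$, a set of measure $\approx 2^{3k}$; intersecting with $\{|1+\iota v\cdot\xi|\approx 2^p\}$ (or $\lesssim 2^p$ when $p\leq -4$), which is a slab of thickness $\approx 2^p/|v|\approx 2^{p-l}$ in the direction $v/|v|$, gives a region of measure $\lesssim 2^{2k}\cdot 2^{p-l}$ (this is where the hypotheses $k\leq 4$, $p\leq -4$ enter — the slab is genuinely thin and fits inside the annulus). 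The $L^\infty$ bound on $K_{k,\leq p}^\iota$ then follows from $\|\mathcal F^{-1}m\|_{L^\infty}\leq (2\pi)^{-3}\|m\|_{L^1}$ together with $\|\varphi_k\varphi_{\leq p}(1+\iota v\cdot\xi)\|_{L^1_\xi}\lesssim 2^{2k+p-l}$. The $L^1$ bound on the kernel is the less trivial half: I would get it by an integration-by-parts argument, rescaling so that the annulus becomes unit-size and the slab has an anisotropic width, and then integrating $\langle 2^k y'\rangle^{-4}\langle 2^{p-l}y''\rangle^{-2}$ (with $y''$ the component of $y$ along $v$ suitably rescaled), whose $L^1_y$ norm is $O(1)$; this uses Lemma \ref{tech5} or a direct non-stationary phase estimate on each derivative hitting the cutoffs. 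The vanishing statement $k+l\leq -4 \Rightarrow K_{k,\leq p}^\iota\equiv 0$ on $|v|\approx 2^l$ is immediate: if $|\xi|\approx 2^k$ and $|v|\approx 2^l$ with $k+l\leq -4$, then $|\iota v\cdot\xi|\leq |v||\xi|\lesssim 2^{k+l}\leq 1/4$, so $1+\iota v\cdot\xi\in[3/4,5/4]$, forcing $\varphi_{\leq p}(1+\iota v\cdot\xi)=0$ for $p\leq -4$. Finally \eqref{omegaio4} follows from \eqref{omegaio3} by interpolating the two endpoint bounds: Young's inequality gives $\|K_{k,\leq p}^\iota(\cdot,v)\|_{L^s}\lesssim 2^{(2k+p-l)/s'}$ for $1/s+1 = 1/q+1/r$ (equivalently $1/s' = 1/q-1/r$) by Hölder-interpolating $\|\cdot\|_{L^1}^{r/s'}\|\cdot\|_{L^\infty}^{1-r/s'}$ appropriately, and then the convolution estimate $\|\Pi_{\iota,k,\leq p}g(\cdot,v)\|_{L^r}\leq \|K\|_{L^s}\|g\|_{L^q}$.

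For part (ii) the new feature is the possibly-singular factor $\omega_{\iota,\kappa}^{-1}(\xi,v)=(1+\iota v\cdot\xi+i\kappa|\xi|)^{-1}$, but the compensating point is that it is multiplied by the \emph{exact-frequency} cutoff $\varphi_p(1+\iota v\cdot\xi)$ (not $\varphi_{\leq p}$), which forces $|1+\iota v\cdot\xi|\approx 2^p$. The plan is to write $\widetilde K_{k,p}^{\iota,\kappa}(\cdot,v) = \mathcal F^{-1}_\xi\big[\varphi_k(\xi)\varphi_p(1+\iota v\cdot\xi)\,\omega_{\iota,\kappa}^{-1}(\xi,v)\big]$ and bound its $L^1_y$ norm by $2^{-p}$. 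When $\kappa=0$, one simply has $|\omega_{\iota,0}^{-1}|\approx 2^{-p}$ on the support, so $\|\varphi_k\varphi_p\omega_{\iota,0}^{-1}\|_{S^\infty}\leq 2^{-p}\|\varphi_k\varphi_p\|_{S^\infty}\cdot(\text{bounded factor})$, using \eqref{locl0.1} and the fact that $\|\varphi_k(\xi)\varphi_p(1+\iota v\cdot\xi)\|_{S^\infty}\lesssim 1$ uniformly (a rescaled bump). When $\kappa=\pm1$, the denominator $1+\iota v\cdot\xi\pm i|\xi|$ has modulus $\geq \max(2^{p-2}, c2^k)$, but more importantly $\omega_{\iota,\kappa}^{-1}$ together with its $\xi$-derivatives obeys symbol-type bounds compatible with the two relevant length scales ($2^{-k}$ for the annulus, $2^{p-l}$ for the slab direction), so that each derivative costs at most the reciprocal of the corresponding width, and the kernel decays like the product of the two bump kernels times the extra factor $2^{-p}$; concretely I would show $\|\varphi_k(\xi)\varphi_p(1+\iota v\cdot\xi)\omega_{\iota,\kappa}^{-1}(\xi,v)\|_{S^\infty}\lesssim 2^{-p}$ by the Leibniz rule, distributing derivatives among the three factors and checking each resulting term. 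Then \eqref{omegaio4x} is just \eqref{locl0.2} applied with this multiplier.

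The main obstacle I anticipate is the $L^1_y$ kernel bound in part (i) — i.e.\ the first inequality in \eqref{omegaio3} — because it is genuinely a non-stationary-phase / anisotropic Bernstein estimate rather than a trivial consequence of the support size. The subtlety is that one needs uniformity in $v$ (with $|v|\approx 2^l$) and in $l$, and the slab $\{|1+\iota v\cdot\xi|\lesssim 2^p\}$ degenerates as $|v|\to 0$; the clean way around this is precisely the observation recorded in the statement, that the kernel is \emph{identically zero} once $k+l\leq -4$, so one only ever needs the estimate in the regime $2^{k+l}\gtrsim 1$ where the slab thickness $2^{p-l}$ is comparable to $2^{p-l}$ with $2^{-l}\lesssim 2^{k+4}$, keeping all scales under control. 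After changing variables $\xi = 2^k\eta$ and further adapting coordinates to the direction $v/|v|$, the multiplier becomes a fixed Schwartz bump in the ``transverse'' variables times a bump of width $\approx 2^{p-l-k}$ in the ``longitudinal'' variable (which is $\gtrsim 2^{-4-4} = 2^{-8}$, but can be small), and integration by parts $N$ times in each group of variables yields $|K(y,v)|\lesssim 2^{2k+p-l}\langle 2^k y_\perp\rangle^{-4}\langle 2^{-l}(v\cdot y)\rangle^{-2}$ up to harmless constants, whose $L^1_y$-norm is $O(1)$; the exponents $4$ and $2$ are chosen just large enough to be integrable in $3$ and $1$ dimensions respectively. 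Everything else — the $L^\infty$ bound, the vanishing, the interpolation \eqref{omegaio4}, and all of part (ii) — is then routine bookkeeping with \eqref{locl0.1}--\eqref{locl0.2}.
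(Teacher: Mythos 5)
Your approach matches the paper's: compute the explicit $x$-convolution kernel for fixed $v$, align coordinates with $v$, and get anisotropic pointwise decay by repeated integration by parts (Lemma~\ref{tech5}) on the rescaled bump; then deduce the $L^1_y$ and $L^\infty_y$ kernel bounds and the operator bounds by Young's inequality, with the observation that $K\equiv 0$ when $k+l\leq -4$ keeping all scales under control. Part (ii) likewise agrees: establish symbol-type estimates for $\varphi_k(\xi)\varphi_p(1+\iota v\cdot\xi)\,\omega_{\iota,\kappa}^{-1}$ on its support (each $\xi_1$-derivative costing at most $(2^{-k}+|v|2^{-p})$, each transverse one $2^{-k}$) and feed them into the same IBP scheme.

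One small slip in part (i): the pointwise bound you assert, $|K|\lesssim 2^{2k+p-l}\langle 2^k y_\perp\rangle^{-4}\langle 2^{-l}(v\cdot y)\rangle^{-2}$, is \emph{stronger} than what integration by parts actually delivers. Since each longitudinal $\xi$-derivative costs the inverse slab width $2^{l-p}$, the decay only kicks in at spatial scale $2^{l-p}$, so the correct bound is the paper's $|K|\lesssim 2^{2k+p-l}(1+2^k|y_2|+2^k|y_3|+2^{p-l}|y_1|)^{-4}$, not decay at unit scale in $y_1$. This does not affect your conclusion (the true bound still integrates to $\|K\|_{L^1}\lesssim 1$), and indeed you identify the correct longitudinal width $\approx 2^{p-l-k}$ after rescaling, so the argument is sound; only the recorded exponent in the bump factor is off.
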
 

\begin{proof} (i) We have the formula
\begin{equation}\label{omegaio6}
K_{k,\leq p}^\iota(y,v)=\frac{1}{(2\pi)^3}\int_{\R^3}e^{iy\cdot\xi}\varphi_k(\xi)\varphi_{\leq p}(1+\iota v\cdot \xi)\,d\xi.
\end{equation}
Clearly $K_{k,\leq p}^\iota(y,v)=0$ if $2^{k}|v|\leq 1/8$ and $p\leq -4$. On the other hand, if $2^k|v|\geq 1/8$ and $p\leq -4$ then we may assume $v=(v_1,0,0)$, $v_1\in[2^{l-1},2^{l+1}]$, and integrate by parts (using Lemma \ref{tech5}) to see that
\begin{equation*}
\big|K_{k,\leq p}^\iota(y,v)\big|\lesssim 2^{2k+p-l}(1+2^k|y_2|+2^k|y_3|+2^{p-l}|y_1|)^{-4}.
\end{equation*}
The $L^1$ bounds in \eqref{omegaio3} follow as well, and the bounds \eqref{omegaio4} follow by interpolation and H\'{o}lder's inequality.

(ii) We may assume that $v=(v_1,0,0)$, $v_1\geq 0$. Notice that 
\begin{equation}\label{omegaio9}
\widetilde{K}_{k,p}^{\iota,\kappa}(y,v)=\frac{1}{(2\pi)^3}\int_{\R^3}e^{iy\cdot\xi}\varphi_k(\xi)\frac{\varphi_0(2^{-p}(1+\iota v_1\xi_1))}{1+\iota v_1\xi_1+i\kappa|\xi|}\,d\xi.
\end{equation}
Notice that, for any $\alpha_1,\alpha_2,\alpha_3\in[0,4]$ and $\xi$ in the support of the integral in \eqref{omegaio9},
\begin{equation}\label{omegaio9.5}
\Big|\frac{d^{\alpha_1}}{d\xi_1^{\alpha_1}}\frac{d^{\alpha_2}}{d\xi_2^{\alpha_2}}\frac{d^{\alpha_3}}{d\xi_3^{\alpha_3}}\Big(\varphi_k(\xi)\frac{\varphi_0(2^{-p}(1+\iota v_1\xi_1))}{1+\iota v_1\xi_1+i\kappa|\xi|}\Big)\Big|\lesssim (2^p+|\kappa|2^k)^{-1}2^{-k(\alpha_2+\alpha_3)}(2^{-k}+|v_1|2^{-p})^{\alpha_1}.
\end{equation}
Therefore, using integration by parts (Lemma \ref{tech5}), for any $p\in\Z$ we have
\begin{equation}\label{omegaio8}
\big|\widetilde{K}_{k,p}^{\iota,\kappa}(y,v)\big|\lesssim 2^{-p}2^{2k}(2^{-k}+2^{-p}|v_1|)^{-1}\big\{1+2^k|y_2|+2^k|y_3|+(2^{-k}+2^{-p}|v_1|)^{-1}|y_1|\big\}^{-4}.
\end{equation}
In particular $\|\widetilde{K}_{k,p}^{\iota,\kappa}(.,v)\|_{L^1_y}\lesssim 2^{-p}$, and the desired bounds \eqref{omegaio3x}--\eqref{omegaio4x} follow.
\end{proof}

\subsection{Linear dispersive estimates} For $\lambda\in\mathbb{R}$ and multipliers $\mathfrak{m}:\R^3\times\R^3\to\mathbb{C}$ we define the operators
\begin{equation}\label{disper1}
\mathcal{I}_{\mathfrak{m}}(g;\lambda)(x):=\frac{1}{(2\pi)^3}\int_{\R^3\times\R^3}e^{ix\cdot\xi}e^{-i\lambda v\cdot\xi}\widehat{g}(\xi,v)\mathfrak{m}(\xi,v)\,d\xi dv.
\end{equation}

We have the following lemma:

\begin{lemma}\label{disper2}
(i) If $j\in\{1,2,3\}$ and $\lambda\neq 0$ then
\begin{equation}\label{disper3}
\mathcal{I}_{i\xi_j\mathfrak{m}}(g;\lambda)=\frac{1}{\lambda}\big\{\mathcal{I}_{\partial_{v_j}\mathfrak{m}}(g;\lambda)+\mathcal{I}_{\mathfrak{m}}(\partial_{v_j}g;\lambda)\big\}.
\end{equation}

(ii) Let $K$ denote the inverse Fourier transform in $\xi$ of the multiplier $\mathfrak{m}$,
\begin{equation}\label{disper4}
K(y,v):=\frac{1}{(2\pi)^3}\int_{\R^3}\mathfrak{m}(\xi,v)e^{iy\cdot \xi}\,d\xi.
\end{equation}
Then, if $q\in[1,\infty]$,
\begin{equation}\label{disper5}
\begin{split}
\|\mathcal{I}_{\mathfrak{m}}(g;\lambda)\|_{L^q_x}&\lesssim\|g\|_{L^1_vL^q_y}\|K\|_{L^\infty_vL^1_y},\\
|\lambda|^3\|\mathcal{I}_{\mathfrak{m}}(g;\lambda)\|_{L^\infty_x}&\lesssim\|g\|_{X_\lambda}\|K\|_{L^1_yL^\infty_v},
\end{split}
\end{equation}
where, by definition,
\begin{equation}\label{disper5.2}
\|g\|_{X_\lambda}:=\sup_{p\in\R^3}\int_{\R^3}|g(w,p-w/\lambda)|\,dw.
\end{equation}

(iii) For $k\in\Z$ let $\mathfrak{m}_k(\xi,v):=\mathfrak{m}(\xi,v)\varphi_k(\xi)$ and $K_k:=\mathcal{F}^{-1}(\mathfrak{m}_k)$. Then, if $q\in[1,\infty]$,
\begin{equation}\label{disper5.5}
\begin{split}
|\lambda|\|\mathcal{I}_{\mathfrak{m}_k}(g;\lambda)\|_{L^q_x}&\lesssim 2^{-k}\big\{\|g\|_{L^1_vL^q_y}\|\nabla_vK_k\|_{L^\infty_vL^1_y}+\|\nabla_vg\|_{L^1_{v}L^q_y}\|K_k\|_{L^\infty_vL^1_y}\big\},\\
|\lambda|^{4}\|\mathcal{I}_{\mathfrak{m}_k}(g;\lambda)\|_{L^\infty_x}&\lesssim2^{-k}\big\{\|g\|_{X_\lambda}\|\nabla_vK_k\|_{L^1_yL^\infty_v}+\|\nabla_vg\|_{X_\lambda}\|K_k\|_{L^1_yL^\infty_v}\big\}.
\end{split}
\end{equation}
\end{lemma}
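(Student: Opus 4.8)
\textbf{Proof plan for Lemma \ref{disper2}.}

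The plan is to treat the three parts in order, as each builds on the previous one. For part (i), I would simply note that $i\xi_j e^{-i\lambda v\cdot\xi} = -\lambda^{-1}\partial_{v_j}(e^{-i\lambda v\cdot\xi})$ and integrate by parts in $v_j$ inside the definition \eqref{disper1} of $\mathcal{I}_{i\xi_j\mathfrak{m}}(g;\lambda)$. The $v_j$-derivative lands either on $\mathfrak{m}(\xi,v)$ or on $\widehat{g}(\xi,v)$, producing exactly the two terms on the right-hand side of \eqref{disper3}; there is no boundary term since $g\in C_0$. For part (ii), I would rewrite \eqref{disper1} by performing the $\xi$-integral first: with $K$ as in \eqref{disper4} we get the representation
\begin{equation*}
\mathcal{I}_{\mathfrak{m}}(g;\lambda)(x)=\int_{\R^3}K(x-\lambda v,v)\,g(x-\lambda v,v)\ldots
\end{equation*}
— more precisely one writes $\mathcal{I}_{\mathfrak{m}}(g;\lambda)(x)=\int_{\R^3\times\R^3}K(x-\lambda v-y,v)\,g(y,v)\,dy\,dv$, so that $\mathcal{I}_{\mathfrak{m}}(g;\lambda)(\cdot)$ is, for each fixed $v$, a convolution in the $y$-variable of $g(\cdot,v)$ against $K(\cdot-\lambda v, v)$, integrated in $v$. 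The first estimate in \eqref{disper5} is then Young's inequality in $y$ followed by Minkowski in $v$: $\|\mathcal{I}_\mathfrak{m}(g;\lambda)\|_{L^q_x}\le \int_{\R^3}\|K(\cdot,v)\|_{L^1_y}\|g(\cdot,v)\|_{L^q_y}\,dv\le \|K\|_{L^\infty_vL^1_y}\|g\|_{L^1_vL^q_y}$. For the $L^\infty$ bound I would instead bound pointwise, $|\mathcal{I}_\mathfrak{m}(g;\lambda)(x)|\le\int_{\R^3\times\R^3}|K(x-\lambda v-y,v)|\,|g(y,v)|\,dy\,dv$, and change variables in the $v$-integral from $v$ to $w:=x-\lambda v-y$ (so $dv=|\lambda|^{-3}dw$ and $v=(x-y-w)/\lambda$), giving $|\lambda|^3|\mathcal{I}_\mathfrak{m}(g;\lambda)(x)|\le\int\int |K(w,(x-y-w)/\lambda)|\,|g(y,(x-y-w)/\lambda)|\,dy\,dw$; bounding $|K|$ by $\sup_v|K(w,v)|$ and recognizing that the remaining integral in $y$ (at fixed $w$), after the substitution $y\mapsto p:=\ldots$, is controlled by $\|g\|_{X_\lambda}$ as defined in \eqref{disper5.2}, yields the second line of \eqref{disper5}. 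The bookkeeping of which variable is integrated/frozen is the only place one must be careful.

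For part (iii), I would combine (i) and (ii). Writing $\mathfrak{m}_k=\mathfrak{m}\varphi_k$, the factor $\varphi_k(\xi)$ lets us insert a harmless $|\xi|^{-1}$-type gain: more concretely, I would use (i) coordinatewise. Since $|\xi|^2\varphi_k(\xi)=\sum_j (i\xi_j)(i\xi_j)\varphi_k(\xi)/(-1)$ is awkward, the cleaner route is: pick $j$ with $|\xi_j|\gtrsim 2^k$ on a conical piece of $\mathrm{supp}\,\varphi_k$, or simply observe that $\mathcal{I}_{\mathfrak{m}_k}(g;\lambda)=\sum_{j}\mathcal{I}_{(i\xi_j)\widetilde{\mathfrak{m}}_{k,j}}(g;\lambda)$ where $\widetilde{\mathfrak m}_{k,j}:=\mathfrak m\cdot(-i\xi_j|\xi|^{-2})\varphi_k$ has Fourier-in-$\xi$ transform controlled in the relevant norms by $2^{-k}$ times those of $K_k$, together with its $\nabla_v$ derivatives. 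Applying part (i) to each $j$ converts the $\xi_j$ into a $\lambda^{-1}$ together with a $\partial_{v_j}$ hitting either $\widetilde{\mathfrak m}_{k,j}$ or $g$, and then applying the two estimates of part (ii) to each of the resulting pieces — with kernel $\mathcal{F}^{-1}(\partial_{v_j}\widetilde{\mathfrak m}_{k,j})$ or $\mathcal{F}^{-1}(\widetilde{\mathfrak m}_{k,j})$ and data $g$ or $\partial_{v_j}g$ — produces \eqref{disper5.5} after summing over $j$, absorbing the constant $2^{-k}$ coming from the extra $|\xi|^{-1}$ and noting $\|\nabla_v K_k\|$-type norms already appear on the right-hand side. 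The extra power $|\lambda|$ on the left of the first line and $|\lambda|^4$ on the second (versus $|\lambda|^0$ and $|\lambda|^3$ in part (ii)) is exactly the single $\lambda^{-1}$ extracted by part (i).

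The main obstacle I anticipate is purely organizational rather than conceptual: in part (ii) one must keep straight, through the change of variables $v\mapsto w=x-\lambda v-y$, which function the $X_\lambda$-norm \eqref{disper5.2} is measuring and that the supremum over $p\in\R^3$ there precisely matches the $x$-dependence after integrating out $w$; a sign or a factor of $\lambda$ misplaced here propagates into (iii). A secondary, minor point is justifying the manipulation in part (iii) that trades $\varphi_k(\xi)$ for a $2^{-k}$ gain while keeping the $L^1_yL^\infty_v$ and $L^\infty_vL^1_y$ norms of the modified kernels comparable to those of $K_k$ and $\nabla_vK_k$ — this is where one uses that multiplication by the smooth symbol $-i\xi_j|\xi|^{-2}\varphi_k(\xi)/\varphi_{k}'$-cutoff (suitably a Mikhlin-type multiplier localized at frequency $2^k$) is bounded on the mixed-norm spaces, with operator norm $O(2^{-k})$, uniformly in $v$.
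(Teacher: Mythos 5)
Your argument is correct and matches the paper's proof essentially verbatim: (i) is integration by parts in $v$ using $i\xi_j e^{-i\lambda v\cdot\xi}=-\lambda^{-1}\partial_{v_j}e^{-i\lambda v\cdot\xi}$; (ii) is the convolution representation followed by Young/Minkowski for $L^q$ and the change of variables $v\mapsto w=x-y-\lambda v$ for the $X_\lambda$ bound; and (iii) is the decomposition $\mathfrak{m}_k=\xi_a\cdot\mathfrak{m}_k\cdot(\xi_a/|\xi|^2)\varphi_{[k-2,k+2]}(\xi)$ combined with (i) and (ii), exactly as in the paper. The only minor imprecision is writing $\varphi_k$ rather than a fattened cutoff $\varphi_{[k-2,k+2]}$ in part (iii), but you acknowledge this is a Mikhlin-type multiplier localized at $2^k$, which is what is actually needed.
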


\begin{proof} (i) The identities \eqref{disper3} follow from the definition and integration by parts in $v$.

(ii) To prove the bounds \eqref{disper5} we rewrite
\begin{equation}\label{disper6}
\begin{split}
\mathcal{I}_{\mathfrak{m}}(g;\lambda)(x)&=\int_{\R^d\times\R^d}K(y,v)g(x-y-\lambda v,v)\,dydv\\
&=|\lambda|^{-d}\int_{\R^d\times\R^d}K\big(y,\frac{x-y-w}{\lambda}\big)g\big(w,\frac{x-y-w}{\lambda}\big)\,dydw.
\end{split}
\end{equation}
The bounds in \eqref{disper5} follow from these identities.

(iii) To prove \eqref{disper5.5} we write
\begin{equation*}
\mathfrak{m}_k(\xi,v)=\xi_a\cdot \mathfrak{m}_k(\xi,v)\cdot (\xi_a/|\xi|^2)\varphi_{[k-2,k+2]}(\xi),
\end{equation*}
and then combine the identity \eqref{disper3} and the first two bounds in \eqref{disper5}.
\end{proof}

We will also need dispersive bounds on the resonant projectors $\Pi_{\iota,k,\leq p}$ and $\Pi_{\iota,k,p}\omega_{\iota,\kappa}^{-1}$.

\begin{lemma}\label{omegaioLem2}
If $\iota\in\{1,-1\}$, $\kappa\in\{-1,0,1\}$, $k\leq 4$, $p\in\Z$, $|\lambda|\geq 1+2^{-p(1+\delta)}$, and $x\in\R^3$ then
\begin{equation}\label{omegaio2y}
\begin{split}
\int_{|v|\leq |\lambda|^8}\big|(\Pi_{\iota,k,\leq p} g)(x+\lambda v,v)\big|\,dv&\lesssim |\lambda|^{-3}\|g\|_{L^1},\\
\int_{|v|\leq |\lambda|^8}\big|(\Pi_{\iota,k,p} \omega_{\iota,\kappa}^{-1}g)(x+\lambda v,v)\big|\,dv&\lesssim 2^{-p}|\lambda|^{-3}\|g\|_{L^1}.
\end{split}
\end{equation}
\end{lemma}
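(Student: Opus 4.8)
The plan is to reduce the two estimates to a single change-of-variables argument in the $v$-integral, exactly in the spirit of the dispersive bound \eqref{disper5} in Lemma \ref{disper2}. First I would use the kernel representations from Lemma \ref{omegaioLem}: by \eqref{omegaio2} we have $(\Pi_{\iota,k,\leq p}g)(x+\lambda v,v)=\int_{\R^3}g(x+\lambda v-y)K^\iota_{k,\leq p}(y,v)\,dy$, and similarly by \eqref{omegaio2x} with $\widetilde K^{\iota,\kappa}_{k,p}$ for the second quantity. For the first bound, integrating the absolute value in $v$ over $|v|\le|\lambda|^8$ and performing the substitution $w=x+\lambda v-y$ (so that $v=(w-x+y)/\lambda$, $dv=|\lambda|^{-3}dw$) gives
\begin{equation*}
\int_{|v|\le|\lambda|^8}\big|(\Pi_{\iota,k,\leq p}g)(x+\lambda v,v)\big|\,dv\lesssim |\lambda|^{-3}\int_{\R^3}\int_{\R^3}|g(w)|\,\big|K^\iota_{k,\leq p}(y,(w-x+y)/\lambda)\big|\,dy\,dw,
\end{equation*}
and the $y$-integral is bounded uniformly in the remaining variables by $\sup_v\|K^\iota_{k,\leq p}(\cdot,v)\|_{L^1_y}\lesssim 1$ from \eqref{omegaio3}, yielding $|\lambda|^{-3}\|g\|_{L^1}$. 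The second bound is identical, with the $L^1_y$-bound $\|\widetilde K^{\iota,\kappa}_{k,p}(\cdot,v)\|_{L^1_y}\lesssim 2^{-p}$ from \eqref{omegaio3x} producing the extra factor $2^{-p}$.

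The one point that requires care — and is the only real obstacle — is that the kernel $L^1_y$-bounds in \eqref{omegaio3} come with the restriction that $v$ lie in a dyadic shell $|v|\in[2^{l-1},2^{l+1}]$ with the kernel vanishing when $k+l\le-4$, and the bound "$\lesssim 1$" is uniform in $l$ but one must still sum over the shells $2^l\lesssim|\lambda|^8$. Here the hypothesis $|\lambda|\geq 1+2^{-p(1+\delta)}$ enters: together with $p\le -4$ (needed to invoke part (i) of Lemma \ref{omegaioLem}) and $k\le 4$, the support condition $2^k|v|\gtrsim 2^p$ forces $2^l\gtrsim 2^{p-k}$, so the number of nonzero shells with $2^l\lesssim |\lambda|^8$ is $O(\log(|\lambda|^8 2^{k-p}))=O(\log|\lambda|+|p|)$. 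This logarithmic loss is harmless after the power $|\lambda|^{-3}$ — or, more cleanly, one simply absorbs it by noting $|\lambda|^{8}$ can be replaced by $|\lambda|^{9}$ with room to spare, or by keeping $|\lambda|^{-3}\log$ and observing it is still $\lesssim|\lambda|^{-3}$ after trivially enlarging the implicit constant on the dyadic scales where $|\lambda|$ is bounded below. For the case $p\ge -3$ in the second bound, part (ii) of Lemma \ref{omegaioLem} applies with no shell restriction at all, so the change of variables goes through directly with the uniform bound $\|\widetilde K^{\iota,\kappa}_{k,p}(\cdot,v)\|_{L^1_y}\lesssim 2^{-p}$.

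In summary: Step 1, write both operators via their convolution kernels from Lemma \ref{omegaioLem}; Step 2, substitute $w=x+\lambda v-y$ in the $v$-integral to extract the Jacobian factor $|\lambda|^{-3}$; Step 3, bound the resulting $y$-integral by the uniform $L^1_y$ kernel estimates \eqref{omegaio3} and \eqref{omegaio3x}; Step 4, sum over the at most $O(\log|\lambda|+|p|)$ relevant dyadic $v$-shells, which the constraint $|\lambda|\geq 1+2^{-p(1+\delta)}$ renders a negligible correction to $|\lambda|^{-3}$. The bound then follows in both cases.
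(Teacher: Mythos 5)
Your reduction to a change of variables in the $v$-integral is the right starting point, and after Fubini it correctly reduces both inequalities to a Schur-type bound of the form $\sup_{z}\int_{|v|\le|\lambda|^8}|K(z+\lambda v,v)|\,dv\lesssim|\lambda|^{-3}$ (resp.\ $\lesssim 2^{-p}|\lambda|^{-3}$). This is exactly the inequality \eqref{omegaio3y} that the paper isolates. However, the step where you claim this follows from $\sup_v\|K^\iota_{k,\leq p}(\cdot,v)\|_{L^1_y}\lesssim 1$ (resp.\ from \eqref{omegaio3x}) is a genuine gap. After your substitution $w=x+\lambda v-y$, the inner integral is $\int_{\R^3}|K(y,(w-x+y)/\lambda)|\,dy$, and its second argument \emph{depends on $y$}. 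The bounds \eqref{omegaio3}, \eqref{omegaio3x} control $\|K(\cdot,v)\|_{L^1_y}$ for each \emph{fixed} $v$; they do not control the integral of $y\mapsto|K(y,\phi(y))|$ along a $y$-dependent path $\phi(y)=(w-x+y)/\lambda$ in the $v$-slot. (A simple counterexample: for kernels of the form $K(y,v)=f(y-Av)$ with $A$ linear, the path integral equals $|\det(I-A/\lambda)|^{-1}\|f\|_{L^1}$, which blows up when $\lambda$ approaches an eigenvalue of $A$.) The kernels here have precisely this kind of $v$-anisotropic structure — $|K^\iota_{k,\leq p}(y,v)|\lesssim 2^{2k+p-l}(1+2^k|y_\perp|+2^{p-l}|y_\parallel|)^{-4}$ in a frame adapted to $v$, with $|v|\approx 2^l$ — so the coupling between the $y$-argument and the $v$-argument cannot be ignored.

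Because the desired estimate \emph{is} the nontrivial Schur bound, the proof requires a structural decoupling of the $v$-dependence, and this is where your plan diverges from what actually works. The paper achieves the decoupling by Fourier-expanding the denominator $(2^p\beta+i\kappa|\xi|)^{-1}$ in the variable $\beta=2^{-p}(1+\iota v\cdot\xi)$, which produces the representation
\begin{equation*}
\widetilde{K}_{k,p}^{\iota,\kappa}(x+\lambda v,v)=\frac{1}{2\pi}\int_{\R}e^{i\rho 2^{-p}}G_{k,p}^\kappa\big(x+(\lambda+\iota\rho 2^{-p})v,\rho\big)\,d\rho,
\end{equation*}
where $G_{k,p}^\kappa(\cdot,\rho)$ has no dependence on $v$ at all. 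Now, for each fixed $\rho$, the change of variables $v\mapsto y=x+(\lambda+\iota\rho 2^{-p})v$ is a genuine linear scaling with Jacobian $|\lambda+\iota\rho 2^{-p}|^{-3}$, and $\|G_{k,p}^\kappa(\cdot,\rho)\|_{L^1_y}\lesssim 2^{-p}(1+|\rho|)^{-2}$. The argument then splits in $\rho$: for $|\rho|\le 2^{p}|\lambda|/2$ the effective scale $\lambda+\iota\rho 2^{-p}$ is comparable to $\lambda$ so the $L^1_y$ bound gives $2^{-p}|\lambda|^{-3}$; for $|\rho|\ge 2^p|\lambda|/2$ the rapid decay $\|G_{k,p}^\kappa(\cdot,\rho)\|_{L^\infty_y}\lesssim_N 2^{-p}(1+|\rho|)^{-N}$ plus the hypothesis $|\lambda|\ge 1+2^{-p(1+\delta)}$ (which guarantees $2^p|\lambda|\gtrsim|\lambda|^{\delta/2}$) suffices. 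Note in particular that your use of the hypothesis $|\lambda|\ge 1+2^{-p(1+\delta)}$ as a dyadic-shell-counting device is misplaced: that hypothesis is needed to make the large-$\rho$ tail negligible, not to control a logarithmic sum. You would need to supply this Fourier decoupling (or an equivalent mechanism) to close the proof.
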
 

\begin{proof} The two estimates are similar, so we focus on the harder bounds in the second line. Using \eqref{omegaio2x} we have
\begin{equation*}
\int_{|v|\leq |\lambda|^8}\big|(\Pi_{\iota,k,p}\omega_{\iota,\kappa}^{-1}g)(x+\lambda v,v)\big|\,dv\leq\int_{\R^3}\int_{|v|\leq |\lambda|^8}|g(y)\widetilde{K}_{k, p}^{\iota,\kappa}(x+\lambda v-y,v)|\,dv dy.
\end{equation*}
To prove the bounds in the second line of \eqref{omegaio2y} it suffices to show that
\begin{equation}\label{omegaio3y}
\sup_{x\in\R^3}\int_{|v|\leq |\lambda|^8}\big|\widetilde{K}_{k,p}^{\iota,\kappa}(x+\lambda v,v)\big|\,dv\lesssim 2^{-p}|\lambda|^{-3}.
\end{equation}

As in \eqref{omegaio9} we write
\begin{equation}\label{omegaio4y}
\widetilde{K}_{k,p}^{\iota,\kappa}(x+\lambda v,v)=\frac{1}{(2\pi)^3}\int_{\R^3}e^{i(x+\lambda v)\cdot\xi}\varphi_k(\xi)\frac{\varphi_0(2^{-p}(1+\iota v_1\xi_1))}{1+\iota v_1\xi_1+i\kappa|\xi|}\,d\xi.
\end{equation}
Let $H_{p}^{\iota,\kappa}$ denote the Fourier transform of part of the integrand, 
\begin{equation}\label{omegaio5y}
H_p^{\kappa}(\xi,\rho):=\int_{\R}e^{-i\rho\beta}\frac{\varphi_0(\beta)}{2^p\beta+i\kappa|\xi|}\,d\beta,
\end{equation}
therefore, for any $\beta\in\R$,
\begin{equation}\label{omegaio6y}
\frac{\varphi_0(\beta)}{2^p\beta+i\kappa|\xi|}=\frac{1}{2\pi}\int_{\R}e^{i\rho\beta}H_p^{\kappa}(\xi,\rho)\,d\rho.
\end{equation}
We substitute this identity into \eqref{omegaio4y}, with $\beta=2^{-p}(1+\iota v\cdot\xi)$, so
\begin{equation}\label{omegaio7y}
\begin{split}
\widetilde{K}_{k, p}^{\iota,\kappa}(x+\lambda v,v)&=\frac{1}{(2\pi)^4}\int_{\R^3}\int_{\R}e^{i(x+\lambda v)\cdot\xi}\varphi_k(\xi)e^{i\rho 2^{-p}(1+\iota v\cdot\xi)}H_p^\kappa(\xi,\rho)\,d\rho d\xi\\
&=\frac{1}{2\pi}\int_{\R}e^{i\rho 2^{-p}}G_{k,p}^\kappa(x+\lambda v+\iota\rho 2^{-p}v,\rho)\,d\rho,
\end{split}
\end{equation}
where, using also the definition \eqref{omegaio5y},
\begin{equation}\label{omegaio8y}
\begin{split}
G_{k,p}^\kappa(y,\rho)&:=\frac{1}{(2\pi)^3}\int_{\R^3}e^{iy\cdot\xi}\varphi_k(\xi)H_p^\kappa(\xi,\rho)\,d\xi\\
&=\frac{1}{(2\pi)^3}\int_{\R^3}\int_{\R}e^{iy\cdot\xi}e^{-i\rho\beta}\frac{\varphi_k(\xi)\varphi_0(\beta)}{2^p\beta+i\kappa|\xi|}\,d\beta d\xi.
\end{split}
\end{equation}

We notice that
\begin{equation}\label{omegaio9y}
\Big|D^a_\beta D^\alpha_\xi\Big(\frac{\varphi_k(\xi)\varphi_0(\beta)}{2^p\beta+i\kappa|\xi|}\Big)\Big|\lesssim_N2^{-p}2^{-|\alpha|k}
\end{equation}
if $|\xi|\in[2^{k-1},2^{k+1}]$, $|\beta|\in[1/2,2]$, $a\leq N$, and $|\alpha|\leq 4$. We can therefore integrate by parts in the formula \eqref{omegaio8y} to see that
\begin{equation}\label{omegaio10y}
|G_{k,p}^\kappa(y,\rho)|\lesssim_N2^{-p}2^{3k}(1+2^{k}|y|)^{-4}(1+|\rho|)^{-N}\qquad \text{ for any }y\in\R^3,\,\rho\in\R.
\end{equation}

We can now prove the bounds \eqref{omegaio3y}. For any $x\in\R^3$ we use the formula \eqref{omegaio7y} to estimate
\begin{equation*}
\int_{|v|\leq |\lambda|^8}\big|\widetilde{K}_{k, p}^{\iota,\kappa}(x+\lambda v,v)\big|\,dv\lesssim \int_{\R}\int_{|v|\leq |\lambda|^8}\big|G_{k,p}^\kappa(x+\lambda v+\iota\rho 2^{-p}v,\rho)\big|\,dvd\rho\lesssim I+II,
\end{equation*}
where
\begin{equation*}
\begin{split}
I&:=\int_{|\rho|\leq 2^p|\lambda|/2}\int_{|v|\leq |\lambda|^8}\big|G_{k,p}^\kappa(x+(\lambda+\iota\rho 2^{-p})v,\rho)\big|\,dvd\rho,\\
II&:=\int_{|\rho|\geq 2^p|\lambda|/2}\int_{|v|\leq |\lambda|^8}\big|G_{k,p}^\kappa(x+(\lambda+\iota\rho 2^{-p})v,\rho)\big|\,dvd\rho.
\end{split}
\end{equation*}

It follows from \eqref{omegaio10y} that $\|G_{k,p}^\kappa(.,\rho)\|_{L^1_y}\lesssim 2^{-p}(1+|\rho|)^{-2}$. Moreover $|\lambda+\iota\rho 2^{-p}|\in[|\lambda|/2,2|\lambda|]$ if $|\rho|\leq 2^p|\lambda|/2$. We can therefore make a suitable change of variables and integrate in $v$ to see that $|I|\lesssim 2^{-p}|\lambda|^{-3}$. In addition, it follows from \eqref{omegaio10y} that $\|G_{k,p}^\kappa(.,\rho)\|_{L^\infty_y}\lesssim_N 2^{-p}(1+|\rho|)^{-N}$. Notice that $2^p|\lambda|\gtrsim |\lambda|^{\delta/2}$, due to the assumption $|\lambda|\geq 1+2^{-p(1+\delta)}$. We can thus take $N$ large enough to see that $|II|\lesssim 2^{-p}|\lambda|^{-3}$ and the desired bounds \eqref{omegaio3y} follow.
\end{proof}

\subsubsection{The $S^\infty L^\infty$ class} To apply dispersive bounds in certain cases, we need to introduce the space $S^\infty L^\infty (\R^d\times\R^d)$ of multipliers $m:\R^d\times\R^d\to\mathbb{C}$ defined by the norm
\begin{equation}\label{loc25}
\|m\|_{S^\infty L^\infty}:=\|\mathcal{F}^{-1}m\|_{L^1_xL^\infty_v}=\Big\|\sup_{v\in\R^d}\Big|\frac{1}{(2\pi)^d}\int_{\R^d}m(\xi,v)e^{ix\cdot\xi}\,d\xi\Big|\Big\|_{L^1_x}.
\end{equation}

\begin{lemma}\label{ProjAB} (i) If $m,m'\in S^\infty L^\infty$ then $mm'\in S^\infty L^\infty$ and
\begin{equation}\label{loc26}
\|mm'\|_{S^\infty L^\infty}\leq \|m\|_{S^\infty L^\infty}\|m'\|_{S^\infty L^\infty}.
\end{equation}

(ii) If $k\in\Z$, and $j\in\Z_+$ then
\begin{equation}\label{loc27}
\|\mathcal{D}\varphi_k(\xi)\widetilde{\varphi}_j(v)\|_{S^\infty L^\infty}\lesssim 2^{j+k},
\end{equation}
\begin{equation}\label{loc28}
\|(\nabla_v\mathcal{D})\varphi_k(\xi)\widetilde{\varphi}_j(v)\|_{S^\infty L^\infty}\lesssim 2^{k}.
\end{equation}
Moreover, if $j+k\leq -10$ then
\begin{equation}\label{loc29}
\big\|(1+\mathcal{D}^2)^{-1}\varphi_k(\xi)\widetilde{\varphi}_j(v)\big\|_{S^\infty L^\infty}\lesssim 1.
\end{equation}
\end{lemma}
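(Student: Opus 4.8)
The plan is to prove Lemma \ref{ProjAB} by direct computation with the inverse Fourier transforms, using the algebra structure of $S^\infty L^\infty$ as in the definition \eqref{loc25}.

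First I would establish (i): if $m,m'\in S^\infty L^\infty$, write $K=\mathcal{F}^{-1}m$ and $K'=\mathcal{F}^{-1}m'$ (inverse Fourier transform in $\xi$ only, with $v$ a parameter). Then $\mathcal{F}^{-1}(mm')(x,v)=(K(\cdot,v)\ast_x K'(\cdot,v))(x)$, so for each fixed $v$ we have $|\mathcal{F}^{-1}(mm')(x,v)|\leq \int |K(x-y,v)||K'(y,v)|\,dy$. Taking the supremum over $v$ inside and then $L^1_x$, Young's inequality (or Fubini) gives $\|mm'\|_{S^\infty L^\infty}\leq \sup_v\|K(\cdot,v)\|_{L^1_x}\cdot\|\sup_v|K'(\cdot,v)|\|_{L^1_x}$; but $\sup_v\|K(\cdot,v)\|_{L^1_x}\leq \|\sup_v|K(\cdot,v)|\|_{L^1_x}=\|m\|_{S^\infty L^\infty}$, which yields \eqref{loc26}. (Alternatively, bound $\sup_v|K\ast_x K'|\leq (\sup_v|K|)\ast_x(\sup_v|K'|)$ pointwise and apply Young directly.)

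Next, for (ii) I would compute the relevant kernels explicitly. For \eqref{loc27}, $\mathcal{D}\varphi_k(\xi)\widetilde\varphi_j(v)=(|\xi|-iv\cdot\xi)\varphi_k(\xi)\widetilde\varphi_j(v)$, and its inverse $\xi$-Fourier transform at frequency scale $2^k$ is a smooth function concentrated at $|x|\lesssim 2^{-k}$ with size $\lesssim 2^{3k}\cdot 2^k(1+2^j)\cdot(1+2^k|x|)^{-N}$ — here the extra $2^k$ comes from the factor $|\xi|\approx 2^k$ or $|v\cdot\xi|\lesssim 2^j 2^k$, using $|v|\approx 2^j$ (or $|v|\lesssim 1$ when $j=0$). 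Integrating in $x$ costs $2^{-3k}$, giving $\lesssim 2^{k}\cdot 2^j\approx 2^{j+k}$ after noting $1+2^j\lesssim 2^j$ for $j\geq 1$ and $=1$ for $j=0$ where the bound $2^k$ suffices. For \eqref{loc28}, $\nabla_v\mathcal{D}=-i\xi$, so the multiplier is $-i\xi\,\varphi_k(\xi)\widetilde\varphi_j(v)$; the $\widetilde\varphi_j(v)$ factor is just a bounded function of the parameter $v$, and $\|\mathcal{F}^{-1}_\xi(\xi\varphi_k(\xi))\|_{L^1_x}\lesssim 2^k$ by the same scaling estimate, giving \eqref{loc28}. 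For \eqref{loc29}, when $j+k\leq-10$ we have $|\mathcal{D}(\xi,v)|\leq |\xi|+|v||\xi|\lesssim 2^k(1+2^j)\lesssim 2^{k+j}\ll 1$ on the support, so $(1+\mathcal{D}^2)^{-1}=\sum_{n\geq 0}(-\mathcal{D}^2)^n$ converges; by part (i) and \eqref{loc27}, $\|\mathcal{D}^2\varphi_k\widetilde\varphi_j\|_{S^\infty L^\infty}\lesssim 2^{2(j+k)}\leq 2^{-20}$, so the Neumann series converges in $S^\infty L^\infty$ with sum bounded by an absolute constant, giving \eqref{loc29}. (One should be slightly careful that the series telescopes against the fixed cutoff $\varphi_k\widetilde\varphi_j$ — i.e. write $(1+\mathcal{D}^2)^{-1}\varphi_k\widetilde\varphi_j=\varphi_k\widetilde\varphi_j\sum_n(-1)^n(\mathcal{D}^2\varphi_{[k-2,k+2]}\widetilde\varphi_{[j-2,j+2]})^n$ using that $\mathcal{D}$ is supported-compatible, or simply note $\mathcal{D}^n\varphi_k\widetilde\varphi_j$ is itself a legitimate localized multiplier — and that localization is preserved under multiplication.)

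The main obstacle, such as it is, is bookkeeping rather than anything conceptual: one must handle the $j=0$ case (where $\widetilde\varphi_0=\varphi_{\leq 0}$ means $|v|\lesssim 1$ rather than $|v|\approx 1$) uniformly with $j\geq 1$, and one must make sure the Neumann-series argument for \eqref{loc29} respects the compact support in $\xi$ so that \eqref{loc26} applies at each step. Everything else reduces to the standard estimate that a multiplier which is $C^N$ with $|D^\alpha_\xi m|\lesssim A\,2^{-|\alpha|k}$ and supported in $|\xi|\approx 2^k$ has $\|\mathcal{F}^{-1}_\xi m\|_{L^1_x}\lesssim A$, applied with the $v$-dependence carried along as a harmless parameter since the relevant $v$-factors ($\widetilde\varphi_j(v)$, and $|v|\lesssim 2^j$ bounds) are uniform on the support.
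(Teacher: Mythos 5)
Your argument is correct, but for \eqref{loc29} it takes a genuinely different route from the paper. The paper factors $1+\mathcal{D}^2=(\mathcal{D}+i)(\mathcal{D}-i)$ and, since $|\mathcal{D}|\lesssim 2^{j+k}\ll 1$ on the support so that each factor $\mathcal{D}\pm i$ is bounded away from zero, proves directly by integration by parts in $\xi$ that the kernel of $(\mathcal{D}\pm i)^{-1}\varphi_k\widetilde\varphi_j$ is $\lesssim \widetilde\varphi_j(v)\,2^{3k}(1+2^k|x|)^{-4}$; it then invokes the algebra property \eqref{loc26} once to multiply the two factors. You instead expand $(1+\mathcal{D}^2)^{-1}$ as a Neumann series and iterate \eqref{loc26} and \eqref{loc27}, using $j+k\leq -10$ to make the series geometrically convergent; the parenthetical remark about inserting fattened cutoffs $\varphi_{[k-2,k+2]}\widetilde\varphi_{[j-2,j+2]}$ between successive factors of $\mathcal{D}^2$ is exactly the point one must check, since $\mathcal{D}$ alone is not compactly supported, and you handle it correctly (the fattened cutoffs are identically $1$ on $\mathrm{supp}\,\varphi_k\widetilde\varphi_j$, so the telescoped product reproduces $\mathcal{D}^{2n}\varphi_k\widetilde\varphi_j$). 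The paper's factoring avoids any convergence bookkeeping and yields a single explicit kernel bound; your Neumann series is a softer argument that trades explicit kernel estimates for a purely algebraic reduction to \eqref{loc26}--\eqref{loc27}, at the modest cost of tracking the implicit constant through the iteration. Both are sound.
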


\begin{proof}
The bounds \eqref{loc26}, \eqref{loc27}, and \eqref{loc28} follow directly from definitions. To prove \eqref{loc29} we decompose $(1+\mathcal{D}^2)=(\mathcal{D}+i)(\mathcal{D}-i)$ and integrate by parts in $\xi$ to show that
\begin{equation*}
\Big|\int_{\R^3}\frac{1}{\mathcal{D}(\xi,v)\pm i}e^{ix\cdot\xi}\varphi_k(\xi)\widetilde{\varphi}_j(v)\,d\xi\Big|\lesssim \widetilde{\varphi}_j(v)2^{3k}(1+2^k|x|)^{-4}
\end{equation*}
for any $x,v\in\R^3$. The bounds \eqref{loc29} follow.
\end{proof}

\subsection{Pointwise estimates of characteristics}\label{CharBounds}

We prove now estimates on the functions $\widetilde{Y}$ and $\widetilde{W}$ defined in \eqref{Lan6}, which measure the nonlinear deviation of the characteristic flow. 
 
\begin{lemma}\label{derivativeschar}
For any $(s,t)\in \mathcal{I}_T^2=\{(s,t)\in[0,T]^2:\,0\leq s\leq t\leq T\}$ we have
\be\label{nov28eqn2}
  \sup_{x, v\in \R^3}   |\p_t \widetilde{Y}(x,v,s,t)-(t-s) E( x+tv,t)|   \lesssim \varepsilon_1\langle t-s\rangle \langle t\rangle^{-2+\delta}\langle s \rangle^{-1+1.1\delta},
\ee
\be\label{dec5eqn51}
  \sup_{x, v\in \R^3}   |\p_t\widetilde{W}(x,v,s,t)+E( x+tv,t)| \lesssim \varepsilon_1\langle t-s\rangle \langle t\rangle^{-2+\delta}\langle s \rangle^{-2+1.1\delta}. 
\ee
\end{lemma}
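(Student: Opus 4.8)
The plan is to differentiate the integral representations \eqref{Lan7} in $t$ and track the leading-order term against the error. From \eqref{Lan7} we have $\widetilde{W}(x,v,s,t)=-\int_s^t E(x+\tau v+\widetilde{Y}(x,v,\tau,t),\tau)\,d\tau$, so
\[
\p_t\widetilde{W}(x,v,s,t)=-E(x+tv+\widetilde{Y}(x,v,t,t),t)-\int_s^t (\nabla_x E)(x+\tau v+\widetilde{Y}(x,v,\tau,t),\tau)\cdot \p_t\widetilde{Y}(x,v,\tau,t)\,d\tau.
\]
Since $\widetilde{Y}(x,v,t,t)=0$, the boundary term is exactly $-E(x+tv,t)$, which is the quantity we subtract; likewise, differentiating $\widetilde{Y}(x,v,s,t)=\int_s^t(\tau-s)E(x+\tau v+\widetilde{Y}(x,v,\tau,t),\tau)\,d\tau$ produces the boundary term $(t-s)E(x+tv,t)$ plus an integral remainder. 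So in both cases it remains to bound the integral remainders, which involve $\nabla_x E$ and $\p_t\widetilde{Y}$ (resp.\ $\p_t\widetilde{W}$). This is naturally a Gronwall/bootstrap-in-$(s,t)$ argument: one first establishes crude bounds $\sup_{x,v}|\p_t\widetilde Y(x,v,\tau,t)|\lesssim \varepsilon_1\langle t-\tau\rangle\langle t\rangle^{-2+\delta}$ and $\sup_{x,v}|\p_t\widetilde W|\lesssim \varepsilon_1\langle t-\tau\rangle\langle t\rangle^{-2+\delta}$ (which follow from differentiating \eqref{Lan7} directly and using the dispersive decay $\|\nabla_x E(t)\|_{L^\infty}\lesssim\varepsilon_1\langle t\rangle^{-3+\delta}\ln(2+t)$ from \eqref{Laga11}--\eqref{Laga12}, together with $\|E(t)\|_{L^\infty}\lesssim\varepsilon_1\langle t\rangle^{-2+\delta}$ and the fact that the $e^{-it}$ factor on the oscillatory part does not help pointwise in $x,v$), and then feeds these back into the two remainder integrals to gain the extra $\langle s\rangle$-decay.

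The key point for the sharp $\langle s\rangle$-powers is the weight $(\tau-s)$ vs.\ $\langle \tau \rangle$ bookkeeping inside the $\tau$-integrals. For \eqref{dec5eqn51}, the remainder is $\int_s^t |\nabla_x E(\cdot,\tau)|\,|\p_t\widetilde Y(x,v,\tau,t)|\,d\tau\lesssim \varepsilon_1^2\int_s^t \langle\tau\rangle^{-3+\delta}\ln(2+\tau)\cdot\langle t-\tau\rangle\langle t\rangle^{-2+\delta}\,d\tau$; since $\langle t-\tau\rangle\leq\langle t-s\rangle$ and $\int_s^t\langle\tau\rangle^{-3+\delta}\ln(2+\tau)\,d\tau\lesssim \langle s\rangle^{-2+1.1\delta}$, this gives $\lesssim \varepsilon_1^2\langle t-s\rangle\langle t\rangle^{-2+\delta}\langle s\rangle^{-2+1.1\delta}$, which (using $\varepsilon_1\le\varepsilon_0^{3/4}\le 1$ to absorb one power of $\varepsilon_1$) is the claimed bound. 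For \eqref{nov28eqn2}, the remainder carries an extra factor $(\tau-s)\le\langle t-s\rangle$ from the kernel, costing one power and producing $\langle s\rangle^{-1+1.1\delta}$ instead. One should be slightly careful that the logarithm is absorbed into the $0.1\delta$ loss, and that the substitution $x+\tau v+\widetilde Y(x,v,\tau,t)$ is harmless because all estimates on $E$ and $\nabla_x E$ are in $L^\infty_x$ uniformly in the shift.

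The main obstacle is organizing the bootstrap cleanly: $\p_t\widetilde Y$ and $\p_t\widetilde W$ are coupled through \eqref{Lan7.5} (indeed $\p_t\widetilde Y(x,v,s,t)=-\int_s^t\p_t\widetilde W(x,v,\tau,t)\,d\tau$ is \emph{not} immediate because the $t$-derivative also hits the upper limit — one must use $\widetilde Y(x,v,s,t)=-\int_s^t\widetilde W(x,v,\tau,t)\,d\tau$ and differentiate, picking up $\widetilde W(x,v,t,t)=0$), so one has to close a system of two inequalities rather than a single scalar Gronwall estimate. A secondary technical point is that the estimates must be uniform in $x,v\in\R^3$ with \emph{no} localization in $|v|$, so one cannot use any velocity-localized dispersive gain here — only the plain $L^\infty_x$ field bounds \eqref{Laga11}--\eqref{Laga12}. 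Once the crude bounds are in place, the refinement is a one-pass substitution, so the real content is verifying that the $\tau$-integrals of $\langle\tau\rangle^{-3+\delta}\ln(2+\tau)$ and $(\tau-s)\langle\tau\rangle^{-3+\delta}\ln(2+\tau)$ produce exactly the advertised powers $\langle s\rangle^{-2+1.1\delta}$ and $\langle s\rangle^{-1+1.1\delta}$.
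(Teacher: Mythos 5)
Your strategy is essentially the paper's: differentiate the integral identities \eqref{Lan7} in $t$, isolate the boundary contribution (which is exactly the leading term, since $\widetilde{Y}(x,v,t,t)=0$), prove a crude Gronwall bound on $\p_t\widetilde{Y}$, and then feed it back into the remainder integrals with the sharper $\langle s\rangle$-bookkeeping you describe. Your quantitative estimates (the $L^\infty_x$ bounds on $E$, $\nabla_x E$, absorbing the logarithm into $1.1\delta$, and the $\tau$-integrals producing $\langle s\rangle^{-1+1.1\delta}$ and $\langle s\rangle^{-2+1.1\delta}$) all match the paper's.

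One small but genuine misstep: you write that the remainder from differentiating $\widetilde{Y}$ "involves $\nabla_x E$ and ... $\p_t\widetilde{W}$" and worry about closing "a system of two inequalities" coupled through \eqref{Lan7.5}. But the integral representation for $\widetilde{Y}$ in \eqref{Lan7} has $\widetilde{Y}$ alone inside the integrand, so both remainder integrals contain $\p_t\widetilde{Y}$, not $\p_t\widetilde{W}$. Concretely, the paper's identities \eqref{nov19eqn2} and \eqref{dec5eqn41} each have the single factor $\p_t\widetilde{Y}(x,v,\tau,t)$ in the remainder, so a single scalar Gronwall quantity
$Z_0(t)=\sup_{s\in[0,t]}\sup_{x,v}(t-s)^{-1}|\p_t\widetilde{Y}(x,v,s,t)|$
closes on its own; $\p_t\widetilde{W}$ is then recovered by direct substitution without any further fixed-point argument. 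Your detour through \eqref{Lan7.5} (differentiating $\widetilde{Y}=-\int_s^t\widetilde{W}\,d\tau$) is not incorrect — a Fubini identity shows the two routes coincide — but it manufactures a coupling that the direct computation avoids. The rest of the argument, including the "one-pass substitution" refinement and the uniformity in $x,v$, is as the paper does it.
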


\begin{proof}

It follows from \eqref{Lan7} and the assumption $\widetilde{Y}(x,v,t,t)=0$ that
\begin{equation}\label{nov19eqn2}
\begin{split}
\p_t \widetilde{Y}(x,v,s,t)&= (t-s) E( x+tv,t)\\
&+\int_{s}^t (\tau-s) \nabla_x E(x+\tau v +\widetilde{Y}(x,v,\tau,v),\tau )\cdot \p_t \widetilde{Y}(x,v,\tau,t)\,d \tau
\end{split}
\end{equation}
and
\be\label{dec5eqn41}
\p_t\widetilde{W}(x,v,s,t)=-E( x+tv,t)-\int_{s}^t\nabla_x E(x+\tau v +\widetilde{Y}(x,v,\tau,v),\tau )\cdot \p_t \widetilde{Y}(x,v,\tau,t)\,d \tau.
\ee

For $t\in[0,T]$ let 
\[
Z_0(t):=\sup_{s\in [0, t]} \sup_{x, v\in \R^3} ({t-s})^{-1} |\p_t \widetilde{Y}(x,v,s,t)|. 
\]
Using \eqref{nov19eqn2} and Lemma \ref{Laga10} (ii) we have
\begin{equation*}
\begin{split}
Z_0(t)&\lesssim  \varepsilon_1 \langle t \rangle^{-2+\delta}  + \frac{\varepsilon_1}{t-s}  \int_{s}^t (\tau-s) \langle \tau\rangle^{-3+2\delta} (t-\tau) Z_0(t) d \tau\\
&\lesssim \varepsilon_1 \langle t \rangle^{-2+\delta}  + \varepsilon_1 \int_{s}^{t}  \langle \tau\rangle^{-2+2\delta} Z_0(t) d \tau \lesssim  \varepsilon_1 \langle t \rangle^{-2+\delta}  + \varepsilon_1 Z_0(t).
\end{split}
\end{equation*}
Therefore $Z_0(t)\lesssim \varepsilon_1 \langle t \rangle^{-2+\delta}$. Using again \eqref{nov19eqn2} and Lemma \ref{Laga10} (ii) we have
\begin{equation*}
\begin{split}
|\p_t \widetilde{Y}(x,v,s,t)-(t-s) E( x+tv,t)|&\lesssim\int_{s}^t\varep_1\langle \tau\rangle^{-3+1.1\delta}\langle\tau-s\rangle\langle t-\tau\rangle \langle t\rangle^{-2+\delta}\,d \tau\\
&\lesssim\varepsilon_1 \langle t-s\rangle \langle t\rangle^{-2+\delta}\langle s \rangle^{-1+1.1\delta},
\end{split}
\end{equation*}
which gives the desired estimate \eqref{nov28eqn2}. The estimates \eqref{dec5eqn51} follow in a similar way.
\end{proof}

\begin{lemma}\label{derichar2}
For any $(s,t)\in\mathcal{I}^2_T$ and $x,v\in \R^3$ we have
\begin{equation}\label{cui6}
\begin{split}
|\widetilde{Y}(x,v,s,t)|&\lesssim    \varep_1\min\{\langle s\rangle^{-1+2\delta}\langle v\rangle, \langle s\rangle^{-1/6}\},\\
|\widetilde{W}(x,v,s,t)|&\lesssim    \varep_1\min\{\langle s\rangle^{-2+2\delta}\langle v\rangle, \langle s\rangle^{-7/6}\},
\end{split}
\end{equation}
\begin{equation}\label{cui7}
\begin{split}
|\nabla_x\widetilde{Y}(x,v,s,t)|&\lesssim \varep_1\min\{\langle s\rangle^{-2+2.1\delta}\langle v\rangle, \langle s\rangle^{-7/6}\},\\
|\nabla_x\widetilde{W}(x,v,s,t)|&\lesssim \varep_1\min\{\langle s\rangle^{-3+2.1\delta}\langle v\rangle, \langle s\rangle^{-13/6}\},
\end{split}
\end{equation}
\begin{equation}\label{cui7.5}
\begin{split}
|\nabla_v\widetilde{Y}(x,v,s,t)|&\lesssim \varep_1\min\{\langle s\rangle^{-1+2.1\delta}\langle v\rangle, \langle s\rangle^{-1/6}\},\\
|\nabla_v\widetilde{W}(x,v,s,t)|&\lesssim \varep_1\min\{\langle s\rangle^{-2+2.1\delta}\langle v\rangle, \langle s\rangle^{-7/6}\}.
\end{split}
\end{equation}
\end{lemma}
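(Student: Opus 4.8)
The plan is a continuity argument run directly on the integral identities \eqref{Lan7}, \eqref{Lan7.5} and their $x$- and $v$-derivatives. The electric-field bounds of Lemma \ref{Laga10} are available unconditionally here (they follow from the standing bootstrap hypothesis \eqref{YW12}), so the only genuinely unknown quantities are $\widetilde Y,\widetilde W$ and their derivatives, and the smallness $\varep_1\ll 1$ will absorb the feedback. First, the crude bounds: \eqref{Lan7} together with $\|E(\tau)\|_{L^\infty}\lesssim\varep_1\langle\tau\rangle^{-2+\delta}$ gives $|\widetilde W(x,v,s,t)|\lesssim\varep_1\langle s\rangle^{-1+\delta}$, hence $|v+\widetilde W|\lesssim\langle v\rangle$, so the argument $x+\tau v+\widetilde Y$ of $E$ stays controlled. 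Differentiating \eqref{Lan7} in $x$ and $v$ yields Volterra-type identities
\[
\nabla_x\widetilde Y=\int_s^t(\tau-s)(\nabla_xE)(x+\tau v+\widetilde Y,\tau)\cdot(\mathrm{Id}+\nabla_x\widetilde Y)\,d\tau,\qquad \nabla_x\widetilde W=\partial_s\nabla_x\widetilde Y,
\]
\[
\nabla_v\widetilde Y=\int_s^t(\tau-s)(\nabla_xE)(x+\tau v+\widetilde Y,\tau)\cdot(\tau\,\mathrm{Id}+\nabla_v\widetilde Y)\,d\tau,\qquad \nabla_v\widetilde W=\partial_s\nabla_v\widetilde Y,
\]
and $\|\nabla_xE(\tau)\|_{L^\infty}\lesssim\varep_1\langle\tau\rangle^{-3+\delta}\ln(2+\tau)$ makes these contractive and explains the shape of \eqref{cui6}--\eqref{cui7.5}: an extra $\nabla_x$ on $E$ buys one power of $\langle\tau\rangle^{-1}$, so $\nabla_x\widetilde Y,\nabla_x\widetilde W$ decay one power faster than $\widetilde Y,\widetilde W$, while the extra factor $\tau$ in the $\nabla_v$-identity cancels that gain, so the $\nabla_v$-derivatives inherit the rates of the functions themselves (the $2.1\delta$ versus $2\delta$ reflecting the accumulated logarithms).

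Next, the $\langle v\rangle$-weighted rates. To upgrade the crude $\langle s\rangle^{-1+\delta}$ bound on $\widetilde W$ to $\langle s\rangle^{-2+2\delta}\langle v\rangle$, split $E=E^{stat}+\Re(e^{-i\tau}E^{osc})$. The static part contributes $\lesssim\varep_1\int_s^t\langle\tau\rangle^{-3+2\delta}\,d\tau\lesssim\varep_1\langle s\rangle^{-2+2\delta}$ directly; for the oscillatory part one integrates by parts once in $\tau$ (a normal form, using $e^{-i\tau}=\tfrac{d}{d\tau}(ie^{-i\tau})$). The boundary terms are $\lesssim\varep_1\langle s\rangle^{-2+\delta}$, and the remainder carries $\partial_\tau[E^{osc}(x+\tau v+\widetilde Y,\tau)]=(\partial_tE^{osc})+(v+\widetilde W)\cdot(\nabla_xE^{osc})$, which by \eqref{Laga12} is $\lesssim\varep_1\langle\tau\rangle^{-3+2\delta}+\langle v\rangle\varep_1\langle\tau\rangle^{-3+\delta}\ln(2+\tau)$; integrating in $\tau$ closes the bound. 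The $\widetilde Y$-statement follows by integrating $\widetilde W$ once more via \eqref{Lan7.5}, and the same scheme, applied to the differentiated identities above, handles the derivative bounds.

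Finally, the uniform-in-$v$ rates $\langle s\rangle^{-1/6},\langle s\rangle^{-7/6},\langle s\rangle^{-13/6}$, which are only binding when $\langle v\rangle\gtrsim\langle s\rangle^{5/6}$ and the previous estimates degenerate. Here one exploits transversality: the flow $\tau\mapsto x+\tau v+\widetilde Y(x,v,\tau,t)$ moves at velocity $v+\widetilde W$ of size $\approx|v|$. Localizing $E$ to frequencies $|\xi|\approx 2^k$ and writing the corresponding part of $\int_s^t E_k(x+\tau v+\widetilde Y,\tau)\,d\tau$ as a $\xi$-superposition of $\tau$-integrals with phase $e^{i\tau v\cdot\xi}$ (resp. $e^{i\tau(v\cdot\xi\mp 1)}$ for the oscillatory component), integration by parts in $\tau$ on the non-tangential frequencies where $|v\cdot\xi|$ (resp. $|v\cdot\xi\mp 1|$) is bounded below gains powers of $(|v|2^k)^{-1}$; on the complementary region $\xi$ lies in a slab of thickness $\sim|v|^{-1}$ (or $|\xi|\lesssim|v|^{-1}$), of small measure, where one uses the elementary $L^\infty$ bounds for $P_kE$ from \eqref{Laga2}--\eqref{Laga3}, noting that near the resonance $v\cdot\xi=\pm1$ one has $|\xi|\approx|v|^{-1}\ll1$, exactly where the improved low-frequency bound on $\rho^{osc}$ built into the $Osc_\delta$ norm (and the set $A^{II}$) applies; the dispersive bounds of Lemmas \ref{disper2} and \ref{omegaioLem2} are the natural tools in this regime. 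Optimizing the thresholds in $2^k$ against $|v|$ and $\langle s\rangle$ produces the stated exponents, and $\widetilde Y$'s uniform bound $\langle s\rangle^{-1/6}$ then follows from $\widetilde W$'s via \eqref{Lan7.5}. I expect this last step to be the main obstacle: making the transversality/oscillatory-integral bookkeeping for large $|v|$ precise enough to reach exactly $-1/6,-7/6,-13/6$ uniformly in $t\ge s$, while handling the resonant sliver $v\cdot\xi\approx\pm1$ where the $\tau$-oscillation is lost and only the weaker (low-frequency-improved) bounds for $\rho^{osc}$ are available. A secondary, milder point is the self-referential appearance of $\widetilde W$ inside the remainder $(v+\widetilde W)\cdot\nabla_xE^{osc}$ of its own normal form, dealt with by carrying all estimates inside the continuity argument with the crude bounds as base case.
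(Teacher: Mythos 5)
Your proposal follows essentially the same route as the paper's proof: a crude Grönwall/continuity estimate from \eqref{Lan7} as the base case, the split $E=E^{stat}+\Re(e^{-i\tau}E^{osc})$ with a $\tau$-normal form for the oscillatory piece to obtain the $\langle v\rangle$-weighted rates, and then, for the uniform-in-$v$ rates at $\langle v\rangle\gtrsim\langle s\rangle^{4/5}$, frequency localization in $\xi$ together with integration by parts in $\tau$ away from the resonance $1-v\cdot\xi\approx 0$ (gaining $|1-v\cdot\xi|^{-1}$) and a measure/$L^1$-bound on the thin resonant slab — exactly the $J^{(1)}_{k,\le p_0}$ versus $J^{(1)}_{k,>p_0}$ dichotomy with threshold $p_0\sim -2m/5$ that the paper uses, closed by the low-frequency improvement $\|P_k\rho^{osc}\|_{L^1}\lesssim\varep_1 2^{-k}\langle\tau\rangle^{-1+2\delta}$. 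The two details you flag as concerns (the bookkeeping near the resonant sliver, and the mild self-reference of $\widetilde W$ inside its own normal form) are indeed the crux, and the paper resolves them precisely as you outline, with the derivative bounds obtained by running the same scheme on the differentiated Volterra identities inside continuity quantities $Z_1,Z_2$.
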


\begin{proof} {\bf{Step 1.}} To prove the bounds \eqref{cui6} we use the decomposition \eqref{Laga5} of the electric field, and rewrite the identity in the first line of \eqref{Lan7} in the form
\begin{equation}\label{uui1}
\begin{split}
\widetilde{W}(x,v,s,t)&= \widetilde{W}^{stat}(x,v,s,t;t)+ \widetilde{W}^{osc}(x,v,s,t;t),\\
\widetilde{W}^{stat}(x,v,s_1,s_2;t)&:=-\int_{s_1}^{s_2}\big[E^{stat}+\Re(e^{-i\tau}P_{\geq 0}E^{osc})\big](x+\tau v + \widetilde{Y}(x,v,\tau,t),\tau)\,d \tau,\\
\widetilde{W}^{osc} (x,v,s_1,s_2;t)&:=-\int_{s_1}^{s_2}\Re(e^{-i\tau}P_{<0}E^{osc})(x+\tau v + \widetilde{Y}(x,v,\tau,t),\tau)\,d \tau.
\end{split}
\end{equation}

In view of \eqref{Laga3} we have $\|P_{\geq 0}E^{osc}(\tau)\|_{L^\infty}\lesssim\varep_1\langle\tau\rangle^{-4+2\delta}$. Thus, using also \eqref{Laga11}, if $m\geq 0$ and $s_1\leq s_2\in[2^m-1,2^{m+1}]\cap[0,t]$ then
\begin{equation}\label{uui2}
|\widetilde{W}^{stat}(x,v,s_1,s_2;t)|\lesssim \int_{s_1}^{s_2}\varepsilon_1\langle \tau \rangle^{-3+2\delta}\,d \tau\lesssim  \varep_1 2^{-2m+2\delta m}. 
\end{equation}

To bound $|\widetilde{W}^{osc}|$ we integrate by parts in $\tau$. Notice that $\partial_\tau\widetilde{Y}(x,v,\tau,t)=\widetilde{W}(x,v,\tau,t)$, see \eqref{Lan7.5}, and $|\widetilde{W}(x,v,\tau,t)|\lesssim \varep_1\langle\tau\rangle^{-1+\delta}$ (due to \eqref{Lan7} and \eqref{Laga11}--\eqref{Laga12}). Therefore
\begin{equation}\label{uui3}
\begin{split}
|\widetilde{W}^{osc}(x,v,s_1,s_2;t)|&\lesssim \int_{s_1}^{s_2}\big\{\|(\partial_\tau P_{<0}E^{osc})(\tau)\|_{L^\infty}+\|(\nabla_xP_{<0}E^{osc})(\tau)\|_{L^\infty}\\
&\qquad\quad\,\,\times(|v|+|\widetilde{W}(x,v,\tau,t)|)\big\}\,d\tau+\sum_{\tau\in\{s_1,s_2\}}\|(P_{<0}E^{osc})(\tau)\|_{L^\infty}\\
&\lesssim\varep_12^{-2m+2\delta m}\langle v\rangle, 
\end{split}
\end{equation}
using \eqref{Laga12} again in the last line.

We prove now that if $m\geq 0$, $|v|\geq 2^{4m/5+10}$, and $s_1\leq s_2\in[2^m-1,2^{m+1}]\cap[0,t]$ then
\begin{equation}\label{uui4}
|\widetilde{W}^{osc} (x,v,s_1,s_2;t)|\lesssim\varep_12^{-7m/6}.
\end{equation}
Indeed, we decompose
\begin{equation}\label{uui5}
\begin{split}
&\widetilde{W}^{osc} (x,v,s_1,s_2;t)=-\sum_{k\leq -1}\Re I^{(1)}_k(x,v,s_1,s_2;t),\\
&I^{(1)}_k(x,v,s_1,s_2;t):=\int_{s_1}^{s_2}e^{-i\tau}(P_{k}E^{osc})(x+\tau v + \widetilde{Y}(x,v,\tau,t),\tau)\,d \tau.
\end{split}
\end{equation}
Using \eqref{Laga3} we have $\|P_kE^{osc}(\tau)\|_{L^\infty}\lesssim\varep_1\min\{\langle\tau\rangle^{-3},2^{3k}\}\langle\tau\rangle^{-1+2\delta}2^{-2k}$ for any $k\leq 0$, thus
\begin{equation}\label{uui6}
\sum_{k\leq -6m/5+60}|I^{(1)}_k(x,v,s_1,s_2;t)|\lesssim \sum_{k\leq -6m/5+60}\varep_12^{2\delta m}2^{k}\lesssim \varep_12^{-6m/5+2\delta m},
\end{equation}
for any $s_1\leq s_2\in[2^{m}-1,2^{m+1}]\cap [0,t]$, consistent with the desired bounds in \eqref{uui4}. 

On the other hand, if $k\in[-6m/5+60,0]$ (in particular $m\geq 50$) then we write
\begin{equation*}
e^{-i\tau}(P_{k}E^{osc})(x+\tau v + \widetilde{Y}(x,v,\tau,t),\tau)=\frac{1}{(2\pi)^3}\int_{\R^3}e^{-i\tau}\varphi_k(\xi)\widehat{E^{osc}}(\xi,\tau)e^{i\xi\cdot (x+\tau v + \widetilde{Y}(x,v,\tau,t))}\,d\xi.
\end{equation*}
We insert cutoff functions of the form $\varphi_{\leq p_0}(1-\xi\cdot v)$ and $\varphi_{\leq p_0}(1-\xi\cdot v)$, and decompose
\begin{equation}\label{uui7}
\begin{split}
I^{(1)}_k&=J^{(1)}_{k,\leq p_0}+J^{(1)}_{k,>p_0},\\
J^{(1)}_{k,\ast}(x,v,s_1,s_2;t)&:=\frac{1}{(2\pi)^3}\int_{s_1}^{s_2} \int_{\R^3}\varphi_{\ast}(1-\xi\cdot v)e^{-i\tau(1-\xi\cdot v)}\\
&\qquad\qquad\qquad\qquad\times\varphi_k(\xi)\widehat{E^{osc}}(\xi,\tau)e^{i\xi\cdot (x+\widetilde{Y}(x,v,\tau,t))}\,d\xi d\tau,
\end{split}
\end{equation}
where $p_0:=-2m/5$ and $\ast\in\{\leq p_0,>p_0\}$. We have $\|\widehat{P_kE^{osc}}(\tau)\|_{L^\infty}\lesssim \|P_kE^{osc}(\tau)\|_{L^1}\lesssim \varep_12^{-2k}\langle \tau\rangle^{-1+2\delta}$, using \eqref{Laga3}. Recalling that $|v|\geq 2^{4m/5+10}$, we have
\begin{equation}\label{uui8}
\big|J^{(1)}_{k,\leq p_0}(x,v,s_1,s_2;t)\big|\lesssim \int_{s_1}^{s_2} \int_{\R^3}|\varphi_{\leq p_0}(1-\xi\cdot v)||\varphi_k(\xi)||\widehat{E^{osc}}(\xi,\tau)|\,d\xi d\tau\lesssim \varep_1 2^{-6m/5+2\delta m}.
\end{equation}

On the other hand, to bound $|J^{(1)}_{k,>p_0}|$ we integrate by parts in $\tau$ and estimate
\begin{equation*}
\begin{split}
|J^{(1)}_{k,>p_0}&(x,v,s_1,s_2;t)|\lesssim\negmedspace\sum_{\tau\in \{s_1,s_2\}}\negmedspace\Big|\int_{\R^3}\frac{\varphi_{>p_0}(1-\xi\cdot v)}{1-\xi\cdot v}e^{-i\tau(1-\xi\cdot v)}\varphi_k(\xi)\widehat{E^{osc}}(\xi,\tau)e^{i\xi\cdot (x+\widetilde{Y}(x,v,\tau,t))}\,d\xi\Big|\\
&+\int_{s_1}^{s_2} \Big|\int_{\R^3}\frac{\varphi_{>p_0}(1-\xi\cdot v)}{1-\xi\cdot v}e^{-i\tau(1-\xi\cdot v)}\varphi_k(\xi)\frac{d}{d\tau}\big\{\widehat{E^{osc}}(\xi,\tau)e^{i\xi\cdot (x+\widetilde{Y}(x,v,\tau,t))}\big\}\,d\xi\Big| d\tau.
\end{split}
\end{equation*}
In view of \eqref{omegaio4x}, for $F\in\{E^{osc}(\tau),\,(\partial_\tau E^{osc})(\tau)\}$ we have
\begin{equation*}
\Big|\int_{\R^3}\frac{\varphi_{>p_0}(1-\xi\cdot v)}{1-\xi\cdot v}e^{-i\tau(1-\xi\cdot v)}\varphi_k(\xi)\widehat{F}(\xi)e^{i\xi\cdot (x+\widetilde{Y}(x,v,\tau,t))}\,d\xi\Big|\lesssim 2^{-p_0}\|P_kF\|_{L^\infty}.
\end{equation*}
Since $\|P_kE^{osc}(\tau)\|_{L^\infty}\lesssim \varep_1\langle \tau\rangle^{-2+2\delta}$ and $\|P_k(\partial_\tau E^{osc})(\tau)\|_{L^\infty}\lesssim \varep_1\langle \tau\rangle^{-3+2\delta}$ (due to \eqref{Laga3}--\eqref{Laga4}) and $|\partial_\tau\widetilde{Y}(x,v,\tau,t)|=|\widetilde{W}(x,v,\tau,t)|\lesssim \varep_1\langle\tau\rangle^{-1+\delta}$, it follows from the last two estimates that
\begin{equation}\label{uui9}
|J^{(1)}_{k,>p_0}(x,v,s_1,s_2;t)|\lesssim \varep_12^{-p_0}2^{-2m+4\delta m}
\end{equation}
for any $s_1\leq s_2\in[2^{m}-1,2^{m+1}]\cap [0,t]$. 

Recalling that $p_0=-2m/5$ and using \eqref{uui7}--\eqref{uui8} it follows that $|I^{(1)}_k(x,v,s_1,s_2;t)|\lesssim \varep_12^{-6m/5+2\delta m}$. The desired bounds \eqref{uui4} follow using also \eqref{uui6}. 

We combine now the bounds \eqref{uui2}--\eqref{uui4} and divide the interval $[s,t]$ dyadically to conclude that $|\widetilde{W}(x,v,s,t)|\lesssim    \varep_1\min\{\langle s\rangle^{-2+2\delta}\langle v\rangle, \langle s\rangle^{-7/6}\}$ for any $s\leq t\in[0,T]$. This gives the bounds in the second line of \eqref{cui6}. The bounds in the first line then follow using the identity $\partial_s\widetilde{Y}=\widetilde{W}$ and integrating on the interval $[s,t]$.

{\bf{Step 2.}} To prove the bounds \eqref{cui7} we define, for any $t\in[0,T]$,
\begin{equation}\label{uui0}
Z_1(t):=\sup_{s\in[0,t]} \sup_{x,v\in \R^3} \big[\langle v \rangle^{-1}\langle s \rangle^{2-2.1\delta}+\langle s\rangle^{7/6}\big] \big[|\nabla_x\widetilde{Y}(x,v,s,t)|+\langle s\rangle\nabla_x\widetilde{W}(x,v,s,t)|\big].
\end{equation}
In view of \eqref{Lan7} we have
\begin{equation*}
\partial_{x^a}\widetilde{W}(x,v,s,t)=-\int_{s}^t (\partial_{x^b} E)(x+\tau v + \widetilde{Y}(x,v,\tau,t),\tau)(\delta_{ab}+ \partial_{x^a} \widetilde{Y}_b(x,v,\tau,t))\,d \tau,
\end{equation*}
so we can decompose, as in \eqref{uui1}
\begin{equation}\label{uui20}
\begin{split}
\partial_{x^a}\widetilde{W}(x,v,s,t)&= \widetilde{W}^{stat}_{x,1}(x,v,s,t;t)+\widetilde{W}^{stat}_{x,2}(x,v,s,t;t) +\widetilde{W}^{osc}_x(x,v,s,t;t),\\
\widetilde{W}^{stat}_{x,1}(x,v,s_1,s_2;t)&:=-\int_{s_1}^{s_2}\big[\partial_{x^b}E^{stat}+\Re(e^{-i\tau}\partial_{x^b}P_{\geq 0}E^{osc})\big](x+\tau v + \widetilde{Y}(x,v,\tau,t),\tau)\\
&\qquad\qquad\qquad\times (\delta_{ab}+ \partial_{x^a} \widetilde{Y}_b(x,v,\tau,t))\,d \tau,\\
\widetilde{W}^{stat}_{x,2}(x,v,s_1,s_2;t)&:=-\int_{s_1}^{s_2}\Re(e^{-i\tau}P_{<0}\partial_{x^b}E^{osc})(x+\tau v + \widetilde{Y}(x,v,\tau,t),\tau)\partial_{x^a} \widetilde{Y}_b(x,v,\tau,t)\,d \tau,\\
\widetilde{W}^{osc}_x(x,v,s_1,s_2;t)&:=-\int_{s_1}^{s_2}\Re(e^{-i\tau}P_{<0}\partial_{x^a}E^{osc})(x+\tau v + \widetilde{Y}(x,v,\tau,t),\tau)\,d \tau.
\end{split}
\end{equation}

Assume that $m\geq 0$ and $s_1\leq s_2\in[2^m-1,2^{m+1}]\cap[0,t]$. Since $|\nabla_x\widetilde{Y}(x,v,\tau,t)|\leq Z_1(t)\langle \tau\rangle^{-7/6}$ it follows from \eqref{Laga2}--\eqref{Laga3} that
\begin{equation}\label{uui21}
|\widetilde{W}^{stat}_{x,1}(x,v,s_1,s_2;t)|+|\widetilde{W}^{stat}_{x,2}(x,v,s_1,s_2;t)|\lesssim \varep_12^{-3m+2.1\delta m}+\varep_1Z_1(t)2^{-3m}.
\end{equation}
Moreover, as in \eqref{uui3}, we integrate by parts in $\tau$ to show that
\begin{equation}\label{uui22}
|\widetilde{W}^{osc}_x(x,v,s_1,s_2,t)|\lesssim\varep_12^{-3m+2.1\delta m}\langle v\rangle.
\end{equation}
Finally, one can estimate as in the proof of \eqref{uui4} to show that if $|v|\geq 2^{4m/5+10}$ then
\begin{equation}\label{uui23}
|\widetilde{W}^{osc}_x(x,v,s_1,s_2;t)|\lesssim\varep_12^{-13m/6}.
\end{equation}

We combine \eqref{uui21}--\eqref{uui23} and sum over integers $m$ satisfying $2^m\gtrsim \langle s\rangle$ to conclude that
\begin{equation*}
|\nabla_x\widetilde{W}(x,v,s,t)|\lesssim \varep_1\min\{\langle s\rangle^{-3+2.1\delta}\langle v\rangle,\langle s\rangle^{-13/6}\}+\varep_1Z_1(t)\langle s\rangle^{-3}.
\end{equation*}
Using \eqref{Lan7.5} it follows that
\begin{equation}\label{uui24}
\langle s\rangle|\nabla_x\widetilde{W}(x,v,s,t)|+|\nabla_x\widetilde{Y}(x,v,s,t)|\lesssim \varep_1\min\{\langle s\rangle^{-2+2.1\delta}\langle v\rangle,\langle s\rangle^{-7/6}\}+\varep_1Z_1(t)\langle s\rangle^{-2}.
\end{equation}
In particular, using the definition \eqref{uui0}, $Z_1(t)\lesssim\varep_1$, and the desired bounds \eqref{cui7} follow.

{\bf{Step 3.}} The proof of the bounds \eqref{cui7.5} is similar. For $t\in [0,T]$ we define
\begin{equation}\label{uui30}
Z_2(t):=\sup_{s\in[0,t]} \sup_{x,v\in \R^3} \big[\langle v \rangle^{-1}\langle s \rangle^{1-2.1\delta}+\langle s\rangle^{1/6}\big] \big[|\nabla_v\widetilde{Y}(x,v,s,t)|+\langle s\rangle\nabla_v\widetilde{W}(x,v,s,t)|\big].
\end{equation}

Using the formulas \eqref{Lan7} and the decomposition \eqref{Laga5} we write
\be\label{feb15eqn32}
\partial_{v^a}\widetilde{W}(x,v,s,t)= \widetilde{W}^{stat}_{v,1}(x,v,s,t;t)+\widetilde{W}^{stat}_{v,2}(x,v,s,t;t) +\widetilde{W}^{osc}_v(x,v,s,t;t),
\ee 
where
\begin{equation*}
\begin{split}
\widetilde{W}^{stat}_{v,1}(x,v,s_1,s_2;t)&:=-\int_{s_1}^{s_2}\big[\partial_{x^b}E^{stat}+\Re(e^{-i\tau}\partial_{x^b}P_{\geq 0}E^{osc})\big](x+\tau v + \widetilde{Y}(x,v,\tau,t),\tau)\\
&\qquad\qquad\qquad\times (\tau\delta_{ab}+ \partial_{v^a} \widetilde{Y}_b(x,v,\tau,t))\,d \tau,\\
\widetilde{W}^{stat}_{v,2}(x,v,s_1,s_2;t)&:=-\int_{s_1}^{s_2}\Re(e^{-i\tau}P_{<0}\partial_{x^b}E^{osc})(x+\tau v + \widetilde{Y}(x,v,\tau,t),\tau)\partial_{v^a} \widetilde{Y}_b(x,v,\tau,t)\,d \tau,\\
\widetilde{W}^{osc}_v(x,v,s_1,s_2;t)&:=-\int_{s_1}^{s_2}\tau\Re(e^{-i\tau}P_{<0}\partial_{x^a}E^{osc})(x+\tau v + \widetilde{Y}(x,v,\tau,t),\tau)\,d \tau.
\end{split}
\end{equation*}

As in \eqref{uui21}, it follows from \eqref{Laga2}--\eqref{Laga3} that
\begin{equation*}
|\widetilde{W}^{stat}_{v,1}(x,v,s_1,s_2;t)|+|\widetilde{W}^{stat}_{v,2}(x,v,s_1,s_2;t)|\lesssim \varep_12^{-2m+2.1\delta m}+\varep_1Z_2(t)2^{-2m},
\end{equation*}
provided that $m\geq 0$ and $s_1\leq s_2\in[2^m-1,2^{m+1}]\cap[0,t]$. Moreover, as in \eqref{uui22}--\eqref{uui23}, we can integrate by parts in $\tau$ to show that
\begin{equation*}
|\widetilde{W}^{osc}_v(x,v,s_1,s_2,t)|\lesssim\varep_1\min\{2^{-2m+2.1\delta m}\langle v\rangle,2^{-7m/6}\}.
\end{equation*}
As before we combine these bounds, sum over integers $m$ satisfying $2^m\gtrsim \langle s\rangle$, and use \eqref{Lan7.5} to conclude that
\begin{equation*}
\langle s\rangle|\nabla_v\widetilde{W}(x,v,s,t)|+|\nabla_v\widetilde{Y}(x,v,s,t)|\lesssim \varep_1\min\{\langle s\rangle^{-1+2.1\delta}\langle v\rangle,\langle s\rangle^{-1/6}\}+\varep_1Z_2(t)\langle s\rangle^{-1}.
\end{equation*}
In particular $Z_2(t)\lesssim\varep_1$, using the definition \eqref{uui30}, and the desired bounds \eqref{cui7.5} follow.
\end{proof}

\section{The contributions of the initial data}\label{InitDataCon}

 In this section we bound the contributions of the initial data $f_0$. These contributions are the terms $R^\ast_{1,j,k}$ and $T^\ast_{1,j,k}$, $\ast\in\{I,II\}$, in the decomposition \eqref{sug31}--\eqref{sug34}, originating from the term $L_1$ defined in \eqref{qwp1}. Our main result in this section is the following:

\begin{proposition}\label{closeboot1}
With the notation in \eqref{bvn0}--\eqref{bvn2}, we have
\begin{equation}\label{bvn3}
\Vert \rho^{stat}_{1,I}\Vert_{Stat_\delta}+\Vert \rho^{stat}_{1,II}\Vert_{Stat_\delta}+\Vert \rho^{osc}_{1,I}\Vert_{Osc_\delta}+\Vert \rho^{osc}_{1,II}\Vert_{Osc_\delta}\lesssim\varep_0.
\end{equation}
\end{proposition}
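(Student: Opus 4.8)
The plan is to reduce everything to pointwise and weighted bounds on the data profile $L_1$ and its derivatives, and then to feed these into the dispersive estimates of Lemma \ref{disper2} and the multiplier estimates of Lemma \ref{ProjAB}. Since $X(x+tv,v,0,t)=x+\widetilde{Y}(x,v,0,t)$ and $V(x+tv,v,0,t)=v+\widetilde{W}(x,v,0,t)$, the definition \eqref{qwp1} reads $L_1(x,v,t)=f_0(x+\widetilde{Y}(x,v,0,t),v+\widetilde{W}(x,v,0,t))$. Using Lemmas \ref{derivativeschar}--\ref{derichar2} at $s=0$ (so $|\widetilde{Y}|+|\nabla_{x,v}\widetilde{Y}|+|\nabla_{x,v}\widetilde{W}|\lesssim\varep_1$, and $|\partial_t\widetilde{Y}|\lesssim\varep_1\langle t\rangle^{-1+\delta}$, $|\partial_t\widetilde{W}|\lesssim\varep_1\langle t\rangle^{-2+\delta}$ via $\partial_t\widetilde{Y}=tE(\cdot+tv,t)+O(\cdots)$ and the field bounds of Lemma \ref{Laga10}), together with a Jacobian change of variables legitimate because $\|\nabla_x\widetilde{Y}\|\lesssim\varep_1\ll1$, the hypothesis \eqref{bootinit} gives, for $L_{1,j}:=\widetilde{\varphi}_j(v)L_1$, bounds of the shape $\|L_{1,j}(t)\|_{L^1_vL^q_x}\lesssim 2^{-3j/2}\varep_0$ and $\|L_{1,j}(t)\|_{X_t}\lesssim 2^{-9j/2}\varep_0$ (with $X_t$ as in \eqref{disper5.2}), and the same for $\nabla_xL_{1,j},\nabla_vL_{1,j}$, and for $\partial_tL_{1,j}$ up to a further factor $\varep_1\langle t\rangle^{-1+\delta}$. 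The essential point is the honest gain $2^{-9j/2}$ (resp.\ $2^{-3j/2}$ after losing the $v$-volume $2^{3j}$) coming from the weight $\langle v\rangle^{4.5}$ in \eqref{bootinit}; this is what will make all sums over $j\in\Z_+$ converge.

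For the static pieces, $R^\ast_{1,j,k}$ is the operator $\mathcal I_{\mathfrak m}(L_{1,j}(t);t)$ of \eqref{disper1} with $\mathfrak m=\varphi_k(\xi)$ for $\ast=I$ and $\mathfrak m=\varphi_k(\xi)\mathcal D^2(1+\mathcal D^2)^{-1}$ for $\ast=II$. I would bound $\|P_\ell\rho^{stat}_{1,\ast}(t)\|_{L^1}$ and $\langle t\rangle^3\|P_\ell\rho^{stat}_{1,\ast}(t)\|_{L^\infty}$ by summing in $j$ (only $k\approx\ell$ and $2^m\approx\langle t\rangle$ contribute), combining: the $3$D free-transport decay $|\lambda|^3\|\mathcal I_{\mathfrak m}(g;\lambda)\|_{L^\infty}\lesssim\|g\|_{X_\lambda}\|\mathcal F^{-1}\mathfrak m\|_{L^1_yL^\infty_v}$ from Lemma \ref{disper2}(ii); one $v$-derivative to extract the extra factor $(\langle t\rangle2^k)^{-1}$ from Lemma \ref{disper2}(iii); one $x$-derivative to gain $2^{-k^+}$ (needed for the $\langle\nabla_x\rangle$ in $Stat_\delta$); the bounds $\|\mathcal D\varphi_k\widetilde{\varphi}_j\|_{S^\infty L^\infty}\lesssim2^{j+k}$ and $\|(1+\mathcal D^2)^{-1}\varphi_k\widetilde{\varphi}_j\|_{S^\infty L^\infty}\lesssim1$ of Lemma \ref{ProjAB}, which give the low-frequency gain $2^{2(j+k)}$ on $A^{II}$; and the $v$-weight gain above. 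The role of the cut set \eqref{sug30} is decisive here: on $A^I$, when $m<\delta^{-4}$ the weight $\langle t\rangle^{1-2\delta}$ is harmless; when $j>19m/20$ the $v$-weight alone crushes every positive power of $\langle t\rangle$; and when $k+j+\delta m/3>0$ either $\langle t\rangle2^k\geq1$, so the $v$-derivative gain $(\langle t\rangle2^k)^{-1}\lesssim\langle t\rangle^{-1}2^{j+\delta m/3}$ pairs with $2^{-3j/2}$ to produce $\langle t\rangle^{-1+\delta/3}$, or $2^k<\langle t\rangle^{-1}$ which forces $j>19m/20$. On $A^{II}$ the analogue uses the $\mathcal D^2/(1+\mathcal D^2)$ gain: if $2^k<\langle t\rangle^{-1}$ then $2^{2(j+k)}\lesssim\langle t\rangle^{-2}$ outright, and otherwise the $v$-derivative gain applies. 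In every case the endpoint bound is $\lesssim\langle t\rangle^{-1+c\delta}2^{-cj}\varep_0$, summable in $j$, which is the $Stat_\delta$ bound.

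For the oscillatory pieces I would first rewrite $T^I_{1,j,k}(x,t)=-i\int_0^te^{is}e^{-(t-s)|\nabla|}G_{1,j,k}(x,s)\,ds$, where $G_{1,j,k}(x,s):=\int_{\R^3}L_{1,j,k}(x-sv,v,s)\,dv=\mathcal I_{\varphi_k}(L_{1,j}(s);s)(x)$ is exactly a free-transport density of the type treated above. Since $\mathcal F^{-1}(e^{-(t-s)|\xi|}\varphi_k)$ has $L^1_y$-norm $\lesssim1$ and $L^\infty_y$-norm $\lesssim2^{3k}e^{-(t-s)2^{k-1}}$, the $s$-integral concentrates on $[t-2^{-k},t]$; together with the (weak) decay of $G_{1,j,k}$ and the $A^I$ dichotomy this gives $\lesssim\langle t\rangle^{-3+c\delta}\langle2^k\rangle^{-1}2^{-cj}\varep_0$ in $L^\infty$ and $\lesssim\langle t\rangle^{c\delta}2^{-cj}\varep_0$ in $L^1$, exactly the loss tolerated by the weaker weight $\langle t\rangle^{-\delta}$ in $\|\cdot\|_{Osc_\delta}$. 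For $\nabla_{x,t}T^I_{1,j,k}$ I use $\partial_tT^I_{1,j,k}=-ie^{it}G_{1,j,k}-|\nabla|T^I_{1,j,k}$: the second term is covered above, and the first by the genuinely decaying bounds on $G_{1,j,k}$ itself, which on $A^I$ carry the $v$-derivative gain $(\langle t\rangle2^k)^{-1}$ needed to reach $\langle t\rangle^{-1+2\delta}$. On $A^{II}$ I instead invoke the representation $T^{II}_{1,j,k}$ of \eqref{sug13.4}: the $\partial_s$-term is smaller by a factor $\sim\|E(s)\|\lesssim\varep_1\langle s\rangle^{-2+\delta}$ (because $\partial_sL_1$ is essentially $\nabla f_0\cdot\partial_s(\widetilde{Y},\widetilde{W})$, governed by $E$), hence contributes $\lesssim\varep_0\varep_1\lesssim\varep_0$, and the initial-time term $e^{-t|\nabla|}\int(1-i\mathcal D)^{-1}L_{1,j,k}(\cdot,v,0)\,dv$ is handled via $2^{3k}e^{-t2^{k-1}}\lesssim\langle t\rangle^{-3}$ and $\|(1-i\mathcal D)^{-1}\varphi_k\widetilde{\varphi}_j\|_{S^\infty L^\infty}\lesssim1$, the latter valid on $A^{II}$ since there $|v\cdot\xi|\lesssim2^{j+k}\lesssim2^{-\delta m/3}\ll1$.

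The hard part will be the low-frequency, long-time regime, where the exponential damping $e^{-(t-s)|\nabla|}$ of the oscillatory kernel is essentially inactive and the modified free-transport densities $R^\ast_{1,j,k}$ and $G_{1,j,k}$ decay only polynomially with loss — this is precisely the regime created by the failure of the Penrose condition. The resolution relies on the triple interplay of: (i) the velocity weight $\langle v\rangle^{4.5}$, yielding $2^{-cj}$ after integration in $v$; (ii) the dispersive $v$-derivative gain $(\langle t\rangle2^k)^{-1}$ from Lemma \ref{disper2}(iii) in the range $\langle t\rangle2^k\gtrsim1$, and the multiplier gain $\mathcal D^2/(1+\mathcal D^2)\lesssim2^{2(j+k)}$ (or crude Bernstein $2^{3k}$) in the range $\langle t\rangle2^k\lesssim1$; and (iii) the precise thresholds built into the definition \eqref{sug30} of $A^I$ and $A^{II}$, which are chosen so that at least one of these gains is always available in the needed strength. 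Organizing the resulting case distinctions — sign of $k$, size of $j$ versus $m$, and $\ast\in\{I,II\}$ — is the bulk of the labor, but each individual case reduces to the elementary estimates above.
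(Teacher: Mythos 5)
Your proposal is essentially the paper's own proof. You first establish profile bounds on $L_{1,j}$ (and its $x,v,t$-derivatives) in $L^\infty_v L^q_x$ and in the $X_t$-norm with the $v$-weight gain $2^{-4.5j}$, via the change-of-variables argument and the characteristics bounds at $s=0$ — this is precisely Lemma \ref{L1kjBou}. You then feed these into Lemma \ref{disper2}(ii)–(iii), Lemma \ref{ProjAB}, and the $A^I/A^{II}$ partition, and close $\partial_t T^I$ by the identity $(\partial_t+|\nabla|)T^I_{1,j,k}=-ie^{it}R^I_{1,j,k}$; for $T^{II}$ you use \eqref{sug13.4} together with the smallness of $\partial_s L_{1,j,k}$ — all exactly as in the paper's proof of \eqref{bvn4}–\eqref{bvn6}.

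One small imprecision: on $A^{II}$ you write that if $2^k<\langle t\rangle^{-1}$ then $2^{2(j+k)}\lesssim\langle t\rangle^{-2}$ ``outright.'' That inequality is false in general for this range of $(j,k,m)$ (e.g.\ $j$ can be comparable to $-k-m$). What is actually true, and what the paper's arithmetic in the $\ast=II$ step of \eqref{bvn4} verifies, is that the \emph{combination} $2^{2(j+k)}\cdot2^{-4j/3}$ (multiplier gain times $v$-weight gain in $L^1_v$), together with the constraints $k<-m$ and $j\leq19m/20$, yields the needed $\langle t\rangle^{-(1-2\delta)}2^{-\delta^2 j}$. The strategy is right; just don't claim the multiplier gain carries the full time decay alone.
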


We would like to use the defining formulas \eqref{deco1} and \eqref{sug13.2}--\eqref{sug13.4} and Lemma \ref{disper2} (with $d=3$). For this we need several bounds on the functions $L_{1,j,k}$.

\begin{lemma} \label{L1kjBou}
With $L_{1,j,k}=L_{1,j,k}(x,v,s)$ defined as in \eqref{qwp1}--\eqref{qwp2}, we have
\begin{equation}\label{bvn8}
\begin{split}
&\sup_{s\in[0,T]}\big\{\big\|\partial_x^\alpha\partial_v^\beta L_{1,j,k}(s)\big\|_{L^\infty_vL^1_x}
+\langle s\rangle^{1-\delta}\big\|(\partial_sL_{1,j,k})(s)\big\|_{L^\infty_vL^1_x}\big\}\lesssim\varep_0 2^{-4.5j},\\
&\sup_{s\in[0,T]}\big\{\big\|\partial_x^\alpha\partial_v^\beta L_{1,j,k}(s)\big\|_{L^\infty_vL^\infty_x}+\langle s\rangle^{1-\delta}\big\|(\partial_sL_{1,j,k})(s)\big\|_{L^\infty_vL^\infty_x}\big\}\lesssim\varep_0 2^{-4.5j},\\
&\sup_{s\in[1,T]}\big\{\big\|\partial_x^\alpha\partial_v^\beta L_{1,j,k}(s)\big\|_{X_s}+\langle s\rangle^{1-\delta}\big\|(\partial_sL_{1,j,k})(s)\big\|_{X_s}\big\}\lesssim\varep_0 2^{-4.5j},
\end{split}
\end{equation}
for any $(j,k)\in\Z_+\times\Z$ and multi-indices $\alpha,\beta$ with $|\alpha|+|\beta|\leq 1$.
\end{lemma}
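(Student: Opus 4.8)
The plan is to reduce the three families of bounds in \eqref{bvn8} to pointwise estimates on the profiles $L_{1,j}$ (before the $P_k$ localization) and then to change variables in the spatial integral, feeding in the characteristic estimates of Lemmas \ref{derivativeschar}--\ref{derichar2} together with the weight $\langle v\rangle^{4.5}$ built into the data norm \eqref{bootinit}.

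\textbf{Reduction past $P_k$.} Since $P_k$ acts only in the $x$ variable it commutes with $\partial_x,\partial_v,\partial_s$, and it is bounded with norm $\|\mathcal{F}^{-1}\varphi\|_{L^1}$ on $L^1_x$ and on $L^\infty_x$, uniformly in $k$; it is also bounded on $X_s$ uniformly in $k$, since writing $P_kg(w,v)=\int K_k(z)\,g(w-z,v)\,dz$ with $\|K_k\|_{L^1}\lesssim1$ and translating $w\mapsto w-z$ inside the affine slice $w\mapsto(w,p-w/s)$ gives $\|P_kg\|_{X_s}\le\|K_k\|_{L^1}\|g\|_{X_s}$. Hence it suffices to prove \eqref{bvn8} with $L_{1,j}$ in place of $L_{1,j,k}$.

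\textbf{Pointwise reduction to $f_0$.} By \eqref{Lan6} one has $L_1(x,v,s)=f_0\big(x+\widetilde{Y}(x,v,0,s),\,v+\widetilde{W}(x,v,0,s)\big)$ and $L_{1,j}=\widetilde{\varphi}_j(v)L_1$. Evaluating Lemma \ref{derichar2} at current time $0$, all the minima in \eqref{cui6}--\eqref{cui7.5} collapse (since $\langle0\rangle=1$ and $\langle v\rangle\ge1$), so $|\widetilde{Y}|+|\widetilde{W}|+|\nabla_x\widetilde{Y}|+|\nabla_x\widetilde{W}|+|\nabla_v\widetilde{Y}|+|\nabla_v\widetilde{W}|\lesssim\varep_1$ uniformly in $x,v,s$; and Lemma \ref{derivativeschar} together with \eqref{Laga11}--\eqref{Laga12} (which bound both the remainder and the main term $\sim sE(x+sv,s)$ of \eqref{nov28eqn2}--\eqref{dec5eqn51} by $\varep_1\langle s\rangle^{-1+\delta}$) gives $|\partial_s\widetilde{Y}(x,v,0,s)|+|\partial_s\widetilde{W}(x,v,0,s)|\lesssim\varep_1\langle s\rangle^{-1+\delta}$. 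The chain rule, together with $|\nabla_v\widetilde{\varphi}_j|\lesssim1$ and the fact that $\widetilde{\varphi}_j$ and $\nabla_v\widetilde{\varphi}_j$ are supported where $|v|$ lies in a fixed-ratio dilate of the support of $\widetilde{\varphi}_j$, then yields: for every $G$ of the form $\partial_x^\alpha\partial_v^\beta L_{1,j}$ with $|\alpha|+|\beta|\le1$, or of the form $\langle s\rangle^{1-\delta}\partial_sL_{1,j}$,
\[ |G(x,v,s)|\ \lesssim\ \widetilde{\varphi}_{[j-2,j+2]}(v)\sum_{|\mu|+|\nu|\le1}\big|(\partial_x^\mu\partial_v^\nu f_0)\big(x+\widetilde{Y}(x,v,0,s),\,v+\widetilde{W}(x,v,0,s)\big)\big|. \]

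\textbf{Change of variables and the velocity weight.} Choose $\overline{\varep}$ so small that $\varep_1\le\overline{\varep}^{3/4}$ makes all the $O(\varep_1)$ quantities above bounded by $1/2$. For fixed $v,s$ the map $\Psi_{v,s}:x\mapsto x+\widetilde{Y}(x,v,0,s)$ has differential $I+\nabla_x\widetilde{Y}=I+O(\varep_1)$, hence (from $\|D\Psi_{v,s}-I\|\le1/2$, which forces injectivity and properness) is a global $C^1$-diffeomorphism of $\R^3$ with $|\det D\Psi_{v,s}|\in[1/2,2]$. Set $g(y):=\sup_{v'}\langle v'\rangle^{4.5}\sum_{|\mu|+|\nu|\le1}|(\partial_x^\mu\partial_v^\nu f_0)(y,v')|$, so that $\|g\|_{L^1}\lesssim\varep_0$ by \eqref{bootinit} and $|(\partial_x^\mu\partial_v^\nu f_0)(y,v')|\le g(y)\langle v'\rangle^{-4.5}$ for $|\mu|+|\nu|\le1$. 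On $\mathrm{supp}\,\widetilde{\varphi}_{[j-2,j+2]}(v)$ one has $|v+\widetilde{W}|\ge|v|-C\varep_1\gtrsim2^j$ when $j$ is large, while always $\langle v+\widetilde{W}\rangle\ge1$, so $\langle v+\widetilde{W}\rangle^{-4.5}\lesssim2^{-4.5j}$. Changing variables $y=\Psi_{v,s}(x)$ in $\int_{\R^3}|G(x,v,s)|\,dx$ and inserting these bounds gives $\int_{\R^3}|G(x,v,s)|\,dx\lesssim2^{-4.5j}\|g\|_{L^1}\lesssim\varep_0 2^{-4.5j}$, uniformly in $v$ — the $L^\infty_vL^1_x$ part of \eqref{bvn8}. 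The $L^\infty_vL^\infty_x$ part is immediate from $|(\partial^\mu f_0)(y,v')|\le\varep_0\langle v'\rangle^{-4.5}$. For the $X_s$ part ($s\ge1$) one runs the identical computation inside the slice $w\mapsto(w,p-w/s)$: now the relevant substitution is $w\mapsto w+\widetilde{Y}(w,p-w/s,0,s)$, with differential $I+\nabla_x\widetilde{Y}-s^{-1}\nabla_v\widetilde{Y}=I+O(\varep_1)$ — the extra term being $O(\varep_1)$ precisely because $s\ge1$ — so it is again a global diffeomorphism with Jacobian in $[1/2,2]$, and the same estimates yield $\int_{\R^3}|G(w,p-w/s,s)|\,dw\lesssim\varep_0 2^{-4.5j}$ uniformly in $p$, i.e.\ $\|G(s)\|_{X_s}\lesssim\varep_0 2^{-4.5j}$. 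Combined with the reduction of the first step, this proves \eqref{bvn8}.

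\textbf{Anticipated difficulty.} Nothing here is deep; the points that need care are (i) that $\Psi_{v,s}$ — and, in the $X_s$ case, its slice variant — is a \emph{global} diffeomorphism with Jacobian in $[1/2,2]$ (this is where the smallness of $\varep_1$, and for the $X_s$ bound the restriction $s\ge1$, actually enter), and (ii) that the velocity weight $\langle v\rangle^{-4.5}$ survives the shift $v\mapsto v+\widetilde{W}$, which is immediate since $|\widetilde{W}|\lesssim\varep_1\ll2^j$ on the relevant velocity support. The only genuinely new ingredient is the handling of the non-standard norm $X_s$ — its invariance under $P_k$ and under the slice change of variables — and both of those reduce to the elementary translation argument noted above.
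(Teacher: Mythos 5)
Your proposal is correct and follows essentially the same route as the paper's proof: reduce the pointwise bounds on $\partial_x^\alpha\partial_v^\beta L_{1,j,k}$ and $\partial_s L_{1,j,k}$ to the data $f_0$ via the chain rule and the characteristic bounds (Lemmas \ref{derivativeschar}--\ref{derichar2}), absorb the velocity weight $\langle v\rangle^{-4.5}$ on $\mathrm{supp}\,\widetilde{\varphi}_j$ after a small shift by $\widetilde W$, then use that $x\mapsto x+\widetilde Y(x,v,0,s)$ (and its $X_s$-slice variant with differential $I+\nabla_x\widetilde Y-s^{-1}\nabla_v\widetilde Y$) is a global diffeomorphism with bounded Jacobian. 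The only cosmetic difference is that you dispose of $P_k$ up front by noting it is bounded on $L^1_x,L^\infty_x$ and $X_s$ uniformly in $k$ (the translation argument you give for $X_s$-boundedness is valid), whereas the paper keeps the kernel $K_k$ in the integral and changes variables afterward; both organizations work.
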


\begin{proof} We use the initial-data assumptions \eqref{bootinit}. Let
\begin{equation}\label{bvn8.1}
F_0(x,v):=|f_0(x,v)|+|\nabla_xf_0(x,v)|+|\nabla_vf_0(x,v)|,\qquad F_0^\ast(x):=\sup_{v\in\R^3} |F_0(x,v)|\langle v\rangle^{4.5}.
\end{equation}
Notice that, for any $(x,v,s)\in\R^3\times\R^3\times[0,T]$ we have
\begin{equation*}
L_{1,j,k}(x,v,s)=\widetilde{\varphi}_j(v)\int_{\R^3}L_1(x-y,v,s)K_k(y)\,dy,
\end{equation*}
where $K_k:=\mathcal{F}^{-1}(\varphi_k)$. Thus
\begin{equation}\label{bvn8.2}
\begin{split}
|\partial_x^\alpha\partial_v^\beta &L_{1,j,k}(x,v,s)|+\langle s\rangle^{1-\delta}|\partial_s L_{1,j,k}(x,v,s)|\\
&\lesssim \widetilde{\varphi}_{[j-2,j+2]}(v)\sum_{\mathcal{T}\in\{Id,\nabla_x,\nabla_v,\langle s\rangle^{1-\delta}\partial_s\}}\int_{\R^3}|\mathcal{T}L_1(x-y,v,s)||K_k(y)|\,dy.
\end{split}
\end{equation}
We examine the formula in the first line of \eqref{qwp1} and use the bounds \eqref{nov28eqn2}, \eqref{dec5eqn51}, \eqref{cui7}, and \eqref{cui7.5}. 
It follows that
\begin{equation*}
|\mathcal{T}L_1(z,v,s)|\lesssim F_0(z+\widetilde{Y}(z,v,0,s),v+\widetilde{W}(z,v,0,t))\lesssim F_0^\ast(z+\widetilde{Y}(z,v,0,s))\langle v\rangle^{-4.5},
\end{equation*}
for any $\mathcal{T}\in\{Id,\nabla_x,\nabla_v,\langle s\rangle^{1-\delta}\partial_s\}$. Using also \eqref{bvn8.2} we have
\begin{equation}\label{bvn8.3}
\begin{split}
|\partial_x^\alpha\partial_v^\beta &L_{1,j,k}(x,v,s)|+\langle s\rangle^{1-\delta}|\partial_s L_{1,j,k}(x,v,s)|\\
&\lesssim \widetilde{\varphi}_{[j-2,j+2]}(v)\langle v\rangle^{-4.5}\int_{\R^3}F_0^\ast(x-y+\widetilde{Y}(x-y,v,0,s))|K_k(y)|\,dy.
\end{split}
\end{equation}

The assumptions \eqref{bootinit} show that $\|F_0^\ast\|_{L^\infty_x}+\|F_0^\ast\|_{L^1_x}\lesssim\varep_0$. The $L^\infty_vL^\infty_x$ bounds in the second line of \eqref{bvn8} then follow. 
The $L^\infty_vL^1_x$ bounds in the first line also follow, once we notice that the mapping $x\mapsto x-y+\widetilde{Y}(x-y,v,0,s)$ is a global change of coordinates on $\R^3$ with Jacobian $\approx 1$ for any $y,v\in\R^3$, due to the bounds \eqref{cui7}. 

Finally, the bounds on the $X_s$ norms in the third line of \eqref{bvn8} also follow using the definition \eqref{disper5.2} and the observation that the mapping $x\mapsto x-y+\widetilde{Y}(x-y,p-x/s,0,s)$ is a global change of coordinates on $\R^3$ with Jacobian $\approx 1$ for any $y,p\in\R^3$, $s\geq 1$, due to the bounds \eqref{cui7}--\eqref{cui7.5}. This completes the proof of the lemma.
\end{proof}

\noindent \textit{Proof of Proposition \ref{closeboot1}.} In view of the definitions, it suffices to prove that for $\ast\in\{I,II\}$, $(j,k,m)\in A^\ast$, $t\in[2^{m-1},2^{m+1}]$ (or $t\in[0,2]$ if $m=0$) we have
\begin{align}
2^{m(1-2\delta)}2^{k^+}\big[\|R^{\ast}_{1,j,k}(t)\|_{L^1_x}+2^{3m}\|R^{\ast}_{1,j,k}(t)\|_{L^\infty_x}\big]&\lesssim\varep_02^{-\delta^2j},\label{bvn4}\\
(2^{-\delta m}+2^{m(1-2\delta)}2^{k})\big[\|T^{\ast}_{1,j,k}(t)\|_{L^1_x}+2^{3m}\|T^{\ast}_{1,j,k}(t)\|_{L^\infty_x}\big]&\lesssim\varep_02^{-\delta^2j},\label{bvn5}\\
2^{m(1-2\delta)}\big[\|(\partial_t+|\nabla|)T^{\ast}_{1,j,k}(t)\|_{L^1_x}+2^{3m}\|(\partial_t+|\nabla|) T^{\ast}_{1,j,k}(t)\|_{L^\infty_x}\big]&\lesssim\varep_02^{-\delta^2j}.\label{bvn6}
\end{align}
We prove these bounds in several steps.

{\bf{Proof of \eqref{bvn4}.}} Assume first that $\ast=I$, so we start from the formula \eqref{deco1}. We use also Lemma \ref{disper2} with $d=3$ and $m(\xi,v)=\varphi_{[k-4,k+4]}(\xi)\widetilde{\varphi}_{[j-4,j+4]}(v)$, $\|m\|_{S^\infty L^\infty}\lesssim 1$. Thus
\begin{equation}\label{bvn9}
\begin{split}
2^{k^+}\|R^I_{1,j,k}(t)\|_{L^1_x}&\lesssim \|L_{1,j,k}(t)\|_{L^1_xL^1_v}+\|\nabla_xL_{1,j,k}(t)\|_{L^1_xL^1_v}\lesssim\varepsilon_02^{-j},\\
2^{k^+}\|R^I_{1,j,k}(t)\|_{L^\infty_x}&\lesssim \|L_{1,j,k}(t)\|_{L^1_vL^\infty_x}+\|\nabla_xL_{1,j,k}(t)\|_{L^1_vL^\infty_x}\lesssim\varepsilon_02^{-j},\\
\end{split}
\end{equation}
using the bounds \eqref{disper5} and Lemma \ref{L1kjBou}. These bounds suffice if $2^m\lesssim 1$.

 On the other hand, if $m\geq \delta^{-6}$, after using \eqref{disper5}, \eqref{disper5.5}, and Lemma \ref{L1kjBou}, we have 
\begin{equation}\label{bvn10}
\begin{split}
(1+2^k2^m)\|R^I_{1,j,k}(t)\|_{L^1_x}&\lesssim \|L_{1,j,k}(t)\|_{L^1_xL^1_v}+\|\nabla_vL_{1,j,k}(t)\|_{L^1_xL^1_v}\lesssim\varepsilon_02^{-4j/3},\\
(1+2^k2^m)2^{3m}\|R^I_{1,j,k}(t)\|_{L^\infty_x}&\lesssim \|L_{1,j,k}(t)\|_{X_t}+\|\nabla_vL_{1,j,k}(t)\|_{X_t}\lesssim\varepsilon_02^{-4j/3},\\
\end{split}
\end{equation}
  The desired bounds \eqref{bvn4} follow for $\ast=I$, since $2^{4j/3}(1+2^{k+m})\gtrsim 2^{\delta^2j}2^{k^+}2^{m(1-2\delta)}$ for any $(j,k,m)\in A^I$ with $m\geq \delta^{-6}$.

Assume now that $\ast=II$, so we start from the formula \eqref{sug13.2}. We use also Lemma \ref{disper2} with $d=3$ and
\begin{equation*}
m'(\xi,v)=\varphi_{[k-4,k+4]}(\xi)\widetilde{\varphi}_{[j-4,j+4]}(v)\frac{\mathcal{D}(\xi,v)^2}{1+\mathcal{D}(\xi,v)^2},
\end{equation*}
which satisfies $\|m'\|_{S^\infty L^\infty}+\|\nabla_vm'\|_{S^\infty L^\infty}\lesssim 2^{2j+2k}$ if $j+k\leq -20$ due to Lemma \ref{ProjAB}. Thus, if $(j,k,m)\in A^{II}$ (in particular $j+k\leq 0$), we have 
\begin{equation*}
\begin{split}
(1+2^k2^m)\|R^{II}_{1,j,k}(t)\|_{L^1_x}&\lesssim 2^{k+j}\big[\|L_{1,j,k}(t)\|_{L^1_xL^1_v}+\|\nabla_vL_{1,j,k}(t)\|_{L^1_xL^1_v}\big]\lesssim\varepsilon_02^{-4j/3}2^{k+j},\\
(1+2^k2^m)2^{3m}\|R^{II}_{1,j,k}(t)\|_{L^\infty_x}&\lesssim 2^{k+j}\big[\|L_{1,j,k}(t)\|_{X_t}+\|\nabla_vL_{1,j,k}(t)\|_{X_t}\big]\lesssim\varepsilon_02^{-4j/3}2^{k+j}.
\end{split}
\end{equation*}
Since $2^{j/3}2^{-k}(1+2^{k+m})\gtrsim 2^{\delta^2j}2^{k^+}2^{m(1-2\delta)}$ for any $(j,k,m)\in A^{II}$, the desired bounds \eqref{bvn4} follow if $\ast=II$. 

{\bf{Proof of \eqref{bvn5} when $\ast=I$.}} Using \eqref{disper5} and Lemma \ref{L1kjBou} we have
\begin{equation*}
\begin{split}
2^{k^+}\Big\|\frac{1}{(2\pi)^3}\int_{\R^3}\int_{\R^3}e^{ix\cdot\xi}e^{-(t-s)|\xi|}&e^{-isv\cdot\xi}\widehat{L_{1,j,k}}(\xi,v,s)\,d\xi dv\Big\|_{L^1_x}\\
&\lesssim \|L_{1,j,k}(s)\|_{L^1_xL^1_v}+\|\nabla_xL_{1,j,k}(s)\|_{L^1_xL^1_v}\lesssim\varepsilon_02^{-j}
\end{split}
\end{equation*}
and
\begin{equation*}
\begin{split}
2^{k^+}\Big\|\frac{1}{(2\pi)^3}\int_{\R^3}\int_{\R^3}e^{ix\cdot\xi}e^{-(t-s)|\xi|}&e^{-isv\cdot\xi}\widehat{L_{1,j,k}}(\xi,v,s)\,d\xi dv\Big\|_{L^\infty_x}\\
&\lesssim \|L_{1,j,k}(s)\|_{L^1_vL^\infty_x}+\|\nabla_xL_{1,j,k}(s)\|_{L^1_vL^\infty_x}\lesssim\varepsilon_02^{-j}
\end{split}
\end{equation*}
for any $s\in[0,t]$. The bounds \eqref{bvn5} follow if $\ast=I$ and $2^m\lesssim 1$, using the identities \eqref{deco1}.

On the other hand, if $m\geq\delta^{-6}$, $t\in[2^{m-2},2^{m+2}]$ and $s\geq t/2$ then we use \eqref{disper5}, \eqref{disper5.5}, and Lemma \ref{L1kjBou} to estimate
\begin{equation*}
\begin{split}
(1+2^{k+m})\Big\|\frac{1}{(2\pi)^3}&\int_{\R^3}\int_{\R^3}e^{ix\cdot\xi}e^{-(t-s)|\xi|}e^{-isv\cdot\xi}\widehat{L_{1,j,k}}(\xi,v,s)\,d\xi dv\Big\|_{L^1_x}\\
&\lesssim (1+2^k|t-s|)^{-6}\big[\|L_{1,j,k}(s)\|_{L^1_xL^1_v}+\|\nabla_vL_{1,j,k}(s)\|_{L^1_xL^1_v}\big]\\
&\lesssim \varep_0(1+2^k|t-s|)^{-6}2^{-4j/3}
\end{split}
\end{equation*}
and
\begin{equation*}
\begin{split}
(1+2^{k+m})2^{3m}\Big\|\frac{1}{(2\pi)^3}&\int_{\R^3}\int_{\R^3}e^{ix\cdot\xi}e^{-(t-s)|\xi|}e^{-isv\cdot\xi}\widehat{L_{1,j,k}}(\xi,v,s)\,d\xi dv\Big\|_{L^\infty_x}\\
&\lesssim (1+2^k|t-s|)^{-6}\big[\|L_{1,j,k}(s)\|_{X_s}+\|\nabla_vL_{1,j,k}(s)\|_{X_s}\big]\\
&\lesssim\varepsilon_0(1+2^k|t-s|)^{-6}2^{-4j/3}.
\end{split}
\end{equation*}
Moreover, if $s\in[0,t/2]$ then we use \eqref{disper5} and Lemma \ref{L1kjBou} to estimate
\begin{equation*}
\begin{split}
\Big\|\frac{1}{(2\pi)^3}\int_{\R^3}\int_{\R^3}e^{ix\cdot\xi}e^{-(t-s)|\xi|}e^{-isv\cdot\xi}\widehat{L_{1,j,k}}(\xi,v,s)\,d\xi dv\Big\|_{L^\infty_x}&\lesssim (1+2^k|t-s|)^{-6}\|L_{1,j,k}(s)\|_{L^1_xL^1_v}\\
&\lesssim \varep_0(1+2^{k+m})^{-6}2^{-4j/3}.
\end{split}
\end{equation*}
Since $\|P_lg\|_{L^\infty}\lesssim \|\widehat{P_lg}\|_{L^1}\lesssim 2^{3l}\|\widehat{P_lg}\|_{L^\infty}\lesssim 2^{3l}\|P_lg\|_{L^1}$ for any $l\in\Z$, it follows that 
\begin{equation*}
\begin{split}
2^{3m}\Big\|\frac{1}{(2\pi)^3}\int_{\R^3}\int_{\R^3}e^{ix\cdot\xi}e^{-(t-s)|\xi|}e^{-isv\cdot\xi}\widehat{L_{1,j,k}}(\xi,v,s)\,d\xi dv\Big\|_{L^\infty_x}\lesssim \varep_0(1+2^{k+m})^{-3}2^{-4j/3}.
\end{split}
\end{equation*}
Therefore, using the identities \eqref{deco1} and the last four inequalities, we have
\begin{equation*}
\begin{split}
(1+2^{k+m})\big[\|T^I_{1,j,k}(t)\|_{L^1_x}+2^{3m}\|T^I_{1,j,k}(t)\|_{L^\infty_x}\big]&\lesssim \int_0^t\varep_0(1+2^k|t-s|)^{-2}2^{-4j/3}\,ds\\
&\lesssim\varep_02^{-4j/3}\min\{2^m,2^{-k}\}.
\end{split}
\end{equation*}
Moreover, $2^{-4j/3}\min\{2^m,2^{-k}\}\lesssim 2^{-\delta^2j}$ if $(j,k,m)\in A^I$ and $m\geq \delta^{-6}$, and the desired bounds \eqref{bvn5} follow if $\ast=I$. 

{\bf{Proof of \eqref{bvn5} when $\ast=II$.}} Assume that $(j,k,m)\in A^{II}$, so $m\geq \delta^{-4}$, and $t\in[2^{m-2},2^{m+2}]$. If $s\in[0,t]$ we use \eqref{disper5}, Lemma \ref{ProjAB}, and Lemma \ref{L1kjBou} to estimate
\begin{equation}\label{bvn11.1}
\begin{split}
\Big\|\frac{1}{(2\pi)^3}&\int_{\R^3}\int_{\R^3}e^{ix\cdot\xi}e^{-(t-s)|\xi|}e^{-isv\cdot\xi}\frac{\mathcal{D'}}{1+\mathcal{D}^2}(\partial_s\widehat{L_{1,j,k}})(\xi,v,s)\,d\xi dv\Big\|_{L^1_x}\\
&\lesssim (1+2^k|t-s|)^{-6}\|\partial_sL_{1,j,k}(s)\|_{L^1_xL^1_v}\\
&\lesssim \varep_0(1+2^k|t-s|)^{-6}2^{-j}\langle s\rangle^{-1+\delta},
\end{split}
\end{equation}
where $\mathcal{D}'\in\{1,\mathcal{D}\}$. If $s\geq t/2$ then we use \eqref{disper5}, Lemma \ref{ProjAB}, and Lemma \ref{L1kjBou} to estimate
\begin{equation*}
\begin{split}
2^{3m}\Big\|\frac{1}{(2\pi)^3}&\int_{\R^3}\int_{\R^3}e^{ix\cdot\xi}e^{-(t-s)|\xi|}e^{-isv\cdot\xi}\frac{\mathcal{D'}}{1+\mathcal{D}^2}(\partial_s\widehat{L_{1,j,k}})(\xi,v,s)\,d\xi dv\Big\|_{L^\infty_x}\\
&\lesssim (1+2^k|t-s|)^{-6}\|\partial_sL_{1,j,k}(s)\|_{X_s}\\
&\lesssim\varepsilon_0(1+2^k|t-s|)^{-6}2^{-j}2^{-m+\delta m}.
\end{split}
\end{equation*}
As before, since $\|P_lg\|_{L^\infty}\lesssim 2^{3l}\|P_lg\|_{L^1}$ for any $l\in\Z$ we can use \eqref{bvn11.1} to estimate
\begin{equation*}
\begin{split}
2^{3m}\Big\|\frac{1}{(2\pi)^3}&\int_{\R^3}\int_{\R^3}e^{ix\cdot\xi}e^{-(t-s)|\xi|}e^{-isv\cdot\xi}\frac{\mathcal{D'}}{1+\mathcal{D}^2}(\partial_s\widehat{L_{1,j,k}})(\xi,v,s)\,d\xi dv\Big\|_{L^\infty_x}\\
&\lesssim \varep_0(1+2^k|t-s|)^{-3}2^{-j}\langle s\rangle^{-1+\delta}
\end{split}
\end{equation*}
for $s\in[0,t/2]$. We use these last three inequalities and the formulas \eqref{sug13.4}, and integrate from $0$ to $t$. The contribution of the term in the first line of \eqref{sug13.4} is easier to estimate, similar to \eqref{bvn11.1} with $s=0$. Therefore
\begin{equation*}
\|T^{II}_{1,j,k}(t)\|_{L^1_x}+2^{3m}\|T^{II}_{1,j,k}(t)\|_{L^\infty_x}\lesssim\varep_02^{-j}2^{-m+\delta m}\min\{2^m,2^{-k}\}.
\end{equation*}
This suffices to prove the desired bounds \eqref{bvn5}.

{\bf{Proof of \eqref{bvn6}.}} The formulas \eqref{deco1} show that
\begin{equation*}
(\partial_t+|\nabla|)T^I_{1,j,k}(t)=(-i)e^{it}\cdot R^I_{1,j,k}(t),
\end{equation*}
so the bounds \eqref{bvn6} follow from the bounds \eqref{bvn4} if $\ast=I$. Moreover, using \eqref{sug13.4},
\begin{equation*}
\mathcal{F}\big\{(\partial_t+|\nabla|)T^{II}_{1,j,k}\big\}(t)=\int_{\R^3}e^{-itv\cdot\xi}\frac{e^{it}}{1-i\mathcal{D}}(\partial_t\widehat{L_{1,j,k}})(\xi,v,t)\,dv.
\end{equation*}
Using \eqref{disper5}, Lemma \ref{ProjAB}, and \eqref{bvn8}, if $(j,k,m)\in A^{II}$, we have
\begin{equation*}
\begin{split}
\|(\partial_t+|\nabla|)&T^{II}_{1,j,k}(t)\|_{L^1_x}+2^{3m}\|(\partial_t+|\nabla|)T^{II}_{1,j,k}(t)\|_{L^\infty_x}\\
&\lesssim \|(\partial_tL_{1,j,k})(t)\|_{L^1_xL^1_v}+\|(\partial_tL_{1,j,k})(t)\|_{X_t}\lesssim\varep_02^{-j}2^{-m+\delta m},
\end{split}
\end{equation*}
  This gives the desired bounds \eqref{bvn6} if $\ast=II$, which completes the proof of Proposition \ref{closeboot1}.
\qed

\section{Bounds on the static terms  and the type-I reaction term}\label{StatCont}

In this section we estimate the $B_T$-norms of the static terms, which consist of the type-I  components $R_{2,j,k}^I$  defined in \eqref{deco2} and the type-II components $R_{2,j,k}^{II}$ defined in \eqref{sug23.2}. Moreover, the estimate of the type-I oscillatory terms  $T_{2,j,k}^{I}$ will be obtained as a byproduct.


It turns out that many error terms arising not only in this section but also in the next one fit into a general framework of trilinear operators. We define these next and then record some of the relevant estimates. We note that these bounds improve over crude estimates through the use of the density decomposition \eqref{rhodeco} into static and oscillatory components. 

Assume that $\theta_1,\theta_2 \in [0, 1]$ and $s\in[0,T]$, and define the trilinear operators
\begin{equation}\label{ropi1}
\begin{split}
&  \mathcal{Q}_{j,k}(f, g; C)(x,\gamma, \tau, s):=\int_{\R^3} \int_{\R^3}\mathcal{K}_{j,k}(x-y, v,\tau,s) C(x, y, v, \gamma, \tau, s)\\
&\times f(y-(s-\tau)v+P_1(y,v,\tau,s),\tau)g(y-(s-\gamma)v + P_2(y,v,\gamma,\tau,s), \gamma)\,dy d v, 
\end{split}
\end{equation}
where $\gamma,\tau\in[0,s]$, $(j,k)\in\Z_+\times\Z$. We assume that kernel $\mathcal{K}_{j,k}$ and the functions $P_1,P_2$ satisfy the uniform estimates 
 \be\label{june16eqn4}
 \begin{split}
&\big| \mathcal{K}_{j,k}(y, v,\tau,s)\big|\lesssim 2^{3k}(1+2^k|y|)^{-8}\cdot 2^{-3j}\widetilde{\varphi}_{[j-4,j+4]}(v),\\
&\big|\partial_yP_1(y,v,\tau,s)|+\langle s\rangle^{-1}\big|\partial_vP_1(y,v,\tau,s)|\lesssim\varep_1,\\
&\big|\partial_yP_2(y,v,\gamma,\tau,s)|+\langle s\rangle^{-1}\big|\partial_vP_2(y,v,\gamma,\tau,s)|+\big|\partial_\gamma P_2(y,v,\gamma,\tau,s)|\lesssim\varep_1.
\end{split}
\ee
Notice that we suppress the dependence on the kernels $\mathcal{K}_{j,k}$ and the exact functions $P_1,P_2$ in the notation for the operators $\mathcal{Q}_{j,k}$. In most of our applications $P_1(y,v,\tau,s)=\theta_1\widetilde{Y}(y-sv,v,\tau,s)$ and $P_2=\theta_2\widetilde{Y}(y-sv,v,\gamma,s)$, $\theta_1,\theta_2\in[0,1]$, and the bounds \eqref{june16eqn4} follow from Lemma \ref{derichar2}.

We assume that the coefficient $C$ is differentiable in $\gamma$, and define
\begin{equation}\label{june16eqn3}
\begin{split}
\Lambda(C)(\gamma,\tau, s)&:= \|C(x, y, v,\gamma,\tau,s)\|_{L^\infty_{x,y,v}}  + \|\langle\gamma\rangle\p_\gamma C(x, y, v,\gamma,\tau,s)\|_{L^\infty_{x,y,v}}.
\end{split}
\end{equation}

Assume that $m_2\geq 0$, $t_3,t_4\in [2^{m_2}-1,2^{m_2+1}]\cap [0,s]$, and define the trilinear operators
\begin{equation}\label{june16eqn1}
\mathcal{B}^{ }_{j,k}(f, g; C)(x, \tau, s) = \int_{t_3}^{t_4}\mathcal{Q}_{j,k}(f, g; C)(x, \gamma, \tau, s)\,d\gamma.
\end{equation}
For these we have the following bounds:

\begin{lemma}[Trilinear estimates]\label{keybilinearlemma}
Let  $s\in [0, T]$,  $\tau \in [0, s],$ $2^{m_2-1}\leq s$. With the assumptions of the bootstrap Proposition \ref{MainBootstrapProp}, and the notation and assumptions above,    we have
\begin{align}
\| \mathcal{B}_{j,k}(\mathcal{R}^1\nabla E,  \mathcal{R}^2E; C) (., \tau, s)  \|_{B^0_s}&\lesssim   \varep_1^2 \min\{2^{- 1.1 m_2}, \langle \tau \rangle^{-1.1} \}  \Lambda^\ast(C)(\tau,s),\label{june16eqn31A1}\\
\| \mathcal{B}_{j,k}(\mathcal{R}^1E, \mathcal{R}^2\nabla E; C)  (., \tau, s)\|_{B^0_s}&\lesssim \varep_1^2 \min\{2^{- 1.1 m_2}, \langle \tau \rangle^{-1.1} \}  \Lambda^\ast(C)(\tau,s),\label{june16eqn31A2}\\
\| \mathcal{B}_{j,k}(\mathcal{R}^1E,  \mathcal{R}^2E; C) (., \tau, s)\|_{B^0_s}  &\lesssim \varep_1^2 \min\{2^{- 0.1 m_2}, \langle \tau \rangle^{-0.1} \}  \Lambda^\ast(C)(\tau,s),\label{june16eqn32}\\
\| \mathcal{B}_{j,k}(\mathcal{R}^1\nabla E, \mathcal{R}^2\nabla E; C) (., \tau, s)\|_{B^0_s}&\lesssim \varep_1^2  \min\{2^{- 2.1 m_2}, \langle \tau \rangle^{-2.1} \}  \Lambda^\ast(C)(\tau,s),\label{june16eqn33}\\
\| \mathcal{B}_{j,k}(\mathcal{R}^1\nabla^2 P_{k_1}E,  \mathcal{R}^2E; C) (., \tau, s)\|_{B^0_s}&\lesssim \varep_1^2 \langle \tau \rangle^{-1} \min\{2^{- 1.1 m_2}, \langle \tau \rangle^{-1.1} \}  \Lambda^\ast(C)(\tau,s),\label{april10eqn50}\\
\| \mathcal{B}_{j,k}(\mathcal{R}^1\nabla^2 P_{k_1}E, \mathcal{R}^2\nabla E; C) (., \tau, s)\|_{B^0_s}&\lesssim \varep_1^2 \langle \tau \rangle^{-1} \min\{2^{- 2.1 m_2}, \langle \tau \rangle^{-2.1} \} \Lambda^\ast(C)(\tau,s),\label{april10eqn51}
\end{align}
for any $k_1\in\Z$, where $\Lambda^\ast(C)(\tau,s):=\sup_{\gamma\in [t_3,t_4]}\Lambda(\gamma, \tau, s)$ and $\mathcal{R}^1,\mathcal{R}^2$ are operators defined by H\"{o}rmander-Michlin multipliers satisfying the inequalities
\begin{equation}\label{HormMich}
\sup_{|\alpha|\leq 8}\sup_{\xi\in\R^3}|\xi|^{|\alpha|}|D^\alpha_\xi \mathcal{R}^l(\xi)|\leq 1,\qquad l\in\{1,2\}.
\end{equation}

Moreover, for any $k_1,k_2\in\Z$ and $\gamma,\tau\in[0,s]$,
\begin{equation}\label{change15}
\begin{split}
&\|\mathcal{Q}_{j,k} (P_{k_1}\mathcal{R}^1\rho, P_{k_2}\mathcal{R}^2\rho; C)(., \gamma, \tau, s)\|_{L^1_x}+\langle s\rangle^{3}\|\mathcal{Q}_{j,k} (P_{k_1}\mathcal{R}^1\rho, P_{k_2}\mathcal{R}^2\rho; C)(., \gamma, \tau, s)\|_{L^\infty_x}\\
&\quad\lesssim\frac{\varep_1^2}{\langle \tau\rangle^{1-2\delta}2^{k_1}+\langle \tau\rangle^{-\delta}}\frac{\min\{2^{k_{\min}}, \langle \tau\rangle^{-1} , \langle \gamma\rangle^{-1}\}^3}{\langle \gamma\rangle^{1-2\delta}2^{k_2}+\langle \gamma\rangle^{-\delta}}\|C(.,.,.,\gamma,\tau,s)\|_{L^\infty_{x,y,v}}.
\end{split}
\end{equation}
\end{lemma}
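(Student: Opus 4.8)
The plan is to prove the trilinear bounds \eqref{june16eqn31A1}--\eqref{april10eqn51} and the kernel bound \eqref{change15} by reducing everything to the bilinear kernel estimate \eqref{change15} followed by a time integration, so the bulk of the work is in \eqref{change15}. For \eqref{change15}, I would start from the defining formula \eqref{ropi1} for $\mathcal{Q}_{j,k}$, insert the bootstrap bounds on the density from Lemma \ref{Laga10}, in particular \eqref{Laga2}--\eqref{Laga3}, and split the argument according to whether one estimates in $L^1_x$ or $L^\infty_x$. The key structural point is that the two density factors $P_{k_1}\mathcal{R}^1\rho$ and $P_{k_2}\mathcal{R}^2\rho$ are evaluated along perturbed free trajectories $y-(s-\tau)v+P_1$ and $y-(s-\gamma)v+P_2$; since the Jacobian of the maps $y\mapsto y-(s-\tau)v+P_1(y,v,\tau,s)$ (and similarly in $v$) is comparable to $1$ by the first line of \eqref{june16eqn4} — which in our applications reduces to the characteristic bounds of Lemma \ref{derichar2} — one can change variables freely. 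For the $L^1_x$ bound I would integrate in $x$ first, using Young's inequality in $x$ against the kernel $\mathcal{K}_{j,k}$ (whose $L^1_y$ norm is $O(2^{-3j}\widetilde\varphi_{[j-4,j+4]}(v))$ by \eqref{june16eqn4}), then bound the first density factor in $L^1_x$ and the second in $L^\infty_x$ (or vice versa) after the change of variables, then integrate in $v$ over the shell $|v|\approx 2^j$, which contributes a factor $2^{3j}$ that cancels the $2^{-3j}$ from the kernel. This produces
\[
\frac{\varep_1^2}{\langle\tau\rangle^{1-2\delta}2^{k_1}+\langle\tau\rangle^{-\delta}}\cdot\frac{\langle\gamma\rangle^{-3}}{\langle\gamma\rangle^{1-2\delta}2^{k_2}+\langle\gamma\rangle^{-\delta}}\cdot\|C\|_{L^\infty},
\]
from the $\langle t\rangle^3$-weighted $L^\infty$ decay in \eqref{Laga2}--\eqref{Laga3}, and one observes that the factor $\langle\gamma\rangle^{-3}$ should be replaced by $\min\{2^{k_{\min}},\langle\tau\rangle^{-1},\langle\gamma\rangle^{-1}\}^3$: the $2^{3k_{\min}}$ option comes from Bernstein $\|P_{k}g\|_{L^\infty}\lesssim 2^{3k}\|P_k g\|_{L^1}$ applied to whichever density carries the smaller frequency, and the $\langle\tau\rangle^{-3}$ option from using the $L^\infty$ decay of the $\tau$-density instead. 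The $L^\infty_x$ bound is symmetric, costing an extra $\langle s\rangle^{-3}$ which is exactly the prefactor on the left of \eqref{change15}.

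For the trilinear bounds \eqref{june16eqn31A1}--\eqref{june16eqn33}, I would write $E=\nabla\Delta^{-1}\rho=\nabla\Delta^{-1}(\rho^{stat}+\Re(e^{-it}\rho^{osc}))$ and, more to the point, decompose each copy of $E$ (or $\nabla E$, $\nabla^2 E$) in frequency: $\mathcal{R}^l E=\sum_{k_l}P_{k_l}\mathcal{R}^l\nabla\Delta^{-1}\rho$. The Hörmander--Mikhlin condition \eqref{HormMich} together with $P_{k_l}\nabla\Delta^{-1}$ gives a gain of $2^{-k_l}$ per factor of $E$ (and a loss of $2^{k_l}$ per extra $\nabla$), so that $\mathcal{R}^l\nabla E$ behaves at frequency $2^{k_l}$ like $\rho$ itself and $\mathcal{R}^l E$ like $2^{-k_l}\rho$. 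Feeding these into \eqref{change15} (which is stated precisely for $P_{k_l}\mathcal{R}^l\rho$) and summing the geometric series in $k_1,k_2$ — the sums converge because of the $\langle\tau\rangle^{1-2\delta}2^{k_1}$ and $\langle\gamma\rangle^{1-2\delta}2^{k_2}$ denominators at high frequency and the $\min\{2^{k_{\min}},\dots\}^3$ numerator at low frequency — yields a bound of the form $\varep_1^2\langle\tau\rangle^{-a}\langle\gamma\rangle^{-b}\Lambda^\ast(C)$ for the integrand $\mathcal{Q}_{j,k}$, with $(a,b)$ depending on how many derivatives sit on each $E$: roughly $a=b=1$ for $(\mathcal{R}^1 E,\mathcal{R}^2 E)$ after summing (giving the $\langle\tau\rangle^{-0.1}$ / $2^{-0.1m_2}$ in \eqref{june16eqn32} once we keep only a sliver of the decay), and $a=b=2$ when there is a $\nabla$ on each (giving \eqref{june16eqn33}), with the mixed cases interpolating. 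Then \eqref{june16eqn1} integrates $\mathcal{Q}_{j,k}$ in $\gamma$ over a dyadic block $[t_3,t_4]\subset[2^{m_2}-1,2^{m_2+1}]$; since the $\gamma$-dependence (including that of $C$, controlled by $\langle\gamma\rangle\partial_\gamma C$ in $\Lambda$, allowing one integration by parts in $\gamma$ if needed) is integrable of the stated order on that block, this costs at most a factor $2^{m_2}$, and the min-structure $\min\{2^{-\alpha m_2},\langle\tau\rangle^{-\alpha}\}$ reflects whether $\gamma\approx 2^{m_2}\gtrsim\langle\tau\rangle$ or not. The bounds \eqref{april10eqn50}--\eqref{april10eqn51} with $\nabla^2 P_{k_1}E$ are the same, now with an extra $2^{k_1}$ from the second derivative, which after summation against the $\langle\tau\rangle^{1-2\delta}2^{k_1}$ denominator leaves the additional $\langle\tau\rangle^{-1}$ factor.

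The main obstacle I anticipate is not any single estimate but the bookkeeping of the frequency sums together with the velocity localization: one must be careful that the $2^{-3j}$ in the kernel bound \eqref{june16eqn4} is genuinely compensated by the $v$-integration over $|v|\approx 2^j$ uniformly in all the other parameters, and that the change of variables in $y$ and $v$ (legitimized by \eqref{june16eqn4}, i.e.\ Lemma \ref{derichar2}) does not interact badly with the frequency projections $P_{k_1},P_{k_2}$ — in particular, the composition $P_{k_l}\rho$ evaluated at a perturbed argument is not literally a Littlewood--Paley piece, so one should either absorb the perturbation $P_1,P_2$ into the kernel or argue directly with the physical-space convolution kernels $\mathcal{F}^{-1}(\varphi_{k_l}\mathcal{R}^l\nabla\Delta^{-1})$, whose $L^1$ norms are $O(2^{-k_l})$. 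A secondary subtlety is extracting only a fraction ($0.1$, $1.1$, $2.1$, rather than the full $1$, $2$, $3$) of the available decay in $m_2$: this slack is deliberately built in so that the $\gamma$-integration and the frequency sums converge with room to spare, and I would simply track that the losses (from Bernstein, from the $\ln$ factors implicit in borderline frequency sums, from the block length $2^{m_2}$) are all absorbed by that fractional margin.
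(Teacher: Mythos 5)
Your derivation of the bilinear kernel estimate \eqref{change15} is correct and matches the paper's Step 1 of Section 7 in spirit: $L^1_x$ via Young and the $L^1_{y,v}$ normalization of $\mathcal{K}_{j,k}$, $L^\infty_x$ via the $v$-change of variables when $|s-\tau|\gtrsim\langle s\rangle$ or $|s-\gamma|\gtrsim\langle s\rangle$ (the paper's \eqref{ropi2.5}--\eqref{ropi2.6}), and the $\min\{2^{k_{\min}},\langle\tau\rangle^{-1},\langle\gamma\rangle^{-1}\}^3$ arising from the interplay of Bernstein and the $\langle t\rangle^3$-weighted bounds of Lemma \ref{Laga10}. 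That part is fine.

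The gap is in passing from \eqref{change15} to the trilinear bounds \eqref{june16eqn31A1}--\eqref{april10eqn51}. Your plan is to sum the frequency-localized bilinear bound over $k_1,k_2$ and then integrate in $\gamma$ over the dyadic block $[t_3,t_4]$, arguing that this ``costs at most a factor $2^{m_2}$'' and that the fractional exponents ($0.1$, $1.1$, $2.1$) leave room to absorb logarithms. This does not close. Take \eqref{june16eqn31A1} with $\langle\tau\rangle\approx 2^{m_2}$, $\gamma\approx 2^{m_2}$, and $k_2\approx -m_2$. Plugging the $E^{osc}$ bound \eqref{Laga3} into \eqref{change15} gives a $\gamma$-factor $\approx 2^{m_2(1+\delta)}\cdot\min\{2^{k_{\min}},2^{-m_2}\}^3$, which is at best $\approx 2^{-2m_2+O(\delta m_2)}$; integrating over the block of length $2^{m_2}$ and summing $k_1,k_2$ (which contributes another $m_2^2$) yields $\approx m_2^2\,2^{-m_2+O(\delta m_2)}$, far larger than the target $2^{-1.1 m_2}$ for $m_2$ large. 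The ``slack'' of $0.1$ is not a cushion you can spend: it is precisely the gap the argument must overcome. The missing idea is the decomposition $E(\gamma)=E^{stat}(\gamma)+\Re\{e^{-i\gamma}E^{osc}(\gamma)\}$ in the $\gamma$-slot (\eqref{july16eqn5}), followed, in the resonant regime $m_2\geq 400$, $\langle\tau\rangle\leq 2^{9m_2/8}$, $k_2\in[-9m_2/8,-7m_2/8]$, $k_1\geq -9m_2/8$, by integration by parts in $\gamma$ against the oscillation $e^{-i\gamma}$ (a genuine normal form in $\gamma$, \eqref{ropi11}--\eqref{ropi22}). This produces the denominator $1/(1-\xi\cdot v)$, and one must insert cutoffs $\varphi_{>p_0}(1-\xi\cdot v)$, $\varphi_{\leq p_0}(1-\xi\cdot v)$ with $p_0\approx -m_2/2$, estimate the nonresonant piece via Lemma \ref{omegaioLem}(ii) (gaining $\langle\gamma\rangle^{-2}$ over the crude estimate at the cost of $2^{-p_0}$) and the near-resonant piece via the smallness of the set $\{|1-\xi\cdot v|\lesssim 2^{p_0}\}$, using Lemma \ref{omegaioLem}(i). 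Only then does one recover $2^{-5m_2/4}$-type bounds (\eqref{ropi31}, \eqref{ropi44}--\eqref{ropi45}), which convert to the stated $\min\{2^{-1.1m_2},\langle\tau\rangle^{-1.1}\}$ using $\langle\tau\rangle\leq 2^{9m_2/8}$. Your parenthetical mention of ``one integration by parts in $\gamma$ if needed'' misattributes this to the $\partial_\gamma C$ control in $\Lambda$; in the paper the $\Lambda$-norm's $\langle\gamma\rangle\partial_\gamma C$ term only serves to handle the derivative landing on $C$ under the IBP, while the oscillation that makes the IBP profitable comes entirely from the factor $e^{-i\gamma}$ in $E^{osc}$.
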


\begin{proof}
For the sake of presentation we postpone the proof to section \ref{trilin}. 
\end{proof}

In the rest of this section we fix $t\in[0,T]$ and an integer $m\geq 0$ such that $t\in[2^m-1,2^{m+1}]$. We also fix an integer $K$ such that
\begin{equation}\label{SizeOfKLinearBoundInEpsilon}
1\le \varepsilon_1 2^K\le 8.
\end{equation}
This is needed in some cases of very high frequencies, to ensure that the overall contribution is nonlinearly small. The formulas \eqref{qwp1}, \eqref{qwp2}, and \eqref{Lan6} show that
\begin{equation}\label{L2Formula}
\begin{split}
L_{2,j,k}(x,v,s,t)&=P_k\big\{E(x,s)M'_j(v)-E(x+\widetilde{Y}_\#(x,v,s,t),s)M'_j(v+\widetilde{W}_\#(x,v,s,t))\big\},\\
\widetilde{Y}_\#(x,v,s,t)&:=\widetilde{Y}(x-sv,v,s,t),\quad   \widetilde{W}_\#(x,v,s,t):=\widetilde{W}(x-sv,v,s,t).
\end{split}
\end{equation}

 \subsection{Large velocity case: $j\geq 19m/20$ or $m\leq \delta^{-4}$}
 
The goal of this section is to prove that the contribution of large velocities is acceptable. 

\begin{proposition}\label{change6}
Assume that the bounds \eqref{YW12} hold, $k\in\Z$, and $2^j\gtrsim 2^{19m/20}$. Then
\begin{equation}\label{april2eqn2}
\begin{split}
\Vert \langle t\rangle^{1-\delta^2} \langle \nabla_x\rangle R^I_{2,j,k}(t)\Vert_{B_t^0}\lesssim \varepsilon_1^{3/2}2^{-\delta^2 j},\\
\Vert \langle t\rangle^{-\delta} T^I_{2,j,k}(t)\Vert_{B_t^0}+\Vert \langle t\rangle^{1-\delta} \nabla_{x,t}T^I_{2,j,k}(t)\Vert_{B_t^0}\lesssim \varepsilon_1^{3/2}2^{-\delta^2 j}.
\end{split}
\end{equation}

\end{proposition}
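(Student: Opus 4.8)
The plan is to exploit the fact that we are in the large-velocity regime $2^j \gtrsim 2^{19m/20}$, which makes the weight $2^{-4.5j}$ coming from $M_0'$ (or from the initial-data decay in $L_1$) extremely favorable. First I would recall the formula \eqref{L2Formula} for $L_{2,j,k}$ and the corresponding Fourier-space expressions \eqref{deco2} for $R^I_{2,j,k}$ and $T^I_{2,j,k}$. The key observation is that $M'_j(v) = \widetilde\varphi_j(v)\nabla_v M_0(v)$ is supported in $|v|\approx 2^j$ and satisfies $\|M'_j\|_{L^1_v} + \|M'_j\|_{L^\infty_v} \lesssim 2^{-4.5j}\cdot 2^{3j} = 2^{-1.5j}$ in the relevant norms, with analogous bounds after localization; in particular each $v$-integral over the support costs at most $2^{3j}$ while the amplitude gains $2^{-4.5j}$, for a net gain of $2^{-1.5j}$. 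Since $2^j \gtrsim 2^{19m/20}$, powers of $2^{-j}$ can be freely traded for powers of $\langle t\rangle^{-1} \approx 2^{-m}$, so the required decay $\langle t\rangle^{1-\delta^2}$ (and $\langle t\rangle^{1-\delta}$ for the $\nabla_{x,t}$ piece) is recovered with plenty of room, and the residual $2^{-\delta^2 j}$ is automatic.

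The next step is to split $L_{2,j,k}$ into the ``linear'' piece $E(x,s)M_j'(v)$ and the ``deviation'' piece involving $\widetilde Y_\#, \widetilde W_\#$. For the linear piece one plugs directly into \eqref{deco2}: $\mathcal{F}(R^I_{2,j,k})$ becomes an integral of $\widehat{P_kE}(\xi,\tau)\widehat{M_j'}(\cdots)$ against $e^{-i(t-\tau)v\cdot\xi}$, which is estimated by the dispersive Lemma \ref{disper2} (with $d=3$) together with the electric-field bounds \eqref{Laga11}--\eqref{Laga12} from Lemma \ref{Laga10}; the weight $2^{-1.5j}$ absorbs all losses. For the deviation piece I would Taylor-expand $E(x+\widetilde Y_\#,s)M_j'(v+\widetilde W_\#)$ around $(x,s)$, $v$, writing the difference as an integral of $\nabla E$ and $\nabla_v M_0$ along the deviation, and then recognize the result as an instance of the trilinear operators $\mathcal{Q}_{j,k}$ / $\mathcal{B}_{j,k}$ of Lemma \ref{keybilinearlemma}: one factor is (a Riesz transform of) $\rho$ or $E$, the other is $\nabla E$, and the coefficient $C$ collects the smooth cutoffs and the derivatives of $M_0$, whose $\Lambda$-norm is $\lesssim 2^{-4.5j}$ by direct differentiation. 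Applying \eqref{june16eqn31A1}--\eqref{june16eqn33} or \eqref{change15}, integrating the resulting $\tau$-factor $\langle\tau\rangle^{-1.1}$ (or better) over $[0,t]$, and then combining with $2^{-1.5j}$ and the trade $2^{-j}\leftrightarrow 2^{-m}$ gives the claimed bounds for $R^I_{2,j,k}$.

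For $T^I_{2,j,k}$ and its $\nabla_{x,t}$ counterpart the argument is essentially the same, with two extra ingredients: the factor $e^{-(t-s)|\xi|}$ in \eqref{deco2} provides additional decay/smoothing at each frequency (and in particular lets one integrate $ds$ against $e^{-(t-s)|\xi|}$ to gain $\min\{2^m, 2^{-k}\}$, exactly as in the proof of \eqref{bvn5}), and the identity $(\partial_t+|\nabla|)T^I_{2,j,k} = -i e^{it} R^I_{2,j,k}$ reduces the $\nabla_{x,t}$ estimate for $T^I$ to the $R^I$ estimate plus a spatial-derivative bound that is handled by inserting an extra $2^{k^+}$ and redoing the dispersive/trilinear estimates. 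The main obstacle I anticipate is purely bookkeeping: one must carefully track the interplay of the three small parameters — the velocity gain $2^{-1.5j}$, the time factor $2^{m(1-\delta)}$ demanded by the $Stat_\delta$/$Osc_\delta$ norms, and the frequency factor $2^{k^+}$ — across all the sub-cases ($m\leq\delta^{-4}$ versus $m$ large; $k$ small versus $k$ large near the cutoff \eqref{SizeOfKLinearBoundInEpsilon}), and verify that in every case the exponent $19m/20 \le j$ leaves a genuine surplus power of $2^{-\delta^2 j}$; the analytic inputs (Lemmas \ref{disper2}, \ref{Laga10}, \ref{keybilinearlemma}) are all already in place, so no new estimate should be required.
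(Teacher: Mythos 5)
Your split of $L_{2,j,k}$ into the ``linear'' piece $E(x,s)M'_j(v)$ and the ``deviation'' piece $-E(x+\widetilde{Y}_\#,s)M'_j(v+\widetilde{W}_\#)$, treated separately, cannot produce the bound $\lesssim\varepsilon_1^{3/2}2^{-\delta^2 j}$. The linear piece contributes to $R^I_{2,j,k}$ the term $\int_0^t\int_{\R^3}P_kE(x-(t-s)v,s)\cdot M'_j(v)\,dvds$, whose size in $B^0_t$ is $O(\varepsilon_1)$, not $O(\varepsilon_1^{3/2})$: the dispersive estimates of Lemma \ref{disper2} and the decay of $M'_j$ improve the dependence on $t$ and $j$, but not the power of $\varepsilon_1$. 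Already for $m=0$, $j=0$, $k=0$ (allowed under $2^j\gtrsim 2^{19m/20}$) there is no excess time- or velocity-decay to trade away, and you are stuck at $\varepsilon_1$.

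The extra half-power of $\varepsilon_1$ must come from the cancellation in $L_{2,j,k}$, which your split discards. The paper keeps the difference structure in the main term $\mathcal{L}_{j,k_1}=[\partial_{x^p}E_{k_1}(x,s)-\partial_{x^p}E_{k_1}(x+\widetilde{Y}_\#,s)]M'_j(v)$, and uses the cutoff $K$ of \eqref{SizeOfKLinearBoundInEpsilon}, with $2^K\approx\varepsilon_1^{-1}$, to split the sum over the internal frequency $k_1$: for $k_1\leq -m/2+K/2$ the difference is opened by Taylor expansion into $\widetilde{Y}_\#\cdot\nabla\partial_{x^p}E_{k_1}$ and fed into a trilinear operator $\mathcal{Q}_{j,k}(\nabla E_{k_1},E;C)$ of Lemma \ref{keybilinearlemma}, giving $\varepsilon_1^2$; for $k_1\geq -m/2+K/2$ (where an extra derivative costs too much) each of the two pieces of the difference is estimated separately as $\varepsilon_1 2^{-2j-k_1}\langle t\rangle^{1+2\delta}$, and the restricted sum $\sum_{k_1\geq-m/2+K/2}2^{-k_1}\approx 2^{m/2}\varepsilon_1^{1/2}$ supplies the missing $\varepsilon_1^{1/2}$. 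You mention $K$ only as ``bookkeeping,'' but it is precisely where $\varepsilon_1^{3/2}$ comes from; without it the high-frequency contribution is $O(\varepsilon_1)$ and the proposition fails. (A minor point: the relevant decay here is $\|M'_j\|_{L^\infty}\lesssim 2^{-5j}$, coming from $\nabla M_0\sim|v|^{-5}$, not $2^{-4.5j}$, which is the velocity weight in the initial-data hypothesis \eqref{bootinit} and plays no role in the reaction term.) The rest of your outline — recognizing the Taylor remainder as trilinear operators of Lemma \ref{keybilinearlemma}, the identity $(\partial_t+|\nabla|)T^I_{2,j,k}=-ie^{it}R^I_{2,j,k}$, and the Poisson-kernel smoothing \eqref{BoundsPoissonKernel} to pass from $R^I$ to $T^I$ — is consistent with the paper's Lemmas \ref{LargeV1Lem}--\ref{LargeV2Lem}, but the main-term estimate needs to be rebuilt along the lines above.
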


\begin{proof}

The first inequality follows from Lemma \ref{LargeV1Lem} and Lemma \ref{LargeV2Lem} below. To prove the second estimate in \eqref{april2eqn2}, it thus suffices to show that
\begin{equation}\label{SuffCSProp1}
\begin{split}
\Vert \langle t\rangle^{-\delta} T^I_{2,j,k}\Vert_{B_t^0}+\Vert \langle t\rangle^{1-\delta} \nabla_{x}T^I_{2,j,k}\Vert_{B_t^0}\lesssim \varepsilon_1^{3/2}2^{-\delta^2 j}. 
\end{split}
\end{equation}

From \eqref{deco2} we have
\begin{equation}\label{april2eqn1}
\begin{split}
T^I_{2,j,k}(x,t)&: =-i\int_{0}^te^{is}e^{-(t-s)\vert\nabla\vert}R^I_{2,j,k}(x,s)ds.
\end{split}
\end{equation}
Notice that, 
\begin{equation}\label{BoundsPoissonKernel}
\begin{split}
\text{ for any }p\in[1, \infty]\text{ and }\lambda>0, \qquad \Vert \vert\nabla\vert e^{-\lambda\vert\nabla\vert} f\Vert_{L^p}&\lesssim \lambda^{-1}\Vert f\Vert_{L^p}.  
\end{split} 
\end{equation}
Using Lemma \ref{LargeV1Lem}, Lemma \ref{LargeV2Lem}, and \eqref{BoundsPoissonKernel}, we find that
\begin{equation*}
\begin{split}
\Vert e^{is}e^{-(t-s)\vert\nabla\vert}R^I_{2,j,k}(x,s)\Vert_{B_t^0}&\lesssim \varepsilon_1^{3/2}\langle s\rangle^{\delta^2-1}2^{-\delta^2 j},\\
\Vert e^{is}e^{-(t-s)\vert\nabla\vert}\nabla_xR^I_{2,j,k}(x,s)\Vert_{B_t^0}&\lesssim \varepsilon_1^{3/2}2^{k^-}[1+2^k(t-s)]^{-1}\langle s\rangle^{\delta^2-1}2^{-\delta^2 j}.
\end{split}
\end{equation*}
This gives \eqref{SuffCSProp1}, upon integration.
\end{proof}

It remains to prove Lemmas \ref{LargeV1Lem} and \ref{LargeV2Lem}.  We use the formula \eqref{deco2}, thus
\begin{equation}\label{change7}
R^I_{2,j,k}(x,t)=\int_0^t\int_{\R^3}L_{2,j,k}(x-(t-s)v,v,s,t)\,dvds.
\end{equation}
We start with the contribution of high frequencies.

\begin{lemma}\label{LargeV1Lem}
Assume that $2^j\gtrsim 2^{19m/20}$. Then
\begin{equation*}
\Vert \langle t\rangle\nabla_x R^I_{2,j,k}(t)\Vert_{B_T}\lesssim \varepsilon_1^{3/2}2^{-\delta j}.
\end{equation*}
\end{lemma}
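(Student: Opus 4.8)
The plan is to work directly from the formula \eqref{change7} and exploit the large-velocity localization $2^j \gtrsim 2^{19m/20}$, which forces the time $s$ (which ranges over $[0,t]\subset[0,2^{m+1}]$) to be much smaller than $2^j$; in particular $\langle s\rangle \lesssim 2^{20j/19}$ is almost $2^j$, so the fast decay $\langle v\rangle^{-4.5}$ built into $M'_0$ and into $L_1$ (via Lemma \ref{L1kjBou}) beats any polynomial loss in $s$. First I would split $L_{2,j,k}$ as in \eqref{L2Formula} into the linear piece $P_k[E(x,s)M'_j(v)]$ and the ``difference'' piece $P_k[E(x,s)M'_j(v) - E(x+\widetilde Y_\#,s)M'_j(v+\widetilde W_\#)]$. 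The linear piece is handled by using $\|P_k\nabla_x E(s)\|_{L^\infty} + \|P_k\nabla_x E(s)\|_{L^1}$ from Lemma \ref{Laga10}, multiplied by $\|M'_j\|_{L^1_v}\lesssim 2^{3j}\cdot 2^{-j}\cdot 2^{-4.5j} = 2^{-2.5j}$ (using $|\nabla_v M_0(v)|\lesssim \langle v\rangle^{-5}$ on the support $\widetilde\varphi_j$), and integrating $ds$ over $[0,t]$; here one must also track that the free transport $v\mapsto x-(t-s)v$ only rearranges the $L^1_x$ mass, so the $B^0_t$ norm of the $v$-integral is controlled by $\int_0^t \|P_k\nabla_x E(s)\|_{B^0_s}\,ds \cdot 2^{-2.5j}$, which after using the $\langle s\rangle^{-3+2\delta}$-type decay of $\nabla_x E^{stat}$ and the weaker $\langle s\rangle^{-2+\delta}$ decay of $E^{osc}$ is bounded by $\varepsilon_1 2^{-2.5j}\lesssim \varepsilon_1 2^{-\delta j}\langle t\rangle^{-1}$ since $2^j\gtrsim 2^{19m/20}$ makes $2^{-1.5j}$ absorb the $\langle t\rangle^{-1}$ and the $\langle t\rangle$ prefactor.

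For the difference piece I would write it as an integral in a parameter $\theta\in[0,1]$ of $\nabla$ of the shifted object, i.e.\ $E(x+\widetilde Y_\#,s)M'_j(v+\widetilde W_\#) - E(x,s)M'_j(v) = \int_0^1 \frac{d}{d\theta}\big[E(x+\theta\widetilde Y_\#,s)M'_j(v+\theta\widetilde W_\#)\big]\,d\theta$, producing terms with $\widetilde Y_\# \cdot (\nabla_x E)$ and $\widetilde W_\# \cdot (\nabla_v M'_j)$. Using the characteristic bounds \eqref{cui6}, namely $|\widetilde Y_\#|\lesssim \varepsilon_1 \langle s\rangle^{-1/6}$ and $|\widetilde W_\#|\lesssim \varepsilon_1\langle s\rangle^{-7/6}$, one gains an extra factor of $\varepsilon_1$, which is exactly what converts the $\varepsilon_1$ from Lemma \ref{Laga10} into the required $\varepsilon_1^{3/2}$ (indeed $\varepsilon_1^2 \le \varepsilon_1^{3/2}$). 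Again applying $\nabla_x$ to $R^I_{2,j,k}$ lands on $E$ (not on the slowly varying $M'_j$), so we use $\|P_k \nabla_x E(s)\|$ or at worst $\|P_k\nabla_x^2 E\|$ when the derivative hits the already-shifted field; the latter costs a $\ln(2+s)$ but is still integrable against the velocity weight. The velocity integral again contributes $2^{-2.5j}$ from $M'_j$, and the $v$-shift $v\mapsto v+\theta\widetilde W_\#$ plus the $x$-shift are $\approx$-identity changes of variables by \eqref{cui7}--\eqref{cui7.5}, so they do not disturb the $B^0_t$ bound.

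The main obstacle I anticipate is organizing the $\langle t\rangle\nabla_x$ prefactor together with the two different decay rates of $E^{stat}$ and $E^{osc}$ so that the $ds$-integral genuinely closes with a gain $2^{-\delta j}$ rather than just $2^{-\varepsilon j}$ for some tiny unspecified $\varepsilon$; the point is that on the relevant time range $2^m \lesssim 2^{20j/19}$, so $\langle t\rangle \lesssim 2^{21j/20}$ say, and the velocity weight $2^{-2.5j}$ (or even $2^{-2j}$ after being slightly wasteful with $\nabla_v M_0$) comfortably dominates, leaving a surplus power of $2$ in $j$ that exceeds $\delta$. A secondary bookkeeping point is the high-frequency regime $2^k \gtrsim \varepsilon_1^{-1}$ (cf.\ \eqref{SizeOfKLinearBoundInEpsilon}): there one uses Bernstein $\|P_k g\|_{L^\infty}\lesssim 2^{3k}\|P_k g\|_{L^1}$ together with the extra smallness $\varepsilon_1 2^K \lesssim 1$ and the fact that $\nabla_x$ supplies a factor $2^k$ whose largeness is compensated by the smallness of $E$ at high frequencies; I would treat this case separately but briefly. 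Apart from this, everything reduces to Lemma \ref{Laga10}, Lemma \ref{derichar2}, and elementary $\min\{\ldots\}$ bookkeeping for the $ds$-integral.
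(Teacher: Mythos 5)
There is a genuine gap, and it concerns the very first step of your proposal. You announce a split of $L_{2,j,k}$ into a ``linear piece'' $P_k[E(x,s)M'_j(v)]$ and a ``difference piece'' $P_k[E(x,s)M'_j(v) - E(x+\widetilde Y_\#,s)M'_j(v+\widetilde W_\#)]$, but by \eqref{L2Formula} the quantity $L_{2,j,k}$ \emph{is} the difference; there is no linear piece to add. If you instead mean (more charitably) to bound the two electric-field contributions separately without exploiting the cancellation, then the estimate you write down -- $\int_0^t\|\nabla_x E(s)\|\,ds\cdot\|M'_j\|_{L^1_v}\lesssim\varepsilon_1 2^{-2.5j}$ -- yields only a factor of $\varepsilon_1$, not the required $\varepsilon_1^{3/2}$, and this is fatal to the bootstrap because $\varepsilon_1$ may be much larger than $\varepsilon_0$. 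The paper's proof avoids this by decomposing the electric field in frequency, $E=\sum_{k_1}E_{k_1}$, and splitting into two cases. For $k_1\geq -m/2+K/2$ (with $2^K\sim\varepsilon_1^{-1}$, cf.\ \eqref{SizeOfKLinearBoundInEpsilon}) the two field contributions are bounded separately, but the constraint on $k_1$ converts the geometric sum $\sum_{k_1\geq -m/2+K/2}2^{-k_1}\sim 2^{m/2-K/2}$ into an extra $\varepsilon_1^{1/2}$; your proposal does not track $k_1$, never uses $K$ for this purpose, and therefore cannot recover the exponent $3/2$.

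Conversely, applying the fundamental-theorem-of-calculus ``difference'' trick across \emph{all} frequencies, as your second paragraph suggests, also fails: after $\nabla_x$ and the $\theta$-integration you are left with $\widetilde Y_\#\cdot\nabla_x^2 E_{k_1}$, and $\|\nabla_x^2 E_{k_1}(s)\|_{L^1}\lesssim 2^{k_1}\|\nabla_x E_{k_1}(s)\|_{L^1}\lesssim\varepsilon_1\langle s\rangle^{-1+2\delta}$ is uniform in $k_1$, so summing over $k_1$ diverges. Your claim that this ``costs a $\ln(2+s)$'' confuses $\nabla_x E$ (for which Lemma \ref{Laga10}(ii) does give a logarithmic $L^\infty$ bound) with $\nabla_x^2 E$ (which has no such bound). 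The paper's proof uses the cancellation only for $k_1\leq -m/2+K/2$ (Case 2), exactly so that $2^{k_1}$ is summable there. Finally, the genuinely cubic remainders -- $\nabla_x$ landing on $\widetilde Y_\#$, $\widetilde W_\#$ or on the difference $M'_j(v)-M'_j(v+\widetilde W_\#)$ -- are treated systematically in the paper via the trilinear operators $\mathcal{Q}_{j,k}$ and Lemma \ref{keybilinearlemma} (see \eqref{ErrHF}), which your proposal only gestures at. In short: the frequency splitting in $k_1$ combined with the high-frequency threshold $K$ is not a ``secondary bookkeeping point'' but the essential device producing $\varepsilon_1^{3/2}$, and without it neither of the two routes you outline closes.
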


\begin{proof}
We examine the formula \eqref{L2Formula} and decompose, for $p\in\{1,2,3\}$,
\begin{equation}\label{L2HF}
\partial_{x^p}L_{2,j,k}:=   P_k [Err_{j}]+  \sum_{k_1\in \Z}P_k[\mathcal{L}_{j,k_1}],\\
\end{equation}
where, with $E_{k_1}:=P_{k_1}E$, we have
\begin{equation}\label{april5eqn2}
\begin{split}
\mathcal{L}_{j,k_1}(x,&v,s,t)
:=\big[\partial_{x^p}E_{k_1}(x,s)-\partial_{x^p}E_{k_1}(x+\widetilde{Y}_\#(x,v,s,t),s)\big]\cdot M'_j(v),\\
Err_{j}(x,&v,s,t):=\partial_{x^p}E_{ }(x+\widetilde{Y}_\#(x,v,s,t),s)\cdot\big[M'_j(v)-M'_j(v+\widetilde{W}_\#(x,v,s,t))\big]\\
&-\partial_{x^p}\widetilde{Y}^q_\#(x,v,s,t)\partial_{x^q}E_{ }(x+\widetilde{Y}_\#(x,v,s,t),s)\cdot M'_j(v+\widetilde{W}_\#(x,v,s,t))\\
&-E_{ }(x+\widetilde{Y}_\#(x,v,s,t),s)\cdot\big[\partial_{x^p}\widetilde{W}_\#^q(x,v,s,t)\partial_{v^q}M'_j(v+\widetilde{W}_\#(x,v,s,t))\big]. 
\end{split}
\end{equation}

We first estimate the contribution of the error term $Err_{j }$. We expand as in \eqref{Lan7} 
\begin{equation}\label{april2eqn5}
\begin{split}
\widetilde{W}_\#(x,v,s,t)&=-\int_{s}^tE(x-(s-\gamma)v+\widetilde{Y}(x-s v,v,\gamma,t),\gamma)\,d\gamma,\\
\widetilde{Y}_\#(x,v,s,t)&=\int_{s}^t(\gamma-s)E(x-(s-\gamma)v+\widetilde{Y}(x-s v,v,\gamma,t),\gamma)\,d\gamma.
\end{split}
\end{equation}
Then we observe that, with the notation of \eqref{ropi1}, we can write the contribution of $Err_{j }$ as a sum of terms of the form
\begin{equation}\label{ErrHF}
\begin{split}
&\int_{0}^t\int_{\mathbb{R}^3}P_k[Err_{j }](x-(t-s)v,v,s,t) d v ds=\int_{0}^1 \int_{0}^t \int_{s}^t \mathcal{Q}_{j,k}(\nabla E_{ },E,C^1)(x,\gamma, s , t) d \gamma d s d\theta\\
 &\qquad\qquad\qquad  +  \int_{0}^t \int_{s}^t\big[\mathcal{Q}_{j,k}(\nabla E_{ },\nabla E,C^2)+ \mathcal{Q}_{j,k}(E_{ },\nabla E,C^3)\big](x,\gamma, s , t) d \gamma d s,\\
\end{split}
\end{equation}
for suitable kernels $\mathcal{K}_{j,k}^i$ and
\begin{equation*}
\begin{split}
C^1(x,y,v,\gamma,s,t)&=2^{3j}\nabla_vM_j^\prime(v+\theta \widetilde{W}(y-tv,v,s,t)),\\
C^2(x,y,v,\gamma,s,t)&=2^{3j}(\gamma-s)(\delta_p^\ell+\partial_{x^p}\widetilde{Y}^\ell(y-tv,v,\gamma,t))M_j^\prime(v+\widetilde{W}(y-tv,v,s,t)),\\
C^3(x,y,v,\gamma,s,t)&=2^{3j}(\delta_p^\ell+\partial_{x^p}\widetilde{Y}^\ell(y-tv,v,\gamma,t))\nabla_vM_j^\prime(v+\widetilde{W}(y-tv,v,s,t)).
\end{split}
\end{equation*}
Notice that $\Lambda(C^1)(\gamma,s,t)+\Lambda(C^3)(\gamma,s,t)\lesssim 2^{-2j}$ and $\Lambda(C^2)(\gamma,s,t)\lesssim 2^{-2j}\langle \gamma\rangle$. This leads to acceptable contributions using Lemma \ref{keybilinearlemma} and the bound $2^j\gtrsim 2^{19m/20}$.

Now, we move on to the estimate of the main contribution. We split into two cases based on the size of $k_1$ as follows (recall $K$ defined in \eqref{SizeOfKLinearBoundInEpsilon}).

\textbf{Case $\mathbf{1}$}: $k_1\ge -m/2 +K/2$. We first consider the $L^1_x$-estimate.  Using Lemma \ref{Laga10}(i),
\begin{equation}\label{LFEstim1}
\begin{split}
\langle t\rangle\Big\|\int_{0}^t\int_{\mathbb{R}^3}&\mathcal{L}_{j,k_1}(x-(t-s)v,v,s,t)dv  d s \Big\|_{L^1_x}\lesssim\langle t\rangle 2^{-2j}\int_{0}^t\Vert \nabla_xE_{k_1}(s)\Vert_{L^1}ds\\
&\lesssim \varepsilon_1\langle t\rangle2^{-2j-k_1}\int_{0}^t\langle s\rangle^{2\delta-1}ds\lesssim\varepsilon_1 2^{-2j-k_1}\langle t\rangle^{1+2\delta}.
\end{split}
\end{equation}
Thus the contribution for all $k_1$ with  $k_1\ge  -m/2 +K/2$  is acceptable, due to the assumption $2^{2j}\gtrsim 2^{19m/10}$. Now, we consider the $L^\infty_x$-estimate. 
If $t/2\leq s \leq t$, we use that
\begin{equation*}
\begin{split}
\Vert \nabla E_{k_1}(s)\Vert_{L^\infty_x}\lesssim \varepsilon_1\langle s\rangle^{2\delta-4}2^{-k_1}
\end{split}
\end{equation*}
and integrate over $v$, so
\begin{equation*}
\begin{split}
 &\langle t\rangle^4\Big\| \int_{t/2}^t\int_{\mathbb{R}^3}\mathcal{L}_{j,k_1}(x-(t-s)v,v,s,t)dv  d s \Big\|_{L^\infty_x}\\ 
&\qquad \lesssim \langle t\rangle^{4}\int_{t/2}^t\int_{\mathbb{R}^3} \Vert \nabla E_{k_1}(s)\Vert_{L^\infty_x} \vert M'_j(v)\vert dv d s
 \lesssim \varepsilon_1 \langle t\rangle^{2\delta+1}2^{-k_1-2j}
\end{split}
\end{equation*}
which also gives an acceptable contribution.  It remains to consider the case  $0\le s\le t/2$. If $t\lesssim 1$ then the same bounds as above hold. On the other hand, if $t\geq 10$ then we can use the characteristics bounds in Lemma \ref{derichar2} to verify that the Jacobian of the map
\begin{equation*}\label{AddedJacobian}
w\mapsto y:=x-(t-s)w+\widetilde{Y}(x-tw,w,s,t)
\end{equation*}
is of size $\langle t\rangle^3$. Using a simple change of variable, we deduce that, for fixed $x$,
\begin{equation*}
\begin{split}
\int_{\mathbb{R}^3}\vert\partial_{x^p}E_{k_1}(x-(t-s)v,s)M_j^\prime(v)\vert dv&\lesssim 2^{-5j}\langle t\rangle^{-3}\Vert P_{k_1}\rho(s)\Vert_{L^1},\\
\int_{\mathbb{R}^3}\vert\partial_{x^p}E_{k_1}(x-(t-s)v+\widetilde{Y}(x-tv,v,s,t),s)M_j^\prime(v)\vert dv&\lesssim 2^{-5j}\langle t\rangle^{-3}\Vert P_{k_1}\rho(s)\Vert_{L^1}.
\end{split}
\end{equation*}
Therefore
\begin{equation*}
\begin{split}
\langle t\rangle^4\Big\| \int_{0}^{t/2}\int_{\mathbb{R}^3}\mathcal{L}_{j,k_1}(x-(t-s)v,v,s,t)dvds\Big\|_{L^\infty}&\lesssim \varepsilon_1 2^{-k_1-5j}\langle t\rangle^{2\delta+1},
\end{split}
\end{equation*}
which again gives an acceptable contribution.

 \textbf{Case $\mathbf{2}$}: $k_1\leq   -m/2 +K/2$. For this case,  the derivative in $E$ is not problematic. Note that, after taking the difference between the two electric fields, we have 
\be\label{april5eqn1}
\big[\partial_{x^p}E_{k_1}(x ,s)-\partial_{x^p}E_{k_1}(x+\widetilde{Y}_\#(x,v,s,t),s)\big]M'_j(v) =- \int_{0}^1 L^\theta_{j, k_1}(x,v,s,t) d\theta,
\ee 
where
\[
L^\theta_{j, k_1}(x,v,s,t) := \widetilde{Y}^q_\#(x,v,s,t)\partial_{x^p}\partial_{x^q}E_{k_1}(x+\theta\widetilde{Y}_\#(x,v,s,t),s)\cdot M'_j(v). 
\]

We expand the function $\widetilde{Y}_\#$ as in \eqref{april2eqn5}. Therefore, for any fixed $\theta\in [0,1]$, in terms of the trilinear operators defined in \eqref{ropi1}, we have 
\[
\int_{\R^3} P_k L^\theta_{j, k_1}(x-(t-s)v,v,s,t) d v = \int_{s }^t \mathcal{Q}_{j,k}^{ }( \nabla E_{k_1}, E; C^4)(x,   \tau,s, t) d \tau,
\]
with $C^4(x,y,v,\gamma,s,t):=2^{k_1}(\gamma-s)  2^{3j}M'_j(v)$, $\Lambda(C^4)(\gamma,s,t)\lesssim 2^{-2j}2^{k_1}\langle \gamma\rangle$. This leads to an acceptable contribution using Lemma \ref{keybilinearlemma} and recalling the assumption $2^{2j}\gtrsim 2^{1.9m}$.
\end{proof}

The following lemma deals with the contributions of low frequencies, when $\langle \nabla \rangle \approx 1$.

\begin{lemma}\label{LargeV2Lem}
If $2^j\gtrsim 2^{19m/20}$ and $k\leq 0$ then
\begin{equation}\label{BoundLargeV2}
\Vert R^I_{2,j,k}\Vert_{B^0_t}\lesssim \varepsilon_1^2\langle t\rangle^{0.9}2^{-2j}.
\end{equation}
\end{lemma}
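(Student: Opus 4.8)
The plan is to rewrite $R^I_{2,j,k}(\cdot,t)$ — which by \eqref{change7} equals $\int_0^t\int_{\R^3}L_{2,j,k}(x-(t-s)v,v,s,t)\,dv\,ds$ — as a finite superposition of the trilinear operators $\mathcal{B}_{j,k}$ of \eqref{june16eqn1}, and then to control it by invoking Lemma \ref{keybilinearlemma}. Since $k\le 0$, the $B^0_t$--norm of $R^I_{2,j,k}$ is comparable to $\langle t\rangle^3\|R^I_{2,j,k}(t)\|_{L^\infty_x}+\|R^I_{2,j,k}(t)\|_{L^1_x}$, which is exactly what those trilinear estimates deliver, so there is nothing extra to prove about high $x$--frequencies here.

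The first step is to expose the quadratic structure. From \eqref{L2Formula} I split $L_{2,j}$ into a ``spatial'' and a ``velocity'' difference,
\begin{equation*}
L_{2,j}(x,v,s,t)=\big[E(x,s)-E(x+\widetilde{Y}_\#,s)\big]\cdot M'_j(v)+E(x+\widetilde{Y}_\#,s)\cdot\big[M'_j(v)-M'_j(v+\widetilde{W}_\#)\big],
\end{equation*}
with $\widetilde{Y}_\#=\widetilde{Y}_\#(x,v,s,t)$, $\widetilde{W}_\#=\widetilde{W}_\#(x,v,s,t)$, and apply the fundamental theorem of calculus to each bracket, writing $E(x,s)-E(x+\widetilde{Y}_\#,s)=-\int_0^1\widetilde{Y}_\#^q\,\partial_{x^q}E(x+\theta\widetilde{Y}_\#,s)\,d\theta$ and similarly for $M'_j(v)-M'_j(v+\widetilde{W}_\#)$. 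I then expand the prefactors $\widetilde{Y}_\#,\widetilde{W}_\#$ via \eqref{Lan7} (equivalently \eqref{april2eqn5}), which turns each bracket into an integral over an inner time $\gamma\in[s,t]$ of a product of two copies of the field. After the change of variables sending the spatial argument $x-(t-s)v$ of $R^I_{2,j,k}$ to the outer convolution variable $y$ of \eqref{ropi1}, these expressions fit the template \eqref{ropi1}: the inner field sits in the slot evaluated at $y-(t-\gamma)v+P_2$ with $P_2=\widetilde{Y}(y-tv,v,\gamma,t)$, the outer field in the slot evaluated at $y-(t-s)v+P_1$ with $P_1=\theta\,\widetilde{Y}(y-tv,v,s,t)$, the shift bounds \eqref{june16eqn4} hold by Lemma \ref{derichar2}, and (using the $2^{3j}$--normalization built into the $\mathcal{Q}_{j,k}$--kernel) the coefficients are, schematically,
\begin{equation*}
C_{\mathrm{sp}}=2^{3j}(\gamma-s)\,M'_j(v),\qquad C_{\mathrm{vel}}=2^{3j}\,\partial_v M'_j\big(v+\theta\,\widetilde{W}_\#(y-(t-s)v,v,s,t)\big).
\end{equation*}
Chopping the $\gamma$--integral dyadically ($\gamma\approx 2^{m_2}$, $2^{m_2}\gtrsim\langle s\rangle$) and using $|M'_j|\lesssim 2^{-5j}$, $|\nabla_v M'_j|\lesssim 2^{-6j}$, one gets $\Lambda^\ast(C_{\mathrm{sp}})\lesssim 2^{-2j}2^{m_2}$ and $\Lambda^\ast(C_{\mathrm{vel}})\lesssim 2^{-3j}$, together with an identity of the form $R^I_{2,j,k}(\cdot,t)=\sum_{m_2}\int_0^1\!\int_0^t\big[\mathcal{B}_{j,k}(\nabla E,E;C_{\mathrm{sp}})+\mathcal{B}_{j,k}(E,E;C_{\mathrm{vel}})\big](\cdot,s,t)\,ds\,d\theta$ (up to harmless constants and a finite sum over components).

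It then suffices to apply Lemma \ref{keybilinearlemma}. By \eqref{june16eqn31A1}, $\|\mathcal{B}_{j,k}(\nabla E,E;C_{\mathrm{sp}})(\cdot,s,t)\|_{B^0_t}\lesssim\varepsilon_1^2\,2^{-1.1m_2}\Lambda^\ast(C_{\mathrm{sp}})\lesssim\varepsilon_1^2\,2^{-0.1m_2}2^{-2j}$ — the key being that the extra spatial derivative on the field in \eqref{june16eqn31A1} supplies the gain $2^{-1.1m_2}$ that exactly cancels the weight $2^{m_2}$ produced by the factor $(\gamma-s)$ in $\widetilde{Y}_\#$ — while by \eqref{june16eqn32}, $\|\mathcal{B}_{j,k}(E,E;C_{\mathrm{vel}})(\cdot,s,t)\|_{B^0_t}\lesssim\varepsilon_1^2\,2^{-0.1m_2}2^{-3j}$. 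Summing in $m_2$ over $2^{m_2}\gtrsim\langle s\rangle$ costs a factor $\langle s\rangle^{-0.1}$, and then $\int_0^1\!\int_0^t\varepsilon_1^2\langle s\rangle^{-0.1}2^{-2j}\,ds\,d\theta\lesssim\varepsilon_1^2\langle t\rangle^{0.9}2^{-2j}$, which is the claim.

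The main obstacle is the bookkeeping in the second paragraph: one must track the three time variables and the spatial shifts carefully when matching the double difference to \eqref{ropi1} — in particular making sure that the weight $(\gamma-s)$ from $\widetilde{Y}_\#$ enters $C_{\mathrm{sp}}$ with exactly one power $2^{m_2}$, so it is absorbed by the improved decay in \eqref{june16eqn31A1} — and one must check that the weaker rate $2^{-0.1m_2}$ obtained for the velocity term from \eqref{june16eqn32} is still summable, which it is precisely because $C_{\mathrm{vel}}$ carries no $m_2$--weight. I emphasize that no ad hoc high/low splitting of the field frequency is needed: feeding the full field $E$ into $\mathcal{Q}_{j,k}$ and using Lemma \ref{keybilinearlemma} — whose proof already exploits the density decomposition \eqref{rhodeco} — absorbs the low--frequency content of $E$ automatically.
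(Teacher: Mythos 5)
Your proposal is correct and follows essentially the same route as the paper: split $L_{2,j}$ into the spatial and velocity differences, expand $\widetilde{Y}_\#,\widetilde{W}_\#$ via \eqref{april2eqn5} to expose the second field, recast as trilinear operators $\mathcal{Q}_{j,k}$, and invoke Lemma \ref{keybilinearlemma} before integrating in $s$. One small point in your favor: you pair the velocity term with the sharp coefficient bound $\Lambda^\ast(C_{\mathrm{vel}})\lesssim 2^{-3j}$ and estimate \eqref{june16eqn32}, which is the clean way to make the $m_2$--sum converge, whereas the paper quotes the looser common bound $2^{-2j}\langle\gamma\rangle$ that only closes if implicitly combined with the stronger $2^{-1.1m_2}$ decay of the other trilinear estimates.
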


\begin{proof}

We decompose
\begin{equation}\label{L2}
L_{2,j,k}(x,v,s,t)=L_{2,j,k}^a+L_{2,k,j}^b,
\end{equation}
where
\begin{equation*}
\begin{split}
L_{2,j,k}^a(x,v,s,t)&:=P_k\big\{E(x,s)-E(x+\widetilde{Y}_\#(x,v,s,t),s)\big\}\cdot M'_j(v),\\
L_{2,j,k}^b(x,v,s,t)&:=P_k\big\{E(x+\widetilde{Y}_\#(x,v,s,t),s)\cdot\big[M'_j(v)-M'_j(v+\widetilde{W}_\#(x,v,s,t))\big]\big\}.
\end{split}
\end{equation*}
We can rewrite
\begin{equation*}
\begin{split}
E(x,s)-E(x+\widetilde{Y}_\#(x,v,s,t),s)&=-\int_{0}^1\widetilde{Y}_\#(x,v,s,t)\cdot\nabla_x E(x+\theta\widetilde{Y}_\#(x,v,s,t),s)d\theta,\\
M'_j(v)-M'_j(v+\widetilde{W}_\#(x,v,s,t))&=-\int_{0}^1\widetilde{W}_\#(x,v,s,t)\cdot\nabla_{v}M'_j(v+\theta\widetilde{W}_\#(x,v,s,t))d\theta,\\
\end{split}
\end{equation*}
We use the formulas \eqref{april2eqn5}, so for any $\ast\in\{a,b\}$ we have
\begin{equation}\label{april5eqn21}
\begin{split}
L^{ \ast}_{2,j,k}(x,v,s,t)&=\int_{0}^1 \int_{s}^t  \mathcal{L}^{ \ast,\theta}_{2,j,k}(x,v,\gamma,s,t)\,d \gamma d\theta, \\
 \mathcal{L}^{ a,\theta}_{2,j,k}(x,v,\gamma,s,t) &:=P_k\big[-(\gamma-s)E^p(x-(s-\gamma)v+\widetilde{Y}(x-sv,v,\gamma,t),\gamma)\\
 &\qquad\qquad\times\partial_{x^p}E(x+\theta\widetilde{Y}_\#(x,v,s,t),s)\cdot M'_j(v) \big],\\
 \mathcal{L}^{b,\theta}_{2,j,k}(x,v,\gamma, s,t) &:=P_k\big[  E^p(x-(s-\gamma)v+\widetilde{Y}(x-sv,v,\gamma,t),\gamma)\\
 &\qquad\qquad\times E(x+\widetilde{Y}_\#(x,v,s,t),s)\cdot \partial_{v^p}M'_j(v+\theta\widetilde{W}_\#(x,v,s,t))\big]. 
\end{split}
\end{equation}

Therefore, for any $\theta\in [0,1]$, in terms of the trilinear operator defined in \eqref{ropi1}, we can write
\begin{equation}\label{april5eqn22}
\begin{split}
  \int_{\R^3}  \mathcal{L}^{a,\theta}_{2,j,k}(x-(t-s)v,v,\gamma,s,t)  d v = \mathcal{Q}_{j,k}(\nabla E, E; C^a)(x, \gamma,s, t), \\ 
    \int_{\R^3}  \mathcal{L}^{b,\theta}_{2,j,k}(x-(t-s)v,v,\gamma,s,t)  d v = \mathcal{Q}_{j,k}(E, \nabla E; C^b)(x, \gamma, s, t), 
\end{split}
\end{equation}
where
\[
C^a(x,y,v,\gamma,s,t) =(\gamma-s)2^{3j}M'_j(v),\qquad C^b(x,y,v,\gamma,s,t)=2^{3j}\nabla_vM'_j(v+\theta\widetilde{W}_\#(y,v,s,t)).
\]
Clearly $\Lambda(C^a)(\gamma,s,t)+\Lambda(C^b)(\gamma,s,t)\lesssim 2^{-2j}\langle \gamma\rangle$. Therefore, using Lemma \ref{keybilinearlemma},
\begin{equation*}
\begin{split}
  \Big\|\int_s^t\int_{\R^3}  \mathcal{L}^{\ast,\theta}_{2,j,k}(x-(t-s)v,v,\gamma,s,t) \,d vd\gamma\Big\|_{B_t^0}\lesssim \varep_1^2\langle s\rangle^{-0.1}2^{-2j},
\end{split}
\end{equation*}
for $\ast\in\{a,b\}$. We integrate in $s\in[0,t]$ and $\theta\in[0,1]$ to prove the desired bounds \eqref{BoundLargeV2}.
\end{proof}

 \subsection{Small velocity case: $j\leq 19m/20$ and $m\geq\delta^{-4}$} 

In this subsection we estimate the terms $R^I_{2,j,k}$ and $T^I_{2,j,k}$ in the case where $j\leq 19m/20$, $m\geq\delta^{-4}$, and $j+k\geq - \delta m /3$, and the terms $R^{II}_{2,j,k}$ when $j\leq 19m/20$, $m\geq\delta^{-4}$, and $j+k\leq  - \delta m /3$. 

Since in the expression for $R^{II}_{2,j,k}$ we gain a factor of $\mathcal{D}^2$ over $R^I_{2,j,k}$, we can treat both these terms in the same way.
For any $\ast\in \{I, II\}$, we localize the size of $t-s$ and decompose
\begin{equation*}
\begin{split}
R^{\ast}_{2,j,k}&=\sum_{0\le n\le m+2}R^{\ast}_{2,j,k,n},\qquad R^{\ast}_{2,j,k,n}(x,t):=\int_{0}^t\int_{\mathbb{R}^3}\widetilde{\varphi}_n(t-s)L^{\ast}_{2,j,k}(x-(t-s)v,v,s,t)dsdv,
\end{split}
\end{equation*}
where
\be\label{april7eqn1}
\begin{split}
&L^{I}_{2,j,k}(x,v,s,t):= L_{2,j,k}(x ,v,s,t),\\
&L^{II}_{2,j,k}(x,v,s,t):=\int_{\R^3}\mathcal{K}_{j,k}^{II}(y,v ) L_{2,j}(x-y ,v,s,t) d y, 
\end{split}
\ee
and the kernels $\mathcal{K}^{II}_{j,k}(y,v )$ are defined when $j\leq 19m/20$, $m\geq\delta^{-4}$, and $j+k\leq  - \delta m /3$ by
\be\label{april10eqn11}
\begin{split}
&\mathcal{K}^{II}_{ j,k }(y,v ) : = \frac{1}{(2\pi)^3}\widetilde{\varphi}_{[j-2,j+2]}(v)\int_{\R^3} e^{i y \cdot \xi } \frac{\mathcal{D}(\xi,v)^2}{1+\mathcal{D}(\xi,v)^2 } \varphi_{k}(\xi) \,d \xi, \\ 
 &  |\mathcal{K}^{II}_{j,k}(y,v ) |\lesssim 2^{3k+2(k+j)} (1+2^k|y|)^{-100}. 
\end{split}
\ee 
Therefore, we can think that the functions $L^{I}_{2,j,k}$ and $L^{II}_{2,j,k}$ differ essentially by a factor of $2^{2k+2j}$, $L^{II}_{2,j,k}\sim 2^{2k+2j}L^I_{2,j,k}$.

Our main result in this section is the following:

\begin{proposition}\label{MainPropositionI}
Assume that the bootstrap bounds \eqref{YW12} hold, and $m\geq\delta^{-4}$, $j\in[0,19m/20]$, $k\in\Z$, $n\in[0, m+2]$. Then
\begin{equation}\label{ElementaryBoundTermI}
\begin{split}
&\text{ if }\,\,j+k \ge-\delta m/3\,\,\text{ then }\,\,\Vert \langle t\rangle \langle\nabla_x\rangle R^I_{2,j,k,n}(t)\Vert_{B^0_t}\lesssim \varepsilon_1^{3/2} 2^{-(j+k^-)}2^{\delta m/2}, \\ 
&\text{ if }\,\,j+k \le -\delta m/3,\text{ then }\,\,\Vert \langle t\rangle  R^{II}_{2,j,k,n}(t)\Vert_{B^0_t}\lesssim \varepsilon_1^{3/2}2^{ \delta m/2}.
\end{split}
\end{equation}
In particular, for any $k,m\in\Z$, $m\geq \delta^{-4}$,
\begin{equation}\label{CCLITerms}
\begin{split}
&\sum_{j\in [0,19m/20],\,j\ge-k-\delta m/3} \Vert \langle t\rangle^{1-\delta}\langle\nabla_x\rangle R^I_{2,j,k}(t)\Vert_{B_t^0}\lesssim\varepsilon_1^{3/2},\\
&\sum_{j\in [0,19m/20],\,j\le-k-\delta m/3} \Vert \langle t\rangle^{1-\delta}\langle\nabla_x\rangle R^{II}_{2,j,k}(t)\Vert_{B_t^0}\lesssim\varepsilon_1^{3/2},\\
&\sum_{j\in [0,19m/20],\,j\ge-k-\delta m/3}\Vert \langle t\rangle^{-\delta}T^I_{2,j,k}(t)\Vert_{B_t^0}+\Vert \langle t\rangle^{1-\delta}\nabla_{x,t} T^I_{2,j,k}(t)\Vert_{B_t^0}\lesssim\varepsilon_1^{3/2}.\\
\end{split}
\end{equation}
\end{proposition}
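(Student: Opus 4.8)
The plan is to reduce the estimate of $R^{\ast}_{2,j,k,n}$ to applications of the trilinear estimates in Lemma \ref{keybilinearlemma}, by writing $L_{2,j,k}$ (or $L^{II}_{2,j,k}$) as a sum of three basic pieces, exactly as in the proof of Lemma \ref{LargeV1Lem}. Since $L^{II}_{2,j,k}\sim 2^{2k+2j}L^I_{2,j,k}$ by \eqref{april10eqn11}, and the target bound in the second line of \eqref{ElementaryBoundTermI} is weaker than the first by a factor $2^{j+k^-}$ (no $\langle\nabla_x\rangle$, hence no $2^{k^-}$, and the extra $2^{-j}$ is absorbed into $2^{-(j+k^-)}\cdot 2^{2(j+k)}$ for $j+k\le-\delta m/3$), it suffices to treat the type-$I$ quantity $R^I_{2,j,k,n}$ and keep track of powers of $2^{k+j}$. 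Using \eqref{L2Formula} and the mean value theorem as in \eqref{april5eqn2}, I write
\begin{equation*}
L_{2,j,k}(x,v,s,t)=L^{a}_{2,j,k}+L^{b}_{2,j,k},
\end{equation*}
with $L^a$ the difference of electric fields evaluated at $x$ and $x+\widetilde Y_\#$, and $L^b$ the term carrying the $M'_j(v)-M'_j(v+\widetilde W_\#)$ difference. Expanding $\widetilde Y_\#,\widetilde W_\#$ via \eqref{april2eqn5}, each of these becomes, after integrating in $v$, an integral over $\gamma\in[s,t]$ of $\mathcal{Q}_{j,k}(\nabla E,E;C^a)$ and $\mathcal{Q}_{j,k}(E,\nabla E;C^b)$ respectively, with coefficients satisfying $\Lambda(C^a)+\Lambda(C^b)\lesssim 2^{-2j}\langle\gamma\rangle$ (the factor $\langle\gamma\rangle$ coming from $\gamma-s$ or from $\widetilde Y$).

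Next I localize $t-s\approx 2^n$ and apply Lemma \ref{keybilinearlemma}. For the factor-of-$\langle\nabla_x\rangle$ bound I use that $R^I_{2,j,k,n}$ carries an $x$-frequency $\approx 2^k$, so the multiplier $\mathcal{R}^l$ conventions of \eqref{HormMich} apply and $\langle\nabla_x\rangle$ costs only $2^{k^+}\le 1$ when $k\le 0$ and $2^k$ when $k\ge 0$; the relevant bilinear inputs are $\mathcal B_{j,k}(\mathcal R^1\nabla E,\mathcal R^2 E;C^a)$ and $\mathcal B_{j,k}(\mathcal R^1 E,\mathcal R^2\nabla E;C^b)$ with the $\gamma$-integration running over $[2^{m_2}-1,2^{m_2+1}]$, $m_2\le m$. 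By \eqref{june16eqn31A1}--\eqref{june16eqn31A2} these are bounded by $\varepsilon_1^2\min\{2^{-1.1m_2},\langle\tau\rangle^{-1.1}\}\cdot 2^{-2j}\langle\tau\rangle$ where $\tau=s$. Summing over the dyadic blocks $m_2$ of $[s,t]$ and then integrating $ds$ over $[0,t]$ (with $t-s\approx 2^n$), the worst case is $s\approx 2^m$, giving a total of order $\varepsilon_1^2 2^{-2j}2^{-m\cdot 0.1}2^m\cdot 2^{\text{(gain from }n\text{ sum)}}$; together with the prefactor $\langle t\rangle\approx 2^m$ in the norm, and using $2^{2j}\gtrsim\ldots$ is \emph{not} available here, one instead uses $j\le 19m/20$ so that $2^{-2j}2^{2m}\lesssim 2^{m/10}\ll 2^{\delta m/2}\cdot 2^{j+k^-}$ only marginally — so the actual argument must exploit the extra $\langle\tau\rangle^{-1}$ from having $\nabla E$ rather than $E$ and, crucially, the low-frequency gains. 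To get the clean $2^{-(j+k^-)}2^{\delta m/2}$ I will split further on the size of $k$: for $k\ge 0$ use the $L^1_x\to L^1_x$ and $L^1_x\to L^\infty_x$ bounds on $\nabla E_{k}$ directly (the $2^{-k}$ there supplies $2^{-k}=2^{-k^+}$), while for $k\le 0$ and $j+k\ge-\delta m/3$ the constraint forces $2^{-(j+k^-)}=2^{-(j+k)}\lesssim 2^{\delta m/3}$, so a comparatively crude bound suffices. The cubic error terms $Err_j$ (the three terms in \eqref{april5eqn2}, with $\partial_x\widetilde Y_\#,\partial_x\widetilde W_\#$ factors) are handled exactly as in Lemma \ref{LargeV1Lem}: expand once more via \eqref{april2eqn5} to produce $\mathcal Q_{j,k}(\nabla E,\nabla E;\cdot)$, $\mathcal Q_{j,k}(E,\nabla E;\cdot)$, and apply \eqref{june16eqn33}, \eqref{june16eqn32}; these are lower order because of the additional decay in \eqref{june16eqn33}.

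For the $T^I_{2,j,k}$ bounds in the third line of \eqref{CCLITerms}, I proceed as in Proposition \ref{change6}: from \eqref{deco2}, $T^I_{2,j,k}(t)=-i\int_0^t e^{is}e^{-(t-s)|\nabla|}R^I_{2,j,k}(s)\,ds$, and $(\partial_t+|\nabla|)T^I_{2,j,k}=-ie^{it}R^I_{2,j,k}$, so both follow from the $R^I_{2,j,k}$ estimates via \eqref{BoundsPoissonKernel} and an $s$-integration, with the Poisson kernel $|\nabla|e^{-(t-s)|\nabla|}$ providing the $2^{k}[1+2^k(t-s)]^{-1}$ gain needed to dominate $\langle t\rangle^{1-\delta}\nabla_x$. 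Summing the geometric-type series in $j$ (using the $2^{-\delta^2 j}$ or, here, the net negative power of $2^{j+k^-}$ after accounting for all blocks $n$) and in $m_2$ gives \eqref{CCLITerms}. Finally, \eqref{change15} in Lemma \ref{keybilinearlemma} is used in the borderline regime where $n\approx m$ (so $s$ is small and one must instead bound a single $\mathcal Q_{j,k}$ with both arguments being genuine densities $\rho$, not differences) — this is the degenerate case and needs the sharp $\min\{2^{k_{\min}},\langle\tau\rangle^{-1},\langle\gamma\rangle^{-1}\}^3$ factor. \textbf{The main obstacle} I anticipate is precisely this bookkeeping of low-frequency gains: the naive count $2^{-2j}\langle t\rangle^2$ loses, and one must everywhere extract the extra $\langle\tau\rangle^{-1}$ from differentiated electric fields together with the frequency-dependent factors $\langle t\rangle^{1-2\delta}2^{k}$ in the $Osc_\delta$/$Stat_\delta$ norms, organizing the three regimes ($k\ge 0$; $k\le 0$ with $j+k\ge-\delta m/3$; $j+k\le-\delta m/3$) so that the constraint defining $A^I$ versus $A^{II}$ is exactly what closes each case.
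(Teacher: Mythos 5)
Your overall strategy — apply the trilinear bounds of Lemma~\ref{keybilinearlemma} after writing $L_{2,j,k}=L^a+L^b$ and expanding $\widetilde{Y}_\#,\widetilde{W}_\#$ via \eqref{april2eqn5}, then reduce $R^{II}$ to $R^I$ via the heuristic $L^{II}_{2,j,k}\sim 2^{2(k+j)}L^{I}_{2,j,k}$, and obtain the $T^I_{2,j,k}$ bounds from those for $R^I_{2,j,k}$ via the Poisson kernel — is the approach the paper uses for the \emph{large-velocity} case $j\gtrsim 19m/20$ (Lemma~\ref{LargeV2Lem} and Proposition~\ref{change6}). But it does not close in the regime of Proposition~\ref{MainPropositionI}, and you correctly detect this yourself: with $\Lambda(C^a)+\Lambda(C^b)\lesssim 2^{-2j}\langle\gamma\rangle$, the bounds \eqref{june16eqn31A1}--\eqref{june16eqn31A2} plus the $ds$ and $d\gamma$ integrations produce roughly $\varepsilon_1^2\, 2^{-2j}\,\langle t\rangle^{1+\delta}$, and after multiplying by the prefactor $\langle t\rangle\langle\nabla_x\rangle\approx 2^{m}2^{k^+}$ one needs $2^{2m-2j+k}\lesssim 2^{\delta m/2}2^{-k^-}$, which fails badly for small $j$ (e.g.\ $j=k=0$). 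Your proposed remedies — splitting on the sign of $k$ and ``extracting the extra $\langle\tau\rangle^{-1}$'' — do not supply the missing decay; they give at best another factor of $\langle\tau\rangle^{-1}$ or $2^{-|k|}$, which leaves you still off by a power of $2^{m}$ in the worst cases. So there is a genuine gap: the decomposition you chose does not see the structure that makes the estimate true.

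The idea the paper actually uses, and which your proposal is missing, is an integration by parts in $v$ \emph{before} any bilinear estimate. Since $[(t-s)\partial_{x^r}+\partial_{v^r}]$ applied to $F(x-(t-s)v,v,\ldots)$ evaluates $\partial_{v^r}F$ along the straight-line flow, and since $\int \partial_{v^r}(\cdot)\,dv=0$, one obtains
\[
\int_{\R^3}P_kF(x-(t-s)v,v,s,t)\,dv
=\frac{1}{2^k(t-s)}\int_{\R^3}P_k^r(\partial_{v^r}F)(x-(t-s)v,v,s,t)\,dv,
\]
which injects the critical factor $2^{-k-n}$ visible in the model terms \eqref{ModelTerms}. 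This $2^{-n}$ tames the $s$-integration (which runs over a dyadic block of length $\approx 2^{n}$), and the $2^{-k}$ cancels the $\langle\nabla_x\rangle$ prefactor. After this step the main piece $\mathcal{M}^0_{k_1,k_2}$ becomes genuinely bilinear in the localized densities $P_{k_1}\rho,P_{k_2}\rho$ (Lemma~\ref{LemReductionR12}), and is then estimated in Lemma~\ref{maintypeI} by decomposing into static and oscillatory parts, using the resonant projectors $\Pi_{\iota,k,p}$ and integration by parts in time; the thirteen remainder terms $\mathcal{M}^1,\dots,\mathcal{M}^{13}$ (cubic in the electric field) are handled by Lemma~\ref{PerturbativeTermsR12} — these are the terms to which your trilinear framework does apply. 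Small $n\in[0,10]$ is treated separately in Lemma~\ref{RILateTimeHighFrequencies}. In short: you reproduced the correct derivation of \eqref{CCLITerms} from \eqref{ElementaryBoundTermI} and the correct reduction of $R^{II}$ to $R^I$ and of $T^I$ to $R^I$, but for \eqref{ElementaryBoundTermI} itself you need to first integrate by parts in $v$ to produce the $2^{-k-n}$ gain; without it, no rearrangement of the trilinear bounds will give the claimed estimate for small $j$.
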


\begin{proof}[Proof] The basic bounds \eqref{ElementaryBoundTermI} follow from Lemma \ref{RILateTimeHighFrequencies}, Lemma \ref{LemReductionR12}, and Lemma \ref{maintypeI} below. Here we show that they imply the bounds \eqref{CCLITerms}.

The first two bounds in \eqref{CCLITerms} follow by summation with respect to $j$ and $n$ (notice that $k\leq 0$ in the bounds concerning $R^{II}_{2,j,k}$). For the last bound, using \eqref{april2eqn1}, we have
\begin{equation*}
\begin{split}
\Vert e^{is}e^{-(t-s)\vert\nabla\vert} R^I_{2,j,k}(s)\Vert_{B_t^0}&\lesssim \Vert R^I_{2,j,k}(s)\Vert_{B^0_s},\\
(t-s)\Vert e^{is}\vert\nabla\vert e^{-(t-s)\vert\nabla\vert} R^I_{2,j,k}(s)\Vert_{B^0_t}&\lesssim \Vert R^I_{2,j,k}(s)\Vert_{B^0_s},
\end{split}
\end{equation*}
for $s\in[0,t]$. Therefore, using \eqref{april2eqn1},
\begin{equation*}
\begin{split}
\Vert T^I_{2,j,k}(t)\Vert_{B^0_t}&\lesssim \varepsilon_1^{3/2} 2^{3\delta m/5}2^{-(j+k^-)},\\
\Vert \vert\nabla\vert T^I_{2,j,k}(t)\Vert_{B^0_t}&\lesssim \int_{0}^{t-1}\Vert R^I_{2,j,k}(s)\Vert_{B^0_s}\frac{ds}{t-s}+2^k\int_{t-1}^t \Vert R^I_{2,j,k}(s)\Vert_{B^0_s}ds\\
& \lesssim\varepsilon_1^{3/2} 2^{-(j+k^-)}2^{\delta m/2}\langle t\rangle^{-1+\delta m/10}.
\end{split}
\end{equation*}
Finally, we notice that $\left(\partial_t+\vert\nabla\vert\right)T^I_{2,j,k}(t)=-ie^{it}R^I_{2,j,k}(t)$, and the bounds in the last line of \eqref{CCLITerms} follow by summation over $j$.
\end{proof}

We bound first the contributions of the small indices $n\in[0,10]$.

\begin{lemma}\label{RILateTimeHighFrequencies}
With the hypothesis of Proposition \ref{MainPropositionI}, if $n\in[0,10]$ then
\begin{equation}\label{change12}
\begin{split}
&\text{ if }\,\,\,j+k \ge -\delta m/3\,\,\,\text{ then }\,\,\,2^j\Vert \langle t\rangle \nabla_xR^I_{2,j,k,n}(t)\Vert_{B^0_t}\lesssim\varepsilon_1^{3/2},\\
&\text{ if }\,\,j+k \le -\delta m/3\,\,\,\text{ then }\,\,\,2^j\Vert \langle t\rangle  \nabla_x R^{II}_{2,j,k,n}(t)\Vert_{B^0_t}\lesssim \varepsilon_1^{3/2} 2^{j+k}.
\end{split}
\end{equation}
\end{lemma}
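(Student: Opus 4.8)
The plan is to reduce both estimates to applications of the trilinear machinery of Lemma~\ref{keybilinearlemma}, exploiting that when $n\leq 10$ the factor $t-s\approx 2^n\approx 1$ is harmless and the only scale that matters is $\langle s\rangle\approx\langle t\rangle\approx 2^m$. First I would start from the decomposition \eqref{L2Formula} of $L_{2,j,k}$ as $P_k\{E(x,s)M_j'(v)-E(x+\widetilde Y_\#,s)M_j'(v+\widetilde W_\#)\}$ and take one $x$-derivative, splitting $\partial_{x^p}L_{2,j,k}$ exactly as in \eqref{L2HF}--\eqref{april5eqn2} into the ``difference of electric fields'' piece $\mathcal L_{j,k_1}$ (with a localization $E_{k_1}=P_{k_1}E$) and an error piece $Err_j$ collecting the terms where the $v$-Taylor expansion of $M_j'$ or a derivative of $\widetilde Y_\#,\widetilde W_\#$ appears. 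The error piece is handled verbatim as in the proof of Lemma~\ref{LargeV1Lem}: expand $\widetilde W_\#,\widetilde Y_\#$ via \eqref{april2eqn5}, recognize the result as a sum of $\mathcal Q_{j,k}$-operators with coefficients $C$ satisfying $\Lambda(C)\lesssim 2^{-2j}\langle\gamma\rangle$, and apply \eqref{june16eqn31A2} and \eqref{june16eqn33}; since $m\geq\delta^{-4}$ and $j\leq 19m/20$ the resulting $\varep_1^2\langle\tau\rangle^{-1.1}$-type decay, integrated in $s$ over $[0,t]$ with $2^m\approx\langle t\rangle$, is bounded by $\varep_1^2 2^{-2j}\langle t\rangle^{\delta}$, more than enough.

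For the main piece $\mathcal L_{j,k_1}$ I would, exactly as in Case~2 of Lemma~\ref{LargeV1Lem}, write the difference $\partial_{x^p}E_{k_1}(x,s)-\partial_{x^p}E_{k_1}(x+\widetilde Y_\#,s)$ via the fundamental theorem of calculus as $-\int_0^1\widetilde Y_\#^q\,\partial_{x^p}\partial_{x^q}E_{k_1}(x+\theta\widetilde Y_\#,s)\,d\theta\cdot M_j'(v)$, expand $\widetilde Y_\#$ using \eqref{april2eqn5}, and recognize the $v$-integral $\int_{\R^3}P_k\mathcal L^\theta_{j,k_1}(x-(t-s)v,v,s,t)\,dv$ as $\int_s^t\mathcal Q_{j,k}(\nabla^2 P_{k_1}E,\,E;\,C^4)(x,\tau,s,t)\,d\tau$ with $C^4=2^{k_1}(\gamma-s)2^{3j}M_j'(v)$ modulo the $\theta$-integration. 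Here $n\leq 10$ forces $(\gamma-s)\approx 2^n\lesssim 1$, so $\Lambda(C^4)\lesssim 2^{-2j}2^{k_1}$ with no $\langle\gamma\rangle$ loss (or at worst $2^{-2j}2^{k_1}\langle\gamma\rangle$, which is still acceptable). Applying \eqref{april10eqn50} we get $\|\mathcal B_{j,k}(\nabla^2 P_{k_1}E,E;C^4)\|_{B^0_s}\lesssim\varep_1^2 2^{k_1}2^{-2j}\langle\tau\rangle^{-1}\min\{2^{-1.1n},\langle\tau\rangle^{-1.1}\}$; integrating in $s$ over $[0,t]$ the $\langle s\rangle^{-1}$ factor gives a logarithmic loss $\lesssim\langle t\rangle^{\delta}$, and summing the geometric series $\sum_{k_1}2^{k_1}$ over $k_1\leq\log_2(\text{something}\lesssim 1)$—using the frequency localization $\varphi_k$ to cap $k_1\lesssim k+O(1)$—produces $\varep_1^2 2^{k}2^{-2j}\langle t\rangle^\delta$. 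Then $\|\langle t\rangle\nabla_x R^I_{2,j,k,n}\|_{B^0_t}\lesssim\varep_1^2\langle t\rangle^{1+\delta}2^{k}2^{-2j}$, and multiplying by $2^j$ and using $2^j\langle t\rangle 2^k 2^{-2j}=2^{k-j}2^m$: wait—one must be more careful and keep $k_1$ \emph{large} as well, using the Hörmander--Mikhlin normalization $\mathcal R^1=\nabla^2 P_{k_1}E\mapsto$ ``$\mathcal R^1\nabla E$'' type bounds; I would therefore actually split on $k_1\gtrless -m/2+K/2$ exactly as in Lemma~\ref{LargeV1Lem}, using \eqref{june16eqn31A2} for large $k_1$ (where $E_{k_1}$ has good $L^1$-decay $\varep_1 2^{-k_1}\langle s\rangle^{2\delta-1}$) and the second-derivative bound \eqref{april10eqn50} for small $k_1$. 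In both regimes the hypothesis $j+k\geq-\delta m/3$ (resp.\ $j+k\leq-\delta m/3$ for the $R^{II}$ case, where the extra $2^{2k+2j}$ from $\mathcal K^{II}_{j,k}$ in \eqref{april10eqn11} supplies exactly the needed gain) closes the estimate with room to spare; for $R^{II}$ one inserts the kernel bound $|\mathcal K^{II}_{j,k}(y,v)|\lesssim 2^{3k+2(k+j)}(1+2^k|y|)^{-100}$ and proceeds identically, the net effect being to multiply every bound by $2^{2(k+j)}\leq 2^{-2\delta m/3}$.

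\textbf{Main obstacle.} The delicate point is the \emph{uniformity in} $n\in[0,10]$ combined with the low-frequency summation over $k_1$: when $k$ and hence $k_1$ are very negative the electric-field bounds of Lemma~\ref{Laga10} degrade (the $L^1$ norm of $P_{k_1}\rho$ carries a $2^{-2k_1}$), so the naive geometric series in $k_1$ diverges at the low end and one must instead use the \emph{other} side of the $B^0$-norm (the $L^\infty$ component with its $2^{3k_1}$ gain, or equivalently Bernstein $\|P_{k_1}g\|_{L^1}\lesssim\|g\|_{B^0}$ directly) to get convergence; this is precisely the role of the split at $k_1\approx-m/2+K/2$ and of the constant $K$ fixed in \eqref{SizeOfKLinearBoundInEpsilon}, and getting the bookkeeping of the two regimes to match up—so that the worse of the two bounds is still $\lesssim\varep_1^{3/2}$ after the $2^j$ and $\langle t\rangle$ weights—is where all the care goes. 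Everything else is a routine repackaging of the arguments already carried out in Lemmas~\ref{LargeV1Lem}--\ref{LargeV2Lem}.
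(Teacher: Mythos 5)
Your overall skeleton — the decomposition \eqref{L2HF} into $Err_j$ plus $\mathcal L_{j,k_1}$, the dichotomy over $k_1$, and the fundamental-theorem-of-calculus rewrite of the electric-field difference — does match the paper's strategy, and you correctly identify that the net effect of the $\mathcal K^{II}$ kernel is a factor $2^{2(k+j)}$. However, there are two concrete errors that would break the argument as written.

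First, the integration range. With $n\le 10$, the cutoff $\widetilde\varphi_n(t-s)$ in the definition of $R^\ast_{2,j,k,n}$ restricts $s$ to $[t-2^{11},t]$, a window of length $\lesssim 1$. You repeatedly write ``integrated in $s$ over $[0,t]$,'' and you use this to claim the error contribution is bounded by $\varep_1^2 2^{-2j}\langle t\rangle^\delta$. But that bound does not close: after the $\langle t\rangle\cdot 2^j$ weight you would get $\varep_1^2 2^{-j}\langle t\rangle^{1+\delta}$, which is unbounded. The whole point of the $n\le 10$ case is that both the $s$-integral and the inner $\gamma$-integral run over intervals of length $\lesssim 1$, and the paper exploits precisely this by invoking the \emph{pointwise} trilinear bound \eqref{change15} (not the $\gamma$-integrated bounds \eqref{june16eqn31A2}, \eqref{june16eqn33}), which at $s\approx\gamma\approx t$ produces $\varep_1^2 2^{-2j}\langle t\rangle^{-1.1}$ and thus $\varep_1^2 2^{-j}\langle t\rangle^{-0.1}\lesssim\varep_1^{3/2}$ after the weights.

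Second, the split threshold. You copy $k_1\gtrless -m/2+K/2$ from Lemma \ref{LargeV1Lem}, but that threshold is tuned to the regime $j\ge 19m/20$, where the extra $2^{-2j}\lesssim 2^{-19m/10}$ absorbs the $2^{m/2}$ produced by summing the geometric series $\sum_{k_1\ge -m/2+K/2}2^{-k_1}$. Here $j$ can be as small as zero, so with your threshold Case~1 yields roughly $\varep_1^{3/2}\,2^{m/2+2\delta m}2^{-j}$ after the $\langle t\rangle 2^j$ weight — divergent. The paper uses $k_1\gtrless m/4+K/2$, so that Case~1 gives $\sum_{k_1\ge m/4+K/2}2^{-k_1}\lesssim 2^{-m/4-K/2}\lesssim\varep_1^{1/2}2^{-m/4}$, which closes even for $j=0$. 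Correspondingly, Case~2 then runs over $k_1\le m/4+K/2$ (a range reaching into large positive $k_1$), which the paper handles by a direct ``rough bilinear'' estimate $\|\widetilde Y_\#\|_{L^\infty}\|\nabla^2E_{k_1}\|_{B^0_s}\lesssim\varep_1^2 2^{\delta k_1^-}\langle t\rangle^{-11/10}$ using \eqref{cui6} and Lemma \ref{Laga10}(i), rather than the trilinear estimate \eqref{april10eqn50}. That direct route is both simpler and sharper: the $2^{\delta k_1^-}$ factor gives summability at small $k_1$ and the $\sim m$ terms with $k_1\in[0,m/4+K/2]$ only cost a logarithm, comfortably absorbed by $\langle t\rangle^{-11/10}$. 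If you insist on the trilinear route you would need to redo the computation carefully; with the $[t-2^{11},t]$ range the factor $\langle\tau\rangle^{-1}$ from \eqref{april10eqn50} becomes $\langle t\rangle^{-1}$ and the $2^{k_1}$ in $\Lambda(C)$ summed over $k_1\le m/4+K/2$ costs $2^{m/4}$, and one must track this against the remaining decay — which is not done in your proposal and does not obviously close without further care.

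Finally, a small but relevant inaccuracy: you claim the output localization $\varphi_k$ caps $k_1\lesssim k+O(1)$. It does not — the product of two fields at frequencies $k_1,k_2$ can have any output frequency up to $\max(k_1,k_2)+O(1)$, so $k_1$ is not bounded above by $k$. The control over large $k_1$ must come from the decay of $\|\nabla E_{k_1}\|$ itself, which is what the Case~1 estimate supplies.
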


\begin{proof} To prove the bounds on $R^I_{2,j,k,n}$ we use again the decomposition \eqref{L2HF}. The error terms can be written as in \eqref{ErrHF}. The integrals over $s$ and $\gamma$ are over intervals of length $\lesssim 1$, so these contributions are easily acceptable using the bounds \eqref{change15}. 

To bound the main term, we consider again two cases. For  $k_1\geq m/4+K/2$, with $K$ defined in \eqref{SizeOfKLinearBoundInEpsilon}, for $\theta\in \{0,1\}$ we find that
\begin{equation*}
\begin{split}
\Big\Vert \int_{\R^3} \partial_{x^p} E_{k_1}(x-(t-s)v+\theta \widetilde{Y}(x-tv,v,s,t), s)\cdot M'_j(v) d v\Big\Vert_{B^0_s}&\lesssim \varepsilon_1 2^{-k_1-2j}\langle s\rangle^{2\delta-1}
\end{split}
\end{equation*} 
and  each term in $\mathcal{L}_{j,k_1}$ gives an acceptable contribution separately. 

 If $k_1\le m/4+K/2$,  we work with the  difference of the two electric fields as in   \eqref{april5eqn1}.  As $t\geq s\geq t -2^{11}$, from  rough bilinear estimates, we have 
\begin{equation*}
\begin{split}
\Big\Vert \int_{\R^3} L^\theta_{j, k_1 }(x-(t-s)v,v,s,t) d v \Big\Vert_{B_t^0}&\lesssim 2^{-2j}\Vert \widetilde{Y}_\#(s,t)\Vert_{L^\infty}\Vert \nabla^2E_{k_1}(s)\Vert_{B^0_s}\lesssim\varepsilon_1^22^{  -2j} 2^{\delta k_1^-} \langle t \rangle^{-11/10}.
\end{split}
\end{equation*}
The desired bounds follow by summation over $k_1\le m/4+K/2$ and integration over $s$.

The bounds on $R^{II}_{2,k,j,n}$ are similar, using the identities \eqref{april7eqn1}--\eqref{april10eqn11} (recall the heuristical equivalence $L^{II}_{2,j,k}\sim 2^{2k+2j}L^I_{2,j,k}$) and the assumption $j+k\leq -\delta m/3\leq 0$.
\end{proof}

From now on we assume that $n\ge 10$. We first isolate the main contribution.

\begin{lemma}\label{LemReductionR12}
For any $\star\in \{I, II\}$, we have
\begin{equation*}
\begin{split}
\int_{0}^t\varphi_n(t-s)\int_{\mathbb{R}^3}L^{\star}_{2,j,k}(x-(t-s)v,v,s,t)dsdv&=I_{n,j,k}^{\star}(x,t)+Rem_{n,j,k}^{\star}(x,t)
\end{split}
\end{equation*}
where $Rem_{n,j,k}^{\star}$ denote acceptable terms,
\begin{equation}\label{change17}
\begin{split}
\text{ if }\,\,\,j+k\ge -\delta m/3\,\,\,\text{ then }\,\,\,\Vert \langle t\rangle Rem_{n,j,k}^{I}\Vert_{B_t^0}&\lesssim\varepsilon_1^2 2^{-(k+j)+ \delta m/10}, \\   
\text{ if }\,\,\,j+k\le -\delta m/3\,\,\,\text{ then }\,\,\,\Vert \langle t\rangle Rem_{n,j,k}^{II}\Vert_{B_t^0}&\lesssim\varepsilon_1^22^{  \delta m/10}. 
\end{split}
\end{equation}
The main terms $I^{\star}_{n,j,k}, \star\in \{I,II\}$, can be written as a linear superposition of terms of the form
\begin{equation}\label{ModelTerms}
\begin{split}
& I^{\star}_{n,j,k} =\sum_{k_{1}, k_{2}\in\Z} 2^{-5j-k-n}\iint_{\{0\le s\le \tau\le t\}}\varphi_n(t-s)\int_{\mathbb{R}^3}\mathfrak{I}^{\star}_{kk_1k_2}[P_{k_1}\rho,P_{k_2}\rho]\, dvdsd\tau,\\
&\mathfrak{I}^{I}_{kk_1k_2}[f,g](x,v, \tau,  s, t) = \mathfrak{I}_{kk_1k_2}[f,g](x,v, \tau,  s, t), \\ 
 &\mathfrak{I}^{II}_{kk_1k_2}[f,g](x,v, \tau,  s, t) = \int_{\R^3} \mathcal{K}^{II}_{j,k}(x-y, v) \mathfrak{I}_{kk_1k_2}[f,g](y,v, \tau,  s, t) d y, \\ 
&\mathfrak{I}_{kk_1k_2}[f,g](x,v, \tau,  s, t) =\omega_{k_1}(\tau,s )\omega_{k_2}(\tau,s)\cdot b(v,s,t)\widetilde{\varphi}_{[j-2,j+2]}(v)\\
 &\qquad\qquad\qquad\qquad\times P_{[k-2,k+2]}\mathcal{R}^1\left[(\mathcal{R}^2f)(x-(t-s)v,s)\cdot(\mathcal{R}^3g)(x-(t-\tau)v,\tau)\right],
\end{split}
\end{equation}
where $\mathcal{R}^a$ denote normalized Calderon-Zygmund operators (as defined in \eqref{HormMich}) and
\begin{equation}\label{ModelTermsC}
\begin{split}
\omega_{k_1}(\tau, s)\in\{\tau-s,2^{-k_1}\},\qquad\omega_{k_2}(\tau,s)\in\{\tau-s,2^{-k_2}\},\qquad \Vert b\Vert_{L_{v,s,t}^{\infty}}+2^n\Vert \partial_sb\Vert_{L_{v,s,t}^{\infty}}\lesssim 1.
\end{split}
\end{equation}
\end{lemma}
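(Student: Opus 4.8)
The plan is to start from the definition
$R^{\star}_{2,j,k,n}(x,t)=\int_0^t\widetilde\varphi_n(t-s)\int_{\R^3}L^{\star}_{2,j,k}(x-(t-s)v,v,s,t)\,ds\,dv$,
insert the formula \eqref{L2Formula} (noting that after the translation $x\mapsto x-(t-s)v$ the characteristic corrections $\widetilde{Y}_\#,\widetilde{W}_\#$ become simply $\widetilde{Y}(x-tv,v,s,t)$ and $\widetilde{W}(x-tv,v,s,t)$), and Taylor-expand the difference $E(x-(t-s)v,s)M'_j(v)-E(x-(t-s)v+\widetilde{Y}(x-tv,v,s,t),s)M'_j(v+\widetilde{W}(x-tv,v,s,t))$ to first order. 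This writes it as a $\nabla_xE$--term, $-\int_0^1\widetilde{Y}^q(x-tv,v,s,t)\,(\partial_{x^q}E)(x-(t-s)v+\theta\widetilde{Y},s)\,d\theta\cdot M'_j(v)$, plus a $\nabla_vM'_j$--term, $-\int_0^1E(x-(t-s)v+\widetilde{Y},s)\cdot\widetilde{W}^q(x-tv,v,s,t)\,\partial_{v^q}M'_j(v+\theta\widetilde{W})\,d\theta$. Substituting the integral identities \eqref{april2eqn5} for $\widetilde{Y}$ and $\widetilde{W}$ turns each of these into a bilinear-in-$E$ expression with one extra time integration over $\gamma\in[s,t]$ and an explicit weight $\gamma-s$ (from $\widetilde{Y}$), resp.\ $1$ (from $\widetilde{W}$); for $\star=II$ one additionally convolves in $x$ with the kernel $\mathcal{K}^{II}_{j,k}$ of \eqref{april7eqn1}--\eqref{april10eqn11}, which is an $L^1$-normalized bump times the harmless factor $2^{2(k+j)}\le 1$ (using $j+k\le-\delta m/3$). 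Hence $\star=I$ and $\star=II$ may be treated in parallel.

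Next I would split each of these two contributions into a \emph{main} part and a \emph{remainder}: the main part is obtained by replacing, in the argument of the time-$\gamma$ electric field, the inner correction $\widetilde{Y}(x-tv,v,\gamma,t)$ by $0$, and in the argument of the time-$s$ field the shift $\theta\widetilde{Y}$ by $0$ (and freezing $M'_j$ at $v$ in the $\nabla_vM'_j$--term). The differences between the true expressions and these linearized ones are the remainders $Rem^{\star}_{n,j,k}$; after one more application of the mean value theorem each such remainder carries an additional factor $\widetilde{Y},\widetilde{W},\nabla_x\widetilde{Y}$ or $\nabla_v\widetilde{Y}$, which is kept inside the amplitude (where it is pointwise $O(\varep_1)$ by Lemma \ref{derichar2}), so the remainder is of the bilinear type $\mathcal{B}_{j,k}(\mathcal{R}^1\nabla E,\mathcal{R}^2 E;C)$, $\mathcal{B}_{j,k}(\mathcal{R}^1\nabla E,\mathcal{R}^2\nabla E;C)$, etc., as in \eqref{ropi1}--\eqref{june16eqn1}, with kernels obeying \eqref{june16eqn4} (checked from $|M'_j|,|\nabla_vM'_j|\lesssim 2^{-5j}\widetilde\varphi_{[j-4,j+4]}(v)$) and coefficients $C$ whose $\Lambda(C)$ is controlled by Lemma \ref{derichar2}. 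The bounds \eqref{change17} then follow from the trilinear estimates of Lemma \ref{keybilinearlemma} (cases \eqref{june16eqn31A1}--\eqref{april10eqn51}, and \eqref{change15} after integrating in $\gamma$ and $s$), together with the constraint $j\le 19m/20$, which converts the power $2^{-5j}$ into the claimed $2^{\delta m/10}$.

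For the main parts I would decompose the two electric fields in frequency, write $E_{k_i}=\nabla\Delta_x^{-1}P_{k_i}\rho=2^{-k_i}\mathcal{R}P_{k_i}\rho$ with $\mathcal{R}$ a normalized Calder\'on--Zygmund operator, and write the outer $P_k$ (which commutes with the $v$-dependent factor $M'_j$) as $P_{[k-2,k+2]}\mathcal{R}^1$. To produce the remaining gains $2^{-k}$ and $2^{-n}$ I would integrate by parts in $v$ in the $\int_{\R^3}dv$ integral, using $(\partial_{x^q}E)(x-(t-s)v,s)=-(t-s)^{-1}\partial_{v^q}\bigl[E(x-(t-s)v,s)\bigr]$ on $\mathrm{supp}\,\widetilde\varphi_n(t-s)$: this both replaces $\nabla E$ at time $s$ by $E$ at time $s$ (one further inverse-gradient, i.e.\ a factor $2^{-k}$ after writing $E=\nabla\Delta_x^{-1}\rho$) and produces $(t-s)^{-1}\approx 2^{-n}$, while moving the $v$-derivative onto the cutoffs $\widetilde\varphi_j$, onto $M'_j$, and onto the $\theta$-expansions, all of which are absorbed into an amplitude $b(v,s,t)\widetilde\varphi_{[j-2,j+2]}(v)$ with $\|b\|_{L^\infty}+2^n\|\partial_sb\|_{L^\infty}\lesssim 1$ (the $2^n$ arising because $\partial_s$ landing on the surviving corrections $\widetilde{Y},\widetilde{W}$ gains $\langle t-s\rangle^{-1}$). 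Collecting the explicit time-weights $\gamma-s$ and the derivative gains $2^{-k_i}$ into $\omega_{k_1}(\tau,s)\omega_{k_2}(\tau,s)$ with $\omega_{k_i}\in\{\tau-s,2^{-k_i}\}$, pulling out $2^{-5j}$ from $M'_j$ and $2^{-k},2^{-n}$ as above, and relabeling $\gamma=\tau$, gives exactly the model form \eqref{ModelTerms}--\eqref{ModelTermsC}, with $\mathfrak{I}^{II}$ obtained from $\mathfrak{I}^{I}$ by the extra convolution with $\mathcal{K}^{II}_{j,k}$.

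The step I expect to be the real obstacle is this last one: organizing the many terms produced by the Taylor expansion, the substitution of \eqref{april2eqn5}, and the $v$-integration by parts so that \emph{every} main contribution fits the rigid template \eqref{ModelTerms} — in particular that each carries the full prefactor $2^{-5j-k-n}$ together with two genuine gains $\omega_{k_1}\omega_{k_2}$ (and not just one), and that the resulting amplitude genuinely obeys $\|b\|_{L^\infty}+2^n\|\partial_sb\|_{L^\infty}\lesssim 1$ uniformly in $j,k,n,m$. By contrast, the remainder estimates \eqref{change17} are a routine, if lengthy, bookkeeping exercise feeding Lemma \ref{derichar2} into Lemma \ref{keybilinearlemma}.
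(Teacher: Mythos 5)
Your plan matches the paper's in broad strokes — a Taylor expansion of the reaction term, substitution of the Duhamel representations \eqref{april2eqn5} for $\widetilde{Y}$ and $\widetilde{W}$, an integration by parts in $v$ to gain $2^{-k-n}$, and control of the remainders via Lemma \ref{keybilinearlemma}. But there is a genuine gap in how and where you perform the $v$-integration by parts, and as written your scheme does not land on the rigid template \eqref{ModelTerms}.

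The paper performs the $v$-integration by parts at the very start, \emph{before} any Taylor expansion, using $P_k=2^{-k}P_k^r\partial_{x^r}$ with $P_k^r=-2^k\partial_{x^r}|\nabla|^{-2}P_k$ and the identity
\[
\int_{\R^3} P_kF(x-(t-s)v,v,s,t)\,dv = \frac{1}{2^k(t-s)}\int_{\R^3} P_k^r\big(\partial_{v^r}F\big)(x-(t-s)v,v,s,t)\,dv.
\]
Two features are essential. First, the $2^{-k}$ in the prefactor comes from the outer projector $P_k$, not from writing $E=\nabla\Delta_x^{-1}\rho$: the latter only produces $2^{-k_1}$ or $2^{-k_2}$, which are precisely the gains carried by $\omega_{k_1},\omega_{k_2}$ and \emph{not} the prefactor $2^{-k}$; your proposal conflates these. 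Second, in this identity $\partial_{v^r}$ hits $L_{2,j}$ in its second slot, where $\widetilde{Y}$ appears as $\widetilde{Y}(x-sv,v,s,t)$; the chain rule then produces the vector field $(s\partial_{x^r}-\partial_{v^r})$ on $\widetilde{Y},\widetilde{W}$, and expanding via \eqref{Lan7} gives the leading weights $(\tau-s)$ and $(\tau-s)^2$ of $\mathcal{M}^0_{k_1,k_2}$, exactly the allowed $\omega_{k_i}\in\{\tau-s,2^{-k_i}\}$.

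In your scheme you Taylor-expand, substitute \eqref{april2eqn5}, and only then integrate by parts in $v$ on the already-shifted expression. After the shift the inner electric field at time $\gamma$ sits at $x-(t-\gamma)v$, so the $\partial_v$ hitting it produces $-(t-\gamma)\nabla_x$, not $(\tau-s)\nabla_x$. Concretely, the main piece of your $\nabla_xE$-term becomes
\[
\frac{(t-\gamma)(\gamma-s)}{t-s}\,(\nabla\!\cdot\! E)(x-(t-\gamma)v,\gamma)\,E(x-(t-s)v,s)\,M'_j(v),
\]
and $(t-\gamma)(\gamma-s)/(t-s)$ cannot be written as $\omega_{k_1}(\tau,s)\omega_{k_2}(\tau,s)\,b(v,s,t)$: after extracting one $(\tau-s)$ the leftover $(t-\tau)/(t-s)$ still depends on $\tau$, while \eqref{ModelTermsC} forbids $\tau$-dependence in $b$. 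Your list of places the $v$-derivative lands (cutoffs, $M'_j$, $\theta$-expansions) omits exactly this term, which is where the gap hides. The fix is to restore the paper's order of operations: integrate by parts once at the level of $\int_{\R^3} P_k L_{2,j}(x-(t-s)v,\cdot)\,dv$ \emph{before} substituting \eqref{april2eqn5}; this both yields the clean prefactor $2^{-k-n}$ (from $P_k$ and $(t-s)^{-1}$) and, because $\partial_{v^r}$ never sees the $v$ inside the shift, ensures every surviving weight is $(\tau-s)$ or $2^{-k_i}$.
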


\begin{proof} Recall \eqref{april7eqn1}. Notice that the only difference between $L^{I}_{2,j,k}(x,v,s,t)$ and $L^{II}_{2,j,k}(x,v,s,t)$ lies in the kernels, which play a minor role, so it would be sufficient to consider  $L^{I}_{2,j,k}(x,v,s,t)$ in detail here.  We observe that for any function $F$,
\begin{equation*}
\begin{split}
 \int_{\mathbb{R}^3}P_k F(x-(t-s)v,v,s,t)dv&=\frac{1}{2^k(t-s)}\int_{\mathbb{R}^3}\left[(t-s)\partial_{x^r}+\partial_{v^r}\right] P^{r}_kF(x-(t-s)v,v,s,t)\,dv\\
&=\frac{1}{2^k(t-s)}\int_{\mathbb{R}^3}P^{r}_k(\partial_{v^r}F)(x-(t-s)v,v,s,t)\,dv,\\
\end{split}
\end{equation*}
where $P_k^r:=-2^k\partial_{x^r}|\nabla|^{-2}P_k$ has the same properties as $P_k$. Using \eqref{qwp1} we calculate
\begin{equation}\label{dL2dv}
\begin{split}
\partial_{v^r}&L_{2,j}(x,v,s,t)=\big\{E(x,s)-E(x+\widetilde{Y}(x-sv,v,s,t),s)\big\}\cdot \partial_{v^r}M'_j(v)\\
&\quad+E(x+\widetilde{Y}(x-sv,v,s,t),s)\cdot\big[\partial_{v^r}M'_j(v)-\partial_{v^r}M'_j(v+\widetilde{W}(x-sv,v,s,t))\big]\\
&\quad+\nu^q_{r}(x,v,s,t)\partial_{x^q}E(x+\widetilde{Y}(x-sv,v,s,t),s)\cdot M'_j(v+\widetilde{W}(x-sv,v,s,t))\\
&\quad+\mu_r^q(x,v,s,t)E(x+\widetilde{Y}(x-sv,v,s,t),s)\cdot\partial_{v^q}M'_j(v+\widetilde{W}(x-sv,v,s,t)),
\end{split}
\end{equation}
where
\begin{equation*}
\begin{split}
&\nu_r^q(x,v,s,t):=\big[(s\partial_{x^r}-\partial_{v^r})\widetilde{Y}^q\big](x-sv,v,s,t),\\
&\mu_r^q(x,v,s,t):=\big[\left(s\partial_{x^r}-\partial_{v^r}\right)\widetilde{W}^q\big](x-sv,v,s,t). 
\end{split}
\end{equation*}
As a result of direct computations using \eqref{Lan7}, we have 
\begin{align}
 &\nu^q_r(x,v,s,t) = -
 \int_s^t(\tau-s)^2 \p_{x^r} E^q(x -(s-\tau) v+\widetilde{Y}(x-sv ,v,\tau,t),\tau)\,d\tau\\
&\quad+\int_s^t(\tau-s) (s  \p_{x^r}-\p_{v^r})  \widetilde{Y}^{q'}(x-sv ,v,\tau,t ) \p_{x^{q'}} E^q(x -(s-\tau)  v+\widetilde{Y}(x -sv,v,\tau,t),\tau) d\tau,\\
&\mu^q_r(x,v,s,t)= \int_s^t(\tau-s)   \p_{x^r} E^q(x  -(s-\tau) v+\widetilde{Y}(x-sv ,v,\tau,t),\tau)\,d\tau\\
&\quad - \int_s^t(s\p_{x^r}-\p_{v^r})  \widetilde{Y}^{q'}(x-sv ,v,\tau,t ) \p_{x^{q'}} E^q(x  -(s-\tau) v+\widetilde{Y}(x -sv,v,\tau,t),\tau) d\tau.  
\end{align}
After localizing the frequencies of the two electric fields, we have
\begin{equation*}
\begin{split}
\partial_{v^r}L_{2,j,k}(x,v,s,t)&= \sum_{k_1, k_2 \in \Z }\int_{s}^t P_k \left[\mathcal{M}_{k_1,k_2} \right](x,v,\tau,s,t) d\tau, 
\end{split}
\end{equation*}
where 
\begin{equation}\label{april5eqn45}
 \mathcal{M}_{k_1,k_2}=\mathcal{M}^0_{k_1,k_2}+\sum_{i=1}^{13}\mathcal{M}_{k_1,k_2}^{i},
\end{equation}
and, with $E_l:=P_lE$,
\begin{equation*}
\begin{split}
\mathcal{M}_{k_1, k_2}^0(x,v,\tau,s,t)&:=-(\tau-s)\partial_{x^q}E_{k_1}(x,s)E_{k_2}^q(x-(s-\tau)v,\tau)\cdot\partial_{v^r}M'_j(v)\\
&+E_{k_1}(x,s)E_{k_2}^p(x-(s-\tau)v,\tau)\cdot\partial_{v^r}\partial_{v^p}M'_j(v)\\
&-(\tau-s)^2\partial_{x^q}E_{k_1}(x,s)\partial_{x^r}E_{k_2}^q(x-(s-\tau)v,\tau)\cdot M'_j(v)\\
&+(\tau-s)E_{k_1}(x,s)\partial_{x^r}E_{k_2}^q(x-(s-\tau)v,\tau)\cdot\partial_{v^q}M'_j(v),
\end{split}
\end{equation*}
\begin{equation*}
\begin{split}
&\mathcal{M}_{k_1, k_2}^1(x,v,\tau,s,t):= -(\tau-s) \int_0^1\big[\p_{x^q} E_{k_1}(x+ \theta \widetilde{Y}(x-sv,v,s,t) ,s)- \p_{x^q} E_{k_1}(x,s)\big]\\
&\qquad\times E^q_{k_2} (x-(s-\tau)v+\widetilde{Y}(x-sv,v,\tau,t),\tau)\cdot\partial_{v^r}M'_j(v)\,d \theta,
  \end{split}
  \end{equation*}
\begin{equation*}
\begin{split}
&\mathcal{M}_{k_1, k_2}^2(x,v,\tau,s,t):= -(\tau-s)\p_{x^q} E_{k_1}(x,s)\\
&\qquad\times \big[E^q_{k_2} (x-(s-\tau)v+\widetilde{Y}(x-sv,v,\tau,t),\tau)-E^q_{k_2} (x-(s-\tau)v,\tau)\big]\cdot \partial_{v^r}M'_j(v),
  \end{split}
  \end{equation*}
\begin{equation*}
\begin{split}
&\mathcal{M}_{k_1, k_2}^3(x,v,\tau,s,t):=\int_0^1 \big[\p_{v^q} \partial_{v^r}M'_j(v+ \theta  \widetilde{W}(x-sv,v,s,t))-\partial_{v^r}\partial_{v^q}M'_j(v)\big]\\
&\qquad\cdot E_{k_1}(x+\widetilde{Y}(x-sv,v,s,t),s)E_{k_2}^q  (x-(s-\tau)v+\widetilde{Y}(x-sv,v,\tau,t),\tau)\,d\theta,
\end{split}
\end{equation*}
\begin{equation*}
\begin{split}
&\mathcal{M}_{k_1, k_2}^4(x,v,\tau,s,t):=\int_0^1\p_{v^q} \partial_{v^r}M'_j(v)\cdot \big[E_{k_1}(x+\widetilde{Y}(x-sv,v,s,t),s)-E_{k_1}(x,s)\big]\\
&\qquad\times E_{k_2}^q  (x-(s-\tau)v+\widetilde{Y}(x-sv,v,\tau,t),\tau)\,d \theta,
\end{split}
\end{equation*}
\begin{equation*}
\begin{split}
&\mathcal{M}_{k_1, k_2}^5(x,v,\tau,s,t):=\int_0^1 \p_{v^q} \partial_{v^r}M'_j(v)\cdot E_{k_1}(x,s) \\
&\qquad\times \big[E_{k_2}^q  (x-(s-\tau)v+\widetilde{Y}(x-sv,v,\tau,t),\tau)-E_{k_2}^q(x-(s-\tau)v,\tau)\big]\,d \theta,
\end{split}
\end{equation*}
\begin{equation*}
\begin{split}
&\mathcal{M}_{k_1, k_2}^6(x,v,\tau,s,t):=-(\tau-s)^2M'_j(v+\widetilde{W}(x-sv,v,s,t))\cdot\partial_{x^q}E_{k_1}(x+\widetilde{Y}(x-sv,v,s,t),s)\\
&\qquad\times\big[\partial_{x^r}E_{k_2}^q(x-(s-\tau)v+\widetilde{Y}(x-sv,v,\tau,t),\tau)-\partial_{x^r}E_{k_2}^q(x-(s-\tau)v,\tau)\big],
\end{split}
\end{equation*}
\begin{equation*}
\begin{split}
&\mathcal{M}_{k_1, k_2}^7(x,v,\tau,s,t):=-(\tau-s)^2 M'_j(v+\widetilde{W}(x-sv,v,s,t))\\
&\qquad\cdot\big[\partial_{x^q}E_{k_1}(x+\widetilde{Y}(x-sv,v,s,t),s)-\partial_{x^q}E_{k_1}(x,s)\big]\partial_{x^r}E_{k_2}^q(x-(s-\tau)v,\tau),
\end{split}
\end{equation*}
\begin{equation*}
\begin{split}
&\mathcal{M}_{k_1, k_2}^8(x,v,\tau,s,t):=-(\tau-s)^2 \big[M'_j(v+\widetilde{W}(x-sv,v,s,t))-M'_j(v)\big]\\
&\qquad\cdot\partial_{x^q}E_{k_1}(x,s)\partial_{x^r}E_{k_2}^q(x-(s-\tau)v,\tau),
\end{split}
\end{equation*}
\begin{equation*}
\begin{split}
&\mathcal{M}_{k_1, k_2}^9(x,v,\tau,s,t):=(\tau-s)\big[\partial_{v^q}M'_j(v+\widetilde{W}(x-sv,v,s,t))-\partial_{v^q}M'_j(v)\big]\\
&\qquad\cdot E_{k_1}(x+\widetilde{Y}(x-sv,v,s,t),s)\p_{x^r} E^q_{k_2}(x  -(s-\tau) v+\widetilde{Y}(x-sv ,v,\tau,t),\tau),
\end{split}
\end{equation*}
\begin{equation*}
\begin{split}
&\mathcal{M}_{k_1, k_2}^{10}(x,v,\tau,s,t):=(\tau-s)\partial_{v^q}M'_j(v)\cdot \big[E_{k_1}(x+\widetilde{Y}(x-sv,v,s,t),s)-E_{k_1}(x,s)\big]\\
&\qquad\times\p_{x^r} E^q_{k_2}(x  -(s-\tau) v+\widetilde{Y}(x-sv ,v,\tau,t),\tau),
\end{split}
\end{equation*}
\begin{equation*}
\begin{split}
&\mathcal{M}_{k_1, k_2}^{11}(x,v,\tau,s,t):=(\tau-s)\big[\partial_{v^q}M'_j(v)\cdot E_{k_1}(x,s)\\
&\qquad\times\big[\p_{x^r} E^q_{k_2}(x  -(s-\tau) v+\widetilde{Y}(x-sv ,v,\tau,t),\tau)-\partial_{x^r}E_{k_2}^q(x-(s-\tau)v,\tau)\big],
\end{split}
\end{equation*}
\begin{equation*}
\begin{split}
&\mathcal{M}_{k_1, k_2}^{12}(x,v,\tau,s,t):=(\tau-s) (s  \p_{x^r}-\p_{v^r})  \widetilde{Y}^{q'}(x-sv ,v,\tau,t )\partial_{x^q}E_{k_1}(x+\widetilde{Y}(x-sv,v,s,t),s)\\
&\qquad\cdot\p_{x^{q'} } E^q_{k_2}(x -(s-\tau)  v+\widetilde{Y}(x-sv,v,\tau,t), \tau)M'_j(v+\widetilde{W}(x-sv,v,s,t),
\end{split}
\end{equation*}
\begin{equation*}
\begin{split}
&\mathcal{M}_{k_1, k_2}^{13}(x,v,\tau,s,t):=-(s\p_{x^r}-\p_{v^r})  \widetilde{Y}^{q'}(x-sv ,v,\tau,t ) E_{k_1}(x+\widetilde{Y}(x-sv,v,s,t),s)\\
&\qquad\cdot \p_{x^{q'}} E^q_{k_2}(x  -(s-\tau) v+\widetilde{Y}(x -sv,v,\tau,t),\tau)\partial_{v^q}M'_j(v+\widetilde{W}(x-sv,v,s,t)).
\end{split}
\end{equation*}

Direct inspection shows that all terms in $\mathcal{M}^0_{k_1, k_2}$ lead to terms of the form \eqref{ModelTerms} as claimed, whereas for the error terms $\mathcal{M}_{k_1, k_2}^i$, $i\in\{1,\ldots,13\}$, we can apply Lemma \ref{PerturbativeTermsR12} to conclude.
\end{proof}

We focus now on the terms in \eqref{ModelTerms} and prove the following lemma.

\begin{lemma}\label{maintypeI}
With the hypothesis of Lemma \ref{LemReductionR12} and the notations inside its proof, if $m\geq\delta^{-4}$, $j\in[0,19m/20]$, $k\in\Z$, $n\in[10,m+2]$ then
\begin{equation}\label{april13eqn21}
\begin{split}
\text{ if }\,\,\,j+k\ge -\delta m/3\,\,\,\text{ then }\,\,\,\Vert \langle t\rangle I^{I}_{n,j,k}\Vert_{B^0_t}   &\lesssim \varepsilon_1^22^{-(k+j)+\delta m /2} ,\\ 
\text{ if }\,\,\,j+k\le -\delta m/3\,\,\,\text{ then }\,\,\,\Vert \langle t\rangle I^{II}_{n,j,k}\Vert_{B^0_t}   &\lesssim \varepsilon_1^22^{\delta m /2}. 
\end{split}
\end{equation}

\end{lemma}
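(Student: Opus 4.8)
The plan is to first reduce the type-$II$ estimate to the type-$I$ one, and then analyze the type-$I$ model terms \eqref{ModelTerms} by means of the trilinear bounds of Lemma \ref{keybilinearlemma}. For the \emph{reduction of $II$ to $I$}: by \eqref{april7eqn1}--\eqref{april10eqn11} the kernel $\mathcal{K}^{II}_{j,k}$ differs from a rescaled Littlewood-Paley kernel only by the factor $\mathcal{D}^2/(1+\mathcal{D}^2)$, whose relevant $S^\infty L^\infty$ size is $\lesssim 2^{2(k+j)}$ (cf.\ Lemma \ref{ProjAB} and \eqref{april10eqn11}). Hence $I^{II}_{n,j,k}$ is $I^{I}_{n,j,k}$ with an extra gain of $2^{2(k+j)}$, and in the regime $j+k\le -\delta m/3\le 0$ the first bound in \eqref{april13eqn21} yields the second, since $2^{2(k+j)}\cdot 2^{-(k+j)+\delta m/2}=2^{(k+j)+\delta m/2}\le 2^{\delta m/2}$. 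So it suffices to prove the type-$I$ bound, which we may aim to prove for all $j,k$ (the loss $2^{-(k+j)}$ at low output frequency being harmless by the same mechanism when fed back into $I^{II}$).

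\emph{Setting up the trilinear structure.} Matching variables $\tau_{\mathcal Q}=s$, $\gamma_{\mathcal Q}=\tau$, $s_{\mathcal Q}=t$, recall that in \eqref{ModelTerms} the two copies of $\rho$ sit at times $s$ and $\tau$ with $0\le s\le\tau\le t$, and $\varphi_n(t-s)$ forces $t-s\approx 2^n$, hence $s\in[t-C2^n,t]$ and $0\le\tau-s\lesssim 2^n$. Summing the frequency pieces $P_{k_1},P_{k_2}$ back up, the combination $\omega_{k_i}P_{k_i}\mathcal{R}\rho$ equals a normalized H\"{o}rmander-Michlin operator applied to $P_{k_i}E$ when $\omega_{k_i}=2^{-k_i}$, and $(\tau-s)$ times such an operator applied to $\nabla P_{k_i}E$ when $\omega_{k_i}=\tau-s$. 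Therefore, after localizing the inner time $\tau$ in dyadic blocks $\tau\approx 2^{m_2}$ ($0\le m_2\le m$), each summand of $I^{I}_{n,j,k}$ takes the form $2^{-5j-k-n}\int\varphi_n(t-s)\,\mathcal{B}_{j,k}(\mathcal{R}^aG_1,\mathcal{R}^bG_2;C)(\cdot,s,t)\,ds$ with $G_1,G_2\in\{E,\nabla E\}$, the kernel of $\mathcal{B}_{j,k}$ absorbing the Littlewood-Paley projector together with $\widetilde{\varphi}_{[j-2,j+2]}(v)$, and the coefficient $C$ built from $b(v,s,t)$, a compensating factor $2^{3j}$, and, when $\omega_{k_i}=\tau-s$, the corresponding powers of $(\tau-s)$; here $|C|\lesssim 2^{3j}|\tau-s|^\ell$ and $\langle\gamma_{\mathcal Q}\rangle|\partial_{\gamma_{\mathcal Q}}C|\lesssim 2^{3j}\langle t\rangle|\tau-s|^{\ell-1}$, so $\Lambda^\ast(C)\lesssim 2^{3j}(2^{\ell n}+\langle t\rangle 2^{(\ell-1)n})$, with $\ell\in\{0,1,2\}$ the number of $\nabla E$ factors.

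\emph{Applying Lemma \ref{keybilinearlemma} and summing.} We insert the bound from Lemma \ref{keybilinearlemma} matching $(G_1,G_2)$: \eqref{june16eqn32} when $\ell=0$, \eqref{june16eqn31A1}--\eqref{june16eqn31A2} when $\ell=1$, and \eqref{june16eqn33} when $\ell=2$; the point is that the extra decay $2^{-am_2}$ gained per $\nabla E$ (with $a=0.1,1.1,2.1$ as $\ell=0,1,2$) compensates the growth $\langle t\rangle 2^{(\ell-1)n}$ in $\Lambda^\ast(C)$, using $\tau-s\le t-s\approx 2^n$ and $m_2\gtrsim\log\langle s\rangle$ (since $\tau\ge s$). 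It then remains to integrate in $s$ over the interval of length $\approx 2^n$ carved out by $\varphi_n(t-s)$ and sum over the admissible $m_2$, collecting the powers $2^{-5j-k-n}$, $2^{3j}$, $2^n$, the $\Lambda^\ast$-bound, the time decay $\min\{2^{-am_2},\langle s\rangle^{-a}\}$, and $\varepsilon_1^2$. For the pieces that do not cleanly recombine into full $E$'s --- the very high frequencies, cut off by $2^K$ as in \eqref{SizeOfKLinearBoundInEpsilon}, or the genuinely low output-frequency range --- one argues instead from \eqref{change15} keeping the frequency localization, the sum over large $k_i$ converging through the $2^{-2k_i}\langle\cdot\rangle^{-1+2\delta}$ factor and the sum over small $k_i$ through the $2^{3k_{\min}}$ gain in \eqref{change15}.

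\emph{Main obstacle.} The real work is the final bookkeeping: verifying that in each regime ($n<m$ versus $n=m$, and across the dichotomy $j+k\gtrless -\delta m/3$) the collected powers close with the net bound $\varepsilon_1^2 2^{-(k+j)+\delta m/2}$ (respectively $\varepsilon_1^2 2^{\delta m/2}$ for type $II$). In particular one must check that the dangerous factor $2^{-(k+j)}$ from the Littlewood-Paley rescaling at low output frequency is always absorbed --- either by the $2^{2(k+j)}$ gain in the type-$II$ kernel, or by the dispersive gain $\min\{2^{k_{\min}},\langle\tau\rangle^{-1},\langle\gamma\rangle^{-1}\}^3$ in \eqref{change15} --- and that, for each configuration of the $\omega_{k_i}$, the chosen inequality among \eqref{june16eqn32}--\eqref{june16eqn33} is the one for which the powers of $2^n$ and $\langle t\rangle$ entering $\Lambda^\ast(C)$ cancel against the dyadic decay. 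Everything else is a mechanical application of Lemma \ref{keybilinearlemma} and of the density and electric-field bounds from Section \ref{Prelims}.
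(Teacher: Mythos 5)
Your reduction of the type-$II$ bound to type-$I$ via the $2^{2(k+j)}$ gain in $\mathcal{K}^{II}_{j,k}$ matches the paper and is fine. But the main step --- estimating $I^I_{n,j,k}$ by treating the model terms \eqref{ModelTerms} as instances of Lemma \ref{keybilinearlemma} --- contains a quantitative gap that kills the argument. Lemma \ref{keybilinearlemma} is built for coefficients $C$ carrying extra decay in the internal times (in Lemma \ref{PerturbativeTermsR12} one has, e.g., $\Lambda(C^l)\lesssim 2^{-j}\langle\tau\rangle^2\langle s\rangle^{-2+3\delta}$ from the characteristics bounds of Lemma \ref{derichar2}); for the main term the coefficient is merely bounded, and as you compute, $\Lambda^\ast(C)\lesssim 2^{3j}\big(2^{\ell n}+\langle t\rangle 2^{(\ell-1)n}\big)$. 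The claimed compensation by the $\min\{2^{-am_2},\langle \tau\rangle^{-a}\}$ decay ($a=0.1,1.1,2.1$) is not nearly enough. Take $n\le m-10$ (so $s,\tau\approx 2^m$ and $m_2\approx m$) and $\ell=0$, the $E_{k_1}\cdot E_{k_2}$ pairing arising from the second term of $\mathcal{M}^0_{k_1,k_2}$: estimate \eqref{june16eqn32} yields only $\approx 2^{-0.1m}$, $\Lambda^\ast(C)\lesssim 2^{3j}$, and after the $s$-integral of length $\approx 2^n$ one gets
\begin{equation*}
\big\Vert\langle t\rangle I^I_{n,j,k}\big\Vert_{B^0_t}\lesssim 2^m\cdot 2^{-5j-k-n}\cdot 2^n\cdot\varepsilon_1^2\,2^{-0.1m}\,2^{3j}=\varepsilon_1^2\,2^{-2j-k+0.9m},
\end{equation*}
which exceeds the target $\varepsilon_1^2 2^{-(j+k)+\delta m/2}$ by $2^{-j+0.9m-\delta m/2}$. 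Since $j=0$ is allowed whenever $k\ge-\delta m/3$, this is a polynomial loss of order $2^{0.9m}$, not an $\varepsilon$-loss, and cannot be absorbed by the bootstrap. The $\ell=1,2$ cases exhibit the same deficit: the net dyadic gain $a-1\approx 0.1$ over the $\langle t\rangle$ factor in $\Lambda^\ast(C)$ is an order of magnitude short of the needed $\approx 1$.

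The paper accordingly does not route $I^I_{n,j,k}$ through Lemma \ref{keybilinearlemma} at all. It decomposes the lower-frequency input (wlog $\rho_{k_1}$, $k_1\le k_2$) via Proposition \ref{rhodeco}: the static part $\rho^{stat}_{k_1}$ is estimated directly from \eqref{Laga2} (whose $\langle s\rangle^{-1+2\delta}$ in $L^1$ resp.\ $\langle s\rangle^{-4+2\delta}$ in $L^\infty$ decay is what makes \eqref{change50} close), while the oscillatory part $e^{-is}\rho^{osc}_{k_1}$ is handled by a fresh integration by parts in $s$ (not $\tau$), using the resonant projectors $\Pi_{\pm 1,k,p}$ of \eqref{omeio0}, Lemma \ref{omegaioLem}, and the dispersive estimates of Lemma \ref{omegaioLem2}, culminating in \eqref{change22}. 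That direct analysis --- which exploits the precise decay of the bootstrap norms rather than the generic rates in Lemma \ref{keybilinearlemma} --- is the substance of the proof and is exactly what is missing from your plan.
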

\begin{proof} We focus on the estimate of $I^{I}_{n,j,k}$. With minor modifications, the estimate of $I^{II}_{n,j,k}$ can be obtained as a byproduct.  

Without loss of generality, we  assume that $k_1\le k_2$ and apply the decomposition  \eqref{sug31} for the first input (in the case $k_2\leq k_1$ we apply the  decomposition in \eqref{sug31} for the second input, use the same strategy with the only difference that we do integration by parts in $\tau$ instead of $s$ for the oscillatory part, which gives us better estimates since $\tau\geq s$).

Note that $k\leq k_2+10.$ Moreover, we use the following partition to localize the size of $\tau$, 
\be\label{april13eqn66}
\mathbf{1}_{[0, t]}(\tau) = \sum_{0\le \tilde{m}\le  m+2} \widetilde{\varphi}_{\tilde{m}}(\tau)  \mathbf{1}_{[0, t]}(\tau). 
\ee 
For simplicity of notation, let $\rho_l:=P_l\rho$, $\rho_l^{stat}:=P_l(\rho^{stat})$, $\rho_l^{osc}:=P_l(\rho^{osc})$.

{\bf{Step $1$: the static component.}} We will show that
\begin{equation}\label{change50}
\begin{split}
&\sum_{k_1,k_2\in\Z,\,k_2\geq k-10}\sum_{\tilde{m}\in[0,m+2]}2^{-4j-n}\int_{0}^t \int_s^t\varphi_n(t-s)\widetilde{\varphi}_{\tilde{m}}(\tau)\big[\big\|\mathfrak{J}_{kk_1k_2}[\rho_{k_1}^{stat},\rho_{k_2}](\tau,s,t)\big\|_{L^1_xL^1_v}\\
&\qquad\qquad+2^{3m}\big\|\mathfrak{J}_{kk_1k_2}[\rho_{k_1}^{stat},\rho_{k_2}](\tau,s,t)\big\|_{L^\infty_xL^1_v}\big]\, d \tau  d s\lesssim \varep_1^22^{-m+\delta m/2}.
\end{split}
\end{equation}

A simple estimate using \eqref{Laga2}-\eqref{Laga3} and the fact that $k_2\ge k-10$ shows that
\begin{equation}\label{april13eqn31}
\begin{split}
&2^{-4j-n}\int_{0}^t \int_s^t  \varphi_n(t-s)   \widetilde{\varphi}_{\tilde{m}}(\tau)  \big\|\mathfrak{J}_{kk_1k_2}[\rho_{k_1}^{stat},\rho_{k_2}](\tau,s,t)\big\|_{L^1_{x,v}} d \tau  d s\\
&\lesssim 2^{-j-n} \int_{0}^t \int_s^t \varphi_n(t-s) \widetilde{\varphi}_{\tilde{m}}(\tau) 2^{2\max\{-k_{1}, \tilde{m}\}}\\
&\qquad\qquad\times\min\{\Vert \rho^{stat}_{k_1}(s)\Vert_{L^\infty}\Vert \rho_{k_2}(\tau)\Vert_{L^1},\Vert \rho^{stat}_{k_1}(s)\Vert_{L^1}\Vert\rho_{k_2}(\tau)\Vert_{L^\infty}\} \,d \tau ds\\
&\lesssim  2^{-j-k_2-n}\int_{0}^t \int_s^t \varepsilon_1^2  \varphi_n(t-s) \widetilde{\varphi}_{\tilde{m}}(\tau)  2^{2\max\{-k_{1}, \tilde{m}\}} 2^{3\min\{k_{1}, -\tilde{m}\} }\langle s\rangle^{2\delta-1}\langle \tau\rangle^{2\delta-1} d \tau d s \\
&\lesssim \varepsilon_1^2 2^{-j-k_2}2^{\min\{k_{1}, -\tilde{m}\}}2^{4\delta \tilde{m}}2^{n-2m},
\end{split}
\end{equation}
where the last inequality follows by considering the two possible cases $(n\leq m-10\text{ and }\widetilde{m}\geq m-10)$ and $n\geq m-10$. One can sum over $k_1,k_2,\tilde{m}$ and recall the assumption $j+k\geq -\delta m/3$ to see that this gives an acceptable contribution to \eqref{change50}.

To bound $L^\infty$ norms we use the general estimate
\begin{equation}\label{change34}
\begin{split}
\int_{\R^3}|f(x-\lambda_1v)|&|g(x-\lambda_2v)|\varphi(2^{-j}v)\,dv\\
&\lesssim \min\big\{2^{3j}\|f\|_{L^\infty}\|g\|_{L^\infty},|\lambda_1|^{-3}\|f\|_{L^1}\|g\|_{L^\infty},|\lambda_2|^{-3}\|f\|_{L^\infty}\|g\|_{L^1}\big\},
\end{split}
\end{equation}
for any functions $f,g\in (L^1\cap L^\infty)(\R^3)$, and any $x\in\R^3$, $\lambda_1,\lambda_2\in\R$, $j\in\Z_+$. 

Using this estimate we bound first the contribution of large times $s$,
\begin{equation}\label{april12eqn1}
\begin{split}
&2^{-4j-n}\int_{t/4}^t \int_s^t  \varphi_n(t-s)   \widetilde{\varphi}_{\tilde{m}}(\tau)  2^{3m}\big\Vert \mathfrak{J}_{kk_1k_2} [\rho_{k_1}^{stat},\rho_{k_2}](\tau,s,t)\big\Vert_{L^\infty_xL^1_v} d \tau  d s\\
&\lesssim 2^{-j-n+3m} \int_{t/4}^t \int_s^t \varphi_n(t-s)   \widetilde{\varphi}_{\tilde{m}}(\tau) 
(2^{-k_1}+\tau)^2\Vert\rho_{k_1}^{stat}(s)\Vert_{L^\infty}\Vert\rho_{k_2}(\tau)\Vert_{L^\infty}\,d \tau  d s\\
&\lesssim \varepsilon_1^22^{-j-k_2}2^{\min(k_1,-m)}2^{n-2m+4\delta m}. 
\end{split}
\end{equation}
Similarly, letting $\bar{s}:=\max\{s,  \min\{2^{-k_1}, t/2\}\}$, we use again \eqref{change34} and \eqref{Laga2}-\eqref{Laga3} to estimate
\begin{equation}\label{april12eqn2}
\begin{split}
&2^{-4j-n}\int_0^{t/4}  \int_{\bar{s} }^t  \varphi_n(t-s)   \widetilde{\varphi}_{\tilde{m}}(\tau)  2^{3m}\Vert \mathfrak{J}_{kk_1k_2} [ \rho^{stat}_{k_1},\rho_{k_2}](\tau,s,t)\Vert_{L^\infty_xL^1_v} d \tau  d s\\
&\lesssim  2^{-j+2m}\int_0^{t/4}   \int_{ \bar{s} }^t  \widetilde{\varphi}_{\tilde{m}}(\tau) (2^{-k_1}+\tau)^2\min(2^{3k_1},2^{-3m})\Vert\rho^{stat}_{k_1}(s)\Vert_{L^1}\Vert\rho_{k_2}(\tau)\Vert_{L^\infty} d \tau d s\\
&\lesssim \varepsilon_1^22^{-j-k_2}2^{\min(k_1,-\tilde{m})}2^{-m+4\delta \tilde{m}}.
\end{split}
\end{equation}
and
\begin{equation}\label{april12eqn3}
\begin{split}
&2^{-4j-n}\int_0^{t/4}  \int_s^{\bar{s} }\varphi_n(t-s)   \widetilde{\varphi}_{\tilde{m}}(\tau)  2^{3m}\Vert \mathfrak{J}_{kk_1k_2} [ \rho^{stat}_{k_1},\rho_{k_2}](\tau,s,t)\Vert_{L^\infty_xL^1_v} d \tau  d s\\
&\lesssim  2^{-4j+2m}\int_0^{t/4}   \int_s^{ \bar{s} }  \widetilde{\varphi}_{\tilde{m}}(\tau) (2^{-k_1}+\tau)^2\Vert\rho^{stat}_{k_1}(s)\Vert_{L^\infty}2^{-3m}\Vert\rho_{k_2}(\tau)\Vert_{L^1} d \tau d s\\
&\lesssim \varepsilon_1^22^{-j-k_2}2^{\min(k_1,-\tilde{m})}2^{-m+4\delta \tilde{m}}.
\end{split}
\end{equation}
The desired bounds \eqref{change50} follow from \eqref{april13eqn31} and \eqref{april12eqn1}, \eqref{april12eqn2}, \eqref{april12eqn3}.

{\bf{Step 2: the oscillatory component.}} We now consider the contribution of the oscillatory field. We estimate first as in \eqref{april13eqn31}, \eqref{april12eqn1}, \eqref{april12eqn2}, \eqref{april12eqn3}, with additional factors of $\min(2^{-k_1},\langle\tau\rangle)$ coming from \eqref{Laga2}--\eqref{Laga3}. The result is
\begin{equation}\label{change21}
\begin{split}
&2^{-4j-n}\int_{0}^t \int_s^t\varphi_n(t-s)\widetilde{\varphi}_{\tilde{m}}(\tau)\big[\big\|\mathfrak{J}_{kk_1k_2}[e^{-i\iota s}\rho_{k_1}^{osc,\iota},\rho_{k_2}](\tau,s,t)\big\|_{L^1_xL^1_v}\\
&\qquad\qquad+2^{3m}\big\|\mathfrak{J}_{kk_1k_2}[e^{-i\iota s}\rho_{k_1}^{osc,\iota},\rho_{k_2}](\tau,s,t)\big\|_{L^\infty_xL^1_v}\big]\, d \tau  d s\\
&\lesssim \varepsilon_1^2 2^{-j-k_2}2^{\min\{k_{1}, -\tilde{m}\}}2^{-m+4\delta \tilde{m}}2^{\min\{\tilde{m},-k_1\}},
\end{split}
\end{equation}
where $\iota\in\{-1,1\}$, $\rho_{l}^{osc,+}:=\rho_l^{osc}$, $\rho_{l}^{osc,-}:=\overline{\rho_l^{osc}}$.
This suffices to bound the contributions corresponding to either $|k_1+\tilde{m}|\geq 4\delta\tilde{m}$, or $k_2\geq k+4\delta\tilde{m}$, or $\tilde{m}\leq m/40$. After these reductions it remains to prove that for $q\in\{1,\infty\}$ we have
\begin{equation}\label{change22}
\begin{split}
&2^{-4j-n}2^{3m(1-1/q)}\Big\|\int_{0}^t \int_s^t\int_{\R^3}\varphi_n(t-s)\varphi_{\tilde{m}}(\tau)\\
&\qquad\qquad\times\mathfrak{J}_{kk_1k_2}[e^{-\iota is}\rho_{k_1}^{osc,\iota},\rho_{k_2}](x,v,\tau,s,t)\, dvd \tau  d s\Big\|_{L^q_x}\lesssim 2^{-m+2\delta m/5},
\end{split}
\end{equation}
provided that $m\geq\delta^{-4}$, $j\in[0,19m/20]$, $n\in[10,m+2]$, $j+k\geq-\delta m/3$, and
\begin{equation}\label{change23}
\tilde{m}\in[m/40,m+2],\qquad k_1\in[-\tilde{m}-4\delta\tilde{m},-\tilde{m}+4\delta\tilde{m}],\qquad k_2\in[k-10,k+4\delta\tilde{m}].
\end{equation}

By taking complex conjugates, in proving \eqref{change22} we may assume that $\iota=+1$. Let 
\be\label{april12eqn31}
\rho_{l,\leq p}^{osc}:=\Pi_{-1,l,\leq p}\rho^{osc},\qquad\rho_{l,>p}^{osc}:=\Pi_{-1,l,>p}\rho^{osc},\qquad \Gamma_{l,>p}:=\Pi_{-1,l,>p}\omega^{-1}_{-1,0}\rho^{osc},
\ee
for $l\in\Z$ and $p\leq -4$, where the operators $\Pi_{\pm 1,l,\leq p}$, $\Pi_{\pm1,l,>p}$, $\omega^{-1}_{\pm 1,0}$ are defined in \eqref{omeio0}--\eqref{omeio1}.

To bound the contribution of $\rho_{k_1,>p_0}^{osc}$ we would like to integrate by parts in $s$ (the method of normal forms). We start from the identity
\begin{equation}\label{change24}
\begin{split}
&\int_{0}^\tau\int_{\R^3}\varphi_n(t-s)\mathfrak{J}_{kk_1k_2}[e^{-is}\rho_{k_1,>p_0}^{osc},\rho_{k_2}](x,v,\tau,s,t)\, dvd s\\
&=\frac{1}{(2\pi)^3}\int_{\R^3}\int_{\R^3}\int_{\R^3}\int_{0}^\tau \varphi_n(t-s)\omega_{k_1}(\tau,s)\omega_{k_2}(\tau,s)b(v,s,t)\widetilde{\varphi}_{[j-2,j+2]}(v)A_k(x-y)\\
&\qquad\times e^{-is(1-v\cdot\xi)}\widehat{\mathcal{R}^2\rho_{k_1}^{osc}}(\xi,s)\varphi_{>p_0}(1-v\cdot\xi)e^{i(y-tv)\cdot\xi}(\mathcal{R}^3\rho_{k_2})(y-(t-\tau)v,\tau)\,d s d\xi dvdy,
\end{split}
\end{equation}
where the kernel $A_k$ is defined by $\widehat{A_k}(\eta)=\varphi_{[k-2,k+2]}(\eta)\mathcal{R}^1(\eta)$. Then we use the identity
\begin{equation}\label{IBPTau}
\begin{split}
&\int_{0}^\tau \varphi_n(t-s)\omega_{k_1}(\tau,s)\omega_{k_2}(\tau,s)b(v,s,t)e^{-is(1-v\cdot\xi)}\widehat{\mathcal{R}^2\rho_{k_1}^{osc}}(\xi,s)\varphi_{>p_0}(1-v\cdot\xi)\,d s\\
&=\int_{0}^\tau\frac{e^{-is(1-v\cdot\xi)}\varphi_{>p_0}(1-v\cdot\xi)}{i(1-v\cdot\xi)}\big[B(v,\tau,s,t)(\partial_s\widehat{\mathcal{R}^2\rho_{k_1}^{osc}})(\xi,s)+(\partial_sB)(v,\tau,s,t)\widehat{\mathcal{R}^2\rho_{k_1}^{osc}}(\xi,s)\big]\\
&+\frac{\varphi_{>p_0}(1-v\cdot\xi)}{i(1-v\cdot\xi)}B(v,\tau,0,t)\widehat{\mathcal{R}^2\rho_{k_1}^{osc}}(\xi,0)-\frac{e^{-i\tau(1-v\cdot\xi)}\varphi_{>p_0}(1-v\cdot\xi)}{i(1-v\cdot\xi)}B(v,\tau,\tau,t)\widehat{\mathcal{R}^2\rho_{k_1}^{osc}}(\xi,\tau),
\end{split}
\end{equation}
where
\begin{equation}\label{AddedBoundsBF23}
B(v,\tau,s,t)=B_{n,k_1,k_2}(v,\tau,s,t):=\varphi_n(t-s)\omega_{k_1}(\tau,s)\omega_{k_2}(\tau,s)b(v,s,t).
\end{equation}
We combine the formulas \eqref{change24}--\eqref{IBPTau} and examine the $\xi$-integral. Notice that
\begin{equation*}
\begin{split}
\frac{1}{(2\pi)^3}\int_{\R^3}&\frac{e^{isv\cdot\xi}\varphi_{>p_0}(1-v\cdot\xi)}{i(1-v\cdot\xi)}e^{i(y-tv)\cdot\xi}\widehat{F_{k_1}}(\xi,s)\,d\xi=-i\big(\Pi_{-1,k_1,>p_0}\omega^{-1}_{-1,0}F\big)(y-(t-s)v,v,s),
\end{split}
\end{equation*}
according to the definitions \eqref{omeio0}--\eqref{omeio1}, where $F\in\{\mathcal{R}^2\rho^{osc},\partial_s\mathcal{R}^2\rho^{osc}\}$. Thus
\begin{equation}\label{change26}
\begin{split}
&\int_{0}^\tau\int_{\R^3}\varphi_n(t-s)\mathfrak{J}_{kk_1k_2}[e^{-is}\rho_{k_1,>p_0}^{osc},\rho_{k_2}](x,v,\tau,s,t)\, dvd s=\int_{0}^\tau\int_{\R^3}\int_{\R^3} (-i)e^{-is}\\
&\qquad\times A_k(x-y)\widetilde{\varphi}_{[j-2,j+2]}(v)\cdot(\mathcal{R}^3\rho_{k_2})(y-(t-\tau)v,\tau)X_{n,k_1,k_2}(y,v,\tau,s,t)\,dvdyds,
\end{split}
\end{equation}
where the functions $\Gamma_{k_1,>p_0}$ are defined as in \eqref{april12eqn31}, and
\begin{equation}\label{change27}
\begin{split}
X_{n,k_1,k_2}(y,v,\tau,s,t):&=B_{n,k_1,k_2}(v,\tau,s,t)(\mathcal{R}^2\partial_s\Gamma_{k_1,>p_0})(y-(t-s)v,v,s)\\
&+B'_{n,k_1,k_2}(v,\tau,s,t)(\mathcal{R}^2\Gamma_{k_1,>p_0})(y-(t-s)v,v,s),\\
B'_{n,k_1,k_2}(v,\tau,s,t):&=(\partial_sB_{n,k_1,k_2})(v,\tau,s,t)+B_{n,k_1,k_2}(v,\tau,s,t)[\delta_0(s)-\delta_0(s-\tau)].
\end{split}
\end{equation}

{\bf{Case 1: the $L^1$ bounds.}} We prove first the bounds \eqref{change22} when $q=1$.  Let $p_0:=-100\delta \tilde{m}$. We use Lemma \ref{omegaioLem} (i) (so $\rho_{k_1,\leq p_0}^{osc}\equiv 0$ unless $j+k_1\geq -4$) and estimate as in \eqref{april13eqn31},
\begin{equation}\label{april13eqn32}
\begin{split}
&2^{-4j-n} \int_{0}^t \int_s^t \varphi_n(t-s)\varphi_{\tilde{m}}(\tau)  \big\Vert \mathfrak{J}_{kk_1k_2} [e^{-is}\rho_{k_1,\leq p_0}^{osc},\rho_{k_2}] \big\Vert_{L^1_xL^1_v}\,d \tau  d s\\
& \lesssim 2^{-j-n}\int_{0}^t \int_s^t  \varphi_n(t-s)\varphi_{\tilde{m}}(\tau)(2^{-k_1}+\tau)^2\Vert\rho_{k_1,\leq p_0}^{osc}(s)\Vert_{L^\infty_vL^\infty_x}\Vert \rho_{k_2}(\tau)\Vert_{L^1_x}\,d \tau ds \\
&\lesssim   \varepsilon_1^2 2^{-j-n-k_2}\int_{0}^t \int_s^t \varphi_n(t-s)\varphi_{\tilde{m}}(\tau)  (2^{-2k_1}+2^{2\tilde{m}})2^{3k_1+p_0}2^{-k_1} \langle s\rangle^{2\delta-1}\langle \tau\rangle^{2\delta-1} d \tau  d s \\
&\lesssim \, \varepsilon_1^22^{-j-k-m},
\end{split}
\end{equation}
using also the assumptions \eqref{change23}.

To bound the contribution of $\rho_{k_1,>p_0}^{osc}$ we use the formulas \eqref{change26}--\eqref{change27}. In view of \eqref{Laga3}--\eqref{Laga4} and Lemma \ref{omegaioLem} (ii) we estimate
\begin{equation}\label{change28}
\big\|\mathcal{R}^2\Gamma_{k_1,>p_0}(s)\big\|_{L^\infty_vL^1_y}+\langle s\rangle\big\|\mathcal{R}^2\partial_s\Gamma_{k_1,>p_0}(s)\big\|_{L^\infty_vL^1_y}\lesssim \varep_12^{-p_0}\langle s\rangle^{2\delta}.
\end{equation}
Therefore, using also the assumptions \eqref{change23} we estimate
\begin{equation*}
\begin{split}
\int_{0}^\tau\big\|\widetilde{\varphi}_{[j-2,j+2]}(v)&\cdot(\mathcal{R}^3\rho_{k_2})(y-(t-\tau)v,\tau)X_{n,k_1,k_2}(y,v,\tau,s,t)\big\|_{L^1_{y,v}}\,ds\\
&\lesssim \|\rho_{k_2}(\tau)\|_{L^\infty}2^{3j}\|X_{n,k_1,k_2}(y,v,\tau,s,t)\|_{L^\infty_vL^1_sL^1_y}\\
&\lesssim \varep_1^22^{3j}2^{-k_2}\langle\tau\rangle^{-4+2\delta}\cdot 2^{-p_0}\langle\tau\rangle^{2+12\delta}.
\end{split}
\end{equation*}
Therefore, using the formula \eqref{change26} and the definition \eqref{AddedBoundsBF23} we estimate
\begin{equation}\label{april13eqn33}
\begin{split}
&2^{-4j-n}\Big\|\int_{0}^t \int_s^t\int_{\R^3}\varphi_n(t-s)\varphi_{\tilde{m}}(\tau)\mathfrak{J}_{kk_1k_2}[e^{-is}\rho_{k_1,>p_0}^{osc},\rho_{k_2}](x,v,\tau,s,t)\, dvd \tau  d s\Big\|_{L^1_x}\\
&\lesssim 2^{-4j-n}\int_{0}^t \varphi_{\tilde{m}}(\tau)\varphi_{\leq n+2}(t-\tau)\\
&\qquad\qquad\times\int_0^\tau\big\|\widetilde{\varphi}_{[j-2,j+2]}(v)(\mathcal{R}^3\rho_{k_2})(y-(t-\tau)v,\tau)X_{n,k_1,k_2}(y,v,\tau,s,t)\big\|_{L^1_{y,v}} ds d\tau\\
&\lesssim \varep_1^22^{-j-k_2-n}2^{-p_0}2^{-2\tilde{m}+20\delta\tilde{m}}\min\{2^{\tilde{m}},2^n\}.
\end{split}
\end{equation}
The desired bounds \eqref{change22} follow in the case $q=1$, using also \eqref{april13eqn32}.

{\bf{Case 2: the $L^\infty$ bounds.}} We prove now the bounds \eqref{change22} when $q=\infty$. As before we  let $p_0=-100\delta \tilde{m}$,  and recall the definition \eqref{april12eqn31}. 

We  first  bound the contribution of the function $\rho_{k_1,\leq p_0}^{osc}$, which is nontrivial only if $j+k_1\geq -4$. For $s\in[0,t]$ let $s':=\max\{s,t/4\}$. Using Lemma \ref{omegaioLem2} (i) and the assumptions \eqref{change23} (in particular the integrals below are nontrivial only if $\tilde{m}\geq m-10$ and $|k_1+m|\leq 4\delta m$),
\begin{equation}\label{april12eqn7}
\begin{split}
&2^{-4j-n}2^{3m} \int_{0}^t \int_{s'}^t \varphi_n(t-s)\varphi_{\tilde{m}}(\tau)  \big\Vert \mathfrak{J}_{kk_1k_2} [e^{-is}\rho_{k_1,\leq p_0}^{osc},\rho_{k_2}] \big\Vert_{L^\infty_xL^1_v}\,d \tau  d s\\
&\lesssim  2^{-j-n}2^{3m}\int_{0}^t \int_{s'}^t \varphi_n(t-s)\varphi_{\tilde{m}}(\tau) \big[2^{-k_1}+\tau\big]^2 \Vert \rho_{k_1,\leq p_0}^{osc}(s)\Vert_{L^\infty_vL^\infty_x}\Vert \rho_{k_2}(\tau)\Vert_{L^\infty_x}d \tau  d s\\
&\lesssim  2^{-j-n-k_2}\int_{0}^t \int_{s'}^t \varphi_n(t-s) \varphi_{\tilde{m}}(\tau) \big[2^{-k_1}+\tau\big]^2 2^{3k_1+p_0}2^{-k_1} \langle s\rangle^{2\delta-1}\langle \tau\rangle^{2\delta-1}\,d \tau  d s\\
&\lesssim \, \varepsilon_1^22^{-j-k-m-10\delta \tilde{m}}. 
\end{split}
\end{equation}

This suffices to prove the desired bounds if $n\leq m-4$. In the remaining cases we may assume that $n\geq m-4$ and $s\leq t/4$, and we can estimate the remaining integral using \eqref{change34}, 
\begin{equation}\label{change29}
\begin{split}
&2^{-4j-n}2^{3m} \int_{0}^{t/4} \int_{s}^{t/4} \varphi_n(t-s)\varphi_{\tilde{m}}(\tau)  \big\Vert \mathfrak{J}_{kk_1k_2} [e^{-is}\rho_{k_1,\leq p_0}^{osc},\rho_{k_2}] \big\Vert_{L^\infty_xL^1_v}\,d \tau  d s\\
&\lesssim  2^{-4j-m}2^{3m}\int_{0}^{t/4} \int_s^{t/4}\varphi_{\tilde{m}}(\tau) \big[2^{-k_1}+\tau\big]^2 \Vert \rho_{k_1,\leq p_0}^{osc}(s)\Vert_{L^\infty_vL^\infty_x}2^{-3m}\Vert \rho_{k_2}(\tau)\Vert_{L^1_x}d \tau  d s\\
&\lesssim  2^{-4j-m-k_2}\int_{0}^{t/4} \int_{s}^{t/4} \varphi_{\tilde{m}}(\tau) \big[2^{-k_1}+\tau\big]^2 2^{3k_1+p_0}2^{-k_1} \langle s\rangle^{2\delta-1}\langle \tau\rangle^{2\delta-1}\,d \tau  d s\\
&\lesssim \, \varepsilon_1^22^{-j-k-m-10\delta \tilde{m}}. 
\end{split}
\end{equation} 

Finally, we consider the contribution from the oscillatory part $\rho_{k_1,>p_0}^{osc}$. As in the $L^1$-estimate, we integrate by parts in $s$ once. In view of the formula \eqref{change26}, for \eqref{change22} it suffices to prove that
\begin{equation}\label{change22.5}
\begin{split}
2^{-4j-n}&2^{3m}\Big\|\int_{0}^t \int_{0}^\tau\int_{\R^3}\varphi_{\tilde{m}}(\tau)\varphi_{[n-2,n+2]}(t-s)\big|\widetilde{\varphi}_{[j-2,j+2]}(v)(\mathcal{R}^3\rho_{k_2})(y-(t-\tau)v,\tau)\\
&\qquad\qquad\qquad\times X_{n,k_1,k_2}(y,v,\tau,s,t)\big|\,dvdsd\tau\Big\|_{L^\infty_y}\lesssim 2^{-m+2\delta m/5}.
\end{split}
\end{equation}

We consider two cases. If $n\leq m-6$ then we may assume that $\widetilde{m}\geq m-6$ and $t/2\leq s\leq\tau\leq t$.  We examine the identities \eqref{AddedBoundsBF23} and \eqref{change27}, and use Lemma \ref{omegaioLem2} (ii) to estimate
\begin{equation*}
\begin{split}
\|X_{n,k_1,k_2}(y,v,\tau,s,&t)\|_{L^\infty_{y,v}}\lesssim (2^{-k_1}+2^m)^22^{-p_0}\\
&\big[\|\partial_s\rho^{osc}_{k_1}(s)\|_{L^\infty}+2^{-n}\|\rho^{osc}_{k_1}(s)\|_{L^\infty}+\|\rho^{osc}_{k_1}(\tau)\|_{L^\infty}\delta_0(s-\tau)\big].
\end{split}
\end{equation*}
Thus, using \eqref{Laga2}--\eqref{Laga4} and \eqref{change23}, the left-hand side of \eqref{change22.5} is bounded by
\begin{equation*}
\begin{split}
C\varep_1^22^{-j-n}&2^{3m}\int_{0}^t\int_{s}^t\varphi_{\leq n+2}(t-s)2^{-k_2}\langle\tau\rangle^{-4+2\delta}2^{-p_0}(2^{-k_1}+2^m)^3\langle s\rangle^{-4+2\delta}(2^{-n}+\delta_0(s-\tau))d\tau ds\\
&\lesssim \varep_1^22^{-j-n-k}2^{-1.9m},
\end{split}
\end{equation*}
which clearly gives the desired bounds \eqref{change22.5} in this case.

On the other hand, if $n\geq m-6$ then we may assume that $t-s\geq 2^{m-6}$. We use first the dispersive estimates \eqref{omegaio2y}, so
\begin{equation*}
\begin{split}
\int_{\R^3}\varphi_{\leq j+4}(v)&\big[\langle s\rangle\big|(\mathcal{R}^2\partial_s\Gamma_{k_1,>p_0})(y-(t-s)v,v,s)\big|\\
&+\big|(\mathcal{R}^2\Gamma_{k_1,>p_0})(y-(t-s)v,v,s)\big|\big]\,dv\lesssim \varep_12^{-p_0}2^{-3m}\langle s\rangle^\delta.
\end{split}
\end{equation*}
Therefore, using the identities \eqref{AddedBoundsBF23} and \eqref{change27}, if $s\leq\tau\leq 2^{\tilde{m}+4}$,
\begin{equation*}
\|X_{n,k_1,k_2}(y,v,\tau,s,t)\|_{L^\infty_yL^1_v}\lesssim (2^{-k_1}+2^{\tilde{m}})^2\varep_12^{-p_0}2^{-3m}\big[\langle s\rangle^{2\delta-1}+\delta_0(s)+\langle \tau\rangle^{2\delta}\delta_0(s-\tau)\big].
\end{equation*}
Thus, using \eqref{Laga2}--\eqref{Laga3} and \eqref{change23}, the left-hand side of \eqref{change22.5} is bounded by
\begin{equation*}
\begin{split}
C\varep_1^22^{-4j-m}&2^{3m}\int_{0}^t\int_{0}^\tau \varphi_{\tilde{m}}(\tau)2^{-k_2}\langle\tau\rangle^{-4+2\delta}\\
&\times(2^{-k_1}+2^{\tilde{m}})^22^{-p_0}2^{-3m}\big[\langle s\rangle^{2\delta-1}+\delta_0(s)+\langle \tau\rangle^{2\delta}\delta_0(s-\tau)\big]\, dsd\tau\\
&\lesssim \varep_1^22^{-4j-k}2^{-m-\tilde{m}/2},
\end{split}
\end{equation*}
which clearly gives the desired bounds \eqref{change22.5} in this case.
\end{proof}

For the error type terms $\mathcal{M}_{k_1, k_2}^{l}$ appearing in \eqref{april5eqn45} we have: 

\begin{lemma}\label{PerturbativeTermsR12}
With the hypothesis of Lemma \ref{LemReductionR12} and the notations inside its proof, if $m\geq\delta^{-4}$, $k\in\Z$, $j\in[0,19m/20]$, $n\in[10,m+2]$, $q\in\{1,\infty\}$, and $l\in\{1,\ldots,13\}$ then
\begin{equation}\label{april10eqn66}
\begin{split}
&2^{j-n}2^{3m(1-1/q)}\sum_{k_1,k_2\in\Z}\Big\Vert\int_0^t\int_s^t\varphi_n(t-s)\\
&\qquad\qquad\qquad\times\int_{\mathbb{R}^3}P_k\mathcal{M}_{k_1, k_2}^{l}(x-(t-s)v,v,\tau,s,t)\,dvd\tau ds\Big\Vert_{L^q_x}\lesssim \varepsilon_1^2 2^{-m+\delta m/10}.
\end{split}
\end{equation}
\end{lemma}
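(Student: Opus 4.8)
The strategy is to recognize each of the thirteen error terms $\mathcal{M}_{k_1,k_2}^l$ as a trilinear expression that, after integration in $v$, fits the template $\mathcal{Q}_{j,k}(f,g;C)$ of \eqref{ropi1} with $f,g$ chosen from $\{E_{k_1},\nabla E_{k_1}, \nabla^2 E_{k_1}\}$ and $\{E_{k_2}, \nabla E_{k_2}\}$, and then to invoke Lemma \ref{keybilinearlemma}. The key structural point is that each $\mathcal{M}^l_{k_1,k_2}$ carries a \emph{difference} of electric fields evaluated along the linear and nonlinear characteristics, or a $\widetilde{Y}$/$\widetilde{W}$-deviation factor, or a difference of $M'_j$-type profiles; in each case a single application of the fundamental theorem of calculus (an integration in a $\theta$-parameter, exactly as displayed in the formulas for $\mathcal{M}^l$) trades that difference for one extra spatial derivative on an electric field times a factor $\widetilde{Y}_\#$ or $\widetilde{W}_\#$. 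Using the integral representations \eqref{april2eqn5} for $\widetilde{Y}_\#,\widetilde{W}_\#$ (and the analogous identities for $\nu^q_r,\mu^q_r$ recorded in the proof of Lemma \ref{LemReductionR12}), this factor becomes an extra time integral of an electric field, producing the required trilinear — sometimes, via a further expansion, quadrilinear — structure. The bounds \eqref{june16eqn4} on the kernels $\mathcal{K}_{j,k}$ and the phase corrections $P_1,P_2=\theta_i\widetilde{Y}$ follow from Lemma \ref{derichar2}, and the coefficient bounds \eqref{june16eqn3} for the various $C$'s follow from the pointwise estimates \eqref{cui6}--\eqref{cui7.5}: e.g. $\Lambda(C)\lesssim 2^{-2j}$ when $C$ is built from $M'_j$ and its $v$-derivatives, $\Lambda(C)\lesssim 2^{-2j}\langle\gamma\rangle$ when a factor $(\gamma-s)$ is present, and $\Lambda(C)\lesssim \varep_1 2^{-2j}\langle\gamma\rangle^{2}$ when a $\nu$ or $\mu$ factor is expanded (so that the extra $\varep_1$ upgrades the bilinear bound to the cubic-small estimate \eqref{june16eqn33} type).

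Concretely I would group the terms: $\mathcal{M}^1,\mathcal{M}^2,\mathcal{M}^4,\mathcal{M}^5,\mathcal{M}^{10},\mathcal{M}^{11}$ are differences of a \emph{single} electric field, hence reduce to $\mathcal{Q}_{j,k}(\nabla E,\nabla E;C)$ or $\mathcal{Q}_{j,k}(\nabla^2 E,\nabla E; C)$ (one derivative from the FTC, one or two already present), with $C$ bounded by $2^{-2j}\langle\gamma\rangle$ or $2^{-2j}\langle\gamma\rangle^2$; these are controlled by \eqref{june16eqn33} and \eqref{april10eqn51}, both of which carry a $\min\{2^{-2.1 m_2},\langle\tau\rangle^{-2.1}\}$ that, after the $s,\tau$ integrations over $[0,t]\times[s,t]$ and the $n,m_2$ summations, beats the $2^{j-n}2^{3m(1-1/q)}$ prefactor for $j\le 19m/20$. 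Terms $\mathcal{M}^3,\mathcal{M}^8$ are differences of $M'_j$-profiles along $\widetilde{W}$, which by the FTC produce a factor $\widetilde{W}_\#$; expanding $\widetilde{W}_\#$ via \eqref{april2eqn5} gives $\mathcal{Q}_{j,k}(\nabla E,\nabla E\cdot E;\cdot)$, i.e. a genuinely quartic term that I would treat by first estimating one $E$-factor in $L^\infty$ using Lemma \ref{Laga10}(ii) and absorbing it into the coefficient, then applying \eqref{june16eqn33}. Terms $\mathcal{M}^7,\mathcal{M}^{12},\mathcal{M}^{13}$ contain an explicit $\nu$ or $\mu$ (equivalently $(s\partial_x-\partial_v)\widetilde{Y}$) factor: here I expand that factor using the displayed formulas for $\nu^q_r,\mu^q_r$, which contributes either a $(\tau-s)^2\nabla E$ or a $(\tau-s)$-weighted product, landing in $\mathcal{Q}_{j,k}(\nabla^2 E,\nabla E;C)$ or the quartic case with $\Lambda(C)\lesssim\varep_1 2^{-2j}\langle\gamma\rangle^3$, again controlled by \eqref{april10eqn51} and \eqref{june16eqn33} with room to spare. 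The "purely perturbative" pieces $\mathcal{M}^6,\mathcal{M}^9$ (differences of $\nabla E$ or $\nabla^2 E$ along characteristics) are handled like $\mathcal{M}^1$ but with one more derivative, using \eqref{april10eqn50}--\eqref{april10eqn51}.

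For the high-frequency tail I would, as in Lemma \ref{LargeV1Lem} and Lemma \ref{RILateTimeHighFrequencies}, split at $k_1\gtrsim -m/2+K/2$ (with $K$ as in \eqref{SizeOfKLinearBoundInEpsilon}): for large $k_1$ one does not take the difference of electric fields at all but estimates each of the two terms separately, using $\|\nabla^2 E_{k_1}(s)\|_{L^\infty}\lesssim \varep_1 2^{-k_1}\langle s\rangle^{-4+2\delta}$ (or the $L^1$ analogue) from Lemma \ref{Laga10}, integrating trivially in $v$ against $|M'_j(v)|\lesssim 2^{-4j}$, and summing the resulting geometric series in $k_1$, while for small $k_1$ one uses the FTC reduction above; the factor $\varep_1 2^K\lesssim 1$ ensures the high-$k_1$ contribution is nonlinearly small. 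Throughout, the only arithmetic to watch is the budget: we must produce a net decay faster than $2^{-m}\cdot 2^{-j}\cdot 2^{-3m(1-1/q)}\cdot 2^{n}$ after summing $n\in[10,m+2]$ and $m_2\le m$, and the key inequality $j\le 19m/20$ (equivalently $2^{-19m/10}\gtrsim 2^{-2j}$) is what makes the $2^{-2j}$ or $2^{-2j}\langle\gamma\rangle^{2\text{ or }3}$ coefficients combine with the $2^{-2.1 m_2}$ or $2^{-1.1 m_2}\langle\tau\rangle^{-1}$ kernel decay to close. The \textbf{main obstacle} is purely bookkeeping: verifying, uniformly across all thirteen $\mathcal{M}^l$ and both values of $q$, that the coefficient functions $C$ obey \eqref{june16eqn3} with the claimed $\langle\gamma\rangle$-powers (so that the correct line of Lemma \ref{keybilinearlemma} applies) and that the $(\tau-s)^2$ or $(\tau-s)$ weights, which can be as large as $2^{2m}$, are always dominated by the decay gained from the extra derivatives and the $\widetilde Y,\widetilde W$ factors — this is exactly the accounting the lemma statement \eqref{april10eqn66} encodes, and no new analytic idea beyond Lemmas \ref{keybilinearlemma}, \ref{derichar2}, and \ref{Laga10} is required.
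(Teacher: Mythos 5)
Your high-level plan — recognize each $\mathcal{M}^l_{k_1,k_2}$ as a trilinear operator $\mathcal{Q}_{j,k}(f,g;C)$ after one FTC step, read off the coefficient bound $\Lambda(C)$ from Lemma \ref{derichar2}, and close with Lemma \ref{keybilinearlemma} — is the correct one and is indeed what the paper does. But there is a genuine gap in how you propose to assign the bounds, and it would make the argument fail for $\mathcal{M}^6$ and $\mathcal{M}^{11}$.

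The issue is about which slot of $\mathcal{Q}_{j,k}$ absorbs the extra derivative. In the template \eqref{ropi1}, the first input is evaluated at time $\tau$ and the second at time $\gamma$; after the substitution $\mathcal{Q}_{j,k}(\cdot,\cdot;C)(x,\tau,s,t)$ used for the $\mathcal{M}^l$'s, the first slot carries the factor $E_{k_1}(\cdot,s)$ and the second slot carries $E_{k_2}(\cdot,\tau)$. Lemma \ref{keybilinearlemma} supplies bounds only for the configurations $(\nabla E,E)$, $(E,\nabla E)$, $(E,E)$, $(\nabla E,\nabla E)$, $(\nabla^2 P_{k_1}E,E)$, $(\nabla^2 P_{k_1}E,\nabla E)$. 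There is \emph{no} bound with $\nabla^2$ sitting in the second slot. Now, in $\mathcal{M}^6$ and $\mathcal{M}^{11}$ the FTC is applied to a difference of $\nabla E_{k_2}(\cdot,\tau)$, which produces a $\nabla^2 E_{k_2}$ evaluated at time $\tau$ — exactly the forbidden second slot. You group $\mathcal{M}^6$ with $\mathcal{M}^9$ and propose to close with \eqref{april10eqn50}–\eqref{april10eqn51}, but those estimates require $\nabla^2$ on the $k_1$-factor, so they do not apply as stated. (And $\mathcal{M}^9$ does not have the $\nabla^2$ structure at all: its difference is on $\partial_v M'_j$ along $\widetilde W$, landing in the $(E,\nabla E)$ case, i.e. \eqref{june16eqn31A2}. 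Lumping it with $\mathcal{M}^6$ conflates two different structures.) The paper's fix — which you do not mention and which is the one real idea in this lemma beyond mechanical bookkeeping — is to \emph{reverse the roles of $\tau$ and $s$} for $l\in\{6,11\}$: one writes $\int_{\R^3}P_k\mathcal{M}^{6}_{k_1,k_2}\,dv \sim \mathcal{Q}_{j,k}(\nabla^2 E_{k_2},\nabla E_{k_1};C^6)(x,s,\tau,t)$, so that $\nabla^2 E_{k_2}$ moves into the first slot and the inner integration becomes $\int_0^\tau\cdots ds$ rather than $\int_s^t\cdots d\tau$; only then do \eqref{april10eqn50}–\eqref{april10eqn51} apply. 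Without this swap the argument breaks at those two terms.

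A secondary divergence, not fatal but worth flagging, is your treatment of $\mathcal{M}^3$ and $\mathcal{M}^8$: you propose to expand the $\widetilde W$-difference through \eqref{april2eqn5} into a genuinely quartic term and then estimate one $E$-factor in $L^\infty$. This can be made to work, but it departs from the trilinear framework, which is calibrated exactly so that such an expansion is unnecessary — the paper simply keeps the $\widetilde W$-difference inside the coefficient $C$ and uses the pointwise bound from \eqref{cui6}, yielding $\Lambda(C^3)\lesssim 2^{-j}\langle s\rangle^{-2+3\delta}$ and $\Lambda(C^8)\lesssim 2^{-j}\langle\tau\rangle^2\langle s\rangle^{-2+3\delta}$ and then \eqref{june16eqn32} and \eqref{june16eqn33} directly. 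Likewise, the high-frequency split at $k_1\gtrsim -m/2+K/2$ that you import from Lemma \ref{LargeV1Lem} is not needed here: since every $E_{k_i}$ appears with at most one derivative beyond the FTC gain, the contributions already decay in $|k_i|$ and a crude reduction to $k_1,k_2\in[-10m,10m]$ suffices, as noted in the paper.

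So: right framework, right key lemma, right family of coefficient estimates — but your grouping misassigns derivatives to slots, and the $\tau\leftrightarrow s$ reordering for $l\in\{6,11\}$ is a necessary step that is absent. Fix the grouping (the paper's partition is $\{2,8,10,12\}$ with $(\nabla E,\nabla E)$, $\{4,5,9,13\}$ with $(\nabla E,E)$ or $(E,\nabla E)$, $\{1,3,7\}$ with $(\nabla^2 E_{k_1},\cdot)$ or $(E,E)$, and $\{6,11\}$ with the variable swap) and the argument closes.
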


\begin{proof} Notice that the bounds \eqref{april10eqn66} suffice to prove both estimates in \eqref{change17}: the estimates in the first line follow in the case $k+j+\delta m/3\geq 0$, while the estimates in the second line follow in the case $k+j+\delta m/3\geq 0$, using also the bounds \eqref{april10eqn11}.

Recall the definitions of the functions $\mathcal{M}_{k_1, k_2}^{l}$ following \eqref{april5eqn45}. Notice that factors $E_{k_1}$ and $E_{k_2}$ appear with either 0 or 1 $x$-derivatives, so the contributions decay sufficiently fast when one of the indices $k_1$ or $k_2$ is either too small, or too large. It suffices to estimate the core contributions, coming from $k_1,k_2\in[-10m,10m]$.  

Most of the terms can be estimated using Lemma \ref{keybilinearlemma}. Indeed, with the notation \eqref{ropi1}--\eqref{june16eqn4},
\begin{equation*}
\begin{split}
&\int_{\mathbb{R}^3}P_k\mathcal{M}_{k_1, k_2}^2(x-(t-s)v,v,\tau,s,t)\,dv\sim \mathcal{Q}_{j,k}(\nabla E_{k_1},\nabla E_{k_2};C^2)(x,\tau,s,t),\\
&\qquad C^2(x,y,v,\tau,s,t):=2^{3j}(\tau-s)\widetilde{Y}(y-tv,v,\tau,t)\nabla_v M'_j(v),
\end{split}
\end{equation*}
\begin{equation*}
\begin{split}
&\int_{\mathbb{R}^3}P_k\mathcal{M}_{k_1, k_2}^8(x-(t-s)v,v,\tau,s,t)\,dv\sim \mathcal{Q}_{j,k}(\nabla E_{k_1},\nabla E_{k_2};C^8)(x,\tau,s,t),\\
&\qquad C^8(x,y,v,\tau,s,t):=2^{3j}(\tau-s)^2 \big[M'_j(v+\widetilde{W}(y-tv,v,s,t))-M'_j(v)\big],
\end{split}
\end{equation*}
\begin{equation*}
\begin{split}
&\int_{\mathbb{R}^3}P_k\mathcal{M}_{k_1, k_2}^{10}(x-(t-s)v,v,\tau,s,t)\,dv\sim \mathcal{Q}_{j,k}(\nabla E_{k_1},\nabla E_{k_2};C^{10})(x,\tau,s,t),\\
&\qquad C^{10}(x,y,v,\tau,s,t):=2^{3j}(\tau-s)\widetilde{Y}(y-tv,v,s,t)\nabla_v M'_j(v),
\end{split}
\end{equation*}
\begin{equation*}
\begin{split}
&\int_{\mathbb{R}^3}P_k\mathcal{M}_{k_1, k_2}^{12}(x-(t-s)v,v,\tau,s,t)\,dv\sim \mathcal{Q}_{j,k}(\nabla E_{k_1},\nabla E_{k_2};C^{12})(x,\tau,s,t),\\
&\qquad C^{12}(x,y,v,\tau,s,t):=2^{3j}(\tau-s) (s\nabla_y-\nabla_v)  \widetilde{Y}(y-tv,v,\tau,t )M'_j(v+\widetilde{W}(y-tv,v,s,t)),
\end{split}
\end{equation*}
where the notation $\sim$ means that the function in the left-hand side can be written as a linear combination of functions in the right-hand side (various partial derivatives, various components, and in some cases, integrals from $0$ to $1$). Using \eqref{cui6}--\eqref{cui7.5}, we have $\Lambda(C^l)(\tau,s,t)\lesssim 2^{-j}\langle\tau\rangle^2\langle s\rangle^{-2+3\delta}$ for $l\in\{2,8,10,12\}$. Therefore, using \eqref{june16eqn33},
\begin{equation}\label{change61}
2^{3m(1-1/q)}\Big\Vert\int_s^t\int_{\mathbb{R}^3}P_k\mathcal{M}_{k_1, k_2}^{l}(x-(t-s)v,v,\tau,s,t)\,dvd\tau \Big\Vert_{L^q_x}\lesssim\varep_1^2\langle s\rangle^{-2}2^{-j}
\end{equation}
for any $s\in[0,t]$, $q\in\{1,\infty\}$, and $l\in\{2,8,10,12\}$. The desired bounds \eqref{april10eqn66} follow by integration in $s$ and summation over $k_1,k_2\in[-10m,10m]$.

Similarly,
\begin{equation*}
\begin{split}
&\int_{\mathbb{R}^3}P_k\mathcal{M}_{k_1, k_2}^4(x-(t-s)v,v,\tau,s,t)\,dv\sim \mathcal{Q}_{j,k}(\nabla E_{k_1},E_{k_2};C^4)(x,\tau,s,t),\\
&\qquad C^4(x,y,v,\tau,s,t):=2^{3j}\widetilde{Y}(y-tv,v,s,t)\nabla^2_v M'_j(v),
\end{split}
\end{equation*}
\begin{equation*}
\begin{split}
&\int_{\mathbb{R}^3}P_k\mathcal{M}_{k_1, k_2}^5(x-(t-s)v,v,\tau,s,t)\,dv\sim \mathcal{Q}_{j,k}(E_{k_1},\nabla E_{k_2};C^5)(x,\tau,s,t),\\
&\qquad C^5(x,y,v,\tau,s,t):=2^{3j}\widetilde{Y}(y-tv,v,\tau,t)\nabla^2_v M'_j(v),
\end{split}
\end{equation*}
\begin{equation*}
\begin{split}
&\int_{\mathbb{R}^3}P_k\mathcal{M}_{k_1, k_2}^{9}(x-(t-s)v,v,\tau,s,t)\,dv\sim \mathcal{Q}_{j,k}(E_{k_1},\nabla E_{k_2};C^{9})(x,\tau,s,t),\\
&\qquad C^{9}(x,y,v,\tau,s,t):=2^{3j}(\tau-s) \big[\nabla_v M'_j(v+\widetilde{W}(y-tv,v,s,t))-\nabla_v M'_j(v)\big],
\end{split}
\end{equation*}
\begin{equation*}
\begin{split}
&\int_{\mathbb{R}^3}P_k\mathcal{M}_{k_1, k_2}^{13}(x-(t-s)v,v,\tau,s,t)\,dv\sim \mathcal{Q}_{j,k}(E_{k_1},\nabla E_{k_2};C^{13})(x,\tau,s,t),\\
&\qquad C^{13}(x,y,v,\tau,s,t):=2^{3j}(s\nabla_y-\nabla_v)  \widetilde{Y}(y-tv,v,\tau,t )\nabla M'_j(v+\widetilde{W}(y-tv,v,s,t)).
\end{split}
\end{equation*}
Using \eqref{cui6}--\eqref{cui7.5} we have $\Lambda(C^l)(\tau,s,t)\lesssim 2^{-j}\langle\tau\rangle\langle s\rangle^{-2+3\delta}$ for $l\in\{4,5,9,13\}$. Thus
\begin{equation*}
2^{3m(1-1/q)}\Big\Vert\int_s^t\int_{\mathbb{R}^3}P_k\mathcal{M}_{k_1, k_2}^{l}(x-(t-s)v,v,\tau,s,t)\,dvd\tau \Big\Vert_{L^q_x}\lesssim\varep_1^2\langle s\rangle^{-2}2^{-j},
\end{equation*}
using \eqref{june16eqn31A1}--\eqref{june16eqn31A2}, for any $s\in[0,t]$, $q\in\{1,\infty\}$, and $l\in\{4,5,9,13\}$. The desired bounds \eqref{april10eqn66} follow by integration in $s$ and summation over $k_1,k_2\in[-10m,10m]$, as before.

Similarly,
\begin{equation*}
\begin{split}
&\int_{\mathbb{R}^3}P_k\mathcal{M}_{k_1, k_2}^3(x-(t-s)v,v,\tau,s,t)\,dv\sim \mathcal{Q}_{j,k}(E_{k_1},E_{k_2};C^3)(x,\tau,s,t),\\
&\qquad C^3(x,y,v,\tau,s,t):=2^{3j}\big[\nabla_v^2 M'_j(v+\theta\widetilde{W}(y-tv,v,s,t))-\nabla_v^2 M'_j(v)\big],
\end{split}
\end{equation*}
\begin{equation*}
\begin{split}
&\int_{\mathbb{R}^3}P_k\mathcal{M}_{k_1, k_2}^1(x-(t-s)v,v,\tau,s,t)\,dv\sim \mathcal{Q}_{j,k}(\nabla^2 E_{k_1},E_{k_2};C^1)(x,\tau,s,t),\\
&\qquad C^1(x,y,v,\tau,s,t):=2^{3j}(\tau-s)\widetilde{Y}(y-tv,v,s,t )\nabla M'_j(v),
\end{split}
\end{equation*}
\begin{equation*}
\begin{split}
&\int_{\mathbb{R}^3}P_k\mathcal{M}_{k_1, k_2}^7(x-(t-s)v,v,\tau,s,t)\,dv\sim \mathcal{Q}_{j,k}(\nabla^2E_{k_1},\nabla E_{k_2};C^7)(x,\tau,s,t),\\
&\qquad C^7(x,y,v,\tau,s,t):=2^{3j}(\tau-s)^2\widetilde{Y}(y-tv,v,s,t )M'_j(v+\widetilde{W}(y-tv,v,s,t)).
\end{split}
\end{equation*}
Clearly, $\Lambda(C^3)(\tau,s,t)\lesssim 2^{-j}\langle s\rangle^{-2+3\delta}$, $\Lambda(C^1)(\tau,s,t)\lesssim 2^{-j}\langle\tau\rangle\langle s\rangle^{-1+3\delta}$, and $\Lambda(C^7)(\tau,s,t)\lesssim 2^{-j}\langle\tau\rangle^2\langle s\rangle^{-1+3\delta}$, using \eqref{cui6}. Bounds similar to \eqref{change61} follow for $l\in\{3,1,7\}$. 

In the remaining cases $l=6$ and $l=11$ we need to reverse the roles of the variables $\tau$ and $s$. Indeed,
\begin{equation*}
\begin{split}
&\int_{\mathbb{R}^3}P_k\mathcal{M}_{k_1, k_2}^6(x-(t-s)v,v,\tau,s,t)\,dv\sim \mathcal{Q}_{j,k}(\nabla^2E_{k_2},\nabla E_{k_1};C^6)(x,s,\tau,t),\\
&\qquad C^6(x,y,v,s,\tau,t):=2^{3j}(\tau-s)^2\widetilde{Y}(y-tv,v,\tau,t)M'_j(v+\widetilde{W}(y-tv,v,s,t)),
\end{split}
\end{equation*}
\begin{equation*}
\begin{split}
&\int_{\mathbb{R}^3}P_k\mathcal{M}_{k_1, k_2}^{11}(x-(t-s)v,v,\tau,s,t)\,dv\sim \mathcal{Q}_{j,k}(\nabla^2 E_{k_2},E_{k_2};C^{11})(x,s,\tau,t),\\
&\qquad C^{11}(x,y,v,s,\tau,t):=2^{3j}(\tau-s)\widetilde{Y}(y-tv,v,\tau,t)\nabla_vM'_j(v).
\end{split}
\end{equation*}
Using \eqref{cui6} we have $\Lambda(C^6)(s,\tau,t)\lesssim 2^{-j}\langle\tau\rangle^{1+2\delta}$ and $\Lambda(C^{11})(s,\tau,t)\lesssim 2^{-j}\langle\tau\rangle^{2\delta}$. Thus
\begin{equation*}
2^{3m(1-1/q)}\Big\Vert\int_0^\tau\int_{\mathbb{R}^3}P_k\mathcal{M}_{k_1, k_2}^{l}(x-(t-s)v,v,\tau,s,t)\,dvds \Big\Vert_{L^q_x}\lesssim\varep_1^2\langle \tau\rangle^{-2}2^{-j},
\end{equation*}
using \eqref{april10eqn50}--\eqref{april10eqn51}, for any $\tau\in[0,t]$, $q\in\{1,\infty\}$, and $l\in\{6,11\}$. The desired bounds \eqref{april10eqn66} follow by integration in $\tau$ and summation over $k_1,k_2\in[-10m,10m]$.
\end{proof}

 \section{Bounds on the type-II reaction term  }\label{StatCont3}

In this section we prove the following:

\begin{proposition}\label{closeboot2}
With the notation in \eqref{bvn2}, under the assumptions \eqref{YW12} of Proposition \ref{MainBootstrapProp} we have
\begin{equation}\label{copi1}
\Vert \rho^{osc}_{2,II}\Vert_{Osc_\delta}\lesssim\varep_1^2.
\end{equation}
\end{proposition}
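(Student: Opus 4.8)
The plan is to estimate $\rho^{osc}_{2,II}=\sum_{(j,k,m)\in A^{II}}\widetilde{\varphi}_m(t)T^{II}_{2,j,k}$ directly from the defining formula \eqref{sug23.4}. Recall that on $A^{II}$ we have $m\geq\delta^{-4}$, $j\leq 19m/20$, and $k+j+\delta m/3\leq 0$, so in particular $2^k\lesssim 2^{-j-\delta m/3}$ is small; this is exactly the regime where the normal-form denominator $1-\langle v,\xi\rangle-i|\xi|$ is large (of size $\gtrsim 2^{\delta m/3}$ on the relevant support), so the factor $(1-i\mathcal{D})^{-1}$ is harmless and in fact gains us the extra smallness we need. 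The key structural gain is the presence of $\partial_s\widehat{L_{2,j,k}}$ rather than $\widehat{L_{2,j,k}}$ itself: differentiating \eqref{L2Formula} in $s$ and expanding $\partial_s\widetilde Y_\#$, $\partial_s\widetilde W_\#$ via \eqref{Lan7} produces (after localizing the electric field frequencies) a bilinear-in-$E$ expression of exactly the type handled by Lemma \ref{keybilinearlemma}.

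First I would compute $\partial_s L_{2,j,k}$. From \eqref{L2Formula}, $\partial_s L_{2,j,k}$ equals $P_k$ applied to a sum of terms of the form $(\partial_s E)(\cdot)M'_j(\cdot)$ and, crucially, terms where $\partial_s$ hits the shifts $\widetilde Y_\#,\widetilde W_\#$; using $\partial_s\widetilde Y_\#(x,v,s,t)=\widetilde W_\#(x,v,s,t)-v\cdot(\ldots)$ type identities together with \eqref{Lan7}–\eqref{Lan7.5}, each such term becomes a $\tau$-integral of a product $E_{k_1}(\cdot,\tau)\cdot\nabla E_{k_2}(\cdot,s)$ (or $\nabla E\cdot\nabla E$, or $E\cdot E$) times smooth coefficients supported in $|v|\approx 2^j$ with the right derivative bounds. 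Then, as in \eqref{ErrHF} and the proof of Lemma \ref{LemReductionR12}, I would rewrite $\int_{\R^3}e^{-isv\cdot\xi}\frac{e^{is}}{1-i\mathcal D}(\partial_s\widehat{L_{2,j,k}})(\xi,v,\tau,s)$, after the $\xi$-integration, as a superposition of the operators $\mathcal B_{j,k}(\mathcal R^1 E^{(a)},\mathcal R^2 E^{(b)};C)$ from \eqref{june16eqn1}, where the $(1-i\mathcal D)^{-1}$ factor is absorbed into the kernel $\mathcal K_{j,k}$ (using Lemma \ref{ProjAB}\,(iii) / the bound \eqref{april10eqn11}-type estimate on $(1+\mathcal D^2)^{-1}\varphi_k\widetilde\varphi_j$, valid since $j+k\leq -\delta m/3\leq -10$); the coefficients $C$ satisfy $\Lambda(C)\lesssim 2^{-2j}\langle\tau\rangle^{\#}$ with $\#\in\{0,1,2\}$ depending on how many $(\tau-s)$ factors appear, exactly as in Lemma \ref{PerturbativeTermsR12}.

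Next I would apply Lemma \ref{keybilinearlemma}: the relevant cases \eqref{june16eqn31A1}, \eqref{june16eqn31A2}, \eqref{june16eqn32}, \eqref{june16eqn33} give $\|\mathcal B_{j,k}(\ldots)(\cdot,\tau,s)\|_{B^0_s}\lesssim\varep_1^2\min\{2^{-cm_2},\langle\tau\rangle^{-c}\}\Lambda^\ast(C)$ with $c\in\{0.1,1.1,2.1\}$; combining $\Lambda^\ast(C)\lesssim 2^{-2j}\langle\tau\rangle^{\#}$ with the decay in $\tau$ and in $m_2$ (the size of $t-s$), and then applying the Poisson-semigroup smoothing $e^{-(t-s)|\nabla|}$ which contributes an extra $2^{-k}(1+2^k(t-s))^{-1}$ when a spatial derivative is needed, I integrate over $\tau\in[0,s]$ and over the dyadic ranges of $m_2$. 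The outcome should be a bound of the form $\|T^{II}_{2,j,k}(t)\|_{B^0_t}\lesssim\varep_1^2 2^{-\eta j}2^{-\eta' m}$ for some $\eta,\eta'>0$ with room to spare, using $2^k\lesssim 2^{-j-\delta m/3}$ to beat any positive powers of $2^{-k}$ that arise; then summing over $(j,k,m)\in A^{II}$ and checking the three norms in $\|\cdot\|_{Osc_\delta}$ (the $\langle t\rangle^{-\delta}$-weighted $B_T$ norm of $\rho^{osc}_{2,II}$ and the $\langle t\rangle^{1-2\delta}$-weighted $B_T$ norm of $\nabla_{x,t}\rho^{osc}_{2,II}$; for the latter note $(\partial_t+|\nabla|)T^{II}_{2,j,k}$ has the clean form with $\partial_t\widehat{L_{2,j,k}}$ evaluated at $s=t$, handled the same way) yields \eqref{copi1}.

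The main obstacle I anticipate is the bookkeeping in the first step: $\partial_s L_{2,j,k}$ has many terms (the analogue of the thirteen $\mathcal M^l_{k_1,k_2}$'s of Lemma \ref{LemReductionR12}), and one must verify for each that (i) it genuinely has the bilinear-in-$E$ structure required by Lemma \ref{keybilinearlemma} — in particular that no term carries more than the allowed number of $\langle\tau\rangle$ growth factors relative to its decay, and (ii) the coefficient bounds $\Lambda^\ast(C)$ hold, which relies on the characteristic estimates \eqref{cui6}–\eqref{cui7.5} and on $\partial_\tau C$ being controlled (the $\langle\gamma\rangle\partial_\gamma C$ piece of \eqref{june16eqn3}). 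The terms with two spatial derivatives on one $E$ factor require the refined estimates \eqref{april10eqn50}–\eqref{april10eqn51} and the trick of swapping the roles of $s$ and $\tau$; keeping track of which variable plays the role of "$\tau$" in \eqref{june16eqn1} in each of those cases is the delicate point. Once the algebraic reduction is done correctly, the quantitative estimates are routine given Lemma \ref{keybilinearlemma} and the smallness $2^{k+j+\delta m/3}\leq 1$ on $A^{II}$.
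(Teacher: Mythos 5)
The broad strategy you sketch (use \eqref{sug23.4}, differentiate $L_{2,j,k}$ in the final-time slot, expand $\partial\widetilde Y,\partial\widetilde W$, and feed the resulting bilinear-in-$E$ expressions into Lemma \ref{keybilinearlemma}) is the skeleton of the paper's argument, and your treatment of the remainder terms (the analogue of $N_2$ in \eqref{copi7}, and the $\widetilde{Y}',\widetilde{W}'$ corrections $N_1^2,N_1^3$ in \eqref{miu2}) is essentially how the paper proceeds in Lemmas \ref{copi10lem} and \ref{sil0lem}. But there is a genuine gap in how you propose to handle the leading term.

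First, a factual point: on $A^{II}$ we have $2^{k+j}\leq 2^{-\delta m/3}\ll 1$, so the normal-form symbol $1-i\mathcal D = 1-v\cdot\xi-i|\xi|$ has modulus $\approx 1$ on the relevant support, not $\gtrsim 2^{\delta m/3}$. The factor $(1-i\mathcal D)^{-1}$ is therefore harmless but provides \emph{no} extra smallness, contrary to what you claim; the smallness has to come from somewhere else. (Also, Lemma \ref{ProjAB} has no part (iii), and \eqref{april10eqn11} concerns $\mathcal D^2/(1+\mathcal D^2)$, not $(1-i\mathcal D)^{-1}$.)

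Second, and more importantly, the leading piece of $\partial_s L_{2,j,k}$ is \emph{not} simply a trilinear integral amenable to Lemma \ref{keybilinearlemma}. Differentiating \eqref{L2Formula} in the final-time variable and using Lemma \ref{derivativeschar} for $\partial_t\widetilde Y,\partial_t\widetilde W$ produces, at leading order, a bilinear expression with one factor $E(\cdot,\tau)$ and one factor $E(\cdot,s)$ and \emph{no} $\tau$- or $\gamma$-integral mediating between them (this is $N_1$ in \eqref{copi6}, and its $v$-total-derivative part is $N_1^1$ in \eqref{miu2}). If you cast this directly as $\mathcal B_{j,k}(E,E;C)$ and invoke \eqref{june16eqn32}, the decay factor is only $\min\{2^{-0.1m_2},\langle s\rangle^{-0.1}\}$ with $\Lambda^\ast(C)\lesssim 2^{-j}$; after summing over $m_2$ and $j$ and integrating $\int_0^t(1+|t-s|2^k)^{-8}\,ds\lesssim 2^{-k}$, you get a bound $\lesssim\varep_1^2 2^{-k}$, and $2^{-k}$ is unbounded on $A^{II}$ (only $k\le -j-\delta m/3$ is imposed). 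Likewise the $\nabla E\cdot E$ piece of $N_1$ carries a growing $(s-\tau)$ coefficient, and \eqref{june16eqn31A1}--\eqref{june16eqn31A2} give $\min\{2^{-1.1m_2},\ldots\}\cdot\Lambda^\ast$ with $\Lambda^\ast\lesssim 2^{m_2}$, so the net decay is again only $2^{-0.1m_2}$; the same divergent $2^{-k}$ reappears. None of the six estimates of Lemma \ref{keybilinearlemma} can close the main term by themselves.

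The missing idea is the one implemented in Lemma \ref{copi40lem}: observe that the main term $N_1^1$ is written as a \emph{total $v$-derivative} $\frac{d}{dv^i}\{E^l(\cdot,\tau)E^i(\cdot,s)\partial_lM_0(\cdot)\}$, integrate by parts in $v$ so that $\partial_{v^i}$ lands on the kernel $\mathcal L_{j,k}$, and \emph{sum over $j\in[0,J_0]$ before estimating}. Because $\mathcal D=|\xi|-iv\cdot\xi$ depends on $v$ with a $|\xi|\sim 2^k$-sized derivative, and the velocity cutoff telescopes to $\widetilde\varphi_{\leq J_0}(v)$ after the $j$-sum, the summed kernel $\sum_j\partial_{v^i}\mathcal L_{j,k}$ satisfies the crucial bound \eqref{2022april25eqn34} with an extra factor $2^k+2^{-J_0}\mathbf 1_{[J_0-2,J_0+2]}(j)$. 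Only after harvesting this factor (which sums to $\lesssim 2^{k+\delta m/3}+2^{-3m/2}$ as in \eqref{change44.5}) does the $s$-integral $\int_0^t(1+|t-s|2^k)^{-8}\,ds\lesssim 2^{-k}$ combine to produce the required $\lesssim\varep_1^2 2^{\delta m/3}\approx\varep_1^2\langle t\rangle^{\delta/3}$, which is what the $Osc_\delta$ norm tolerates. Without this $v$-IBP-plus-$j$-summation step, the low-frequency estimate does not close, and your proposal as written cannot be completed.
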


\begin{proof} In view of the definitions, for \eqref{copi1} it suffices to prove that, for any $k,m\in\Z$, $m\geq \delta^{-4}$,
\begin{equation}\label{copi8}
\Big\|\sum_{j\in [0, 19m/20]\cap [0, -k-\delta m/3]}\widetilde{\varphi}_m(t)T^{II}_{2,j,k}\Big\|_{Osc_\delta}\lesssim\varep_1^2 .
\end{equation}

We recall \eqref{sug23.4}, and start from the definition \eqref{qwp1} to calculate $(\partial_tL_{2,j})(x,v,s,t)$. For this we notice that by definition \eqref{Lan6} we have
\begin{equation}\label{copi2}
\begin{split}
&X(x+(t-s)v,v,s,t)=\widetilde{Y}(x-sv,v,s,t)+x,\\
&V(x+(t-s)v,v,s,t)=\widetilde{W}(x-sv,v,s,t)+v.
\end{split}
\end{equation}
Therefore
\begin{equation*}
\begin{split}
&(\partial_tL_{2,j})(x,v,s,t)=\widetilde{\varphi}_j(v)\\
&\times[-(\partial_iE^l)(x+\widetilde{Y}(x-sv,v,s,t),s) (\partial_t\widetilde{Y}^i)(x-sv,v,s,t) \p_l M_0(v+\widetilde{W}(x-sv,v,s,t))\\
&\quad\,\,-E^l(x+\widetilde{Y}(x-sv,v,s,t),s)  (\partial_t\widetilde{W}^i)(x-sv,v,s,t) \partial_i \p_{l}M_0(v+\widetilde{W}(x-sv,v,s,t))\big].
\end{split}
\end{equation*}
Using the formulas \eqref{Lan7} we have
\begin{equation}\label{copi3}
\begin{split}
&(\partial_t\widetilde{Y})(x-sv,v,s,t)=(t-s) E( x+(t-s)v,t)+Err^1(x+(t-s)v,v,s,t),\\
&Err^1_i(x,v,s,t)\!:=\!\int_{s}^t (\tau-s) (\nabla E_{i})(x-tv+\tau v +\widetilde{Y}(x-tv,v,\tau,t),\tau )\cdot \p_t \widetilde{Y}(x-tv,v,\tau,t)d \tau,
\end{split}
\end{equation}
and
\begin{equation}\label{copi4}
\begin{split}
&(\partial_t\widetilde{W})(x-sv,v,s,t)=-E( x+(t-s)v,t)-Err^2(x+(t-s)v,v,s,t),\\
&Err^2_{i}(x,v,s,t)\!:=\!\int_{s}^t (\nabla E_{i})(x-tv+\tau v +\widetilde{Y}(x-tv,v,\tau,t),\tau )\cdot \p_t \widetilde{Y}(x-tv,v,\tau,t)d \tau.
\end{split}
\end{equation}
Therefore we can decompose
\begin{equation}\label{copi5}
(\partial_tL_{2,j})(x,v,s,t)=\big[N_{1 }(x+(t-s)v,v,s,t)+N_{2 }(x+(t-s)v,v,s,t)\big]\widetilde{\varphi}_j(v),
\end{equation}
where
\begin{equation}\label{copi6}
\begin{split}
N_{1}&(y,v,s,t):=-(\partial_iE^l)(y-(t-s)v+\widetilde{Y}(y-tv,v,s,t),s)\\
&\qquad\qquad\qquad\cdot (t-s)E^i(y,t) \p_{l}M_0(v+\widetilde{W}(y-tv,v,s,t))\\
&+ E^l(y-(t-s)v+\widetilde{Y}(y-tv,v,s,t),s)\cdot E^i(y,t)  \partial_i \p_{l}M_0 (v+\widetilde{W}(y-tv,v,s,t)),
\end{split}
\end{equation}
and
\begin{align}
N_{2 }(y,v,s,t)&=N^1_{2 }(y,v,s,t)+N^2_{2 }(y,v,s,t),\\
N^1_{2 }(y,v,s,t)&:=-(\partial_iE^l)(y-(t-s)v+\widetilde{Y}(y-tv,v,s,t),s)\cdot Err^1_i(y,v,s,t)\label{copi7}\\
&\qquad\times \p_{l}M_0(v+\widetilde{W}(y-tv,v,s,t)),\\
N^2_{2 }(y,v,s,t)&:=E^l(y-(t-s)v+\widetilde{Y}(y-tv,v,s,t),s)\cdot Err^2_i(y,v,s,t)\\
&\qquad\times   \partial_i \p_{l}M_0 (v+\widetilde{W}(y-tv,v,s,t)).
\end{align}

We can use the formulas \eqref{copi5}--\eqref{copi7} and \eqref{sug23.4} to decompose
\begin{equation}\label{copi9}
\begin{aligned}
T^{II}_{2,j,k}&= O^1_{j,k}+O^2_{j,k},\\
\widehat{O^a_{j,k}}(\xi,t)&:=\int_0^t\int_0^t \int_{\R^3}\mathbf{1}_+(s-\tau)e^{-(t-s)|\xi|}\frac{e^{is}}{1-i\mathcal{D}}\varphi_k(\xi) \widetilde{\varphi}_j(v) \widehat{N_{a }}(\xi,v,\tau,s)\,dvd\tau ds
\end{aligned}
\end{equation}
for $a\in\{1,2\}$, where as before $\mathcal{D}=|\xi|-iv\cdot\xi$. For future reference we denote the spatial kernel of this expression by
\begin{equation}\label{copi12}
\mathcal{L}_{j,k}(z,v,s,t):= \widetilde{\varphi}_j(v)  \frac{1}{(2\pi )^3}\int_{\R^3}e^{iz\cdot\xi}e^{-(t-s)|\xi|}\frac{e^{is}}{1-i\mathcal{D}}\varphi_k(\xi)  \,d\xi.
\end{equation}
Notice that for $(j,k,m)\in A^{II}$ we have $k+j\leq -10$, so we can integrate by parts several times in $\xi$ in the definition above to see that
\begin{equation}\label{copi12.5}
|\mathcal{L}_{j,k}(z,v,s,t)|\lesssim \widetilde{\varphi}_{[j-2,j+2]}(v)(1+|t-s|2^k)^{-8}2^{3k}(1+2^k|z|)^{-10}.
\end{equation}

Lemmas \ref{copi10lem}--\ref{sil0lem} below, together with the further decompositions \eqref{miu2} and \eqref{sil1}, yield the desired bounds \eqref{copi8}.
\end{proof}

We estimate first the remainder terms $O^2_{j,k}$.

\begin{lemma}\label{copi10lem}
Under the assumptions of Proposition \ref{closeboot2}, for any $(j,k,m)\in A^{II}$ we have
\begin{equation}\label{copi11}
\Vert \widetilde{\varphi}_m(t){O}^2_{j,k}\Vert_{ Osc_\delta }\lesssim\varep_1^22^{-\delta j}.
\end{equation}
\end{lemma}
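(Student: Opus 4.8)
\textbf{Proof proposal for Lemma \ref{copi10lem}.}

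The plan is to reduce the estimate of $O^2_{j,k}$ to an application of the trilinear machinery of Lemma \ref{keybilinearlemma}, after absorbing the Poisson-type kernel $\mathcal{L}_{j,k}$ (whose pointwise bound \eqref{copi12.5} is already available) and the good denominator $1/(1-i\mathcal{D})$ into an admissible coefficient. Recall from \eqref{copi7} that $N_2 = N_2^1 + N_2^2$, and each of $N_2^1,N_2^2$ carries an $Err^i$ factor which, by \eqref{copi3}--\eqref{copi4} and the characteristic bound \eqref{nov28eqn2} in Lemma \ref{derivativeschar}, is itself an integral in a new time variable $\tau'$ of $\nabla E$ against $\partial_t\widetilde Y$; using \eqref{nov28eqn2} again, $\partial_t\widetilde Y$ splits as $(t-\tau')E(\cdot,t)$ plus a genuinely cubic-remainder piece. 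Thus after unwinding, $\widehat{O^2_{j,k}}$ is a superposition of terms in which three electric fields appear: one evaluated at time $s$ (with at least one $x$-derivative, coming from $\partial_iE^l$ in \eqref{copi7}), one evaluated at time $t$ (or a nearby time $\tau'$), and one more from $\partial_t\widetilde Y$. First I would organize these into the trilinear form: integrate by parts in $\xi$ to pass the $e^{-(t-s)|\xi|}$ factor and the multiplier $\varphi_k(\xi)(1-i\mathcal{D})^{-1}$ onto the spatial kernel, producing exactly the structure of $\mathcal{Q}_{j,k}(f,g;C)$ of \eqref{ropi1}, with $\mathcal{K}_{j,k}$ controlled by \eqref{copi12.5} (so \eqref{june16eqn4} holds since $j+k\le -10$ on $A^{II}$), $P_1,P_2$ given by $\theta\widetilde Y$ shifts, and coefficient $C$ built from $\nabla_v^a M_0$ composed with $v\mapsto v+\widetilde W$, whose $\Lambda$-norm is $\lesssim 2^{-2j}$ by Lemma \ref{derichar2}.

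Next, for the $B_T$-part of the $Osc_\delta$ norm, I would apply the relevant line of Lemma \ref{keybilinearlemma}: since $N_2^1$ has the schematic form $\nabla E \cdot (\nabla E)\cdot(\ast)$ and $N_2^2$ the form $E\cdot(\nabla E)\cdot(\ast)$ — with the extra $E$ from $\partial_t\widetilde Y$ supplying yet another $E$ or $\nabla E$ factor — the worst case is covered by \eqref{june16eqn31A1}--\eqref{june16eqn33}, giving a gain of $\min\{2^{-0.1 m_2},\langle\tau\rangle^{-0.1}\}$ at the very least (and more when derivatives are present). Integrating over the auxiliary time variables $\tau,\tau',s$ (all running over $[0,t]\subset[0,2^{m+1}]$) against the Poisson decay $(1+|t-s|2^k)^{-8}$ costs at most $2^{O(\delta m)}$ while the $\langle\tau\rangle^{-0.1}$ and the coefficient $2^{-2j}$ together beat the required $\langle t\rangle^{-\delta}2^{-\delta j}$ with room to spare; here one uses crucially that $j\le 19m/20$ so that factors $2^{-2j}$ are not too lossy, and $j+k\le -\delta m/3$ to absorb the $2^{-k}$ from $(1-i\mathcal{D})^{-1}$. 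For the $\langle t\rangle^{1-2\delta}\nabla_{x,t}$-part of $\|\cdot\|_{Osc_\delta}$ I would note that $(\partial_t+|\nabla|)O^2_{j,k}=(-i)e^{it}\cdot(\text{the }s=t\text{ boundary term})$, which by \eqref{copi5}--\eqref{copi7} at $s=t$ is just $\int N_2(x,v,\tau,t)\widetilde\varphi_j(v)\,d\tau$ — a strictly simpler bilinear-in-$E$ object — handled again by Lemma \ref{keybilinearlemma}; the spatial derivative $\nabla_x$ commutes through and only improves the frequency weights.

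The main obstacle I anticipate is \emph{bookkeeping the cubic remainders}: each $Err^i$ introduces an extra time integral and an extra $\widetilde Y$-deviation, so $N_2^1,N_2^2$ genuinely produce terms that are cubic (indeed one must iterate \eqref{nov28eqn2} once, yielding a sub-leading quartic-looking piece that is nonetheless controlled since $\widetilde W$ decays faster, per \eqref{cui6}). The delicate point is to check that in every resulting term at least one electric field carries a derivative so that the faster-decaying estimates \eqref{june16eqn31A1}--\eqref{june16eqn33} (rather than the lossy \eqref{june16eqn32}) apply, and that the coefficient bounds $\Lambda(C)\lesssim 2^{-2j}\langle\tau\rangle^{O(1)}$ hold with the powers of $\langle\tau\rangle$ small enough to be killed by the $\langle\tau\rangle^{-1.1}$ or $\langle\tau\rangle^{-2.1}$ gains. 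Once the schematic structure is pinned down this is routine, so the proof will consist of: (1) writing out the $\le$ four schematic shapes of $N_2$ after expanding $Err^i$; (2) for each, identifying the matching line of Lemma \ref{keybilinearlemma} and the coefficient $C$; (3) integrating the resulting bounds in $s,\tau,\tau'$ and summing, using $m\ge\delta^{-4}$, $j\le 19m/20$, $j+k\le -\delta m/3$ to close with the stated $2^{-\delta j}$ to spare.
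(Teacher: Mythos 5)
Your proposal follows the paper's strategy at the high level — rewrite $O^2_{j,k}$ in the trilinear form $\mathcal{Q}_{j,k}(f,g;C)$ of \eqref{ropi1} using the kernel bound \eqref{copi12.5}, invoke Lemma \ref{keybilinearlemma}, and get the $(\partial_t+|\nabla|)$ estimate from the $s=t$ boundary term — and this is indeed how the paper proves Lemma \ref{copi10lem}. The extra expansion of $\partial_t\widetilde Y$ via \eqref{nov28eqn2} that you propose is unnecessary: the paper keeps $\partial_s\widetilde Y$ (in the relabeled variables) inside the coefficient $C$ and controls it with \eqref{copi15}; your expansion would produce the same structure plus an extra remainder to track, since the trilinear operator only admits two explicit electric fields and the third must go into $C$ regardless.

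There are, however, two points where your argument as written would not close. First, your $\Lambda$-norm claim ``$\Lambda(C)\lesssim 2^{-2j}$'' is incomplete: the coefficients $C^1_{al}$ and $C^2_{ali}$ in \eqref{copi14.5} and \eqref{copi42} carry an extra factor $\langle s\rangle^{-1+\delta}$ coming from the bound $|\partial_s\widetilde Y|\lesssim\varep_1\langle s\rangle^{-1+\delta}$ of \eqref{copi15}. This decay is essential. Without it, the $s$-integral over $[0,t]$ against $(1+|t-s|2^k)^{-8}$ costs $\min\{2^m,2^{-k}\}$, and since $k$ can satisfy $k\le -\delta m/3$ with no lower bound, this is $O(2^m)$ rather than the $2^{O(\delta m)}$ you claim; the required $\langle t\rangle^\delta$-bound in the $Osc_\delta$ norm would then fail. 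Second, your statement that a gain of $\min\{2^{-0.1m_2},\langle\tau\rangle^{-0.1}\}$ ``at the very least'' (from \eqref{june16eqn32}) would suffice is false. With only $\langle\tau\rangle^{-0.1}$ decay the inner $\tau$-integral over $[0,s]$ produces $\langle s\rangle^{0.9}$, which swamps the $\langle s\rangle^{-1+\delta}$ coefficient decay and again gives a loss of nearly $2^m$ in the $s$-integral. You must use the sharper gains of \eqref{june16eqn31A2} and \eqref{june16eqn33}, which is permissible because — as you correctly flag as ``the delicate point'' but do not verify — both $N^1_2$ and $N^2_2$ carry at least one explicit $\nabla E$ factor (indeed $N^1_2$ is $\nabla E\cdot\nabla E\cdot C$ and $N^2_2$ is $E\cdot\nabla E\cdot C$ in the schematic sense of \eqref{copi14.5} and \eqref{copi42}), so the $\langle\tau\rangle^{-1.1}$ or $\langle\tau\rangle^{-2.1}$ gain is always available.
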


\begin{proof}  We decompose
\begin{equation}\label{copi13}
\begin{split}
&{O}^2_{j,k}={O}^{2,1}_{j,k}+{O}^{2,2}_{j,k},\\
&{O}^{2,l}_{j,k}(x,t):=\int_0^t\int_0^t \int_{\R^3}\int_{\R^3}\mathbf{1}_+(s-\tau)\mathcal{L}_{j,k}(x-y,v,s,t)N^l_{2 }(y,v,\tau,s)\,dydvd\tau ds.
\end{split}
\end{equation}

{\bf{Step 1.}} We consider first the case $l=1$, and examine the formula for $N^1_{2 }$ in \eqref{copi7} and the formula for the functions $Err^1$ in \eqref{copi3}. We have
\begin{equation}\label{newlabel1}
\begin{split}
{O}^{2,1}_{j,k}(x,t):=\int_0^t&\int_0^s\int_{\tau}^s \int_{\R^3}\int_{\R^3}\mathcal{L}_{j,k}(x-y,v,s,t)(-\partial_iE^l)(y-(s-\tau)v+\widetilde{Y}(y-sv,v,\tau,s),\tau)\\
&\times (\gamma-\tau)(\partial_aE^i)(y-(s-\gamma)v +\widetilde{Y}(y-sv,v,\gamma,s),\gamma)\\
&\times(\p_s \widetilde{Y}^a)(y-sv,v,\gamma,s)\p_l M_0(v+\widetilde{W}(y-sv,v,\tau,s))\,dydv d\gamma d\tau ds.
\end{split}
\end{equation}
Notice that, with the notation in \eqref{ropi1}, for a suitable kernel $\mathcal{K}_{j,k}$ satisfying \eqref{june16eqn4} we have
\begin{equation}\label{copi14.5}
\begin{split}
&{O}^{2,1}_{j,k}(x,t)=\int_0^t\int_0^s\int_{\tau}^s \mathcal{Q}_{j,k}(\partial_iE^{l},\partial_aE^{i};C_{al}^1)(x,\gamma,\tau,s;t)\,d\gamma d\tau ds,\\
&C^1_{al}(x,y,v,\gamma,\tau,s;t):=(\gamma-\tau)(1+|t-s|2^k)^{-8}(\p_s \widetilde{Y}^a)(y-sv,v,\gamma,s)\\
&\qquad\qquad\qquad\quad\times 2^{3j} \p_l M_0 (v+\widetilde{W}(y-sv,v,\tau,s))\widetilde{\varphi}_{[j-4,j+4]}(v).
\end{split}
\end{equation}
Clearly, using \eqref{nov28eqn2}--\eqref{dec5eqn51} and Lemma \ref{Laga10} (ii) we have
\begin{equation}\label{copi15}
|(\p_s \widetilde{Y}^a)(y-sv,v,\gamma,s)|+\langle\gamma\rangle |(\p_\gamma\p_s \widetilde{Y}^a)(y-sv,v,\gamma,s)|\lesssim \varep_1\langle s\rangle^{-1+\delta}.
\end{equation}
Since $\|\p_l M_0(v)\widetilde{\varphi}_{[j-4,j+4]}(v)  \|_{L^\infty}\lesssim 2^{-5j}$, with the notation in \eqref{june16eqn3} we have
\begin{equation}\label{copi15.5}
\Lambda(C^1_{al})(\gamma,\tau,s)\lesssim \langle\gamma\rangle (1+|t-s|2^k)^{-8}\langle s\rangle^{-1+\delta}2^{-2j}.
\end{equation}

We decompose ${O}^{2,1}_{j,k}={O}^{2,1,1}_{j,k}+{O}^{2,1,2}_{j,k}$ where
\begin{equation*}
\begin{split}
&{O}^{2,1,1}_{j,k}(x,t):=\int_0^{t/2}\int_0^s\int_{\tau}^s \mathcal{Q}_{j,k}(\partial_iE^l,\partial_aE^i;C_{al}^{1})(x,\gamma,\tau,s;t)\,d\gamma d\tau ds,\\
&{O}^{2,1,2}_{j,k}(x,t):=\int_{t/2}^t\int_0^s\int_{\tau}^s \mathcal{Q}_{j,k}(\partial_iE^l,\partial_aE^i;C_{al}^{1})(x,\gamma,\tau,s;t)\,d\gamma d\tau ds.
\end{split}
\end{equation*}
Using now \eqref{june16eqn33} and recalling that $k\leq 0$ it follows that
\begin{equation}\label{copi17}
\begin{split}
\|{O}^{2,1,1}_{j,k}(.,t)\|_{L^1_x}&\lesssim \int_0^{t/2}\int_0^s (1+|t-s|2^k)^{-8}\langle s\rangle^{-1+\delta}\varep_1^2\langle\tau\rangle^{-1.1}2^{-2j}\,d\tau ds\\
&\lesssim \varep_1^22^{-2j}\langle t\rangle^{\delta}(1+2^k\langle t\rangle)^{-8}.
\end{split}
\end{equation}
Therefore, using Sobolev embedding, 
\begin{equation}\label{copi18}
\langle t\rangle^3\|{O}^{2,1,1}_{j,k}(.,t)\|_{L^\infty_x}\lesssim \varep_1^22^{-2j}2^{3k}\langle t\rangle^3\langle t\rangle^{\delta}(1+2^k\langle t\rangle)^{-8}.
\end{equation}
Moreover, using again \eqref{june16eqn33}, we can also estimate 
\begin{equation}\label{copi19}
\begin{split}
\|{O}^{2,1,2}_{j,k}(.,t)\|_{L^1_x}&+\langle t\rangle^3\|{O}^{2,1,2}_{j,k}(.,t)\|_{L^\infty_x}\\
&\lesssim \int_{t/2}^t\int_0^s (1+|t-s|2^k)^{-8}\langle s\rangle^{-1+\delta}\varep_1^2\langle\tau\rangle^{-1.1}2^{-2j}\,d\tau ds\\
&\lesssim \varep_1^22^{-2j}\langle t\rangle^{-1+\delta}\min\{2^{-k},\langle t\rangle\}.
\end{split}
\end{equation}
These three bounds show that
\begin{equation}\label{copi20}
\|\widetilde{\varphi}_m(t)\langle t\rangle^{-\delta}{O}^{2,1}_{j,k}\|_{ B^0_t }+ \|\widetilde{\varphi}_m(t)\langle t\rangle^{1-2\delta}\nabla_x{O}^{2,1}_{j,k}\|_{ B^0_t }\lesssim \varep_1^2 2^{-2j}.
\end{equation}

We show now that
\begin{equation}\label{copi21}
\|\widetilde{\varphi}_m(t)\langle t\rangle^{1-2\delta}(\partial_t+|\nabla_x|){O}^{2,1}_{j,k}\|_{ B^0_t }\lesssim \varep_1^2 2^{-2j}.
\end{equation}
Indeed, the identities \eqref{newlabel1} show that
\begin{equation*}
(\partial_t+|\nabla_{x}|){O}^{2,1}_{j,k}(x,t)=\int_0^t\int_{\tau}^t \mathcal{Q}_{j,k} (\partial_iE^l,\partial_aE^i;C_{al}^1)(x,\gamma,\tau,t;t)\,d\gamma d\tau.
\end{equation*}
It follows from \eqref{june16eqn33} and \eqref{copi15.5} that
\begin{equation*}
\|(\partial_t+|\nabla_{x}|){O}^{2,1}_{j,k}(.,t)\|_{B_t^0}\lesssim\int_0^t \varep_1^2\langle t\rangle^{-1+\delta}2^{-2j}{\langle}\tau{\rangle}^{-1.1}\,d\tau\lesssim \varep_1^2\langle t\rangle^{-1+\delta}2^{-2j}.
\end{equation*}
The desired bounds \eqref{copi21} follow, which completes the analysis of the term $O^{2,1}_{j,k}$.

{\bf{Step 2.}} We consider now the case $l=2$, and examine the formula for $N^2_{2}$ in \eqref{copi7} and the formula for the functions $Err^2$ in \eqref{copi4}. As before, with the notation in \eqref{ropi1} and using \eqref{copi12.5}, for a suitable kernel $\mathcal{K}_{j,k}$ satisfying \eqref{june16eqn4} we have
\begin{equation}\label{copi42}
\begin{split}
&{O}^{2,2}_{j,k}(x,t):=\int_{0}^t\int_0^s\int_{\tau}^s \mathcal{Q}_{j,k}(E^l,\partial_aE^i;C_{ali}^2)(x,\gamma,\tau,s;t)\,d\gamma d\tau ds,\\
&C^2_{ali}(x,y,v,\gamma,\tau,s;t):=(1+|t-s|2^k)^{-8}(\p_s \widetilde{Y}^a)(y-sv,v,\gamma,s)\\
&\qquad\qquad\qquad\quad\times 2^{3j} (\partial_i\p_l M_0)(v+\widetilde{W}(y-sv,v,\tau,s)) \widetilde{\varphi}_{[j-4,j+4]}(v) .
\end{split}
\end{equation}
Similarly to \eqref{copi15.5} we have
\begin{equation*}
\Lambda(C^2_{ali})(\gamma,\tau,s)\lesssim  (1+|t-s|2^k)^{-8}\langle s\rangle^{-1+\delta}2^{-2j}.
\end{equation*}
Then we estimate, using \eqref{june16eqn31A2} and Sobolev embedding
\begin{equation*}
\|{O}^{2,2}_{j,k}(.,t)\|_{L^1_x}+\langle t\rangle^3\|{O}^{2,2}_{j,k}(.,t)\|_{L^\infty_x}\lesssim \varep_1^22^{-2j}\langle t\rangle^{-1+\delta}\min\{2^{-k},\langle t\rangle\}
\end{equation*}
as in \eqref{copi17}, \eqref{copi18}, and \eqref{copi19}. Then we notice that
\begin{equation*}
(\partial_t+|\nabla_{x}|){O}^{2,2}_{j,k}(x,t)=\int_0^t\int_{\tau}^t \mathcal{Q}_{j,k}(E^l,\partial_aE^i,C_{ali}^2)(x,\gamma,\tau,t;t)\,d\gamma d\tau,
\end{equation*}
so we can estimate using \eqref{june16eqn31A2}
\begin{equation*}
\|(\partial_t+|\nabla_{x}|){O}^{2,2}_{j,k}(.,t)\|_{B_t^0}\lesssim\int_0^t \varep_1^2\langle t\rangle^{-1+\delta}2^{-2j}{\langle}\tau{\rangle}^{-1.1}\,d\tau\lesssim \varep_1^2\langle t\rangle^{-1+\delta}2^{-2j}.
\end{equation*}
These bounds show that
\begin{equation*}
\|\widetilde{\varphi}_m(t)\langle t\rangle^{-\delta}{O}^{2,2}_{j,k}\|_{B^0_t}+ \|\widetilde{\varphi}_m(t)\langle t\rangle^{1-2\delta}\nabla_x{O}^{2,2}_{j,k}\|_{B^0_t}+\|\widetilde{\varphi}_m(t)\langle t\rangle^{1-2\delta}\partial_t {O}^{2,2}_{j,k}\|_{B^0_t}\lesssim \varep_1^2 2^{-2j}.
\end{equation*}
The lemma follows using also \eqref{copi20}--\eqref{copi21}.
\end{proof}

We estimate now the main terms ${O}^1_{j,k}$ defined in \eqref{copi9}. Using the identities \eqref{copi6}, we rewrite
\begin{equation}\label{miu2}
\begin{split}
N_{1}&=N_{1 }^{ 1}+N_{1 }^{  2}+N_{1 }^{  3},\\
N_{1 }^{  1}(y,v,\tau,s)&:=\frac{d}{dv^i}\Big\{E^l(y-(s-\tau)v+\widetilde{Y}(y-sv,v,\tau,s),\tau)E^i(y,s)\\
&\qquad\qquad\qquad\times \p_l M_0(v+\widetilde{W}(y-sv,v,\tau,s))\Big\},\\
N_{1 }^{ 2}(y,v,\tau,s)&:=-(\partial_aE^l)(y-(s-\tau)v+\widetilde{Y}(y-sv,v,\tau,s),\tau) \widetilde{Y}'_{i,a} (y,v,\tau,s)\\
&\qquad\qquad\qquad\times E^i(y,s) \p_l M_0 (v+\widetilde{W}(y-sv,v,\tau,s)),\\
N_{1 }^{  3}(y,v,\tau,s)&:=-E^l(y-(s-\tau)v+\widetilde{Y}(y-sv,v,\tau,s),\tau)E^i(y,s)\\
&\qquad\qquad\qquad\times   \widetilde{W}'_{i,a} (y,v,\tau,s)(\partial_a \p_l M_0 )(v+\widetilde{W}(y-sv,v,\tau,s)),
\end{split}
\end{equation}
where
\begin{equation}\label{miu3}
\begin{split}
\widetilde{Y}'_{i,a}(y,v,\tau,s)&:=[(\partial_{v^i}-s\partial_{x^i})\widetilde{Y}^a](y-sv,v,\tau,s),\\
\widetilde{W}'_{i,a}(y,v,\tau,s)&:=[(\partial_{v^i}-s\partial_{x^i})\widetilde{W}^a](y-sv,v,\tau,s).
\end{split}
\end{equation}

From the above decomposition, with the notation \eqref{copi12}, we have
\begin{equation}\label{sil1}
\begin{split}
{O}^1_{j,k}(x,t)&=\sum_{l=1,2,3} O^{1 ,l}_{j,k}(x,t), \\
{O}^{1 ,l}_{j,k}(x,t)&:=\int_0^t\int_0^t\int_{\R^3}\int_{\R^3}\mathbf{1}_+(s-\tau)\mathcal{L}_{j,k}(x-y,v,s,t)N^{ l}_{1 }(y,v,\tau,s)\,dydvd\tau ds.
\end{split}
\end{equation}
We estimate first the main term:

\begin{lemma}\label{copi40lem}
For any fixed $k,m\in \Z$, $m\geq \delta^{-4}$, we have
\begin{equation}\label{copi41}
\Big\Vert \sum_{j\in [0, 19m/20]\cap [0, -k-\delta m/3]} \widetilde{\varphi}_m(t){O}^{1, 1}_{j,k}\Big\Vert_{Osc_\delta}\lesssim\varep_1^2 .
\end{equation}
\end{lemma}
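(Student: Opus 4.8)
The plan is to estimate the term $O^{1,1}_{j,k}$, which is the ``genuinely quadratic'' main contribution to the type-II reaction term, by exploiting the total $v$-derivative structure exhibited in the definition of $N^1_1$ in \eqref{miu2}. The key observation is that $N^1_1(y,v,\tau,s)=\frac{d}{dv^i}\{\cdots\}$, so after inserting this into the formula \eqref{sil1} for $O^{1,1}_{j,k}$ and recalling that the kernel $\mathcal{L}_{j,k}(x-y,v,s,t)$ depends on $v$ through $\widetilde{\varphi}_j(v)$ and through the phase $e^{-(t-s)|\xi|+is}(1-i\mathcal{D}(\xi,v))^{-1}\varphi_k(\xi)$ (note $\mathcal{D}=|\xi|-iv\cdot\xi$ so $\nabla_v\mathcal{D}=-i\xi$, of size $2^k$), I would integrate by parts in $v$. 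This moves the $v$-derivative off the product of electric fields and onto the smooth, low-frequency factors $\widetilde{\varphi}_j(v)$, $(1-i\mathcal{D})^{-1}$, and the characteristic corrections $\widetilde{Y},\widetilde{W}$ (whose $v$-derivatives are controlled by Lemma \ref{derichar2}). The ``cost'' of each such integration by parts is a factor $2^{-j}$ (from $\nabla_v\widetilde\varphi_j$) or a factor $\langle s\rangle 2^k\lesssim \langle s\rangle$ (from the phase and the characteristic shifts, using that $\nabla_v$ of the shifted argument $y-(s-\tau)v+\widetilde Y$ is of size $\langle s\rangle$), which is acceptable here precisely because $j+k\le -\delta m/3$ and $j\le 19m/20$.

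The concrete steps I would carry out are as follows. First, write $O^{1,1}_{j,k}$ using the kernel $\mathcal{L}_{j,k}$ as in \eqref{sil1}, and integrate by parts in $v^i$; this produces a sum of terms in which the integrand is $\mathcal{L}_{j,k}(x-y,v,s,t)$ (or a mild variant with extra $2^{-j}$, $\langle s\rangle 2^k$, or $\varep_1$ factors coming from derivatives hitting $(1-i\mathcal D)^{-1}$, $\widetilde\varphi_j$, $\widetilde Y$, $\widetilde W$, respectively) times a product of the form $E^l(\,\cdot\,,\tau)\,E^i(\,\cdot\,,s)$ times a bounded coefficient involving $M_0$ and its derivatives restricted to $|v|\approx 2^j$. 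Second, recognize each resulting term as (a superposition of) one of the trilinear expressions $\mathcal{Q}_{j,k}$ of \eqref{ropi1}–\eqref{june16eqn4}, with $P_1=\theta_1\widetilde Y$, $P_2=\theta_2\widetilde Y$ and a coefficient $C$ whose $\Lambda$-norm I compute from \eqref{cui6}–\eqref{cui7.5}: the point is that here no $x$-derivative falls on $E$, so I am in the regime of estimate \eqref{june16eqn32} (or the better \eqref{june16eqn31A1}–\eqref{june16eqn31A2}, \eqref{june16eqn33} when a derivative does fall), together with the refined bilinear bound \eqref{change15} that incorporates the static/oscillatory splitting \eqref{sug31} of the density. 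Third, integrate in $\tau,s$ and sum over the frequencies $k_1,k_2$ of the two electric fields and over $j\in[0,19m/20]\cap[0,-k-\delta m/3]$, and over the time-localization index (using the $(1+2^k|t-s|)^{-8}$ decay from \eqref{copi12.5}); finally handle $\partial_{x,t}O^{1,1}_{j,k}$ by the same scheme, noting that $(\partial_t+|\nabla_x|)O^{1,1}_{j,k}$ removes the $s$-integration and lands on an analogous $\mathcal{Q}_{j,k}$ with one fewer time variable.

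The main obstacle I anticipate is \textbf{the bookkeeping of which factor absorbs each $v$-derivative, and ensuring the resulting powers of $2^j$, $2^k$, $\langle s\rangle$, $\langle\tau\rangle$ combine to give a net gain}. The worst case is when no derivative lands on $\widetilde\varphi_j$ (so we do not gain $2^{-j}$) and instead the derivative lands on the shifted spatial argument of one of the $E$'s, costing $\langle s\rangle 2^k$ while simultaneously converting an $E$ into a $\nabla E$; one must then check that the extra $\langle s\rangle^{-1}$-type decay from $\nabla E$ (via \eqref{june16eqn31A1}–\eqref{june16eqn31A2} or \eqref{april10eqn50}–\eqref{april10eqn51}) compensates, and that the remaining $2^k$ is harmless since $k\le -j-\delta m/3\le 0$. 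A secondary subtlety is the low-frequency factor $2^{k+j}$-type gains hidden in the kernel $\mathcal{L}_{j,k}$ (which, unlike the $R^{II}$ case in \eqref{april10eqn11}, carries no explicit $2^{2(k+j)}$, only the smoothing $(1+2^k|z|)^{-10}$ and the $(1+|t-s|2^k)^{-8}$ Poisson-type decay): one must verify that summing over all $j$ in the stated range is possible, which is where the constraint $j+k\le-\delta m/3$ enters decisively to close the sum with room to spare (the $2^{\delta m/2}$-type losses being swallowed by the $\langle t\rangle^{-1+2\delta}$ vs.\ $2^{-m(1-2\delta)}$ margin in the $Osc_\delta$ norm \eqref{may12eqn21}). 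Modulo this casework, each individual piece is handled by a direct application of Lemma \ref{keybilinearlemma}, exactly as in the proof of Lemma \ref{copi10lem}.
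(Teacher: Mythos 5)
Your high-level strategy is correct and matches the paper's: exploit the total $v$-derivative structure of $N_1^1$, integrate by parts in $v$ to move the derivative onto the kernel $\mathcal{L}_{j,k}$, and then estimate via the trilinear machinery of Lemma \ref{keybilinearlemma}. However, there are two genuine gaps in the details.

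First, your accounting of where the $v$-derivative lands is off, and the ``worst case'' you worry about does not arise. After integrating by parts, the derivative falls \emph{only} on $\mathcal{L}_{j,k}(x-y,v,s,t)$, whose $v$-dependence is entirely through $\widetilde\varphi_j(v)$ (derivative of size $2^{-j}$) and the symbol $(1-i\mathcal{D}(\xi,v))^{-1}$ (derivative of size $2^k$). The product $E^l(\cdots,\tau)E^i(y,s)\,\partial_l M_0(\cdots)$ stays intact; the derivative does \emph{not} fall on the shifted arguments of the $E$'s, on $\widetilde{Y}$, or on $\widetilde{W}$, so no $\langle s\rangle 2^k$ factor and no $\nabla E$ are produced. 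Your proposed bookkeeping of ``$2^{-j}$, $\langle s\rangle 2^k$, or $\varepsilon_1$ factors from $(1-i\mathcal{D})^{-1}$, $\widetilde\varphi_j$, $\widetilde Y,\widetilde W$'' is incorrect; the actual factors are $2^{-j}$ (from $\widetilde\varphi_j$) and $2^k$ (from the symbol), and nothing hits $\widetilde Y,\widetilde W$.

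Second, and more seriously, a term-by-term estimate in $j$ is not enough, and you are missing the cancellation the paper uses. The paper sums $\partial_{v^i}\mathcal{L}_{j,k}$ over $j\in[0,J_0]$ \emph{before} taking absolute values, exploiting that $\sum_j\widetilde\varphi_j=\widetilde\varphi_{\le J_0}$ so that the derivatives on the cutoffs telescope, leaving only $\partial_{v^i}\widetilde\varphi_{\le J_0}$, supported near $|v|\sim 2^{J_0}$ (see \eqref{2022april25eqn33}--\eqref{2022april25eqn34}). This replaces a factor $(2^k+2^{-j})$ per $j$, which sums to $\sim 2^k+1\sim 1$, by $(2^k+2^{-J_0}\mathbf{1}_{j\sim J_0})$, which sums to $\sim 2^k+2^{-2J_0}$ (see \eqref{change44.5}). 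The difference is decisive: after the $s$-integration, using $(1+2^k|t-s|)^{-8}$, the naive approach gives a bound of order $\varepsilon_1^2\min\{2^m,2^{-k}\}$, which can be as large as $\varepsilon_1^2\,2^m$ when $2^{-k}\gg 2^m$, far exceeding the allowed $\varepsilon_1^2\,2^{\delta m}$ from the $\langle t\rangle^{-\delta}$ weight in the $Osc_\delta$ norm. The telescoped factor $2^k+2^{-2J_0}$ is precisely what brings the bound down to $\varepsilon_1^2\,2^{\delta m/3}$ uniformly in $k$; the constraint $j+k\le -\delta m/3$ alone, which you invoke, does not rescue the naive sum. Without identifying this cancellation, the estimate does not close in the very low $x$-frequency regime $k\le -m$.
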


\begin{proof}

Recall \eqref{sil1} and  \eqref{miu2}. After integrating  by parts in $v$, we have 
\begin{equation}\label{sil2}
\begin{split}
{O}^{1, 1}_{j,k}&(x,t)=-\int_0^t\int_0^s\int_{\R^3}\int_{\R^3} (\partial_{v^i}\mathcal{L}_{j,k})(x-y,v,s,t)E^i(y,s)\\
&\times E^l(y-(s-\tau)v+\widetilde{Y}(y-sv,v,\tau,s),\tau) \p_l M_0 (v+\widetilde{W}(y-sv,v,\tau,s))\,dydvd\tau  ds.
\end{split}
\end{equation}

Let $J_0:=\min\{19m/20,-k-\delta m/3\}$ and consider the sum over $j$ of these terms. We write
\begin{equation}\label{2022april25eqn33}
\begin{split}
&\sum_{j\in [0, J_0]}\partial_{v^i}\mathcal{L}_{j,k}(z,v,s,t) =\sum_{j\in[0,J_0+2]}\widetilde{\mathcal{L}}^i_{j,k}(z,v,s,t),\\
&\widetilde{\mathcal{L}}^i_{j,k}(z,v,s,t):=\widetilde{\varphi}_j(v)\frac{d}{dv^i}\Big\{\frac{\widetilde{\varphi}_{\leq J_0}(v)}{(2\pi )^3}\int_{\R^3}e^{iz\cdot\xi}e^{-(t-s)|\xi|}\frac{e^{is}}{1-i\mathcal{D}}\varphi_k(\xi)  \,d\xi\Big\}.
\end{split}
\end{equation}
We integrate by parts a few times in $\xi$ using Lemma \ref{tech5} (notice that $J_0+k\leq -10$, so the denominator $1-i\mathcal{D}$ in the integral is not singular) to see that
\begin{equation}\label{2022april25eqn34}
 \big|\widetilde{\mathcal{L}}^i_{j,k}(z,v,s,t)\big|\lesssim  \widetilde{\varphi}_{[j-2, j+2]}(v)(1+|t-s|2^k)^{-8}2^{3k }(1+2^k|z|)^{-10}\cdot (2^k+2^{-J_0}\mathfrak{1}_{[J_0-2,J_0+2]}(j)).
\end{equation}

We decompose $\sum_{j\in [0, J_0]} \widetilde{\varphi}_m(t)O^{1, 1}_{j,k}$ via \eqref{2022april25eqn33}. Let
\begin{equation}\label{change39.1}
\begin{split}
X_{j,k}&(x,s;t):=\int_0^s\int_{\R^3}\int_{\R^3}\widetilde{\mathcal{L}}^i_{j,k}(x-y,v,s,t)E^i(y,s)\\
&\times E^l(y-(s-\tau)v+\widetilde{Y}(y-sv,v,\tau,s),\tau) \p_l M_0 (v+\widetilde{W}(y-sv,v,\tau,s))\,dydvd\tau,
\end{split}
\end{equation}
and notice that, due to \eqref{sil2},
\begin{equation}\label{change39.2}
\begin{split}
&\sum_{j\in [0,J_0]}  {O}^{1, 1}_{j,k} (x,t)=\sum_{j\in [0,J_0+2]}\int_0^t X_{j,k}(x,s;t)\,ds,\\
&\sum_{j\in [0,J_0]}  \nabla_x{O}^{1, 1}_{j,k} (x,t)=\sum_{j\in [0,J_0+2]}\int_0^t \nabla_xX_{j,k}(x,s;t)\,ds,\\
&\sum_{j\in [0,J_0]}  (\partial_t+|\nabla_x|){O}^{1, 1}_{j,k} (x,t)=\sum_{j\in [0,J_0+2]}X_{j,k}(x,t;t).
\end{split}
\end{equation}

We prove now that for $r\in\{1,\infty\}$
\begin{equation}\label{change39.3}
\begin{split}
\langle s\rangle^{3-3/r}\|X_{j,k}(.,s;t)\|_{L^r_x}&\lesssim (1+|t-s|2^k)^{-8}(2^k+2^{-J_0}\mathfrak{1}_{[J_0-2,J_0+2]}(j))2^{-j},\\
\langle s\rangle^{3-3/r}\|\nabla_xX_{j,k}(.,s;t)\|_{L^r_x}&\lesssim (1+|t-s|2^k)^{-8}(2^k+2^{-J_0}\mathfrak{1}_{[J_0-2,J_0+2]}(j))2^{-j}\langle s\rangle^{-1+\delta},
\end{split}
\end{equation}
for any $k,j,s,t$. Indeed, we use \eqref{2022april25eqn34} to express the functions $X_{j,k}$ this in terms of trilinear operators as defined in \eqref{ropi1}, so that 
\begin{equation}\label{change41}
\begin{split}
&X_{j,k}(x,s;t)=\int_{0}^s  \mathcal{Q}_{j,k}(E, E;C^1)(x, \tau,s,s;t) \,d \tau d s,\\
&\quad C^1(x,y,v,\tau,s,s;t):=(1+|t-s|2^k)^{-8}(2^k+2^{-J_0}\mathfrak{1}_{[J_0-2,J_0+2]}(j))\\
&\qquad\qquad\qquad\times2^{3j}\p_l M_0 (v+\widetilde{W}(y-sv,v,\tau,s))\widetilde{\varphi}_{[j-2, j+2]}(v).
\end{split}
\end{equation}
Moreover, if we distribute one $\nabla_x$ derivative in \eqref{change39.1} we can write
\begin{equation}\label{change42}
\begin{split}
&\nabla_x X_{j,k}=X^1_{j,k}+X^2_{j,k},\\
&X^1_{j,k}(x,s;t):=\int_{0}^s  \big[\mathcal{Q}_{j,k}(E, \nabla E;C^1)(x, \tau,s,s;t)+\mathcal{Q}_{j,k}(\nabla E, E;C^2)(x, \tau,s,s;t)\big] \,d \tau,\\
&X^2_{j,k}(x,s;t):=\int_{0}^s  \mathcal{Q}_{j,k}(E, E;C^3)(x, \tau,s,s;t) \,d \tau,
\end{split}
\end{equation}
where
\begin{equation}\label{change43}
\begin{split}
&C^2(x,y,v,\tau,s,s;t):=(1+|t-s|2^k)^{-8}(2^k+2^{-J_0}\mathfrak{1}_{[J_0-2,J_0+2]}(j))\\
&\qquad\qquad\times [\delta_a^b+\partial_{y^a}\widetilde{Y}^b(y-sv,v,\tau,s)]2^{3j}\p_l M_0 (v+\widetilde{W}(y-sv,v,\tau,s))\widetilde{\varphi}_{[j-2, j+2]}(v).\\
&C^3(x,y,v,\tau,s,s;t):=(1+|t-s|2^k)^{-8}(2^k+2^{-J_0}\mathfrak{1}_{[J_0-2,J_0+2]}(j))\\
&\qquad\qquad\times2^{3j}\p_{y^a}\widetilde{W}^b(y-sv,v,\tau,s)\p_b\p_l M_0 (v+\widetilde{W}(y-sv,v,\tau,s))\widetilde{\varphi}_{[j-2, j+2]}(v).
\end{split}
\end{equation}
The formulas \eqref{change41} and \eqref{change43} and the bounds \eqref{cui7} show that
\begin{equation}\label{change44}
\begin{split}
\Lambda(C^1)(\tau,s,s)+\Lambda(C^2)(\tau,s,s)&\lesssim (1+|t-s|2^k)^{-8}(2^k+2^{-J_0}\mathfrak{1}_{[J_0-2,J_0+2]}(j))2^{-j},\\
\Lambda(C^3)(\tau,s,s)&\lesssim (1+|t-s|2^k)^{-8}(2^k+2^{-J_0}\mathfrak{1}_{[J_0-2,J_0+2]}(j))2^{-j}\langle\tau\rangle^{-2}.
\end{split}
\end{equation}
The bounds in the first line of \eqref{change39.3} follow from \eqref{june16eqn32}. The bounds in the second line follow from \eqref{june16eqn31A1}--\eqref{june16eqn31A2} for the term $X^{j,k}$ and from \eqref{change15} (by summation over $k_1,k_2\in\Z$) for $X^2_{j,k}$.

We can now prove the bounds \eqref{copi41}. Notice that 
\begin{equation}\label{change44.5}
\sum_{j\in[0,J_0+2]}(2^k+2^{-J_0}\mathfrak{1}_{[J_0-2,J_0+2]}(j))2^{-j}\lesssim 2^k+2^{-2J_0}\lesssim 2^{k+\delta m/3}+2^{-3m/2}.
\end{equation}
We can thus use the formulas \eqref{change39.2} and the bounds \eqref{change39.3} to estimate
\begin{equation*}
\begin{split}
\Big\|\widetilde{\varphi}_m(t)\sum_{j\in [0,J_0]} {O}^{1, 1}_{j,k} (t)\Big\|_{L^1_x}&\lesssim\varep_1^2\int_0^t(1+|t-s|2^k)^{-8}(2^{k+\delta m/3}+2^{-3m/2})\,d s\lesssim \varep_1^22^{\delta m/3},
\end{split}
\end{equation*}
\begin{equation*}
\begin{split}
\Big\|\widetilde{\varphi}_m(t)\sum_{j\in [0,J_0]} \nabla_x{O}^{1, 1}_{j,k} (t)\Big\|_{L^1_x}&\lesssim\varep_1^2\int_0^t(1+|t-s|2^k)^{-8}(2^{k+\delta m/3}+2^{-3m/2})\langle s\rangle^{-1+\delta}\,d s\\
&\lesssim \varep_1^22^{-m+4\delta m/3}.
\end{split}
\end{equation*}
Also, using the Sobolev embedding when $s\leq t/2$, we estimate
\begin{equation*}
\begin{split}
\langle t\rangle^3\Big\|\widetilde{\varphi}_m(t)&\sum_{j\in [0,J_0]} {O}^{1, 1}_{j,k} (t)\Big\|_{L^\infty_x}\lesssim\varep_1^2\int_{t/2}^t(1+|t-s|2^k)^{-8}(2^{k+\delta m/3}+2^{-3m/2})\,ds\\
&+\varep_1^2\int_0^{t/2}\langle t\rangle^32^{3k}(1+|t-s|2^k)^{-8}(2^{k+\delta m/3}+2^{-3m/2})\,ds\lesssim \varep_1^22^{\delta m/3}.
\end{split}
\end{equation*}
\begin{equation*}
\begin{split}
\langle t\rangle^3\Big\|\widetilde{\varphi}_m(t)&\sum_{j\in [0,J_0]} \nabla_x{O}^{1, 1}_{j,k} (t)\Big\|_{L^\infty_x}\lesssim\varep_1^2\int_{t/2}^t(1+|t-s|2^k)^{-8}(2^{k+\delta m/3}+2^{-3m/2})\langle s\rangle^{-1+\delta}\,ds\\
&+\varep_1^2\int_0^{t/2}\langle t\rangle^32^{3k}(1+|t-s|2^k)^{-8}(2^{k+\delta m/3}+2^{-3m/2})\langle s\rangle^{-1+\delta}\,ds\lesssim \varep_1^22^{-m+4\delta m/3}.
\end{split}
\end{equation*}
Finally, using the identity in the last line of \eqref{change39.2} and the bounds \eqref{change39.3},
\begin{equation*}
\langle t\rangle^{3-3/r}\Big\|\widetilde{\varphi}_m(t)\sum_{j\in [0,J_0]} (\p_t + |\nabla_x|){O}^{1, 1}_{j,k}(t)\Big\|_{L^r_x}\lesssim\varep_1^22^{\delta m/3},
\end{equation*}
for $r\in\{1,\infty\}$. The desired estimates \eqref{copi41} follow from these last five bounds.
\end{proof}

Finally we estimate the remainders arising in the decomposition \eqref{miu2}.

\begin{lemma}\label{sil0lem}
With $O^{1, l}_{j,k}$ defined as in \eqref{sil1}, for any $(j,k,m)\in A^{II}$ and $l\in\{2,3\}$ we have 
\begin{equation}\label{sil3}
\Vert \widetilde{\varphi}_m(t)O^{1, l}_{j,k}\Vert_{Osc_\delta}\lesssim\varep_1^22^{ -\delta j }.
\end{equation}
\end{lemma}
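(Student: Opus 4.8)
\emph{Proof strategy.} The plan is to treat $O^{1,2}_{j,k}$ and $O^{1,3}_{j,k}$ in parallel, following the scheme of Lemmas \ref{copi10lem} and \ref{copi40lem}. The key point — which makes these terms effectively quadratic in $E$, hence of lower order — is that, unlike the time-integrated quantities $Err^1,Err^2$ that enter $O^2_{j,k}$, the characteristic factors $\widetilde{Y}'_{i,a}$ and $\widetilde{W}'_{i,a}$ of \eqref{miu3} are pointwise objects, so they can be placed directly inside the coefficient of a trilinear operator $\mathcal{Q}_{j,k}$. Since in \eqref{miu3} the evaluation points sit on $|v|\approx 2^j$ and $\tau\le s$, the bounds \eqref{cui7}--\eqref{cui7.5} of Lemma \ref{derichar2} give
\begin{equation*}
|\widetilde{Y}'_{i,a}(y,v,\tau,s)|\lesssim\varep_1 2^j\langle s\rangle\langle\tau\rangle^{-2+2.1\delta},\qquad |\widetilde{W}'_{i,a}(y,v,\tau,s)|\lesssim\varep_1 2^j\langle s\rangle\langle\tau\rangle^{-3+2.1\delta}.
\end{equation*}
Together with $\|\partial_v^a M_0\,\widetilde{\varphi}_{[j-4,j+4]}\|_{L^\infty}\lesssim 2^{-(4+a)j}$ and the kernel estimate \eqref{copi12.5} for $\mathcal{L}_{j,k}$, these supply all the smallness we need.

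\emph{Reduction to the trilinear framework.} Inserting \eqref{miu3} into $N^2_1$ and $N^3_1$ of \eqref{miu2} and recalling \eqref{sil1}, one rewrites, with the notation of \eqref{ropi1} and a kernel $\mathcal{K}_{j,k}$ proportional to $2^{-3j}(1+|t-s|2^k)^{8}\mathcal{L}_{j,k}$ (which satisfies \eqref{june16eqn4} thanks to \eqref{copi12.5}),
\begin{equation*}
O^{1,2}_{j,k}(x,t)=\int_0^t\!\int_0^s \mathcal{Q}_{j,k}(\nabla E, E; C^{(2)})(x,s,\tau,s)\,d\tau\,ds,\qquad O^{1,3}_{j,k}(x,t)=\int_0^t\!\int_0^s \mathcal{Q}_{j,k}(E, E; C^{(3)})(x,s,\tau,s)\,d\tau\,ds,
\end{equation*}
with $P_1(y,v,\tau,s)=\widetilde{Y}(y-sv,v,\tau,s)$, $P_2\equiv 0$, and coefficients obeying (on $|v|\approx 2^j$)
\begin{equation*}
\|C^{(2)}(\cdot,\cdot,\cdot,s,\tau,s)\|_{L^\infty_{x,y,v}}\lesssim\varep_1(1+|t-s|2^k)^{-8}\langle s\rangle\,2^{-j}\langle\tau\rangle^{-2+2.1\delta},\qquad \|C^{(3)}\|_{L^\infty_{x,y,v}}\lesssim\varep_1(1+|t-s|2^k)^{-8}\langle s\rangle\,2^{-2j}\langle\tau\rangle^{-3+2.1\delta}.
\end{equation*}
Writing $\nabla E=\sum_{k_1}P_{k_1}\mathcal{R}^1\rho$ and $E=\sum_{k_2}2^{-k_2}P_{k_2}\mathcal{R}^2\rho$ with $\mathcal{R}^1,\mathcal{R}^2$ normalized Calder\'on--Zygmund operators, one applies the bilinear bound \eqref{change15} of Lemma \ref{keybilinearlemma} for each $s\le t$, $\tau\le s$. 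The frequency sums converge: the numerator $\min\{2^{k_{\min}},\dots\}^3$ handles $k_{\min}\to-\infty$, while the denominators $\langle\tau\rangle^{1-2\delta}2^{k_1}+\langle\tau\rangle^{-\delta}$ and $\langle s\rangle^{1-2\delta}2^{k_2}+\langle s\rangle^{-\delta}$ — which encode the density bounds \eqref{Laga2}--\eqref{Laga3} — absorb $k_1,k_2\to+\infty$ and the high--high interactions. One then integrates in $\tau\in[0,s]$ (the extra decay $\langle\tau\rangle^{-2+2.1\delta}$, resp. $\langle\tau\rangle^{-3+2.1\delta}$, from $\widetilde{Y}'$, resp. $\widetilde{W}'$, keeps this integral finite despite the factor $\langle s\rangle$) and in $s\in[0,t]$ (where $(1+|t-s|2^k)^{-8}$ confines $s$ to $|t-s|\lesssim 2^{-k}\ll\langle t\rangle$ — note $k\ge-j-\delta m/3>-m$ — so $\langle s\rangle\approx\langle t\rangle$ and $\int_0^t(1+|t-s|2^k)^{-8}\,ds\lesssim 2^{-k}$); using $j\le 19m/20$ and $j+k\le-\delta m/3$ (hence $2^{-3k}\langle t\rangle^{-3}\lesssim 2^{3j+\delta m}2^{-3m}\lesssim 2^{-3m/20+\delta m}$), one arrives at
\begin{equation*}
\|O^{1,l}_{j,k}(t)\|_{L^1_x}+\langle t\rangle^3\|O^{1,l}_{j,k}(t)\|_{L^\infty_x}\lesssim\varep_1^3\,2^{-j}\langle t\rangle^{\delta}\lesssim\varep_1^2\,2^{-\delta j}\langle t\rangle^{\delta},\qquad l\in\{2,3\},
\end{equation*}
using $\varep_1\le 1$ and $2^{-j}\le 2^{-\delta j}$; this is the $\langle t\rangle^{-\delta}$ portion of $\|\cdot\|_{Osc_\delta}$.

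\emph{Remaining components; main obstacle.} The $\langle t\rangle^{1-2\delta}\nabla_x$ bound is proved the same way: one $\nabla_x$ is distributed either onto an electric field (turning a factor $\min\{2^{k_i},\dots\}$ into $2^{k_i}\min\{2^{k_i},\dots\}$, equivalently gaining $\langle t\rangle^{-1}$) or onto the characteristic factors, producing a term with a $C^{(3)}$-type coefficient still controlled by \eqref{change15}. For the $\langle t\rangle^{1-2\delta}(\partial_t+|\nabla_x|)$ bound one observes that $(\partial_t+|\nabla_x|)$ annihilates the kernel $e^{-(t-s)|\nabla|}$ inside $\mathcal{L}_{j,k}$, while $N^l_1(y,v,\tau,s)$ is independent of $t$; hence $(\partial_t+|\nabla_x|)O^{1,l}_{j,k}(x,t)=\int_0^t\mathcal{Q}_{j,k}(\cdots)(x,t,\tau,t)\,d\tau$ is a single-time-integral object, for which the estimates above (outer time frozen at $t$) yield the decay $\langle t\rangle^{-1+2\delta}$. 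Combined, these give \eqref{sil3}. The main obstacle throughout is the control of $\widetilde{Y}'_{i,a}$ and $\widetilde{W}'_{i,a}$: the combination $(\partial_v-s\partial_x)$ carries the explicit \emph{growing} factor $s$, and to absorb it one must use the sharp decay in Lemma \ref{derichar2} for $\nabla_x\widetilde{Y}$ and $\nabla_x\widetilde{W}$ (both the $\langle v\rangle$-weighted bounds and the alternatives $\langle s\rangle^{-7/6}$, $\langle s\rangle^{-13/6}$) in tandem with the velocity localization $2^j\lesssim 2^{19m/20}$, so that the resulting $2^j$-weights stay subcritical relative to $2^m$. Beyond this, the argument is bookkeeping — juggling the kernel decay $(1+|t-s|2^k)^{-8}$, the velocity weights, the $M_0$-decay $2^{-(4+a)j}$, the low-frequency gains from $E=|\nabla|^{-1}\rho$, and the bounds \eqref{Laga2}--\eqref{Laga3}, across all four parts of $\|\cdot\|_{Osc_\delta}$ — and uses no idea beyond Lemmas \ref{Laga10}, \ref{derichar2}, and \ref{keybilinearlemma}.
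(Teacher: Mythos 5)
Your route — bounding $\widetilde{Y}'_{i,a}$, $\widetilde{W}'_{i,a}$ pointwise via Lemma \ref{derichar2}, placing them inside the coefficient $C$, and then applying the raw estimate \eqref{change15} to $\mathcal{Q}_{j,k}(\nabla E, E;\cdot)$ — differs from the paper's, which expands $\widetilde{Y}'_{i,a},\widetilde{W}'_{i,a}$ via \eqref{Lan7} as a third time integral in $\gamma$, keeps $E(y,s)$ (not the characteristic correction) in the coefficient (see \eqref{change70}--\eqref{change71}), and then invokes the $\mathcal{B}_{j,k}$ estimates \eqref{june16eqn31A1}, \eqref{june16eqn33}. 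That choice matters, and it is where your argument develops a genuine gap.

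The $\mathcal{B}_{j,k}$ estimates incorporate a normal-form integration by parts in $\gamma$ against the $e^{-i\gamma}$ oscillation of $E^{osc}$, which is what produces the decay $\langle\tau\rangle^{-1.1}$ (resp.\ $\langle\tau\rangle^{-2.1}$) after the dyadic-in-$\gamma$ sum. Your application of \eqref{change15} alone forgoes that oscillatory gain on the factor sitting at time $s$. Concretely, summing \eqref{change15} over $k_1,k_2$ (with $\gamma=s$) picks up a factor $\gtrsim\langle\tau\rangle^{\delta}$ from the low-frequency part of the $k_1$ sum where the denominator reduces to $\langle\tau\rangle^{-\delta}$, and a factor $\gtrsim\langle s\rangle^{\delta}$ (plus a logarithm) from the analogous part of the $k_2$ sum; the net bound on $\Sigma:=\sum_{k_1,k_2}2^{-k_2}\,\cdot\eqref{change15}$ is $\lesssim\varep_1^2\langle s\rangle^{-2+c\delta}\langle\tau\rangle^{\delta}$ for some $c\geq 2$, not $\langle s\rangle^{-2+\delta}$. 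Combined with $\|C^{(2)}\|_{L^\infty}\lesssim\varep_1\langle s\rangle\langle\tau\rangle^{-2+2.1\delta}2^{-j}(1+|t-s|2^k)^{-8}$ and the $\tau$ and $s$ integrations, one lands at $\lesssim\varep_1^3 2^{-j}\langle t\rangle^{c\delta}\min\{1,2^{-k-m}\}$ with $c\geq 2$, not your claimed $\langle t\rangle^{\delta}$. After multiplying by the weight $\langle t\rangle^{-\delta}$ in the $Osc_\delta$ norm one is left with $\langle t\rangle^{(c-1)\delta}\to\infty$, so the bootstrap \eqref{YW12}--\eqref{YW12improved} does not close uniformly in $t$. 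The paper's expansion avoids this precisely because the oscillatory part of the third $E$-factor is handled by integration by parts inside $\mathcal{B}_{j,k}$ rather than by the lossy small-denominator bound.

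Separately, there is a concrete sign error: $(j,k,m)\in A^{II}$ forces $k\leq -j-\delta m/3$ (an \emph{upper} bound; see \eqref{sug30}), not $k\geq -j-\delta m/3$ as you state. Since $k$ is unbounded below on $A^{II}$, the kernel $(1+|t-s|2^k)^{-8}$ does \emph{not} confine $s$ near $t$, and your inequality $2^{-3k}\langle t\rangle^{-3}\lesssim 2^{3j+\delta m}\langle t\rangle^{-3}$ is backwards (with $k\leq -j-\delta m/3$ one has $2^{-3k}\geq 2^{3j+\delta m}$, and $2^{-3k}\langle t\rangle^{-3}$ is unbounded as $k\to-\infty$). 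The regime $k\leq -m$ must instead be handled by Sobolev embedding $\|P_kg\|_{L^\infty}\lesssim 2^{3k}\|P_kg\|_{L^1}$ on the portion $s\leq t/2$, as the paper does explicitly in \eqref{change74}; this step is absent from your argument.
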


\begin{proof} Recalling the definitions \eqref{miu3} and the formulas \eqref{Lan7}, we compute
\begin{equation*}
\begin{split}
&\widetilde{W}'_{i,a}(y,v,\tau,s) =
 -\int_\tau^s \big[(\gamma-s) \p_{y^i} E^{a}(y-(s-\gamma) v+\widetilde{Y}(y-sv ,v,\gamma,s),\gamma)\\
&\quad-(\p_{v^i} - s  \p_{y^i})  \widetilde{Y}^{b}(y-sv ,v,\gamma,s ) \p_{y^b} E^{a}(y-(s-\gamma) v+\widetilde{Y}(y-sv ,v,\gamma,s),\gamma)\big]\,d\gamma
\end{split}
\end{equation*}
and
\begin{equation*}
\begin{split}
&\widetilde{Y}'_{i,a}(y,v,\tau,s) =\int_\tau^s \big[(\gamma-\tau)(\gamma-s) \p_{y^i} E^{a}(y-(s-\gamma) v+\widetilde{Y}(y-sv ,v,\gamma,s),\gamma)\\
&\quad+(\gamma-\tau)(\p_{v^i} - s  \p_{y^i})  \widetilde{Y}^{b}(y-sv ,v,\gamma,s ) \p_{y^b} E^{a}(y-(s-\gamma) v+\widetilde{Y}(y-sv ,v,\gamma,s),\gamma)\big]\,d\gamma.
\end{split}
\end{equation*}

Therefore, using \eqref{copi12.5}, in terms of the trilinear operators defined in \eqref{ropi1} we can write
\begin{equation}\label{change70}
\begin{split}
&O^{1,2}_{j,k}(x,t)=\int_0^t\int_0^s\int_\tau^s\mathcal{Q}_{j,k}(\partial_{b}E^a,\partial_aE^l;C^1_{bl})(x,\gamma,\tau,s;t)\,d\gamma d\tau ds,\\
&C^1_{bl}(x,y,v,\gamma,\tau,s;t):=(1+|t-s|2^k)^{-8}(\gamma-\tau)E^i(y,s)\\
&\qquad\times\big[(\gamma-s)\delta_i^b+(\partial_{v^i}-s\partial_{y^i})\widetilde{Y}^b(y-sv,v,\gamma,s)\big]2^{3j}\partial_lM_0(v+\widetilde{W}(y-sv,v,\tau,s)),
\end{split}
\end{equation}
and
\begin{equation}\label{change71}
\begin{split}
&O^{1,3}_{j,k}(x,t)=\int_0^t\int_0^s\int_\tau^s\mathcal{Q}_{j,k}(\partial_{b}E^a,E^l;C^2_{abl})(x,\gamma,\tau,s;t)\,d\gamma d\tau ds,\\
&C^2_{abl}(x,y,v,\gamma,\tau,s;t):=(1+|t-s|2^k)^{-8}E^i(y,s)\\
&\qquad\times\big[(\gamma-s)\delta_i^b+(\partial_{v^i}-s\partial_{y^i})\widetilde{Y}^b(y-sv,v,\gamma,s)\big]2^{3j}\partial_a\partial_lM_0(v+\widetilde{W}(y-sv,v,\tau,s)).
\end{split}
\end{equation}
The bounds in Lemma \ref{Laga10} (ii) show that
\begin{equation}\label{change72}
\begin{split}
\Lambda(C^1_{bl})(\gamma,\tau,s)&\lesssim (1+|t-s|2^k)^{-8}2^{-j}\langle s\rangle^{-1+\delta}\langle\gamma\rangle,\\
\Lambda(C^2_{abl})(\gamma,\tau,s)&\lesssim (1+|t-s|2^k)^{-8}2^{-j}\langle s\rangle^{-1+\delta}.
\end{split}
\end{equation}
Therefore, using \eqref{june16eqn31A1} and \eqref{june16eqn33}, for $l\in\{2,3\}$,
\begin{equation}\label{change73}
\begin{split}
\big\|\widetilde{\varphi}_m(t){O}^{1,l}_{j,k} (t)\big\|_{L^1_x}&\lesssim\varep_1^2\int_0^t\int_0^s(1+|t-s|2^k)^{-8}2^{-j}\langle s\rangle^{-1+\delta}\langle\tau\rangle^{-1.1}\,d\tau d s\\
&\lesssim\varep_1^22^{-j}2^{\delta m}\min\{1,2^{-k-m}\}.
\end{split}
\end{equation}
Moreover, using Sobolev embedding when $s\leq t/2$ as before,
\begin{equation}\label{change74}
\begin{split}
\langle t\rangle^3&\big\|\widetilde{\varphi}_m(t){O}^{1,l}_{j,k} (t)\big\|_{L^\infty_x}\lesssim\varep_1^2\int_0^{t/2}\int_0^s\langle t\rangle^32^{3k}(1+|t-s|2^k)^{-8}2^{-j}\langle s\rangle^{-1+\delta}\langle\tau\rangle^{-1.1}\,d\tau d s\\
&+\varep_1^2\int_{t/2}^t\int_0^s(1+|t-s|2^k)^{-8}2^{-j}\langle s\rangle^{-1+\delta}\langle\tau\rangle^{-1.1}\,d\tau d s\lesssim\varep_1^22^{-j}2^{\delta m}\min\{1,2^{-k-m}\}.
\end{split}
\end{equation}

Finally, we notice that
\begin{equation*}
\begin{split}
&(\partial_t+|\nabla_x|)O^{1,2}_{j,k}(x,t)=\int_0^t\int_\tau^t\mathcal{Q}_{j,k}(\partial_{b}E^a,\partial_aE^l;C^1_{bl})(x,\gamma,\tau,t;t)\,d\gamma d\tau,\\
&(\partial_t+|\nabla_x|)O^{1,3}_{j,k}(x,t)=\int_0^t\int_\tau^t\mathcal{Q}_{j,k}(\partial_{b}E^a,E^l;C^2_{abl})(x,\gamma,\tau,t;t)\,d\gamma d\tau.
\end{split}
\end{equation*}
and one can use \eqref{june16eqn31A1} and \eqref{june16eqn33} again to see that
\begin{equation}\label{change75}
\langle t\rangle^{3-3/r}\big\|\widetilde{\varphi}_m(t)(\partial_t+|\nabla_x|){O}^{1,l}_{j,k} (t)\big\|_{L^r_x}\lesssim\varep_1^22^{-j}2^{\delta m}
\end{equation}
for $r\in\{1,\infty\}$. The desired bounds \eqref{sil3} follow from \eqref{change73}--\eqref{change74} and \eqref{change75}.
\end{proof}

\section{Proof of   Lemma \ref{keybilinearlemma}}\label{trilin}

In this section we prove Lemma \ref{keybilinearlemma}. The main idea is to decompose the electric field corresponding to the variable $\gamma$ into its static and oscillatory parts, and then integrate by parts in $\gamma$ in certain cases. Since $\tau,s$ are fixed in the proof we may assume that $\Lambda^\ast(\tau,s)=1$, thus
\begin{equation}\label{change1}
|C(x, y, v,\gamma,\tau,s)|+|\langle\gamma\rangle\p_\gamma C(x, y, v,\gamma,\tau,s)|\lesssim 1\quad\text{ for any }x,y,v,\gamma,\tau,s.
\end{equation}

\subsection{Proof of \eqref{june16eqn31A1}}\label{RopiSec} We may assume that $C$ and $\mathcal{K}_{j,k}$ are real-valued, decompose $E(\gamma)=E^{stat}(\gamma)+\Re\{e^{-i\gamma}E^{osc}(\gamma)\}$ as in \eqref{Laga5}, and then decompose dyadically in frequency, so 
\be\label{july16eqn5}
\mathcal{B}_{j,k} (\mathcal{R}^1\nabla E, \mathcal{R}^2E; C)=\sum_{k_1, k_2\in \Z} \sum_{\ast\in \{stat, osc\}} \Re\{\mathcal{B}_{j,k} (P_{k_1}\mathcal{R}^1(\nabla E), c_{\ast}(\gamma) P_{k_2}\mathcal{R}^2E^{\ast}; C)\}, 
\ee
where $c_{stat}=1$ and $c_{osc}=e^{-i\gamma}$. In the following, we set $k_{\min}:=\min\{k_1,k_2\}$.

{\bf{Step 1.}} We bound first the contributions of the static part. Clearly
\begin{equation}\label{ropi2.5}
\int_{\R^3}\big|h(y-(s-\mu)v +P(y,v))\big|\,dy\lesssim \|h\|_{L^1},
\end{equation}
if $h\in L^1(\R^3)$, $v\in\R^3$, and $P$ satisfies $|\nabla_yP(y,v)|\lesssim\varep_1$. Moreover
\begin{equation}\label{ropi2.6}
\int_{\R^3}\big|h(y-(s-\mu)v +P(y,v))\big|\,dv\lesssim \langle s\rangle^{-3}\|h\|_{L^1},
\end{equation}
provided that $h\in L^1(\R^3)$, $y\in\R^3$, $\mu\leq s\in[0,T]$ satisfy $s-\mu\geq \langle s\rangle/8$, and $|\nabla_vP(y,v)|\lesssim\varep_1\langle s\rangle$.

As before, let $\rho_l:=P_l\rho$, $\rho_l^{stat}:=P_l\rho^{stat}$, and $\rho_l^{osc}:=P_l\rho^{osc}$. We use Lemma \ref{Laga10} (i) and the bounds \eqref{change1} to see that
\begin{equation}\label{ropi2.7}
\begin{split}
&\|\mathcal{Q}_{j,k} (\mathcal{R}^1\rho_{k_1}, \mathcal{R}^2\rho_{k_2}^{stat}; C)(., \gamma, \tau, s)\|_{L^1_x}\lesssim   \|\mathcal{K}_{j,k}(.,.,\tau,s)\|_{L^1_{x,v}}\\
&\qquad\qquad\times \min\{\|\rho_{k_1}(\tau)\|_{L^1}\|\rho_{k_2}^{stat}(\gamma)\|_{L^\infty},\|\rho_{k_1}(\tau)\|_{L^\infty}\|\rho_{k_2}^{stat}(\gamma)\|_{L^1}\}\\
&\lesssim \frac{\varep_1^2}{\langle \tau\rangle^{1-2\delta}2^{k_1}+\langle \tau\rangle^{-\delta}}\cdot 2^{-k_2^+}\langle\gamma\rangle^{-1+2\delta}\cdot \min\{2^{3k_{\min}},\langle \tau\rangle^{-3},\langle\gamma\rangle^{-3}\},
\end{split}
\end{equation}
where the operators $\mathcal{Q}_{j,k} $ are defined as in \eqref{ropi1}. Similarly, using \eqref{ropi2.5} and \eqref{ropi2.6} if $s-\gamma\ge\langle s\rangle/8$ or $s-\tau\ge\langle s\rangle/8$, together with the observation that $\|\mathcal{K}(.,.,\tau,s)\|_{L^1_{x,v}}+ \|\mathcal{K}(.,.,\tau,s)\|_{L^1_xL^\infty_v}\lesssim 1$,
\begin{equation*}
\begin{split}
&\|\mathcal{Q}_{j,k} (\mathcal{R}^1\rho_{k_1}, \mathcal{R}^2\rho_{k_2}^{stat}; C)(., \gamma, \tau, s)\|_{L^\infty_x}\lesssim\,   \min\big\{\|\rho_{k_1}(\tau)\|_{L^\infty}\|\rho_{k_2}^{stat}(\gamma)\|_{L^\infty},\\ 
&\quad\qquad\langle s\rangle^{-3}\|\rho_{k_1}(\tau)\|_{L^\infty}\|\rho_{k_2}^{stat}(\gamma)\|_{L^1},
\langle s\rangle^{-3}\|\rho_{k_1}(\tau)\|_{L^1}\|\rho_{k_2}^{stat}(\gamma)\|_{L^\infty}\big\},
\end{split}
\end{equation*}
and using Lemma \ref{Laga10} (i), we see that
\begin{equation}\label{ropi4}
\begin{split}
&\langle s\rangle^3\|\mathcal{Q}_{j,k} (\mathcal{R}^1\rho_{k_1}, \mathcal{R}^2\rho_{k_2}^{stat}; C)(., \gamma, \tau, s)\|_{L^\infty_x}\\
&\quad\lesssim \frac{\varep_1^2}{\langle \tau\rangle^{1-2\delta}2^{k_1}+\langle \tau\rangle^{-\delta}} 2^{-k_2^+}\langle\gamma\rangle^{-1+2\delta}\min\{2^{k_{\min}},\langle \tau\rangle^{-1},\langle \gamma\rangle^{-1}\}^3.
\end{split}
\end{equation}
Therefore, using \eqref{june16eqn1}, \eqref{ropi2.7} and \eqref{ropi4}, we have
\begin{equation}\label{ropi3}
\begin{split}
&\sum_{k_1, k_2\in \Z}\Big\{\|\mathcal{B}_{j,k} (P_{k_1}\mathcal{R}^1(\nabla E),P_{k_2}\mathcal{R}^2E^{stat}; C)(., \tau, s)\|_{L^1_x}\\
&\quad+\langle s\rangle^{3}\|\mathcal{B}_{j,k} (P_{k_1}\mathcal{R}^1(\nabla E),P_{k_2}\mathcal{R}^2E^{stat}; C)(., \tau, s)\|_{L^\infty_x}\Big\}\lesssim \varep_1^2 2^{ 3\delta m_2}\langle \tau \rangle^{2\delta}\min\{2^{-2 m_2}, \langle \tau \rangle^{-2} \}.
\end{split}
\end{equation}

{\bf{Step 2.}} We bound now the contributions of the oscillatory parts, and we will show that
\begin{equation}\label{ropi8}
\sum_{k_1, k_2\in \Z}2^{-k_2}\|\mathcal{B}_{j,k} (\mathcal{R}^1\rho_{k_1}, e^{-i\gamma}\mathcal{R}^2\rho^{osc}_{k_2}; C)(., \tau, s)\|_{B^0_s}\lesssim  \varep_1^2 \min\{2^{- 1.1 m_2}, \langle \tau \rangle^{-1.1} \}.
\end{equation}
Indeed, we estimate first as in \eqref{ropi2.7}--\eqref{ropi4} to see that for any $k_1,k_2\in\Z$ and $r\in\{1,\infty\}$
\begin{equation}\label{ropi8.1}
\begin{split}
&\langle s\rangle^{3-3/r}\|\mathcal{Q}_{j,k} (\mathcal{R}^1\rho_{k_1}, e^{-i\gamma}\mathcal{R}^2\rho_{k_2}^{osc}; C)(., \gamma, \tau, s)\|_{L^r_x}\\
&\quad\lesssim\frac{\varep_1^2}{\langle \tau\rangle^{1-2\delta}2^{k_1}+\langle \tau\rangle^{-\delta}}\frac{1}{\langle \gamma\rangle^{1-2\delta}2^{k_2}+\langle \gamma\rangle^{-\delta}}\min\{2^{k_{\min}}, \langle \tau\rangle^{-1} , \langle \gamma\rangle^{-1}\}^3,
\end{split}
\end{equation}
where we use \eqref{Laga3} instead of \eqref{Laga2}. The bounds \eqref{ropi8} follow if $m_2\in [0, 400]$ or if $\langle\tau\rangle\geq 2^{9m_2/8}$. On the other hand, if $m_2\geq 400$ and $\langle\tau\rangle\leq 2^{9m_2/8}$ then we have
\begin{equation*}
\begin{split}
\sum_{\{k_1, k_2\in \Z:\,|k_2+m_2|\geq  m_2/8\text{ or }k_1\leq -9m_2/8\}}2^{-k_2}\|\mathcal{B}_{j,k} (\mathcal{R}^1\rho_{k_1}, e^{-i\gamma}\mathcal{R}^2\rho_{k_2}^{osc}; C)(., \tau, s)\|_{B^0_s}\\
\lesssim \varep_1^2 \min\{2^{- 1.1 m_2}, \langle \tau \rangle^{-1.1} \}.
\end{split}
\end{equation*}

It remains to bound the contributions coming from indices $k_2\in[-9m_2/8,-7m_2/8]$ and $k_1\geq -9m_2/8$ when $m_2\geq 400$ and $\langle\tau\rangle\leq 2^{9m_2/8}$. For this we integrate by parts in $\gamma$. We start by writing
\begin{equation}\label{ropi11}
\begin{split}
&e^{-i\gamma}\mathcal{R}^2\rho_{k_2}^{osc}(y-(s-\gamma)v +P_2(y,v,\gamma,\tau,s),s), \gamma)\\
&=\frac{1}{(2\pi)^3}\int_{\R^3}e^{-i\gamma}\varphi_{k_2}(\xi)\mathcal{R}^2(\xi)\widehat{\rho^{osc}}(\xi,\gamma)e^{i\xi \cdot (y-(s-\gamma)v + P_2(y,v,\gamma,\tau,s))}\,d\xi.
\end{split}
\end{equation}
Therefore, using the definitions
\begin{equation}\label{ropi12}
\begin{split}
\mathcal{B}_{j,k} (\mathcal{R}^1\rho_{k_1}, &e^{-i\gamma}\mathcal{R}^2\rho_{k_2}^{osc}; C)(x, \tau, s)=\frac{1}{(2\pi)^3}\int_{\R^3}\int_{\R^3} \int_{\R^3}\int_{t_3}^{t_4}\mathcal{K}_{j,k}(x-y, v,\tau,s)\\
&\times C(x, y, v, \gamma, \tau, s)(\mathcal{R}^1\rho_{k_1})(y-(s-\tau)v +P_1(y,v,\tau,s),\tau)\\
&\times \widehat{\rho^{osc}}(\xi,\gamma)e^{i\gamma(-1+\xi\cdot v)}\varphi_{k_2}(\xi)\mathcal{R}^2(\xi)e^{i\xi \cdot (y-s v + P_2(y,v,\gamma,\tau,s))}\, d\gamma dy d v d\xi. 
\end{split}
\end{equation}
We insert cutoff functions of the form $\varphi_{>p}(1-\xi\cdot v)$ and $\varphi_{\leq p}(1-\xi\cdot v)$ and use the integration by parts formula
\begin{equation}\label{ropi14}
\begin{split}
&\int_{t_3}^{t_4}e^{i\gamma(-1+\xi\cdot v)}C(x, y, v, \gamma, \tau, s)\widehat{\rho^{osc}}(\xi,\gamma)\varphi_{>p}(1-\xi\cdot v)e^{i\xi \cdot (y-s v + P_2(y,v,\gamma,\tau,s))}\, d\gamma\\
&=\int_{t_3}^{t_4}(\partial_\gamma C)(x, y, v, \gamma, \tau, s)I^1_{>p}(\xi, y, v, \gamma, s)\, d\gamma+\int_{t_3}^{t_4}C(x, y, v, \gamma, \tau, s)I^2_{>p}(\xi, y, v, \gamma, s)\, d\gamma\\
&\qquad+\sum_{j\in\{3,4\}}(-1)^{j+1}C(x, y, v, t_j, \tau, s)I^1_{>p}(\xi, y, v, t_j, s),
\end{split}
\end{equation}
where
\begin{equation}\label{ropi16}
\begin{split}
I^1_{>p}(\xi, y, v, \gamma, s)&:=\frac{-e^{i\gamma(-1+\xi\cdot v)}}{i(-1+\xi\cdot v)}\widehat{\rho^{osc}}(\xi,\gamma)\varphi_{>p}(1-\xi\cdot v)e^{i\xi \cdot (y-s v + P_2(y,v,\gamma,\tau,s))},\\
I^2_{>p}(\xi, y, v, \gamma, s)&:=\frac{-e^{i\gamma(-1+\xi\cdot v)}}{i(-1+\xi\cdot v)}\varphi_{>p}(1-\xi\cdot v)e^{i\xi \cdot (y-s v + P_2(y,v,\gamma,\tau,s))}\big\{(\partial_\gamma\widehat{\rho^{osc}})(\xi,\gamma)\\
&\qquad+\widehat{\rho^{osc}}(\xi,\gamma)i\xi \cdot \partial_\gamma P_2(y,v,\gamma,\tau,s)\big\}.
\end{split}
\end{equation}

For $p\leq -4$ and $\ell\in\{1,2\}$ let
\begin{equation}\label{ropi17}
H^l_{>p,k_2}(y, v, \gamma, s):=\frac{1}{(2\pi)^3}\int_{\R^3}I^l_{>p}(\xi, y, v, \gamma, s)\varphi_{k_2}(\xi)\mathcal{R}^2(\xi)\,d\xi,
\end{equation}
\begin{equation}\label{ropi18}
\begin{split}
G_{\leq p,k_2}(y, v, \gamma, s):=\frac{e^{-i\gamma}}{(2\pi)^3}\int_{\R^3}&\varphi_{\leq p}(1-\xi\cdot v)\widehat{\rho^{osc}}(\xi,\gamma)\varphi_{k_2}(\xi)\\
&\times \mathcal{R}^2(\xi)e^{i\xi \cdot (y-(s-\gamma) v + P_2(y,v,\gamma,\tau,s))}\,d\xi.
\end{split}
\end{equation}
In view of these identities, for any $p_0\leq -4$ we can thus decompose
\begin{equation}\label{ropi19}
\mathcal{B}_{j,k} (\mathcal{R}^1\rho_{k_1}, e^{-i\gamma}\mathcal{R}^2\rho_{k_2}^{osc}; C)=X_{\leq p_0, k_1,k_2}+(Y^1_{>p_0, k_1,k_2}+Y^2_{>p_0,k_1,k_2})
\end{equation}
where, with $C':=(\partial_\gamma C)\cdot\mathbf{1}_{[t_3,t_4]}(\gamma)+C\cdot (\delta_0(\gamma-t_3)-\delta_0(\gamma-t_4))$, 
\begin{equation}\label{ropi20}
\begin{split}
&X_{\leq p_0, k_1,k_2}(x, \tau, s):=\int_{t_3}^{t_4}\int_{\R^3}\int_{\R^3}\mathcal{K}_{j,k}(x-y, v,\tau,s)C(x, y, v, \gamma, \tau, s)\\
&\qquad\times (\mathcal{R}^1\rho_{k_1})(y-(s-\tau)v +P_1(y,v,\tau,s),\tau)G_{\leq p_0,k_2}(y, v, \gamma, s)\,  dy d v d\gamma,
\end{split}
\end{equation}
\begin{equation}\label{ropi21}
\begin{split}
&Y^1_{>p_0, k_1,k_2}(x, \tau, s):=\int_{\R}\int_{\R^3}\int_{\R^3}\mathcal{K}_{j,k}(x-y, v,\tau,s)C'(x, y, v, \gamma, \tau, s)\\
&\qquad\times (\mathcal{R}^1\rho_{k_1})(y-(s-\tau)v +P_1(y,v,\tau,s),\tau)H^1_{>p_0,k_2}(y, v, \gamma, s)\,  dy d v d\gamma,
\end{split}
\end{equation}
\begin{equation}\label{ropi22}
\begin{split}
&Y^2_{>p_0, k_1,k_2}(x, \tau, s):=\int_{t_3}^{t_4}\int_{\R^3}\int_{\R^3}\mathcal{K}_{j,k}(x-y, v,\tau,s)C(x, y, v, \gamma, \tau, s)\\
&\qquad\times (\mathcal{R}^1\rho_{k_1})(y-(s-\tau)v +P_1(y,v,\tau,s),\tau)H^2_{>p_0,k_2}(y, v, \gamma, s)\,  dy d v d\gamma.
\end{split}
\end{equation}

{\bf{Step 3.}} We will now prove bounds on the functions $H^l_{>p_0,k_2}$ and $G_{\leq p_0,k_2}$. We claim that
\begin{equation}\label{ropi25}
\begin{split}
&\|H^1_{>p_0,k_2}(.,v,\gamma,s)\|_{L^1_y}+\langle\gamma\rangle^3\|H^1_{>p_0,k_2}(.,v,\gamma,s)\|_{L^\infty_y}\lesssim \varep_12^{-p_0}\langle\gamma\rangle^{\delta},\\
&\|H^2_{>p_0,k_2}(.,v,\gamma,s)\|_{L^1_y}+\langle\gamma\rangle^3\|H^2_{>p_0,k_2}(.,v,\gamma,s)\|_{L^\infty_y}\lesssim \varep_12^{-p_0}\langle\gamma\rangle^{-1+2\delta},
\end{split}
\end{equation}
and
\begin{equation}\label{ropi26}
\begin{split}
\|G_{\leq p_0,k_2}(.,v,\gamma,s)\|_{L^\infty_y}&\lesssim \varep_12^{p_0}\langle v\rangle^{-1}\langle\gamma\rangle^{-1+2\delta}2^{k_2},\\
\|G_{\leq p_0,k_2}(.,v,\gamma,s)\|_{L^1_y}+\langle\gamma\rangle^3\|G_{\leq p_0,k_2}(.,v,\gamma,s)\|_{L^\infty_{y}}&\lesssim \varep_1\langle\gamma\rangle^{\delta},
\end{split}
\end{equation}
for any $v\in\R^3$. Indeed, we examine the definitions \eqref{ropi16}--\eqref{ropi17} and rewrite
\begin{equation*}
\begin{split}
H^1_{>p_0,k_2}(y,v,\gamma,s)&=-e^{-i\gamma}\widetilde{H}^1_{>p_0,k_2}(y-(s-\gamma)v +P_2(y,v,\gamma,\tau,s)),\\
\widetilde{H}^1_{>p_0,k_2}(z,v,\gamma)&:=\frac{1}{(2\pi)^3}\int_{\R^3}\varphi_{k_2}(\xi)\mathcal{R}^2(\xi)\frac{\varphi_{>p_0}(1-\xi\cdot v)}{i(-1+\xi\cdot v)}\widehat{\rho^{osc}}(\xi,\gamma)e^{i\xi \cdot z}\,d\xi.
\end{split}
\end{equation*}
It follows from Lemma \ref{omegaioLem} (ii) that
\begin{equation*}
\|\widetilde{H}^1_{>p_0,k_2}(.,v,\gamma)\|_{L^q_z}\lesssim 2^{-p_0}\|\rho^{osc}(.,\gamma)\|_{L^q},\qquad q\in[1,\infty],
\end{equation*}
and the desired bounds in the first line of \eqref{ropi25} follow using \eqref{Laga3} and \eqref{ropi2.5}. The bounds on $H^2_{>p_0,k_2}$ in the second line of \eqref{ropi25} follow by a similar argument.

To prove the bounds \eqref{ropi26} we examine the formula \eqref{ropi18} and notice that if $p_0,k_2\leq 4$ then the functions $G_{\leq p_0,k_2}$ are nontrivial only if $|v|\geq 1$. Using the bounds \eqref{omegaio4}
\begin{equation*}
|G_{\leq p_0,k_2}(y, v, \gamma, s)|\lesssim 2^{2k_2}2^{p_0}\langle v\rangle^{-1}\|P_{k_2}\rho^{osc}(.,\gamma)\|_{L^1}\lesssim \varep_12^{p_0}\langle v\rangle^{-1}\langle\gamma\rangle^{-1+2\delta}2^{k_2},
\end{equation*}
for any $y,v\in\R^3$, as claimed. The remaining bounds in \eqref{ropi26} follow from the corresponding bounds on $\rho(.,\gamma)$ and Lemma \ref{omegaioLem} (i).

{\bf{Step 4.}} We return now to the proof of the main bounds \eqref{ropi8}. In view of the discussion in {\it{Step 2}}, it suffices to show that if $m_2\geq 400$ and $\langle\tau\rangle\leq 2^{9m_2/8}$ then
\begin{equation}\label{ropi30}
\begin{split}
\sum_{\{k_1, k_2\in \Z:\,|k_2+m_2|\leq   m_2/8\text{ and }k_1\geq -9m_2/8\}}2^{-k_2}\|\mathcal{B}_{j,k} (\mathcal{R}^1\rho_{k_1}, e^{-i\gamma}\mathcal{R}^2\rho_{k_2}^{osc}; C)(., \tau, s)\|_{B^0_s}\\
\lesssim   \varep_1^2 \min\{2^{- 1.1 m_2}, \langle \tau \rangle^{-1.1} \}. 
\end{split}
\end{equation}

Recall that $\|\mathcal{K}(.,.,\tau,s)\|_{L^1_{x,v}}+ \|\mathcal{K}(.,.,\tau,s)\|_{L^1_xL^\infty_v}\lesssim 1$ (see \eqref{june16eqn4}). We use the decomposition \eqref{ropi19} with $p_0:=-m_2/2 $. The function $X_{\leq p_0, k_1,k_2}$ defined in \eqref{ropi20} is nontrivial only if $k_2+j\geq -20$. Using \eqref{change1} and \eqref{ropi26}, we have 
\begin{equation}\label{ropi35}
\begin{split}
\|X_{\leq p_0, k_1,k_2}(., \tau, s)\|_{L^1_x} &\lesssim  \int_{t_3}^{t_4}\|\rho_{k_1}(\tau)\|_{L^1}\|G_{\leq p_0,k_2}(., ., \gamma, s)\|_{ L^\infty_{y,v}  }\,d\gamma\\
&\lesssim \frac{\varep_1^2 2^{2\delta m_2}}{\langle\tau\rangle^{1-2\delta}2^{k_1}+\langle\tau\rangle^{-\delta}}2^{p_0}2^{-j}2^{k_2}.
\end{split}
\end{equation}
Using the trivial estimate if $s-\tau\leq\langle s\rangle/8$ and \eqref{ropi2.6} if $s-\tau\geq\langle s\rangle/8$, together with \eqref{ropi26}, 
\begin{equation}\label{ropi36}
\begin{split}
\|X_{\leq p_0, k_1,k_2}(., \tau, s)\|_{L^\infty_x}&\lesssim \int_{t_3}^{t_4}\|G_{\leq p_0,k_2}(., ., \gamma, s)\|_{L^\infty_{y,v}}\cdot  \langle s\rangle^{-3}\|\rho_{k_1}(\tau)\|_{B^0_\tau}\,\,d\gamma\\
&\lesssim \varepsilon_1^22^{2\delta m_2}2^{p_0-j+k_2}\cdot \langle s\rangle^{-3}\frac{1}{\langle\tau\rangle^{1-2\delta}2^{k_1}+\langle\tau\rangle^{-\delta}}.
\end{split}
\end{equation}
Therefore, recalling that   $-j\leq 20+k_2$ and $\langle\tau\rangle\leq 2^{9m_2/8}$, we have 
\begin{equation}\label{ropi31}
\sum_{\{k_1, k_2\in \Z:\,   |k_2+m_2|\leq   m_2/8 \text{ and }k_1\geq -9m_2/8\}}2^{-k_2}\|X_{\leq p_0, k_1,k_2}(., \tau, s)\|_{B^0_s}\lesssim\varep_1^22^{-5m_2/4}.\end{equation}

Now, we proceed to estimate the contributions of the function $Y^1_{>p_0, k_1,k_2}(x, \tau, s)$ defined in \eqref{ropi21}. As before, we use \eqref{change1} and \eqref{ropi25} to estimate, for $p_0=-m_2/2$,
\begin{equation}\label{ropi37}
\begin{split}
\|Y^1_{>p_0, k_1,k_2}(., \tau, s)\|_{L^1_x}&\lesssim \|\rho_{k_1}(\tau)\|_{L^1} \|H^1_{>p_0,k_2}(., ., \gamma, s)\|_{L^\infty_{y,v}}\\
&\lesssim\frac{\varep_1^2}{\langle\tau\rangle^{1-2\delta}2^{k_1}+\langle\tau\rangle^{-\delta}}2^{-p_0}  2^{2\delta m_2}2^{-3m_2}.
\end{split}
\end{equation}
Moreover, using also \eqref{ropi2.6} if $s-\tau\geq\langle s\rangle/8$ and a direct $L^\infty$ bound if $s-\tau\le\langle s\rangle/8$, we have
\begin{equation}\label{ropi38}
\begin{split}
\|Y^1_{>p_0, k_1,k_2}(., \tau, s)\|_{L^\infty_x}&\lesssim \|H^1_{>p_0,k_2}(., ., \gamma, s)\|_{L^\infty_{y,v}}\cdot \langle s\rangle^{-3}\Vert\rho_{k_1}(\tau)\Vert_{B^0_\tau}\\
&\lesssim  \frac{ \varep_1^2\langle s\rangle^{-3}}{\langle\tau\rangle^{1-2\delta}2^{k_1}+\langle\tau\rangle^{-\delta}}2^{-p_0}  2^{\delta m_2} 2^{-3m_2}. 
\end{split}
\end{equation}
Therefore, after combining the obtained estimates \eqref{ropi37} and \eqref{ropi38}, we have 
\begin{equation*}
\sum_{\{k_1, k_2\in \Z:\, |k_2+m_2|\leq   m_2/8 \text{ and }k_1\geq -9m_2/8\}}2^{-k_2}\|Y^1_{>p_0, k_1,k_2}(., \tau, s)\|_{B^0_s}\lesssim      \varep_1^2 2^{- 5m_2/4}.
\end{equation*}

The contributions of the functions $Y^2_{>p_0, k_1,k_2}$ can be estimated in a similar way, as there is an additional factor of $2^{m_2}$ (since $C'$ is replaced with $C$) but also a factor of $2^{-m_2}$ in \eqref{ropi25}. The bounds \eqref{ropi30} follow using also \eqref{ropi31} and recalling that $\langle\tau\rangle^{-1}\geq 2^{-9m_2/8}$. This completes the proof of the main bounds \eqref{june16eqn31A1}.

\subsection{Proof of \eqref{june16eqn31A2}} {\bf{Step 1.}} As in subsection \ref{RopiSec} we decompose
\be\label{ropi40}
\mathcal{B}_{j,k} (E, \nabla E; C)=\sum_{k_1, k_2\in \Z} \sum_{\ast\in \{stat, osc\}} \mathcal{B}_{j,k} (P_{k_1}E, c_{\ast}(\gamma) P_{k_2}(\nabla E^{\ast}); C), 
\ee
where $c_{stat}=1$ and $c_{osc}=e^{-i\gamma}$ as before. As in \eqref{ropi2.7}--\eqref{ropi4}, for $r\in\{1,\infty\}$ we have 
\begin{equation}\label{ropi41}
\begin{split}
\langle s\rangle^{3-3/r}&\|\mathcal{B}_{j,k} (P_{k_1}E, P_{k_2}(\nabla E^{stat}); C)(.,\tau, s)\|_{L^r_x}\\
&\lesssim \frac{\varep_1^22^{-k_1}}{\langle \tau\rangle^{1-2\delta}2^{k_1}+\langle \tau\rangle^{-\delta}} 2^{-k_2^+}2^{2\delta m_2} \min\{2^{k_{\min}}, \langle \tau\rangle^{-1},2^{-m_2}\}^3
\end{split}
\end{equation}
and, as in \eqref{ropi8.1},
\begin{equation}\label{ropi42}
\begin{split}
 \langle s\rangle^{3-3/r}&\|\mathcal{B}_{j,k} (P_{k_1}E, e^{-i\gamma}P_{k_2}(\nabla E^{osc}); C)(.,\tau, s)\|_{L^r_x} \\
 &\lesssim \frac{\varep_1^22^{-k_1}}{\langle \tau\rangle^{1-2\delta}2^{k_1}+\langle \tau\rangle^{-\delta}}\frac{2^{m_2}}{2^{(1-2\delta)m_2}2^{k_2}+2^{-\delta m_2}}\min\{2^{k_{\min}},\langle \tau\rangle^{-1},2^{-m_2}\}^3\\
 &\lesssim \varep_1^2\langle\tau\rangle^{2\delta}2^{2\delta m_2}\min\{\langle \tau\rangle^{-1},2^{-m_2}\}\frac{\min\{1,2^{m_2+k_2},2^{m_2}\langle\tau\rangle^{-1},2^{m_2+k_1}\}}{(1+2^{k_2+m_2})(1+\langle\tau\rangle 2^{k_1})}.
\end{split}
\end{equation}
These bounds are sufficient to estimate the contributions of the static component $E^{stat}$ and of the oscillatory component $E^{osc}$ when $2^{9m_2/8}\lesssim \langle\tau\rangle$. They also suffice to estimate the contributions corresponding to frequencies $\{k_1, k_2\in \Z:\,  |k_2+m_2|\geq  m_2/8 \text{ or }k_1\leq -9m_2/8\text{ or }k_1\leq k_2-m/8\}$ in the oscillatory component $E^{osc}$.

{\bf{Step 2.}} As before, it remains to prove that if $m_2\geq 400$ and $2^{9m_2/8}\geq\langle\tau\rangle$ then
\begin{equation}\label{ropi43}
\begin{split}
&\sum_{|k_2+m_2|\leq m_2/8,\,k_1\geq -m_2/8+\max(k_2,-m_2)}2^{-k_1}\|\mathcal{B}_{j,k} (\mathcal{R}^1\rho_{k_1}, e^{-i\gamma}\mathcal{R}^2\rho_{k_2}^{osc}; C)(., \tau, s)\|_{B^0_s}\\
&\qquad \lesssim \varep_1^2  \min\{2^{- 1.1 m_2}, \langle \tau \rangle^{-1.1} \},
\end{split}
\end{equation}
where $\mathcal{R}^1,\mathcal{R}^2$ are operators defined by multipliers satisfying the differential inequalities \eqref{HormMich}. 

We use the identities \eqref{ropi11}--\eqref{ropi12}, integrate by parts as in \eqref{ropi14}--\eqref{ropi16}, and decompose $\mathcal{B}_{j,k}(\mathcal{R}^1\rho_{k_1}, e^{-i\gamma}\mathcal{R}^2\rho_{k_2}^{osc}; C)$ as in \eqref{ropi17}--\eqref{ropi22} with $p_0=-m_2/2$. Then we use the bounds \eqref{ropi35}--\eqref{ropi36} and recall that the functions $X_{\leq p_0, k_1,k_2}$ are nontrivial only if $-j\leq 20+k_2$, so
\begin{equation}\label{ropi44}
\begin{split}
&\sum_{|k_2+m_2|\leq m_2/8,\,k_1\geq -m_2/8+\max(k_2,-m_2)} 2^{-k_1}\|X_{\leq p_0, k_1,k_2}(., \tau, s)\|_{B^0_s}\\
&\qquad\lesssim  \sum_{|k_2+m_2|\leq m_2/8,\,k_1\geq -m_2/8+\max(k_2,-m_2)}2^{-k_1}\frac{\varep_1^2 2^{2\delta m_2}}{\langle\tau\rangle^{1-2\delta}2^{k_1}+\langle\tau\rangle^{-\delta}}2^{p_0}2^{2k_2}\\
&\qquad\lesssim \varep_1^2  2^{3\delta m_2}\langle\tau\rangle^{3\delta}2^{-5m_2/4}.
\end{split}
\end{equation}
Similarly, using \eqref{ropi37}--\eqref{ropi38}, for $l\in\{1,2\}$ we estimate
\begin{equation}\label{ropi45}
\begin{split}
& \sum_{|k_2+m_2|\leq m_2/8,\,k_1\geq -m_2/8+\max(k_2,-m_2)}2^{-k_1}\|Y^l_{>p_0, k_1,k_2}(., \tau, s)\|_{B^0_s}\\
&\qquad\lesssim   \sum_{|k_2+m_2|\leq m_2/8,\,k_1\geq -m_2/8+\max(k_2,-m_2)}2^{-k_1}\frac{\varep_1^22^{-p_0}  2^{-m_2(3-2\delta)}}{\langle\tau\rangle^{1-2\delta}2^{k_1}+\langle\tau\rangle^{-\delta}}\\
&\qquad\lesssim\varep_1^2  2^{3\delta m_2}\langle\tau\rangle^{3\delta}2^{-5m_2/4}.
\end{split}
\end{equation}
The desired bounds \eqref{ropi43} follow since $2^{9m_2/8}\geq\langle\tau\rangle$.

\subsection{Proofs of \eqref{june16eqn32}--\eqref{april10eqn51}} These estimates can be proved in the same way. We first decompose the electric field in the second position into its static and oscillatory components, and decompose  dyadically in frequency. Then we estimate the contributions of the static components using bounds similar to \eqref{ropi41} (with different factors of $2^{-k_1}$ or $2^{-k_2}$) and the contributions of most of the oscillatory components using bounds similar to \eqref{ropi42}.

After these reductions we are left with the contributions of the oscillatory components when $m_2\geq 400$ and $2^{9m_2/8}\geq\langle\tau\rangle$, coming from frequencies $\{k_1, k_2\in \Z:\,|k_2+m_2|\leq m_2/8\text{ and }k_1\geq -m_2/8+\max\{-m_2,k_2\}\}$. Here we integrate by parts in $\gamma$, as before, and use the bounds \eqref{ropi35}--\eqref{ropi36} and \eqref{ropi37}--\eqref{ropi38} to control the corresponding contributions. To get bounds similar to \eqref{ropi44}--\eqref{ropi45} we need to use different cutoff values of $p_0$: we set $p_0=-3m_2/8$ for the bounds \eqref{june16eqn32} and $p_0=-m/2$ for the bounds \eqref{june16eqn33}--\eqref{april10eqn51} (the frequency $k_1$ is fixed in the last two bounds, in order to avoid the divergent sum corresponding to $k_1\geq 0$, and the sum in the second step is simply taken over frequencies $k_2\in[-9m_2/8,-7m_2/8]$).

\section{Proof of Theorem \ref{thm:main_simple}}\label{ProofMainThm}
In this section we show that Proposition \ref{MainBootstrapProp} implies the main result, Theorem \ref{thm:main_simple}. 

\begin{proof}[Proof of global existence and uniqueness]
Let $\bar\varepsilon>0$ and $\delta>0$ as in Proposition \ref{MainBootstrapProp} be given, and let $f_0$ satisfying the assumptions \eqref{eq:init} (resp.\ \eqref{bootinit}) be given. Then by standard local existence theory we can find $T>0$ and a unique solution $f\in C^1(\R^3\times\R^3\times [0,T])$ to \eqref{NVP}, \eqref{NVP.2}. Moreover, by Corollary \ref{rhodeco} we can decompose the associated density $\rho(x,t)$ as in \eqref{sug31}, and by continuity (note that $\rho^{osc}(x,0)=0$) we can assume that $T>0$ is chosen such that the smallness assumption \eqref{YW12} of Proposition \ref{MainBootstrapProp} is met.

Noting that $f$ can be recovered exactly by integrating along the characteristics, i.e.\ that by \eqref{Lan2} we have
\begin{equation}\label{eq:sol_chars}
\begin{split}
 f(x,v,t)=f_0(X(x,v,0,t),V(x,v,0,t))+\left[M_0(V(x,v,0,t))-M_0(v)\right],
\end{split}
\end{equation}
and the characteristics in turn are determined by $\rho$ and its derivative through the system of ODEs \eqref{Lan1}, we can apply the conclusion \eqref{YW12improved} of Proposition \ref{MainBootstrapProp} to extend the solution $f$ to a larger time interval $[0,T']$ with $T'>T$, on which the splitting of the density according to Corollary \ref{rhodeco} and the corresponding smallness condition \eqref{YW12} still hold. Hence we obtain a unique global solution $f\in C^1(\R^3\times\R^3\times\R_{+})$ with associated density $\rho=\rho^{stat}+\Re\{e^{-it}\rho^{osc}\}$ satisfying for all $t>0$ that
\begin{equation}\label{eq:density_concl}
 \norm{\rho^{stat}}_{Stat_\delta}+\norm{\rho^{osc}}_{Osc_\delta}\lesssim \varepsilon_0.
\end{equation}
The corresponding splitting \eqref{eq:E-decomp} of the electric field follows directly by \eqref{Laga5}.
\end{proof}

We prove next the scattering statement \eqref{eq:lin_scatter} of Theorem \ref{thm:main_simple}:
\begin{proposition}\label{ScatProp}
Assume $f$ is a global solution as constructed above, so that in particular the associated density satisfies \eqref{eq:density_concl}. Then there exists $f_\infty\in L^\infty_{x,v}$ such that 
\begin{equation}\label{LinftyLinearScattering}
\begin{split}
\Vert f(x,v,t)-f_\infty(x-tv,v)\Vert_{L^\infty_{x,v}}\lesssim\varepsilon_1\langle t\rangle^{-\frac{1}{2}}.
\end{split}
\end{equation}
\end{proposition}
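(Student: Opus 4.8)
The plan is to derive the scattering statement from the exact Lagrangian formula \eqref{eq:sol_chars}, which shows that it suffices to understand the limiting behavior of the characteristics as $t\to\infty$. Writing $f(x,v,t) = f_0(X(x,v,0,t),V(x,v,0,t)) + [M_0(V(x,v,0,t)) - M_0(v)]$, we first change variables $x \mapsto x + tv$ so that the relevant objects are $\widetilde{Y}(x,v,0,t) = X(x+tv,v,0,t) - x$ and $\widetilde{W}(x,v,0,t) = V(x+tv,v,0,t) - v$ from \eqref{Lan6}. From \eqref{Lan7.5} and \eqref{Lan7} we have $\partial_t\widetilde{W}(x,v,0,t) = -E(x+tv,t) - \int_0^t \nabla_x E(\cdots)\cdot\partial_t\widetilde{Y}\,d\tau$, and the dispersive bounds \eqref{Laga11}--\eqref{Laga12} give $\|E(t)\|_{L^\infty} \lesssim \varepsilon_1\langle t\rangle^{-2+\delta}$, which is integrable in time. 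Hence the Cauchy criterion applies: $\widetilde{W}(x,v,0,t)$ converges as $t\to\infty$ to some limit $\widetilde{W}_\infty(x,v)$, uniformly in $(x,v)$, with rate $\|\widetilde{W}(x,v,0,t) - \widetilde{W}_\infty(x,v)\|_{L^\infty_{x,v}} \lesssim \varepsilon_1\langle t\rangle^{-1+\delta}$ (using Lemma \ref{derivativeschar} to control the integral-remainder term). This already handles the second bracket $M_0(V) - M_0(v) = M_0(v + \widetilde{W}(x,v,0,t)) - M_0(v)$, since $M_0$ is smooth with bounded derivatives; note however this contribution converges only with rate $\langle t\rangle^{-1+\delta}$, which is better than $\langle t\rangle^{-1/2}$.

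The more delicate piece is $f_0(X(x+tv,v,0,t),V(x+tv,v,0,t)) = f_0(x + \widetilde{Y}(x,v,0,t), v + \widetilde{W}(x,v,0,t))$. Here $\widetilde{Y}$ does \emph{not} converge — indeed from \eqref{cui6} we only have $|\widetilde{Y}(x,v,0,t)| \lesssim \varepsilon_1$, and generically $\widetilde{Y}(x,v,0,t) \to \widetilde{Y}_\infty(x,v)$ would require $\int_0^\infty \tau\|\nabla E\|$ type control which is not available. Instead I would argue: $\partial_t\widetilde{Y}(x,v,0,t)$ is controlled by \eqref{nov28eqn2}, which gives $|\partial_t\widetilde{Y}(x,v,0,t) - tE(x+tv,t)| \lesssim \varepsilon_1\langle t\rangle^{-1+1.1\delta}$, and $|tE(x+tv,t)| \lesssim \varepsilon_1\langle t\rangle^{-1+\delta}$; so actually $\partial_t\widetilde{Y}(x,v,0,t)$ \emph{is} integrable in $t$, hence $\widetilde{Y}(x,v,0,t)$ converges as well, to a limit $\widetilde{Y}_\infty(x,v)$, with rate $\langle t\rangle^{-\varepsilon'}$ for some small $\varepsilon' > 0$ — but wait, the rate from integrating $\langle\tau\rangle^{-1+\delta}$ from $t$ to $\infty$ is only $\langle t\rangle^{-\delta\cdot\text{(something)}}$ or worse. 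Let me reconsider: $\int_t^\infty \langle\tau\rangle^{-1+1.1\delta}\,d\tau$ diverges, so $\widetilde Y$ need not converge, but the deviation grows only like a small power, and this is where the $\langle t\rangle^{-1/2}$ rate (rather than something better) enters. The clean way: set $f_\infty(z,w) := f_0(z + \widetilde{Y}_\infty(z - \text{corrections}, w - \widetilde{W}_\infty), w - \widetilde{W}_\infty)$ — but since $\widetilde Y$ may not converge, instead define $f_\infty$ via the limit along characteristics directly, i.e. $f_\infty(y,v) := \lim_{t\to\infty} f(y + tv, v, t)$ and prove this limit exists in $L^\infty_{x,v}$ with the stated rate.

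Concretely, the main step is to show $t \mapsto f(x+tv,v,t)$ is Cauchy in $L^\infty_{x,v}$ with rate $\langle t\rangle^{-1/2}$. Using \eqref{Lan2} directly, $f(x+tv,v,t) = f_0(X,V) - \int_0^t E(X(x+tv,v,s,t),s)\cdot M_0'(V(x+tv,v,s,t))\,ds$, and the $t$-derivative of this expression can be computed and bounded. The key estimates are: $\|E(s)\|_{L^\infty} \lesssim \varepsilon_1\langle s\rangle^{-2+\delta}$ from \eqref{Laga11}--\eqref{Laga12}, plus the characteristic bounds of Lemma \ref{derivativeschar} controlling $\partial_t$ of the arguments. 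A direct estimate shows $\|\partial_t[f(x+tv,v,t)]\|_{L^\infty_{x,v}} \lesssim \varepsilon_1\langle t\rangle^{-1-\varepsilon'}$ for a suitable $\varepsilon' > 0$... however if the honest bound is only $\lesssim \varepsilon_1\langle t\rangle^{-1/2-\varepsilon'}$ coming from the $tE$ term combined with gradient-of-$f_0$ factors and the non-decay of $\widetilde Y$, then integrating from $t$ to $\infty$ yields exactly the $\langle t\rangle^{-1/2}$ rate; I expect the latter. I would then define $f_\infty(y,v) := f_0(y,v) - \int_0^\infty E(X(y+sv_\infty?,\ldots))\cdots$ — more simply $f_\infty$ is the $L^\infty$-limit guaranteed by the Cauchy property, and \eqref{LinftyLinearScattering} follows. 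The main obstacle is bookkeeping the precise power of $\langle t\rangle$: one must track carefully how the slowly-decaying pieces of $E$ (the oscillatory component decaying like $\langle t\rangle^{-2+\delta}$, and its time-derivative structure) interact with the characteristic deviations and with $\nabla_{x,v} f_0$ under the change of variables, to confirm that the net rate is no worse than $\langle t\rangle^{-1/2}$; the oscillatory structure $e^{-it}E^{osc}$ may need one integration by parts in time to extract the stated rate cleanly, mirroring the normal-form arguments used throughout the paper.
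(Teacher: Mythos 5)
Your overall strategy is the right one (work on the characteristics $\widetilde{Y},\widetilde{W}$ via the exact Lagrangian formula \eqref{eq:sol_chars}, then transfer to $f$ using Lipschitz bounds on $f_0$ and $M_0$), and you correctly identify the obstacle: the crude bound $|\partial_t\widetilde{Y}(x,v,0,t)|\lesssim\varepsilon_1\langle t\rangle^{-1+\delta}\langle v\rangle$ is \emph{not} integrable in $t$, so a naive Cauchy argument for $\widetilde{Y}$ does not close. But having noted this, your proposal never actually resolves it. You waver between claiming $\widetilde{Y}$ converges, conceding it ``need not converge,'' and then floating two unexecuted escape routes (defining $f_\infty$ as a bare limit of $f(x+tv,v,t)$, and guessing the honest decay of $\partial_t[f(x+tv,v,t)]$ ``might'' be $\langle t\rangle^{-1/2-\varepsilon'}$). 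This is the genuine gap: the proposal contains no argument that closes the Cauchy estimate for the characteristics.

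The paper's resolution — which you guess at in your final sentence but do not carry out — is to split the electric field appearing in $\widetilde{Y}(x,v,s,t')-\widetilde{Y}(x,v,s,t)$ into its static and oscillatory parts via \eqref{Laga5}, and integrate by parts in $\tau$ (a normal form) against the $e^{-i\tau}$ phase of $E^{osc}$. The boundary term gains a full power of $\tau$ over the naive bound, and the bulk terms involve either $E^{osc}$ without the $(\tau-s)$ factor, or $\partial_\tau E^{osc}$ and $\nabla_xE^{osc}$, all of which decay at integrable rates thanks to Lemma \ref{Laga10}. This produces $|\mathfrak{e}^{osc}_{s,t,t'}|\lesssim\varepsilon_1\langle v\rangle\langle t\rangle^{2\delta-1}$, which then combines with a Gr\"onwall argument in $\mathfrak{r}_{s,t,t'}$ (to handle the self-referential coupling between $\widetilde{Y}$ at times $t$ and $t'$) to give Lemma \ref{ConvYV}. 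Note also that both $\widetilde Y$ and $\widetilde W$ converge with the much better rate $\langle t\rangle^{-1+2\delta}\langle v\rangle$; the weaker $\langle t\rangle^{-1/2}$ in the proposition statement is just an upshot after trading the $\langle v\rangle$ weight against the $\langle v\rangle^{-1}$ decay built into $f_0$ and $M_0'$. Your speculation that the true rate is exactly $\langle t\rangle^{-1/2}$ is thus also off. To make your proposal complete you would need to actually perform this integration by parts — and once you do, the alternative $\partial_t$-of-$f$ path you suggest becomes unnecessary.
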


\begin{remark}
Using formula \eqref{eq:sol_chars} and with additional simple work on the characteristics, one could obtain the formula
\begin{equation*}
\begin{split}
f_\infty(x,v):=(f_0\circ\Phi)(x,v)+\left[M_0(\Phi^2(x,v))-M_0(v)\right]
\end{split}
\end{equation*}
where $\Phi=(\Phi^1,\Phi^2)$ is a $C^1$ symplectic diffeomorphism.
\end{remark}

Proposition \ref{ScatProp} essentially follows from the following lemma:
\begin{lemma}\label{ConvYV}
Under the assumptions of Proposition \ref{ScatProp}, there exist $(\Y_\infty,\W_\infty):\R^3\times\R^3\times\R_{+}\to\R^3$ such that for each fixed $s$, we have that
\begin{equation}\label{eq:charconv}
\begin{split}
\langle t\rangle^{\frac{1}{2}}\langle v\rangle^{-1}\left[\vert \widetilde{Y}(x,v,s,t)-\Y_\infty(x,v;s)\vert+\vert \widetilde{W}(x,v,s,t)-\W_\infty(x,v;s)\vert\right]\lesssim \varepsilon_1,
\end{split}
\end{equation}
uniformly in $0\le s\le t<\infty$. 
\end{lemma}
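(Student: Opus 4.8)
The plan is to prove convergence of the characteristics by establishing a Cauchy criterion in $t$, using the quantitative decay of the electric field from Lemma \ref{Laga10}. Fix $s\geq 0$ and $x,v\in\R^3$. Starting from the integral identity \eqref{Lan7.5}, namely $\widetilde{Y}(x,v,s,t)=-\int_s^t\widetilde{W}(x,v,\tau,t)\,d\tau$, together with \eqref{Lan7} for $\widetilde{W}$, the natural object to control is $\partial_t\widetilde{W}$ and $\partial_t\widetilde{Y}$, for which Lemma \ref{derivativeschar} already gives
\begin{equation}\label{scat-pt}
  |\partial_t\widetilde{W}(x,v,s,t)+E(x+tv,t)|\lesssim\varepsilon_1\langle t-s\rangle\langle t\rangle^{-2+\delta}\langle s\rangle^{-2+1.1\delta},\qquad
  |\partial_t\widetilde{Y}(x,v,s,t)-(t-s)E(x+tv,t)|\lesssim\varepsilon_1\langle t-s\rangle\langle t\rangle^{-2+\delta}\langle s\rangle^{-1+1.1\delta}.
\end{equation}
The first step is therefore to show that $E(x+tv,t)$ and $(t-s)E(x+tv,t)$ are integrable in $t$ over $[s,\infty)$ in a way that beats the claimed rate $\langle t\rangle^{-1/2}$. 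Here one uses the decomposition $E=E^{stat}+\Re(e^{-it}E^{osc})$ from \eqref{Laga5} and the bounds \eqref{Laga11}--\eqref{Laga12}: the static part satisfies $|E^{stat}(t)|\lesssim\varepsilon_1\langle t\rangle^{-3+2\delta}$, which is easily integrable even against a factor $t$, while the oscillatory part only satisfies $|E^{osc}(t)|\lesssim\varepsilon_1\langle t\rangle^{-2+\delta}$, so $(t-s)\Re(e^{-it}E^{osc})$ is \emph{not} absolutely integrable and must be handled by a normal form (integration by parts in $t$, using the oscillation $e^{-it}$ and the bound $\|\partial_t E^{osc}(t)\|_{L^\infty}\lesssim\varepsilon_1\langle t\rangle^{-3+2\delta}$ from \eqref{Laga12}). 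This is the step I expect to be the main obstacle: one needs to integrate by parts carefully in the formula $\widetilde{Y}(x,v,s,t)=\int_s^t(\tau-s)E(x+\tau v+\widetilde{Y}(x,v,\tau,t),\tau)\,d\tau$ — note the argument is $x+\tau v+\widetilde{Y}(x,v,\tau,t)$, not $x+\tau v$, so one first replaces $E$ at the shifted point by $E$ at $x+\tau v$ plus an error controlled by $\|\nabla E\|_{L^\infty}|\widetilde{Y}|$ via \eqref{cui6}, \eqref{Laga11}--\eqml{Laga12}, and only then does the normal form on the main term.

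Concretely, I would argue as follows. First, by \eqref{scat-pt} and the electric field bounds, for $s\leq t_1\leq t_2$ write
\begin{equation*}
  \widetilde{W}(x,v,s,t_2)-\widetilde{W}(x,v,s,t_1)=\int_{t_1}^{t_2}\partial_t\widetilde{W}(x,v,s,t)\,dt
  =-\int_{t_1}^{t_2}E(x+tv,t)\,dt+\int_{t_1}^{t_2}\big[\partial_t\widetilde{W}(x,v,s,t)+E(x+tv,t)\big]\,dt.
\end{equation*}
The error integral is bounded by $\varepsilon_1\langle s\rangle^{-2+1.1\delta}\int_{t_1}^{t_2}\langle t\rangle^{-1+\delta}\,dt\lesssim\varepsilon_1\langle t_1\rangle^{-1/2}$ (in fact much better), using $\langle t-s\rangle\langle t\rangle^{-2+\delta}\lesssim\langle t\rangle^{-1+\delta}$. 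For the main term $\int_{t_1}^{t_2}E(x+tv,t)\,dt$: the static piece contributes $O(\varepsilon_1\langle t_1\rangle^{-2+2\delta})$, and for the oscillatory piece $\int_{t_1}^{t_2}\Re(e^{-it}E^{osc}(x+tv,t))\,dt$ I integrate by parts in $t$ (the phase $e^{-it}$ is non-stationary with derivative $1$, and differentiating $E^{osc}(x+tv,t)$ in $t$ produces $v\cdot\nabla_x E^{osc}$ and $\partial_t E^{osc}$, both of which gain a power by \eqref{Laga12}), giving a boundary contribution $O(\varepsilon_1\langle v\rangle\langle t_1\rangle^{-2+\delta})$ plus an integral term $O(\varepsilon_1\langle v\rangle\langle t_1\rangle^{-2+\delta})$ — the factor $\langle v\rangle$ coming from the $v\cdot\nabla_x$ term. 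Hence $\{\widetilde{W}(x,v,s,t)\}_{t\geq s}$ is Cauchy as $t\to\infty$, with
\begin{equation*}
  |\widetilde{W}(x,v,s,t_2)-\widetilde{W}(x,v,s,t_1)|\lesssim\varepsilon_1\langle v\rangle\langle t_1\rangle^{-1/2},
\end{equation*}
so $\W_\infty(x,v;s):=\lim_{t\to\infty}\widetilde{W}(x,v,s,t)$ exists and satisfies the second bound in \eqref{eq:charconv}.

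For $\widetilde{Y}$ the same scheme applies starting from the second identity in \eqref{scat-pt}, the only new point being the factor $(t-s)$ multiplying $E(x+tv,t)$: the static contribution is still integrable ($t\langle t\rangle^{-3+2\delta}$), and for the oscillatory contribution $\int_{t_1}^{t_2}(t-s)\Re(e^{-it}E^{osc}(x+tv,t))\,dt$ the normal form now produces a boundary term of size $t_1\cdot\langle t_1\rangle^{-2+\delta}\langle v\rangle\sim\langle v\rangle\langle t_1\rangle^{-1+\delta}$ and, after differentiating the amplitude $(t-s)E^{osc}(x+tv,t)$, an integrable remainder; the worst term, where the $t$-derivative hits $(t-s)$, gives $\int_{t_1}^{t_2}\Re(e^{-it}E^{osc})\,dt$ which was already controlled above. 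This yields
\begin{equation*}
  |\widetilde{Y}(x,v,s,t_2)-\widetilde{Y}(x,v,s,t_1)|\lesssim\varepsilon_1\langle v\rangle\langle t_1\rangle^{-1+\delta}\lesssim\varepsilon_1\langle v\rangle\langle t_1\rangle^{-1/2},
\end{equation*}
so $\Y_\infty(x,v;s):=\lim_{t\to\infty}\widetilde{Y}(x,v,s,t)$ exists and satisfies the first bound in \eqref{eq:charconv}. Taking $t_2\to\infty$ and $t_1=t$ in both estimates gives exactly \eqref{eq:charconv}, uniformly in $0\leq s\leq t$, since all the bounds above are uniform in $s$ (the $\langle s\rangle^{-1+1.1\delta}$ and $\langle s\rangle^{-2+1.1\delta}$ factors only help). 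The one subtlety to double-check is the replacement of $E$ evaluated along the true characteristic $x+\tau v+\widetilde{Y}(x,v,\tau,t)$ by $E$ along the free flow $x+\tau v$ before integrating by parts: the difference is $\lesssim\|\nabla E(\tau)\|_{L^\infty}|\widetilde{Y}(x,v,\tau,t)|$, which by \eqref{Laga11}--\eqref{Laga12} and \eqref{cui6} is $\lesssim\varepsilon_1^2\langle\tau\rangle^{-3+\delta}\langle v\rangle$ (times at most $\langle\tau\rangle$ for the $\widetilde{Y}$-equation), hence contributes an integrable, even smaller, error — so it causes no loss.
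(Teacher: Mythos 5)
Your overall plan (Cauchy criterion in the upper time variable, plus a normal form on the oscillatory part of $E$) is the right idea, but there is a genuine gap at the very first step: the error term from Lemma \ref{derivativeschar} is \emph{not} integrable in $t$ over $[t_1,\infty)$, contrary to what you assert. The bound \eqref{dec5eqn51} gives
\begin{equation*}
\vert\partial_t\widetilde{W}(x,v,s,t)+E(x+tv,t)\vert\lesssim\varepsilon_1\langle t-s\rangle\langle t\rangle^{-2+\delta}\langle s\rangle^{-2+1.1\delta}\lesssim\varepsilon_1\langle t\rangle^{-1+\delta}\langle s\rangle^{-2+1.1\delta},
\end{equation*}
and the factor $\langle t\rangle^{-1+\delta}$ is sharp up to the $\langle s\rangle$-weight (it comes from the $\tau\sim s$ portion of the feedback integral $\int_s^t\nabla_xE(\tau)\cdot\partial_t\widetilde Y(\tau,t)\,d\tau$ in \eqref{dec5eqn41}, which is of size $\langle t\rangle^{-1+\delta}$ even when $s$ is small). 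Consequently $\int_{t_1}^{t_2}\langle t\rangle^{-1+\delta}\,dt\sim\delta^{-1}\bigl(\langle t_2\rangle^\delta-\langle t_1\rangle^\delta\bigr)$ grows without bound as $t_2\to\infty$, and your claimed estimate ``$\int_{t_1}^{t_2}\langle t\rangle^{-1+\delta}\,dt\lesssim\langle t_1\rangle^{-1/2}$'' is false. The same issue destroys the $\widetilde{Y}$ argument, where the error from \eqref{nov28eqn2} is again $O(\langle t\rangle^{-1+\delta})$. So with Lemma \ref{derivativeschar} used as a black box, the Cauchy criterion does not close.

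The reason the paper's argument works is that it never tries to integrate this pointwise error in the upper time. Instead, it writes $\Delta_{s,t,t'}:=\widetilde{Y}(x,v,s,t')-\widetilde{Y}(x,v,s,t)$ directly from the integral equation \eqref{Lan7}, splitting it into $\mathfrak{e}_{s,t,t'}$ (the new contribution over $\tau\in[t,t']$) and $\mathfrak{r}_{s,t,t'}$ (the feedback over $\tau\in[s,t]$, which compares the two characteristics $\widetilde{Y}(\cdot,t)$ and $\widetilde{Y}(\cdot,t')$). The feedback term is controlled not by a pointwise bound but by a backward Gr\"onwall inequality in $s$, using that $g(\tau):=\tau\|\nabla_xE(\tau)\|_{L^\infty}\lesssim\varepsilon_1\langle\tau\rangle^{2\delta-2}$ is in $L^1$. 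Only $\mathfrak{e}_{s,t,t'}$ needs the static/oscillatory splitting and the normal form, which you set up correctly (and in fact the paper integrates by parts directly at the shifted point $x+\tau v+\widetilde{Y}(x,v,\tau,t')$, handling the chain-rule term $\partial_\tau\widetilde{Y}$ via the bound from Lemma \ref{derivativeschar}, rather than first replacing the shifted point by the free flow). Your derivative-in-$t$ route could in principle be rescued: after Fubini, the double integral $\int_{t_1}^{t_2}\int_s^t\nabla_xE(x+\tau v+\widetilde{Y}(\tau,t),\tau)\cdot\partial_t\widetilde{Y}(\tau,t)\,d\tau\,dt$ has a total $t$-derivative structure in the inner integral, which reproduces precisely the paper's $\mathfrak{r}$-term and its Gr\"onwall estimate --- but you have to carry out that exchange and cancellation explicitly; absolute-value bounds on the integrand, which is all Lemma \ref{derivativeschar} provides, are too lossy.
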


We first show how Proposition \ref{ScatProp} follows from Lemma \ref{ConvYV}.

\begin{proof}[Proof of Proposition \ref{ScatProp} assuming Lemma \ref{ConvYV}]
%
Write for simplicity $\Y_\infty(x,v):=\Y_\infty(x,v;0)$ and $\W_\infty(x,v):=\W_\infty(x,v;0)$ and let
\begin{equation*}
\begin{split}
G(y,u)&:=f_0(y+\Y_\infty(y,u),u+\W_\infty(y,u))+\left[M_0(u+\W_\infty(y,u))-M_0(u)\right]\\
\end{split}
\end{equation*}
be the (putative) scattering data from Lemma \ref{ConvYV}. By \eqref{eq:sol_chars} we can rewrite
\begin{equation*}
\begin{split}
f(x,v,t)&-G(x-tv,v)=f_0(x-tv+\widetilde{Y}(x-tv,v,0,t),v+\widetilde{W}(x-tv,v,0,t))\\
&-f_0(x-tv+\Y_\infty(x-tv,v),v+\W_\infty(x-tv,v))\\
&+M_0(v+\widetilde{W}(x-tv,v,0,t))-M_0(v+\W_\infty(x-tv,v))\\
&=\delta Y(x,v,t)\cdot A_x(x,v,t)+\delta W(x,v,t)\cdot A_v(x,v,t)+\delta W(x,v,t)\cdot B(x,v,t),
\end{split}
\end{equation*}
where
\begin{align*}
\delta Y & :=\widetilde{Y}(x-tv,v,0,t)-\Y_\infty(x-tv,v), & Y_0 & :=\widetilde{Y}(x-tv,v,0,t), & Y_\infty &:=\Y_\infty(x-tv,v)\\
\delta W & :=\widetilde{W}(x-tv,v,0,t)-\W_\infty(x-tv,v), &  W_0 & :=\widetilde{W}(x-tv,v,0,t), & W_\infty &:=\W_\infty(x-tv,v)
\end{align*}
and
\begin{align*}
A_x&:=\int_{\theta=0}^1\nabla_xf_0(x-tv+\theta Y_0+(1-\theta)Y_\infty,v+\theta W_0+(1-\theta)W_\infty)d\theta, \\
A_v&:=\int_{\theta=0}^1\nabla_vf_0(x-tv+\theta Y_0+(1-\theta)Y_\infty,v+\theta W_0+(1-\theta)W_\infty)d\theta,\\
B&:=\int_{\theta=0}^1\nabla_vM_0(v+\theta W_0+(1-\theta)W_\infty)d\theta.
\end{align*}

By our choice of background $M_0$ in \eqref{NVP.2} and the assumptions on the initial data \eqref{bootinit}, together with the estimate \eqref{cui6} in Lemma \ref{derichar2}, which ensures that $\langle v \rangle \sim \langle v + \widetilde{W}\rangle,$ we directly see that
\begin{equation*}
\begin{split}
\Vert \langle v\rangle A_x\Vert_{ L^\infty_{x,v}}+\Vert \langle v\rangle A_v\Vert_{ L^\infty_{x,v}}&\lesssim\varepsilon_0,\qquad
\Vert \langle v\rangle B\Vert_{L^\infty_{x,v}}\lesssim 1.
\end{split}
\end{equation*}
Letting $f_\infty:=G$, together with the bound \eqref{eq:charconv} for $\delta Y$ and $\delta W$ the claim follows.
\end{proof}

It remains to prove Lemma \ref{ConvYV}.

\begin{proof}[Proof of Lemma \ref{ConvYV}]

Given $0\le s\le t\le t^\prime$, we can write
\begin{equation*}
\begin{split}
\Delta_{s,t,t^\prime}(x,v)&:=\widetilde{Y}(x,v,s,t^\prime)-\widetilde{Y}(x,v,s,t)\\
&=\int_{\tau=t}^{t^\prime}(\tau-s)E(x+\tau v+\widetilde{Y}(x,v,\tau,t^\prime),\tau)d\tau\\
&\quad+ \int_{\tau=s}^t(\tau-s)\left[E(x+\tau v+\widetilde{Y}(x,v,\tau,t^\prime),\tau)-E(x+\tau v+\widetilde{Y}(x,v,\tau,t),\tau)\right]d\tau\\
&=\mathfrak{e}_{s,t,t^\prime}(x,v)+\mathfrak{r}_{s,t,t^\prime}(x,v).
\end{split}
\end{equation*}
A crude estimate gives
\begin{equation*}
\begin{split}
\vert \mathfrak{r}_{s,t,t^\prime}(x,v)\vert&\lesssim \int_{\tau=s}^t \tau\Vert \nabla_xE(\tau)\Vert_{L^\infty_x}\cdot\vert \Delta_{\tau,t,t^\prime}(x,v)\vert d\tau
\end{split}
\end{equation*}
and observing that
\begin{equation*}
g(\tau):=\tau \Vert \nabla_xE(\tau)\Vert_{L^\infty_x}\lesssim \varepsilon_1\langle \tau\rangle^{2\delta-2}
\end{equation*}
belongs to $L^1_\tau$ and using Gr\"onwall's Lemma, we see that it suffices to bound $\mathfrak{e}_{s,t,t^\prime}(x,v)$ by $\varepsilon_1\ip{v}\ip{t}^{-\frac{1}{2}}$ to prove the claim for $\Y-\Y_\infty$. To obtain this, we can decompose
\begin{equation*}
\begin{split}
\mathfrak{e}_{s,t,t^\prime}(x,v)&=\mathfrak{e}^{stat}_{s,t,t^\prime}(x,v)+\Re(\mathfrak{e}^{osc}_{s,t,t^\prime}(x,v)),\\
\mathfrak{e}^{(\ast)}_{s,t,t^\prime}(x,v)&=\int_{\tau=t}^{t^\prime}(\tau-s)E^\ast(x+\tau v+\widetilde{Y}(x,v,\tau,t^\prime),\tau)d\tau.
\end{split}
\end{equation*}
On the one hand, we see that
\begin{equation*}
\begin{split}
\vert \mathfrak{e}^{stat}_{s,t,t^\prime}(x,v)\vert&\lesssim  \int_{\tau=t}^{t^\prime} \tau\Vert E^{stat}(\tau)\Vert_{L^\infty_x}d\tau\lesssim \varepsilon_1 \int_{\tau=t}^{t^\prime}\langle \tau\rangle^{2\delta-2}d\tau\lesssim\varepsilon_1 \ip{t}^{2\delta-1}.
\end{split}
\end{equation*}
We now consider
\begin{equation*}
\begin{split}
\mathfrak{e}^{osc}_{s,t,t^\prime}(x,v)&= \int_{\tau=t}^{t^\prime} (\tau-s) e^{-i\tau}E^{osc}(x+\tau v+\widetilde{Y}(x,v,\tau,t^\prime),\tau)d\tau
\end{split}
\end{equation*}
and do a direct normal form transformation to get
\begin{equation*}
\begin{split}
\mathfrak{e}^{osc}_{s,t,t^\prime}(x,v)&= \left[ i(\tau-s) e^{-i\tau}E^{osc}(x+\tau v+\widetilde{Y}(x,v,\tau,t^\prime),\tau)\right]_{\tau=t}^{t^\prime}\\
&\quad-i\int_{\tau=t}^{t^\prime} e^{-i\tau}E^{osc}(x+\tau v+\widetilde{Y}(x,v,\tau,t^\prime),\tau)d\tau\\
&\quad-i\int_{\tau=t}^{t^\prime} e^{-i\tau}(\tau-s) \partial_\tau E^{osc}(x+\tau v+\widetilde{Y}(x,v,\tau,t^\prime),\tau)d\tau\\
&\quad-i\int_{\tau=t}^{t^\prime}e^{-i\tau} (\tau-s) \left((v^j+\partial_\tau\widetilde{Y}^j(x,v,\tau,t^\prime))\partial_{x^j}\right)E^{osc}(x+\tau v+\widetilde{Y}(x,v,\tau,t^\prime),\tau)d\tau.
\end{split}
\end{equation*}
Another crude integration gives that
\begin{equation*}
\begin{split}
\vert \mathfrak{e}^{osc}_{s,t,t^\prime}(x,v)\vert&\lesssim\varepsilon_1\langle v\rangle\langle t\rangle^{2\delta-1}.
\end{split}
\end{equation*}
To sum up, we have proved that 
\be
\big|\Delta_{s,t,t^\prime}(x,v)\big|\lesssim \varepsilon_1\langle v\rangle\langle t\rangle^{2\delta-1}.
\ee

\medskip

We can control the $\W$ variation using a similar decomposition and the above obtained estimate of $\Delta_{s,t,t^\prime}(x,v)$:
\begin{equation*}
\begin{split}
\vert \widetilde{W}(x,v,s,t^\prime)-&\widetilde{W}(x,v,s,t)\vert
\le \int_{\tau=t}^{t^\prime} \left\vert E(x+\tau v+\widetilde{Y}(x,v,\tau,t^\prime),\tau)\right\vert d\tau\\
& + \int_{\tau=s}^t\left\vert E(x+\tau v+\widetilde{Y}(x,v,\tau,t^\prime),\tau)-E(x+\tau v+\widetilde{Y}(x,v,\tau,t),\tau) \right\vert d\tau\\
&\lesssim \varepsilon_1 \int_{\tau=t}^{t^\prime}\langle \tau\rangle^{2\delta-2}d\tau+ \int_{\tau=s}^t \Vert \nabla_x E(\tau)\Vert_{L^\infty}\cdot \vert \Delta_{\tau,t,t^\prime}(x,v)\vert d\tau\\
&\lesssim \varepsilon_1 \langle t\rangle^{2\delta-1}+\varepsilon_1^2 \langle v\rangle\langle t\rangle^{2\delta-1} \int_{\tau=s}^t\langle \tau\rangle^{2\delta-3} d\tau\lesssim \varepsilon_1\langle v\rangle\langle t\rangle^{2\delta-1}.
\end{split}
\end{equation*}
 Hence finishing the proof of  Lemma \ref{ConvYV}.
\end{proof}

 \bibliographystyle{abbrev}

\end{document}